\documentclass[11pt,letterpaper]{amsart}
\usepackage{amssymb,amsthm,amsmath}
\usepackage[T1]{fontenc}
\usepackage{lmodern}
\usepackage{bm}%\bm{..} for bold math
\usepackage{esint}%\fint for dashed integrals
\usepackage[hidelinks]{hyperref} %\hypersetup{colorlinks = false, linkbordercolor = myred, citebordercolor = mygreen}
\usepackage{xcolor} \definecolor{myred}{RGB}{204 0 0} \definecolor{mygreen}{RGB}{0 153 76}

\usepackage{tikz}
\usepackage{graphicx}
\usepackage{subcaption}
\usepackage{enumitem}%to set indentation in lists
\usepackage[noabbrev]{cleveref}

\usepackage[backend=biber,style=alphabetic,sorting=nyt,doi=true,isbn=false,url=false]{biblatex}%Imports biblatex package
\renewbibmacro{in:}{}% Suppress "In:" before journal titles

\AtEveryBibitem{%
	\clearfield{number}%
	\clearfield{pages}%
	\clearfield{volume}%
	\clearfield{issn}%
	\clearfield{fjournal}%
	\clearfield{language}%
	\clearfield{keywords}%
	\clearfield{month}%
	\clearfield{note}%
	\clearfield{zbl}%
	\clearfield{zbMATH}%
	\clearfield{url}% 
}
\newtheorem{theorem}{Theorem}[section]

\newtheorem{lemma}[theorem]{Lemma}
\newtheorem{proposition}[theorem]{Proposition}

\theoremstyle{definition}

\newtheorem{remark}[theorem]{Remark}
\newtheorem{example}[theorem]{Example}

\AddToHook{env/conjecture/begin}{\crefalias{theorem}{conjecture}}
\AddToHook{env/lemma/begin}{\crefalias{theorem}{lemma}}
\AddToHook{env/proposition/begin}{\crefalias{theorem}{proposition}}
\AddToHook{env/corollary/begin}{\crefalias{theorem}{corollary}}
\AddToHook{env/definition/begin}{\crefalias{theorem}{definition}}
\AddToHook{env/remark/begin}{\crefalias{theorem}{remark}}
\AddToHook{env/example/begin}{\crefalias{theorem}{example}}

\numberwithin{equation}{section}

\newcommand{\even}{\mathrm{even}}
\newcommand{\odd}{\mathrm{odd}}
\newcommand*{\de}{\partial}

\newcommand*{\R}{\mathbb{R}}

\renewcommand*{\S}{\mathbb{S}}
\newcommand*{\eps}{\epsilon}
\newcommand*{\ind}{\mathcal{X}}
\newcommand*{\weak}{\rightharpoonup}

\newcommand*{\loc}{{\text{\upshape{loc}}}}
\newcommand*{\lap}{\Delta}

\newcommand{\scalar}[1]{\left<#1\right>}
\DeclareMathOperator{\supp}{spt}
\DeclareMathOperator{\divergence}{div}

\DeclareMathOperator{\osc}{osc}

\DeclareMathOperator{\dist}{dist}

\DeclareMathOperator{\tr}{tr}
\DeclareMathOperator{\reg}{Reg}

\DeclareMathOperator{\id}{id}

\author{Giacomo Colombo}
\address{ETH Z\"urich, Switzerland}
\email{gcolom@ethz.ch}
\author{Federico Franceschini}
\address{Stanford University, USA}
\email{ffede@stanford.edu}
\date{\today}
\title[Semiconvexity and curvature bounds]{Mean curvature bounds for the obstacle problem}
\begin{document}
	\begin{abstract}
		We prove optimal $(n-1)$-semiconvexity estimates for solutions to the classical obstacle problem, possibly with a smooth right-hand side. We deduce that the mean curvature of the free boundary is universally bounded above in all dimensions. Existing examples show that full curvature bounds fail in dimensions three and higher. Noticeably, these results are novel even for a constant right-hand side in the $n=2$ case. 
	\end{abstract}
	
	\maketitle
	\tableofcontents
	
	\section{Introduction and Main Results}
	\subsection{}
	A function $u\in C^{1,1}(B_1)$ solves the obstacle problem if 
	\begin{equation}\label{eq:obstacle}
		\Delta u=f\chi_{\{u>0\}}\quad \text{ and }\quad u\ge0\qquad\text{in }B_1\subset\R^n,
	\end{equation}
	for some positive smooth function $f$. Such functions can be constructed by minimizing the convex functional
	\[
	v\mapsto\int_{B_1}\tfrac{1}{2}|\nabla v(x)|^2 +f(x)\max\{v(x);0\}\, dx,
	\]
	among functions $v$ taking some prescribed nonnegative boundary datum on $\de B_1$.
	This is perhaps the most classical free boundary problem, and it arises in a variety of situations; see \cite{DuvatLions,Rodrigues,Friedman,PSU2012,Serfaty}.
	
	Mathematically, the main challenge of this problem is the study of the \textit{free boundary} $\de\{u>0\}$, which has been the subject of extensive research in the past decades. The modern theory started with the seminal paper \cite{Caffarelli77}, where it is proven that the free boundary splits into a regular part (where the free boundary is a smooth hypersurface) and a singular part (where the contact set $\{u=0\}$ has zero density). Although the singular set can be pretty wild (see \cite{Schaeffer}), a lot of progress has recently been made towards understanding its local structure \cite{Weiss,CSVObstacle,FS19,SY23}.
	
	However, little is known about the behavior of the free boundary as a whole, even assuming it is a priori smooth. For example, on the one hand, being a free boundary of a \textit{global} solution of the obstacle problem is an extremely rigid condition \cite{GlobalSolutionsObstacle,2dglobalsolutions}; on the other hand, \textit{any} analytic hypersurface is a free boundary of a suitable solution of the obstacle problem in a sufficiently small ball.
	It is natural to ask ``what happens in between'':
	\begin{gather*}
		\text{\itshape How restrictive is it for a hypersurface to be the free boundary} \\
		\text{\itshape in $B_1$ of a solution to \eqref{eq:obstacle} defined in $B_R$ with $R \gg 1$?}
	\end{gather*}
	A priori curvature estimates are a natural way of answering this question quantitatively. Note that any curvature bound is necessarily one-sided, since it is known by explicit examples \cite[Figure 2]{Schaeffer} that the set $\{u=0\}$ can have outer cusps, even in 2D, so there is no hope to bound the curvature of $\de \{u>0\}$ from below.
	In \cite{SchaefferCurvatureBound}, Schaeffer indeed proved some curvature bounds in 2 dimensions, under some restrictive topological assumptions on the free boundary that could be verified only after an unquantified rescaling, thus making his bounds not universal. 
	Additionally, the following example of Figalli and Serra shows that an upper bound on the full second fundamental form is not possible if $n\ge3$:
	\begin{example}\label{example:anomalous}
		There is a solution $u^*$ of \eqref{eq:obstacle} with $f\equiv1$ in $B_1\subset \R^3$ such that
		\[
		\{u^*>0\}=\big\{x\in B_1 : {\textstyle\sqrt{x_2^2+x_3^2}}> \varphi(|x_1|) \big\},
		\]
		for some positive increasing function $\varphi\in C^\infty((0,1]),$ with $\varphi(0+)=\varphi'(0+)=0,$ such that 
		\[
		[\varphi']_{C^{\eps}((0,1])}=+\infty \quad\text{ for all }\eps>0,
		\]
		see \cite[Equation (A.3)]{FS19}.
		So, not only the curvature of $\de\{u^*>0\}$ along $\{x_3=0\}$ is unbounded, but, as $x\to0$, it explodes faster than $+|x|^{\eps-1}$. 
	\end{example}
	Notice that in this example, the other normal curvature of $\de\{u^*>0\}$ is instead exploding to $-\infty$ faster than $-|x|^{\eps-2}$. So the sum of the two is bounded above. Indeed, J. Spruck suggested that in higher dimensions one could hope to bound the mean curvature (see \cite[Section 4]{SchaefferCurvatureBound}). This is the main result of our work.
	
	\begin{theorem}\label{theorem:viscosity bound}
		Let $u$ solve \eqref{eq:obstacle} for some positive right-hand side $f\in C^{1,\alpha}(B_1)$. Then $\de \{u>0\}$ has mean curvature bounded above in $B_{1/2}$ by a constant $C$ that depends only on 
		\[
		n,\qquad \inf_{B_{1}} f>0,\qquad 
		\alpha>0,\qquad \|f\|_{C^{1,\alpha}(B_1)}.
		\]
	\end{theorem}
	
	Where $\de\{u>t\}$ is a regular surface, our convention is that the mean curvature is $-\divergence(\nabla u /|\nabla u|)$, thus, if $\{u>t\}$ is convex, then the mean curvature is positive.
	
	In general, our bound can be intended in the viscosity sense: for each $z\in \de\{u>0\}\cap B_{1/2}$ with the property that
	\[
	\exists\, E\,\text{ open with }\de E\in C^2\text{ and }\rho>0 \text{ s.t. } z\in \de E\text{ and } \{u>0\}\cap B_\rho(z) \subset E,
	\]
	it must hold that, letting $\nu_E$ be the outer unit normal,
	\begin{equation}\label{eq:viscositybound}
		H_{\de E}(z):=\divergence\nu_E\le C_0.
	\end{equation}
	
	When $n=2$, \Cref{theorem:viscosity bound} gives an a priori upper bound on the full curvature of the free boundary in the viscosity sense, independently of the solution. We emphasize that, in this sense, the result is already new when $n=2$ and $f\equiv1$. 
	It is also, to our knowledge, the first a-priori estimate on the free boundary in higher dimensions.
	
	\begin{remark}[Geometric interpretation]\label{rmk:geometric interpretation}
		The fact that the constant $C_0$ in \Cref{theorem:viscosity bound} is universal implies that, if we zoom in a small ball $B_{r}(x_\circ)\subset B_{1/2}$, we see a free boundary with mean curvature $\le C_0 r$. This is saying that, the picture of any free boundary in $B_1$, irrespectively of the boundary datum, does not contain in $B_{1/2}$ balls of size $\le 1/C_0$ in which the positivity set has an ``outer bump'' of definite size. 
		Changing perspective, if instead $u$ solves \eqref{eq:obstacle} in $B_R$, with $R\gg1$, \Cref{theorem:viscosity bound} gives that $\de\{u>0\} \cap B_{1/2}$ has mean curvature bounded above by $C_0/R$, by some universal $C_0$.
	\end{remark}
	
	\begin{remark}[Analytic interpretation]
		Even though there is a well developed regularity theory for free boundaries, as a geometric object a free boundary does not satisfy a local PDE by itself. Yet, from this point of view, \Cref{theorem:viscosity bound} gives a one-sided second order constraint satisfied by any free boundary.
	\end{remark}
	
	\begin{remark}[No exterior ball property]\label{rmk:exterior ball}
		Using complex-analytic tools, Gustafsson and Sakai \cite{GustSak03,GustSak04} showed that when $n=2$ and $f\equiv 1$ the contact set $\{u=0\}$ has the exterior $r$-ball property\footnote{\,Meaning that, there is $r>0$ such that
			\[\forall z\in\de\{u>0\}\cap B_{1/2},\exists y\in B_{1/2} : B_r(y)\subset\{u>0\}\text{ and }\de B_r(y)\cap \de\{u>0\} =\{z\}.
			\] } for a non-universal $r=r(u)>0.$ This does not follow from (nor imply) \Cref{theorem:viscosity bound}, because this phenomenon is in fact due to analytic rigidity: in \Cref{ex:no ball property} below we construct a $C^\infty$ obstacle in 2D and a solution $u_0$ such that, for each $\rho>0$, the set $\{u_0=0\}\cap B_{1/2}$ has a point which cannot be touched from the outside by a ball of radius $\rho$. 
		
		When $n\ge3$, the exterior sphere property already fails when $f\equiv1$, due to the anomalous solutions in \Cref{example:anomalous}.
	\end{remark}
	
	\begin{remark}[On the regularity of $f$]
		Close to regular points, the free boundary enjoys $C^{k+1,\alpha}$ estimates whenever $f\in C^{k,\alpha}$ (\cite{DSS15}), so one sees that the \Cref{theorem:viscosity bound} is essentially optimal also at the level of regularity of $f$, since $C^2$ estimates of the free boundary should not be expected if $\nabla f$ is not Dini continuous.
	\end{remark}
	We will derive \Cref{theorem:viscosity bound} from a semiconvexity estimate, let us outline the short heuristic that connects the two results. 
	Given a unit vector $e\in\mathbb S^{n-1}$ we write $\Delta_{e^\perp} u := \Delta u - D_{ee}u$. Then the mean curvature of the boundary of a super-level set at a nondegenerate point is
	\[
	-\divergence\left(\frac{\nabla u}{|\nabla u|}\right)=-\frac{\Delta_{\nu^\perp}u}{|\nabla u|},\qquad \nu:=\nabla u/|\nabla u|.
	\]
	Since close to the free boundary one can at best expect that $|\nabla u(x)|\gtrsim \delta(x),$ where 
	$$\delta(x):= \dist(x,\de\{u>0\}),$$
	an upper bound on the curvature asks for a $(n-1)$-semiconvexity estimate of the form
	\[
	\Delta_{e^\perp} u(x) \gtrsim -\delta(x),
	\]
	for all unit vectors $e$. This motivates the following bound:
	\begin{theorem}\label{theorem:linear decay semiconvexity}
		Let $u$ solve \eqref{eq:obstacle} in $B_1$ for some positive right hand side $f$ and assume $0\in\partial\{u>0\}$. Then
		\begin{equation}\label{eq:linrate}
			\Delta_{e^\perp}u(x) \ge -C\, \dist(x,\de\{u>0\}) \quad \text{ for all $x\in B_{1/2}$ and all $e\in\S^{n-1}$,}   
		\end{equation}
		for some $C = C(n,\min_{B_1} f,\alpha,\|f\|_{C^{1,\alpha}(B_1)})>0$.
	\end{theorem}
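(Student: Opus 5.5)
The plan is to study the auxiliary quantity $w_e:=\Delta_{e^\perp}u=\Delta u-D_{ee}u$ in the positivity set $\Omega:=\{u>0\}$. There it equals $f-D_{ee}u$, so \eqref{eq:linrate} is an upper bound $D_{ee}u\le f+C\delta$ with $\delta:=\dist(\cdot,\de\{u>0\})$. Since $f$ is only $C^{1,\alpha}$, I first fix $v\in C^{3,\alpha}(B_{3/4})$ with $\Delta v=f$. Then $u-v$ is harmonic in $\Omega$, and
\[
w_e=\Delta_{e^\perp}(u-v)+\Delta_{e^\perp}v,
\]
where $h_e:=\Delta_{e^\perp}(u-v)$ is harmonic in $\Omega$ and $\Delta_{e^\perp}v\in C^{1,\alpha}$. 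This reduces everything to a harmonic function plus a $C^{1,\alpha}$ error, which is harmless at the linear scale. I also use the standard facts: $u\in C^{1,1}$ with universal bounds, Caffarelli's semiconvexity $D_{ee}u\ge -C$, nondegeneracy $\sup_{B_r(z)}u\ge cr^2$ at free boundary points, and the classification of blow-ups (half-space solutions $\tfrac f2(x\cdot\nu)_+^2$ and homogeneous quadratic polynomials $p$ with $\Delta p=f(z)$).

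\textbf{Step 1: boundary values.} I would show $\liminf_{x\to z,\,x\in\Omega}w_e(x)\ge0$ uniformly for $z\in\de\Omega\cap B_{3/4}$, in a quantitative form: $w_e\ge -\omega(\delta)$ for a universal modulus $\omega$. The argument is by compactness through blow-ups. At regular points $D^2u\to f(z)\,\nu\otimes\nu$, which gives $w_e\to f(z)(1-(\nu\cdot e)^2)\ge0$. At singular points $D^2u\to D^2p$ with $p$ convex, so $w_e\to f(z)-p_{ee}\ge \Delta_{e^\perp}p\ge0$. Combining this with $|w_e|\le C$ and the maximum principle for $h_e$ in $\Omega\cap B_{3/4}$ already yields a uniform lower bound $w_e\ge -C$ in $B_{1/2}$.

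\textbf{Step 2: linear rate, the main obstacle.} I expect this step to be the heart of the proof. I would argue by contradiction and compactness. Suppose there are solutions $u_k$, points $x_k\in B_{1/2}$ and unit vectors $e_k$ with $w_{e_k}(x_k)\le -M_k\delta_k$, where $\delta_k=\delta(x_k)$ and $M_k\to\infty$. Rescale around the nearest free boundary point $z_k$ at scale $r_k\sim\delta_k$ by setting $\tilde u_k(y)=u_k(z_k+r_ky)/r_k^2$. For the rescaled solutions the right-hand sides $\tilde f_k$ become nearly constant, with $\|\nabla\tilde f_k\|\lesssim r_k$, and $\tilde w_k(y_k)\le -M_k r_k$. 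Normalize by $\varepsilon_k:=\sup_{B_{R}}(\tilde w_k)_-$, taking the supremum over a suitably chosen growing family of scales so that the normalized functions $\tilde w_k/\varepsilon_k$ have controlled growth (a standard ``worst scale'' selection). The aim is that $-\tilde w_k/\varepsilon_k$ converges to a nonnegative function $W$ on a global solution $u_\infty$ with constant right-hand side, with the following properties: $W$ is harmonic in $\{u_\infty>0\}$, it vanishes on the free boundary by Step 1, it has at most linear growth, and it is not identically zero. The errors coming from $\Delta_{e^\perp}v$ are $O(r_k)$, and they vanish after normalization provided $\varepsilon_k\gg r_k$, which is exactly what the contradiction hypothesis gives.

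The final contradiction comes from the classification of global solutions with constant right-hand side. These are half-spaces, polynomials, and (convex) ellipsoidal or paraboloidal solutions, and for them $\Delta_{e^\perp}u_\infty\ge0$ holds explicitly, since $\{u_\infty=0\}$ is convex. One then has to rule out a nontrivial harmonic $W\ge0$ vanishing on $\de\{u_\infty>0\}$ with subquadratic growth that arises as a limit of the negative parts of the rescaled $\tilde w_k$. The difficulty I anticipate is precisely controlling the growth of these negative parts (the normalization step) near singular points, where the blow-up may vary slowly, and where the linear barrier must beat an $O(\omega(\delta))$ boundary error. If compactness is too lossy there, I would instead try a direct barrier argument: show that in $\Omega\cap B_r(z)$ the function $w_e+Cr$ is nonnegative on $\de B_r(z)$ and on $\de\Omega$, using a dyadic improvement $\inf_{B_{r/2}}w_e\ge \tfrac12\inf_{B_r}w_e-Cr$ obtained from the same blow-up dichotomy. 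Iterating this improvement down the dyadic scales gives \eqref{eq:linrate}.
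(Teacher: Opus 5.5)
\textbf{There is a genuine gap in Step 2.} It is not a matter of choosing the normalization well. The estimate \eqref{eq:linrate} is scale-critical, and the limit configuration you hope to exclude is realized by admissible solutions.

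Take $u_\infty=\tfrac12(x\cdot\nu)_+^2$, $e=\nu$ and $W=c\,(x\cdot\nu)_+$ with $c>0$. Then $W$ is nonnegative, harmonic in $\{u_\infty>0\}$, zero on $\de\{u_\infty>0\}$, of linear growth, and nontrivial. It is exactly the blow-up of a smooth free boundary with positive mean curvature $\kappa$, near which $\Delta_{\nu^\perp}u\approx-\kappa\,\delta$.

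After you divide by $\eps_k$, the only trace of $M_k\to\infty$ is that the $O(r_k)$ errors disappear. So the limit has no memory of the contradiction hypothesis. No classification of global solutions (all of which satisfy $\Delta_{e^\perp}u_\infty\ge0$) can rule it out. A worst-scale selection does not help either, since here $R^{-1}\sup_{B_R}W$ is independent of $R$. What is missing is a mechanism that carries information from scale $1$ down to scale $\delta(x)$ with a uniform constant, through precisely the singular-to-regular transitions you flag.

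Your fallback does not supply that mechanism:
\begin{itemize}
\item Iterating $\inf_{B_{r/2}}w_e\ge\tfrac12\inf_{B_r}w_e-Cr$ gives only $2^ja_j\ge a_0-2Cj$ for $a_j:=\inf_{B_{2^{-j}}}w_e$, i.e.\ $w_e\ge-Cr\log(1/r)$, not the linear rate.
\item The factor $\tfrac12$ is sharp for the curved profile above, so it cannot be improved to absorb the error.
\item Proving even that inequality near $\Sigma_{n-1}$ is the whole difficulty.
\end{itemize}

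There are also smaller problems:
\begin{itemize}
\item $-\tilde w_k/\eps_k$ does not converge, because $\tilde w_k$ is typically positive of order one (on a half-space solution it equals $f(1-(e\cdot\nu)^2)$). Only $(\tilde w_k)_-/\eps_k$ has a limit, and that limit is merely subharmonic.
\item Step 1 is just \eqref{eq:semiconvexity_weak}. Its error $\omega(r_kt)$ need not be $o(\eps_k)$, so vanishing of $W$ on the free boundary is not justified.
\item $D^2u$ does not converge to $D^2p$ as one approaches a singular point.
\end{itemize}

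\textbf{How the paper breaks the scale invariance.} It never normalizes $(\Delta_{e^\perp}u)_-$. Instead it works with the distance of $u$ to the cones $p_\nu$ at a fixed scale, and with the frequency of $u-p_\nu$ relative to the threshold $3$.
\begin{itemize}
\item By \Cref{lem:semiconvexity}, $(D^2u)_-$ in $B_{r/2}$ is controlled by $r^{-2}\|(u-p_\nu)_r\|_{L^2(\de B_1)}$ plus higher-order errors. So a linear rate follows from cubic approximation.
\item While the truncated frequency stays above $3-\delta_\circ$, the Monneau formula combined with a (log-)epiperimetric inequality at homogeneity $3$ makes the frequency gap integrable in $dr/r$. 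This holds uniformly in how small the transition scale is (\Cref{prop:monneau ND RHS}), and yields $\|(u-p_\nu)_r\|_{L^2(\de B_1)}\le Cr^3$ down to the scale $\rho_u$ where the frequency drops.
\item Below $\rho_u$, compactness is run at the linearized level. The rescaled $(u_k-p_k)/\eps_k$ converge to a thin-obstacle solution with frequency $\le3-\delta$, whose blow-up has homogeneity $1$, $3/2$ or $2$ by the gap of \cite{FranceschiniSavin25}. This limit supplies a scale $\bar r$, independent of $k$, at which either the contact set splits into two regular pieces or $u_k$ inherits strict $(n-1)$-convexity (\Cref{lem:inherit convexity}).
\item This gives $\Delta_{e^\perp}u\ge-M\eps|x|/r$ with $\eps\le C\rho_u$ from the first regime, and the two regimes glue into \eqref{eq:linrate}.
\end{itemize}
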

	Of course the linear rate cannot be improved, as one can see already at a regular free boundary point. The same \Cref{example:anomalous} shows that one cannot hope to bound the full negative part of $D^2 u$ with a linear (nor any power-type) rate in all dimensions $n\ge 3$. This is consistent with the best result available in this direction, which is Caffarelli's original semiconvexity estimate, see
	\cite[Theorem 1]{Caffarelli77}
	\begin{equation}\label{eq:caffarellilog}
		D^2u(x)\ge -C(n) |\log\delta(x)|^{-\alpha(n)}\id \quad \text{ in }B_{1/2}.
	\end{equation}
	
	In the case $n=2$, this bound was later sharpened by Rivier\'e and him \cite[Theorem 1]{CaffAnnals} to
	\[
	D^2u(x)\ge -C_1 \exp\big\{{-C_2 |\log\delta(x)|^{1/2}}\big\}\id \quad \text{ in } B_{1/2}
	\]
	and our \Cref{theorem:linear decay semiconvexity} finds the optimal semiconvexity rate (see also \Cref{theorem:2D 1})
	\[
	D^2u(x)\ge -C\, \delta(x)\id\quad \text{ in } B_{1/2}.
	\]
	
	We emphasize that our arguments draw ideas, build upon and combine nontrivially many recent approaches in the theories of the obstacle problem and thin obstacle problem.
	Specifically we use the frequency formula approach of \cite{FS19,FROS24}, the improvement of semiconvexity approach of \cite{SY23,SavinYufine}, the (log)-epiperimetric inequalities of \cite{Weiss,CSVObstacle}, the spectral gaps of \cite{FranceschiniSavin25} and the linearisation around integer frequency points of \cite{SavinYuinteger}.

	\subsection{About the proof of \Cref{theorem:linear decay semiconvexity}}
	
	To fix the ideas, the reader can, in a first read, consider the case $n=2$ and $f\equiv1$, as this already contains most of the ideas of the general case\footnote{\,The main difference between 2D and nD is that the epiperimetric inequality in 2D (\Cref{lem:apply projection epiperimetric}) is replaced by a log-epiperimetric in nD (\Cref{lem:log epi}).}.
	The starting point to establish a semiconvexity estimate is to quantify the rate of approximation of a solution $u$ of \eqref{eq:obstacle} by another \textit{convex} solution $v$. Indeed, under these assumptions,
	\[
	D^2u\ge -C\, \|u-v\|_{L^2(\de B_1)}\id\quad \text{in }B_{1/2}.
	\]
	Thus, our goal is to show that, when $0\in\de\{u>0\}$ and $u$ is not mean convex in $B_r$, then
	\begin{equation}\label{eq:cubicapprox}
		|u-v^{(r)}|\le C_0 r^3\quad\text{ in $B_r$, for all }r\in(0,1/2),
	\end{equation}
	for some universal $C_0$ and suitable explicit solutions $v^{(r)}$. The typical profiles used as $v^{(r)}$ would be, up to rotation, the homogeneous cones:
	\[
	\tfrac12 x\cdot Ax,\quad  A\in\mathrm{Sym}(n),\, A\ge0,\,\tr A=1,\qquad\tfrac12 (x_1)_+^2.
	\]
	The former are singular profiles, the latter is the regular one.
	
	When $u$ is well approximated by a regular cone, $\eps$-regularity is triggered and \eqref{eq:cubicapprox} follows. The difficulties arise when $u$ is well-approximated, at a certain scale $\rho$, by a singular cone $p$, and then transitions to a regular profile at some smaller scale $\theta\rho\ll\rho$ (not the other way around---because of Weiss monotonicity formula). In this case, $\eps$-regularity estimates degenerate and one loses the uniform constant in \eqref{eq:cubicapprox}. The situation bears analogies with \cite{MinimalSurfacesLinearization} in the framework of minimal surfaces or \cite{TwoPhaseBernoulli} in the Bernoulli problem.
	
	A key observation is that the Almgren's frequency $\phi$ of the difference $u-p$ (where $p$ is a singular blow-up profile) is effectively monotone in $(r,1)$ as long as
	\[
	\phi(r,u-p)\ge 2,
	\]
	and that the cubic approximation \eqref{eq:cubicapprox} holds, by classical formulas, as long as
	\[
	\phi(r,u-p)\ge 3.
	\]
	On the other hand, in \Cref{prop:low frequency}, we prove that, if $\phi$ is smaller than $3-\delta_\circ$, one can adopt similar ideas to \cite{SY23} to prove a linear decay of semiconvexity all the way down to the regular cone regime. 
	
	Then we are left with proving \eqref{eq:cubicapprox} in the transition range $[\theta r,r]$ where $\theta$ is such that
	\[
	3-\delta_\circ=\phi(\theta r,u-p)\le \phi(r,u-p)=3,
	\]
	for some fixed universal $\delta_\circ>0$. In this regime we prove \Cref{prop:extended monneau}, a highly precise refinement of Monneau monotonicity, which gives
	\begin{equation}
		\sup_{B_{\theta r}}|u-p| \le C_\circ \theta^3 \sup_{B_{ r}}|u-p|, \label{eq:point}
	\end{equation}
	for a universal $C_\circ$ and for all $\theta>0,$ no matter how small. An important ingredient to establish \eqref{eq:point} is a new epiperimetric inequality in the spirit of \cite{Weiss,CSVObstacle,SavinYuinteger}. 
	% (we work with negative energy gaps).
	The point of \eqref{eq:point} is that, while the smallness of $\theta$ itself cannot be quantified, it can weaken the overall cubic decay of $u-p$ from scale $r$ to scale $\theta r$ just by the finite constant $C_\circ$.
	
	\subsection{Structure of the paper}
	In \Cref{sec:preliminary} we gather our toolbox of formulas and bounds for general right-hand sides and dimension. In \Cref{sec:model case}, to better illustrate the ideas of the paper and the main contributions, we prove the main result in the model case of 2D solutions with constant right hand side: \Cref{theorem:2D 1}, which follows combining the two key ingredients \Cref{prop:low frequency} and \Cref{prop:extended monneau}.
	We then adapt the arguments to the general case in \Cref{sec:preliminary rhs,sec:theorem rhs}.
	
	\subsection{A note on the paper's length}
	The core argument for $f \equiv 1$ and arbitrary dimension $n$ is relatively short, and would make a 25 pages paper. The additional length is due to the technical work needed to handle general right hand sides $f \in C^{1,\alpha}$. We chose not to omit these adaptations because they involve delicate adjustments at some points, which are worth recording in full detail rather than leaving them to the reader. At the same time, presenting only the general case from the start would likely obscure the main ideas. This motivates our two-step presentation.

	\subsection{Use of AI}
	We wish to make explicit that the use of AI (ChatGPT-5.6 Sol and Gemini-3.1 Pro) in this paper is minor and secondary to the main novelties of the work.
	
	The core conceptual arguments were entirely human-generated. 
	
	In particular, our original proof for $n=2$ and $f\equiv 1$, as well as the realization that the same strategy worked for the \textit{mean} curvature in higher dimension, were in no way AI-helped.
	
	Besides language editing, AI was used for the following tasks:
	\begin{itemize}
		\item To speed up the computations of the truncated frequency formulas when $f\not \equiv 1$. These formulas are essentially known from \cite{FROS24}. Their adaptation to a smooth $f$ was carried out by the second author in \cite{CinftyRectifiable}, but with slightly different statements and assumptions.
		\item To find the barrier in the proof of \Cref{lem:caffarelli rhs holder}. We delegated this search to the AI, and it found a candidate function, which we then rigorously verified. This technical lemma is exclusively needed to generalize our proof to $f\in C^{1,\alpha}$ rather than $f\in C^{1,1}$, which is a secondary improvement relative to our main result.
		\item To speed up the iterative drafting of the log-epiperimetric inequality \Cref{lem:log epi}. This statement (needed only if $n\ge 3$) was known in the literature in slightly different energy regimes (\cite{cubicEpiperimetric,CSV20,SavinYuinteger}) and needed to be adapted to our setting. We claim more insight in the way we use it, rather than in the statement itself which is probably folklore among some experts.
	\end{itemize}
	
	% \subsection{Use of AI}
	% Beside minor editing tasks, the authors acknowledge some use of AI models exclusively in the following: to speed up computations for the proof of \Cref{lem:log epi}, to prove the technical \Cref{lem:caffarelli rhs holder} and for adapting classical monotonicity formulas to the case with right hand side.
	% The paper has entirely been written by the authors and AI computations have been re-done and written by hand. 
	
	\subsection{Acknowledgments} 
	The authors are thankful to Alessio Figalli and Joaquim Serra for suggesting this problem in 2D to us and for providing continuous encouragement.
	
	They are also thankful to Federico Glaudo and Xavier Fernand\'ez-Real for generously sharing their notes \cite{GFR22}, which contained an unpublished proof of the suboptimal estimate $-D^2u\le C_\alpha\dist^\alpha$ for $\alpha<1$ in 2D, and to Matteo Carducci, for helpful discussions on the correct statement in higher dimensions for \Cref{lem:apply projection epiperimetric}.
	
	During the preparation of this work Federico Franceschini has been partly supported by the ERC grant No. 948029 StableIF.

	\section{Notation and preliminary results}\label{sec:preliminary}
	\subsection{Notation and meaning of ``universal''}
	Throughout the paper $u$ will solve \eqref{eq:obstacle} with right-hand side $f\in C^{1,\alpha}(B_1)$ for some $\alpha>0$ and $f\ge \mu>0$.
	
	We will call \emph{universal} positive constants that depend only on upper bounds on the quantities
	\[
	n,\quad 1/\mu,\quad 1/\alpha, \quad \|f\|_{C^{1,\alpha}(B_1)},
	\]
	and we will denote them by $C$ when large and $c$ when possibly very small. If needed for later reference within a proof, they will be numbered as $C_1,$ $C_2$, $c_1$, etc.
	
	We introduce the notation:
	\begin{equation}\label{eq:notation Taylor}
		f(x)=f_1(x)+f_2(x),\quad f(0) + \nabla f(0)\cdot x=:f_1(x).
	\end{equation}
	So that for all $r\in(0,1)$ it holds 
	\[
	\|f_2(r\cdot)\|_{C^{1,\alpha}(B_1)}\le Cr^{1+\alpha}.
	\]
	with $C$ universal.

	Unless otherwise specified, given a function $w$ and $r>0$ we usually write
	\begin{equation}
		w_r :=w(r\cdot),\qquad \tilde w_r :=\frac{w_r}{\|w_r\|_{L^2(\de B_1)}}.\label{eq:notation tilde w}
	\end{equation}
	We also use Almgren's frequency function:
	\begin{equation}\label{eq:def frequency}
		\phi(r,w) := \frac{D(r,w)}{H(r,w)} = D(1,\tilde w_r),
	\end{equation}
	where
	\[
	H(r,w) := \int_{\de B_1} w_r^2,\quad D(r,w) := \int_{B_1} |\nabla w_r|^2,
	\]
	and Weiss' energy functionals 
	\[
	W_\lambda(r,w) := r^{-2\lambda}(D(r,w)-\lambda H(r,w)),\qquad \lambda\ge 0.
	\]

	Given $c\in\R$ we write
	\[
	D^2u(x)\ge c
	\]
	meaning that
	\[
	D_{ee}u(x)\ge c\quad \text{for all }e\in\mathbb S^{n-1}.
	\]
	For $e\in\mathbb S^{n-1}$ we write the $(n-1)$ laplacian in the plane $e^\perp$ as
	\[
	\Delta_{e^\perp} u(x) := \Delta u(x) - D_{ee}u(x).
	\]
	
	We denote the set of quadratic polynomials solving \eqref{eq:obstacle} with $f\equiv 1$ by
	\[
	\mathcal P_2:=\{p(x) = \tfrac12 x\cdot Ax,\,A\in\mathrm{Sym}(n),\,A\ge0,\,\tr A=1\}.
	\]
	With a slight abuse of notation, for $k\in\{0,\dots,n-1\}$ we write
	\begin{equation}\label{eq:stratification polynomials}
		p\in\Sigma_{k}\qquad\iff \qquad p\in\mathcal P_2\quad\text{and}\quad\dim\{p=0\}=k.
	\end{equation}

	\subsection{Some facts on the obstacle problem}
	We collect well known results and some new properties about solutions of \eqref{eq:obstacle}. As general references for the obstacle problem we refer to \cite{PSU2012,XaviXaviLocal}.
	
	\begin{lemma}\label{lem:comparison principle}
		Let $\Omega\subset\R^n$ be a bounded domain, let $u$ solve \eqref{eq:obstacle} in $\overline\Omega$. Suppose that $v,V\colon\overline{\Omega}\to\R$ satisfy
		\[\begin{cases}
			\Delta V \le f \ind_{\{V>0\}} &\text{in }\Omega,\\
			V\ge0                       &\text{in }\Omega,\\
			V\ge u                      &\text{on }\de\Omega,
		\end{cases}\qquad
		\begin{cases}
			\Delta v\ge f                &\text{in }\Omega,\\
			v\le u                      &\text{on }\de\Omega.
		\end{cases}\]
		Then $v\le u\le V$ in $\overline{\Omega}$.
	\end{lemma}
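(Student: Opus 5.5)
The plan is to prove the two inequalities separately, each by the weak maximum principle applied on a suitable open set where both functions solve nice differential inequalities. I will use that $u\in C^{1,1}(\overline\Omega)$, so $u-v$ and $u-V$ are $W^{2,\infty}_\loc$. I interpret all inequalities for $v,V$ in the weak (distributional) sense, and assume $v,V$ are continuous up to $\de\Omega$, which is implicit in the boundary conditions.

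\emph{Lower bound $v\le u$.} Consider the open set $U:=\{v>u\}\cap\Omega$. On $U$ we have $v>u\ge 0$. I will not need the sign of $v$; I only need to compare Laplacians. Since $\Delta u=f\ind_{\{u>0\}}\le f$ a.e., we get $\Delta(v-u)\ge f-f\ind_{\{u>0\}}\ge0$ in $U$ in the weak sense. So $v-u$ is subharmonic in $U$. On $\de U$ we have $v-u\le 0$: either the point lies on $\de\Omega$, where $v\le u$ by hypothesis, or it lies inside $\Omega$, where $v=u$ by continuity. The maximum principle then forces $v-u\le0$ in $U$, contradicting the definition of $U$ unless $U=\emptyset$.

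\emph{Upper bound $u\le V$.} This is the only point where the free boundary matters. Set $U:=\{u>V\}\cap\Omega$. Since $V\ge0$, we have $u>0$ on $U$, hence $\Delta u=f$ there. Also $V\ge 0$ gives $\Delta V\le f\ind_{\{V>0\}}\le f$ (using $f>0$). Thus $\Delta(u-V)\ge f-f=0$ in $U$, so $u-V$ is subharmonic in $U$. On $\de U$ we have $u-V\le0$, by the same two cases as before. The maximum principle gives $U=\emptyset$.

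The only delicate step is justifying the maximum principle when $V$ is merely a weak supersolution: it might be defined only in a distributional or viscosity sense and need not be $C^2$. I would handle this by the weak maximum principle for $W^{1,2}$-subharmonic functions: test with $(u-V-\eps)_+$, which is compactly supported in $U$ for each $\eps>0$ thanks to continuity, and then let $\eps\to0$. If the inequalities for $v,V$ are instead meant in the viscosity sense, the same argument works, because $u\in C^{1,1}$ and touching from above or below transfers directly.
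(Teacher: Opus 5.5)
Your proof is correct. The paper states this lemma without proof, treating it as a standard fact, and your argument is the standard one: apply the maximum principle to $v-u$, and to $u-V$ on $\{u>V\}$, where $V\ge0$ forces $u>0$ and hence $\Delta u=f\ge\Delta V$.

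Two minor remarks.

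\emph{The lower bound.} You do not need to restrict to $\{v>u\}$. Since $\Delta(v-u)\ge f-f\ind_{\{u>0\}}=f\ind_{\{u=0\}}\ge0$ in all of $\Omega$, the function $v-u$ is subharmonic on the whole domain.

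\emph{The viscosity aside.} This is looser than stated. Because $u$ is only $C^{1,1}$, a $C^2$ test function touching $u-V$ cannot simply be combined with $u$ to produce a $C^2$ test function for $V$. One would instead use that continuous viscosity supersolutions and distributional supersolutions of $\Delta V\le f$ coincide, or a comparison between viscosity and strong $W^{2,n}$ solutions. This does not matter for the paper, where every barrier to which the lemma is applied is an explicit smooth function.
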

	
	\begin{lemma}\label{lem:local bounds rhs}
		If $u$ and $U$ solve \eqref{eq:obstacle} with right-hand sides  $f$ and $F$ respectively, then
		\[
		\|\lap (u-U)\|_{L^1(B_{1/2})}+\|u-U\|_{L^\infty(B_{1/2})}\le C(\|u-U\|_{L^2(\de B_1)} + \|f-F\|_{L^\infty(B_1)}),
		\]
		for some $C=C(n)>0$.
	\end{lemma}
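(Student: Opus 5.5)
We prove \Cref{lem:local bounds rhs} by comparing $u$ and $U$ through the comparison principle \Cref{lem:comparison principle} and then reading off the Laplacian bound from the structure of the equation. Set $w:=u-U$, $\eta:=\|f-F\|_{L^\infty(B_1)}$ and $M:=\|w\|_{L^2(\de B_1)}$.

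\textbf{Step 1: a sign-structured inequality.} We first show that $w$ is a subsolution of a Laplace equation up to an error of size $\eta$ on the set where $w>0$. On $\{u>0\}$ we have $\Delta u=f$. Moreover $\Delta U\ge 0$ a.e., since $\Delta U=F\chi_{\{U>0\}}\ge0$. Hence $\Delta w\le f$ on $\{u>0\}$. If $U>0$ there as well, then in fact $\Delta w= f-F\le \eta$.

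On $\{w>0\}$ we have $u>U\ge0$, so $u>0$ there. Pointwise a.e. on $\{w>0\}$ this gives $\Delta w = f-F\chi_{\{U>0\}}$. This is not bounded by $\eta$ where $U=0$, but it has the right sign for a supersolution bound.

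So we work in the opposite direction and use Kato's inequality. On $\{u>0,U>0\}$ we have $|\Delta w|\le\eta$. On $\{u>0=U\}$ we have $\Delta w=f\ge0$ and $w>0$. On $\{U>0=u\}$ we have $\Delta w=-F\le0$ and $w<0$. On $\{u=U=0\}$ we have $\Delta w=0$ a.e. In every case $w\,\Delta w\ge -\eta|w|$ a.e., and $\Delta w$ carries the sign of $w$ outside the set where both functions are positive. Since $w\in C^{1,1}$, Kato gives
\[
\Delta|w|\ge -\eta \quad\text{in } B_1
\]
in the distributional sense.

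\textbf{Step 2: $L^\infty$ bound.} From Step 1, $|w|+\eta|x|^2/(2n)$ is subharmonic. The mean value property over spheres $\de B_\rho(x)$ with $x\in B_{1/2}$ is not immediately available, since we only control $w$ on $\de B_1$. Instead we compare with the harmonic extension $h$ of $|w|$ restricted to $\de B_1$. This gives
\[
|w|\le h+\tfrac{\eta}{2n}(1-|x|^2) \quad\text{in } B_1.
\]
The Poisson kernel is bounded on $B_{1/2}$, so $h\le C(n)\,\|w\|_{L^1(\de B_1)}\le C(n)M$ there. This yields $\|w\|_{L^\infty(B_{1/2})}\le C(M+\eta)$.

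\textbf{Step 3: $L^1$ bound on $\Delta w$.} Write $\Delta w=g_+-g_-$. Here $g_+$ is supported where $w>0$ (up to the $\eta$-error set) and $g_-$ where $w<0$. More precisely, Step 1 shows
\[
|\Delta w|\le \eta+\operatorname{sgn}(w)\Delta w
\]
a.e., and $\operatorname{sgn}(w)\Delta w\ge-\eta$. Therefore
\[
\int_{B_{1/2}}|\Delta w|\le C\eta+\int_{B_{3/4}}\varphi\,\bigl(\Delta|w|+\eta\bigr)
\]
for a cutoff $\varphi\in C^\infty_c(B_{3/4})$ with $\varphi=1$ on $B_{1/2}$. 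This uses that $\Delta|w|=\operatorname{sgn}(w)\Delta w$ a.e. off $\{w=0\}$, and that $\Delta|w|+\eta$ is a nonnegative measure, so its absolutely continuous part is dominated by it.

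Integrating by parts, the last term equals $\int|w|\Delta\varphi+\eta\int\varphi$. This is bounded by $C(\|w\|_{L^\infty(B_{3/4})}+\eta)$. Running Step 2 on $B_{3/4}$ instead of $B_{1/2}$ (the Poisson kernel is still bounded there) concludes the proof.

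\textbf{Main obstacle.} The delicate point is the case analysis in Step 1 on the regions where exactly one of $u$ and $U$ vanishes, together with justifying Kato's inequality for $C^{1,1}$ functions. The latter holds since $\Delta|w|\ge\operatorname{sgn}(w)\Delta w$ distributionally for $W^{2,1}_{\loc}$ functions.
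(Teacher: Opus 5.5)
Your argument is correct. For the $L^\infty$ part it is the paper's argument made explicit. The paper only asserts $\Delta|u-U|\ge-\|f-F\|_{L^\infty(B_1)}$ and cites the mean value formula. Your case analysis together with Kato's inequality supplies that claim, and your comparison with the harmonic extension of $|w|\big|_{\de B_1}$ (bounded Poisson kernel on $B_{3/4}$) is the precise way to get from $\|w\|_{L^2(\de B_1)}$ to an interior supremum.

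For the bound on $\|\Delta(u-U)\|_{L^1}$ you take a genuinely different route.
\begin{itemize}
\item The paper solves two auxiliary obstacle problems $v^\pm$ in $B_{2/3}$, with data $\max/\min\{u,U\}$ and right-hand sides $\min/\max\{f,F\}$. It uses the comparison lemma to get $v^-\le u,U\le v^+$. It then bounds $\|\Delta(v^+-v^-)\|_{L^1}$ by integrating against a cutoff, using $\Delta(v^+-v^-)\ge-|f-F|$. Finally it shows $\Delta(u-U)\le\Delta(v^+-v^-)+2\|f-F\|_{L^\infty}$ and concludes by symmetry.
\item You work directly with $|w|$. Kato's inequality, tested against $\varphi\ge0$, gives $\int\varphi\operatorname{sgn}(w)\Delta w\le\int|w|\Delta\varphi$. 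A pointwise bound of $|\Delta w|$ by $\operatorname{sgn}(w)\Delta w$ plus a multiple of $\eta$ then closes the estimate.
\end{itemize}
Both proofs rest on the same structural fact: where exactly one of $u,U$ vanishes, $\Delta w$ has the sign of $w$. Your version needs no auxiliary free boundary problems, no existence or comparison theory, and no symmetry step. Despite your opening sentence, you never actually use \Cref{lem:comparison principle}. The price is the distributional Kato inequality, which is standard for $C^{1,1}$ functions (regularize with $\sqrt{w^2+\epsilon^2}$). The paper's ordering $v^+\ge v^-$ is exactly what lets it avoid Kato: the function it integrates by parts is already nonnegative and $C^{1,1}$.

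One small correction: in Step 3 the pointwise inequality should read $|\Delta w|\le 2\eta+\operatorname{sgn}(w)\Delta w$. On $\{u>0,U>0\}$ you only know $|\Delta w|\le\eta$ and $\operatorname{sgn}(w)\Delta w\ge-\eta$. For instance, $w>0$ with $\Delta w=-\eta$ violates your version with $\eta$. This matches the $2\|f-F\|_{L^\infty}$ in the paper's argument and only changes the constant.

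You also do not need the detour through the absolutely continuous part of $\Delta|w|+\eta$. The inequality $\Delta|w|\ge\operatorname{sgn}(w)\Delta w$ in $\mathcal D'$, tested against $\varphi\ge0$, gives what you need directly.
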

	\begin{proof}
		One checks that
		\[
		\Delta|u-U|\ge - \|f-F\|_{L^\infty(B_1)}\quad\text{in }B_1,
		\]
		so the $L^\infty$ bound follows from the mean value formula.
		
		The bound on the laplacian follows using as comparisons $v^+$ (resp. $v^-$) solutions of \eqref{eq:obstacle} in $B_{2/3}$ with boundary data $\max\{u,U\}$ (resp. $\min\{u,U\}$) and with right hand side $\min\{f,F\}$ (resp. $\max\{f,F\}$). Indeed, with this choice
		\[
		v^-\le u,U\le v^+\quad\text{and}\quad\Delta(v^+-v^-) \ge -|f-F|\qquad\text{in }B_{2/3}.
		\]
		Choosing a nonnegative cutoff $\xi\in C^\infty_c(B_{2/3})$ with $\xi=1$ on $B_{1/2}$ we can bound
		\[\begin{split}
			\|\Delta (v^+-v^-) &+ |f-F|\|_{L^1(B_{1/2})}\le \int_{B_{2/3}} \xi \Delta(v^+-v_-)+\|f-F\|_{L^1(B_{2/3})}\\
			&=\int_{B_{2/3}}\Delta\xi(v^+-v_-)+\|f-F\|_{L^1(B_{2/3})}\\
			&\le C\|v^+-v_-\|_{L^\infty(B_{2/3})}+\|f-F\|_{L^1(B_{2/3})}\\
			&\le C(\|u-U\|_{L^\infty(\de B_{2/3})}+\|f-F\|_{L^1(B_{2/3})}),
		\end{split}\]
		so the bound for $\|\Delta(v^+-v^-)\|_{L^1}$ follows by the triangular inequality.
		On the other hand, using $f\ind_{\{u>0\}}\le f\ind_{\{v^+>0\}}$ and $F\ind_{\{U>0\}}\ge F\ind_{\{v^->0\}}$ we get
		\begin{align*}
			\lap (u-U)& = f\ind_{\{u>0\}}-F\ind_{\{U>0\}}\\
			&\le \min\{f,F\}\ind_{\{v^+>0\}}-\max\{f,F\}\ind_{\{v_>0\}} +2\|f-F\|_{L^\infty(B_1)}\\
			&=\lap(v^+-v^-) +2\|f-F\|_{L^\infty(B_1)},
		\end{align*}
		and we conclude by symmetry between $u$ and $U$.
	\end{proof}
	The following result summarizes some well-known properties of solutions to \eqref{eq:obstacle} around a free boundary point.
	\begin{lemma}
		If $u$ solves \eqref{eq:obstacle} in $B_1$ and $0\in\de \{u>0\},$ then
		\begin{equation}\label{eq:C11 regularity}
			cr^2 \le \sup_{B_r} u \le Cr^2\quad\text{for }r\in(0,1/2),\qquad \|u\|_{C^{1,1}(B_{1/2})} \le C,
		\end{equation}
		with $C,$ $c>0$ universal. Furthermore for a universal modulus of continuity $\omega$, for $r\in(0,1/2)$,
		\begin{equation}\label{eq:semiconvexity_weak}
			D^2 u \ge -\omega(r)\quad\text{in }B_{r}.
		\end{equation}
	\end{lemma}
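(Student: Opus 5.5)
The estimates in \eqref{eq:C11 regularity} are the classical ones of \cite{Caffarelli77}, adapted to a variable right-hand side $f\in C^{1,\alpha}$ with $f\ge\mu$. I would prove them in three steps.

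\emph{Nondegeneracy.} Fix a point $x_0\in\{u>0\}$ close to $0$ and consider
\[
w(x):=u(x)-u(x_0)-\tfrac{\mu}{2n}|x-x_0|^2 .
\]
On $\{u>0\}$ we have $\Delta w\ge f-\mu\ge 0$, so $w$ is subharmonic there. Since $w(x_0)=0$, the maximum principle on $\{u>0\}\cap B_r(x_0)$ gives a point of the boundary where $w\ge0$. On $\de\{u>0\}$ we have $w<0$, so this point lies on $\de B_r(x_0)$, and hence $\sup_{B_r(x_0)}u\ge \mu r^2/(2n)$. Letting $x_0\to0$ gives $\sup_{B_r}u\ge c r^2$.

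\emph{Quadratic growth and $C^{1,1}$.} First, $u$ is bounded in $L^\infty(B_{3/4})$ by a constant depending on the data; I take this as part of what ``universal'' means, as is customary. The key estimate is a Harnack-type argument at free boundary points. Write $u=h+g$ in $B_{2r}$, where $g$ solves $\Delta g=f\chi_{\{u>0\}}$ with zero boundary data, so that $|g|\le Cr^2$. Then $h$ is harmonic, and $h\ge -Cr^2$ because $u\ge0$. Since $h(0)=u(0)-g(0)\le Cr^2$, Harnack applied to $h+Cr^2$ gives $\sup_{B_r}h\le Cr^2$. Hence $\sup_{B_r}u\le Cr^2$ at every free boundary point.

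Next take $x\in\{u>0\}\cap B_{1/2}$ and set $d:=\dist(x,\de\{u>0\})$. On $B_d(x)$ the function $u$ solves $\Delta u=f$, and by the previous bound $u\le Cd^2$ there. Interior Schauder estimates, rescaled to size $d$ and using $f\in C^{\alpha}$, give $|D^2u(x)|\le C$, with lower-order terms controlled by $d^{-2}\sup u+\|f\|_{C^\alpha}$. The bound on $\nabla u$ follows similarly, and $D^2u=0$ a.e. on $\{u=0\}$. Together these give $\|u\|_{C^{1,1}(B_{1/2})}\le C$.

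\emph{Semiconvexity \eqref{eq:semiconvexity_weak}.} I argue by compactness, which yields a universal but non-explicit modulus $\omega$. Suppose the claim fails. Then there are solutions $u_k$ and right-hand sides $f_k$ with uniformly bounded norms, radii $r_k\to0$, and points $x_k\in B_{r_k}$ with $D_{ee}u_k(x_k)\le-\eta<0$. For second derivatives it is cleaner to work with pure second difference quotients: for a fixed direction $e$, the function $D_{ee}u$ is a supersolution-type quantity. Indeed, differentiating $\Delta u=f$ twice in $\{u>0\}$ gives $\Delta D_{ee}u=D_{ee}f$. At points of $\{u=0\}$ one has $D_{ee}u\ge0$ in the sense of second differences, because $u$ has a minimum there.

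So, at the points where $D_{ee}u$ is negative, it lies in $\{u>0\}$ and satisfies $\Delta(D_{ee}u)^-\ge -C$ in the appropriate weak sense. The concern is that the full negative part near the free boundary cannot be controlled this way alone. This is the main obstacle. I would follow Caffarelli: blow up at $0$ along $r_k$, and use the quadratic bounds to pass to a global solution with constant right-hand side $f(0)$. For global solutions, convexity is known (classical), so $(D_{ee}u)^-$ must be small. The mean value inequality for the subharmonic function $(D_{ee}u_k)^-$ (up to the $O(r_k^\alpha)$ error coming from $f$) then transfers the smallness from $L^1$ in $B_{2r}$ to pointwise values in $B_r$. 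This transfer uses $L^1$ convergence of second derivatives, which follows from \Cref{lem:local bounds rhs} applied to the Laplacians. The resulting contradiction yields \eqref{eq:semiconvexity_weak}.
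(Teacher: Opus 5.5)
The paper gives no proof of this lemma; it quotes it as classical. Your nondegeneracy barrier and your Harnack/Schauder argument for quadratic growth and $C^{1,1}$ are the standard ones.

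For \eqref{eq:semiconvexity_weak} you take a genuinely different route from the classical quantitative estimate (cf.\ \eqref{eq:caffarellilog}). You argue by compactness: blow up to a global solution with constant right-hand side, use its convexity, use the subharmonicity of the negative part of pure second derivatives and $L^1$ convergence of Hessians, and transfer smallness to points via the mean value inequality. This yields only a non-explicit modulus, whereas the classical argument gives a logarithmic rate through a more delicate quantitative iteration. However, a qualitative universal $\omega$ is all the paper ever uses (in \Cref{lem:semiconvexity}, \Cref{lem:inherit convexity} and \Cref{lem:caffarelli rhs holder}), and your route handles variable $f$ cheaply. It is also not circular: you rely on \Cref{lem:local bounds rhs}, whose proof does not use \eqref{eq:semiconvexity_weak}, and not on \Cref{lem:semiconvexity}, whose proof does.

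A few points need repair.

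(i) You should not add an $L^\infty$ bound to the meaning of ``universal'', since the paper's constants do not depend on $u$. Your own Harnack decomposition, run on $B_{3/4}\subset B_1$, already gives $\sup_{B_{3/4}}u\le C$. This bound is actually needed in the $C^{1,1}$ step. There, quadratic growth at the nearest free boundary point $z$ is only available when $B_{4d}(z)\subset B_1$, so the case of $d$ not small must be treated with this $L^\infty$ bound and interior estimates at a fixed scale.

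(ii) For $f\in C^{1,\alpha}$ the inequality $\Delta(D_{ee}u)^-\ge -C$ is not available, since $D_{ee}f$ is only a distribution; that inequality would require $f\in C^{1,1}$. The fix is the corrector of \Cref{lem:semiconvexity}: take $\Phi$ with $\Delta\Phi=f-f_1$ and set $A:=\|D^2\Phi\|_\infty$.
\begin{itemize}
\item The second difference $\delta_h^2(u-\Phi)$ is continuous.
\item It is superharmonic in $\{u>0\}$, because $(f\chi_{\{u>0\}})(x\pm he)\le f(x\pm he)$ and $\delta_h^2 f_1=0$.
\item It is $\ge -A$ on $\{u=0\}$.
\end{itemize}
Hence $(-A-\delta_h^2(u-\Phi))^+$ is subharmonic in the whole ball. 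Letting $h\to0$ gives $(D_{ee}u)^-\le S+2A$ with $S$ subharmonic. After your blow-up, $A\le Cr_k^{1+\alpha}$, which is the error you allude to. The gluing across the free boundary must be done at the level of $\delta_h^2$, not $D_{ee}u$, which is only defined a.e.

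(iii) \Cref{lem:local bounds rhs} only gives $L^1$ convergence of the Laplacians. To get $D^2v_k\to D^2v_\infty$ in $L^1$ you also need Calder\'on--Zygmund estimates, using $|\Delta v_k|\le C$ to upgrade $L^1$ to $L^p$ convergence.

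With these repairs, and citing convexity of global solutions with constant right-hand side, the argument is complete.
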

	In particular, the family of solutions of \eqref{eq:obstacle} with $0\in\de\{u>0\}$ is sequentially closed with respect to the $C^1_\loc$ convergence.
	In addition, it follows that in the proof of \Cref{theorem:linear decay semiconvexity} we may assume
	\begin{equation}\label{eq:universal upper bound}
		0\le u\le C\quad\text{in }B_1
	\end{equation}
	for a universal constant $C$.
	
	Caffarelli's dichotomy (see also \cite{Blank} for a low regularity case) says that 
	\[
	\de\{u>0\} = \Sigma(u)\cup \reg(u)
	\]
	So, if $0\in\de\{u>0\},$ then either:
	\begin{itemize}
		\item[a)] $0\in\mathrm{Reg}(u)$, so there is a modulus of continuity $\omega$ (non-universal) such that
		\[
		u(x) = \tfrac{f(0)}{2}(x\cdot\nu)_+^2 + \omega(r)r^2\quad\text{in }B_r
		\]
		and $\de\{u>0\}$ is smooth (see \Cref{lem:bounds regular points}), or
		\item[b)] $0\in\Sigma(u)$, so the contact set $\{u=0\}$ has zero density at $0$ and there is a modulus of continuity (universal) such that
		\[
		u(x) = f(0)p_0(x) + \omega(r)r^2\quad\text{in }B_r,
		\]
		for some unique $p_0\in\mathcal P_2$. Similarly, we denote by $p_z$ the blow-up of $u$ at $z\in\Sigma(u)$.
	\end{itemize}
	We split singular points according to their dimension: for $0\le k\le n-1$ we denote
	\[
	\Sigma_k(u) := \{x\,:\, \dim\{p_x=0\} = k\},
	\]
	where $p_x$ is the blow-up of $u$ at $x$. Recalling the notation \eqref{eq:stratification polynomials} we have $p_x\in\Sigma_{k}$ if and only if $x\in\Sigma_{k}$.
	
	\subsection{Semiconvexity estimates}
	
	An important point of this work is the ``convexity inheritance'' mechanism (\cite{SY23}): if our solution $u$ is approximated in $L^\infty$ by another solution $U$, which is strictly convex in the $e$ direction, then $u$ must be convex in the $e$ direction as well.
	% The core of this work consists in approximating a solution $u$ of \eqref{eq:obstacle} with convex and strictly ``mean convex'' solutions as in \eqref{eq:cubicapprox}. In order to deduce \Cref{theorem:linear decay semiconvexity} we need to prove semiconvexity bounds and to inherit convexity from estimates on the distance to convex and strictly convex solutions. 
	
	We make this precise in the next Lemmas, which are adapted from similar estimates in \cite{FS19} and \cite{SY23}, in particular \Cref{lem:semiconvexity} should be compared with \cite[Equation 2.17]{FS19}, while \Cref{lem:inherit convexity} should be compared with \cite[Lemma 3.4]{SY23}. The key observation behind these results, proved in \cite{Caffarelli77}, is the following: if $e\in\mathbb S^{n-1}$ and $u$ solves \eqref{eq:obstacle} with $f\equiv1$ then $\Delta(D_{ee}u)_-\ge0$ in $B_1$.
	
	Recall the notation in \eqref{eq:notation Taylor}.
	\begin{lemma}[Convexity estimates]\label{lem:semiconvexity}
		Let $u$ and $U$ solve \eqref{eq:obstacle} for some $f$ and $F$ respectively, with $U_{ee}\ge-\gamma$ in $B_1$, for some $e\in\mathbb S^{n-1}$ and $\gamma\ge 0$. Then
		\[
		\inf_{B_{1/2}}D_{ee}u\ge -C(\|u-U\|_{L^2(\de B_1)}+ \|f-f_1\|_{C^{1/4}(B_1)}+\|f-F\|_{L^\infty(B_1)} +\gamma).
		\]
		The constant $C=C(n)>0$ is dimensional.
	\end{lemma}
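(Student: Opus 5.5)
The plan is to combine Caffarelli's observation that $(D_{ee}u)_-$ is subharmonic (after correcting for the non-linear part of $f$) with an $L^\infty$-to-pointwise mechanism: a sub-mean value inequality, fed by the comparison with $U$ at a fixed positive scale.

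\textbf{Step 1: a subharmonic quantity.} First I remove the non-affine part of $f$. Let $\psi$ solve $\Delta\psi=f-f_1$ in $B_1$ with $\psi=0$ on $\de B_1$. By Schauder theory,
\[
\|\psi\|_{C^{2,1/4}(B_{3/4})}\le C(n)\,\|f-f_1\|_{C^{1/4}(B_1)}=:K .
\]
For $h\in(0,1/8)$ I work with the second incremental quotients
\[
\delta_h v(x):=\frac{v(x+he)+v(x-he)-2v(x)}{h^2},
\]
and set $\sigma_h:=\big(\delta_h u-\delta_h\psi+2K\big)_-$. I claim $\sigma_h$ is subharmonic in $B_{3/4}$.
\begin{itemize}
\item If $u(x)=0$, then $\delta_h u(x)\ge0$ because $u\ge0$. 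Also $|\delta_h\psi|\le K$. Hence $\sigma_h$ vanishes on $\{u=0\}$.
\item In $\{u>0\}$ we have $\Delta u(x\pm he)\le f(x\pm he)$. Since $\delta_h f_1=0$, this gives $\Delta(\delta_h u-\delta_h\psi)\le \delta_h f_1=0$.
\end{itemize}
So $\delta_h u-\delta_h\psi+2K$ is superharmonic where it is negative, and its negative part is subharmonic. The same argument works for the difference-quotient version and for $D_{ee}$ directly, as $h\to0$.

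\textbf{Step 2: pointwise control at a fixed scale.} Write $w:=u-U$. Then
\[
\delta_h u=\delta_h U+\delta_h w\ge -\gamma-\frac{4\|w\|_{L^\infty(B_{7/8})}}{h^2}.
\]
By \Cref{lem:local bounds rhs} (rescaled), $\|w\|_{L^\infty(B_{7/8})}\le C(\|u-U\|_{L^2(\de B_1)}+\|f-F\|_{L^\infty})$. Therefore, for a fixed dimensional $h_0$,
\[
\sigma_{h_0}\le C\big(\|u-U\|_{L^2(\de B_1)}+\|f-F\|_{L^\infty}+\gamma+K\big)=:C\varepsilon .
\]

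\textbf{Step 3 (main obstacle): passing from scale $h_0$ to $h\to0$.} Neither of the two obvious routes works directly.
\begin{itemize}
\item Pointwise control of $\delta_h w$ degenerates like $h^{-2}$.
\item The $L^1$ route, i.e.\ bounding $\sup_{B_{1/2}}\sigma$ by $\int\xi\sigma$ via the mean value inequality, requires an $L^1$ bound on $(D_{ee}w)_-$. This is not controlled by $\|w\|_{L^\infty}$ together with $\|\Delta w\|_{L^1}$.
\end{itemize}
What I will try first is an iteration in $h$. The identity
\[
\delta_{h}v(x)=\tfrac14\big(\delta_{h/2}v(x+\tfrac h2e)+2\delta_{h/2}v(x)+\delta_{h/2}v(x-\tfrac h2e)\big)
\]
shows that a lower bound for $\delta_h$ only yields a lower bound for a local average of $\delta_{h/2}$. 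The plan is to upgrade this to a pointwise bound using the subharmonicity of $\sigma_{h/2}$ on balls of radius comparable to $h$, together with the universal $C^{1,1}$ bound \eqref{eq:C11 regularity}. Dyadic errors then have to be summed geometrically on shrinking balls $B_{1/2+2^{-k}}$.

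If that fails, the fallback is an integration-by-parts bound. Test $\sigma$ against a cutoff $\xi$, use $\int\xi D_{ee}w=\int w\,\xi_{ee}$, and exploit that the positive part of $D_{ee}u-D_{ee}\psi+2K$ is controlled on the set where $\sigma>0$. This would give $\int\xi\sigma\le C\varepsilon$, and the mean value inequality for $\sigma$ then yields
\[
\inf_{B_{1/2}}D_{ee}u\ge -C\varepsilon-CK .
\]
That is exactly the claimed bound, with the $K$-term accounting for the $\|f-f_1\|_{C^{1/4}}$ contribution.
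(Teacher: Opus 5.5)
\textbf{There is a genuine gap in Step 3, and that step is the whole lemma.}

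Steps 1 and 2 are sound. Step 1 is essentially the paper's subharmonic function $(\Phi_{ee}-A-u_{ee})_+$, and your difference quotients even spare you the appeal to \eqref{eq:semiconvexity_weak} at the free boundary. Neither of your routes in Step 3 closes, however.

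\emph{The dyadic iteration.} The three-point identity only gives a lower bound on an \emph{average} of $\delta_{h/2}u$. A lower bound on an average says nothing about the average of the negative part $\sigma_{h/2}$ unless the positive part is also controlled. You only control the positive part by the $C^{1,1}$ constant, which is $O(1)$, not $O(\varepsilon)$. Moreover, any fixed multiplicative loss per dyadic step diverges as $h\to0$.

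\emph{The fallback.} This is circular. After the shift by $K$ one has $\sigma\le (D_{ee}w)_-+\gamma$, and the identity $\int\xi(D_{ee}w)_-=\int\xi(D_{ee}w)_+-\int w\,\xi_{ee}$ merely moves the problem to $(D_{ee}w)_+$. That is exactly the $L^1$ control of $D^2w$ you correctly said is unavailable. Even combining the weak-type bound below with $|D^2w|\le C$, one only gets $\|D^2w\|_{L^1}\lesssim m\log(C/m)$, where $m:=\|\Delta w\|_{L^1}+\|w\|_{L^1}$. So any $L^1$ route yields at best $\varepsilon\log(1/\varepsilon)$, not the linear bound.

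\textbf{The missing idea is to replace $L^1$ by weak-$L^1$ on both sides of the argument.}

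First, Calder\'on--Zygmund theory is of weak type $(1,1)$:
$\|D^2w\|_{L^{1,\infty}(B_{3/4})}\le C(\|\Delta w\|_{L^1(B_{7/8})}+\|w\|_{L^1(B_{7/8})})$. Here $\|\Delta w\|_{L^1}$ is controlled through \Cref{lem:local bounds rhs} by $\|u-U\|_{L^2(\de B_1)}+\|f-F\|_{L^\infty}$.

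Second, a nonnegative subharmonic function satisfies $\sup_{B_{1/2}}\sigma\le C_p\|\sigma\|_{L^p(B_{3/4})}$ for \emph{every} $p>0$. On a bounded set, the $L^p$ quasi-norm with $p<1$ is dominated by the $L^{1,\infty}$ quasi-norm.

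Apply this to $\sigma=(D_{ee}u-D_{ee}\psi+2K)_-$, the $h\to0$ limit of your $\sigma_h$, which remains subharmonic. Together with $\sigma\le (D_{ee}w)_-+\gamma$ this gives
$\sup_{B_{1/2}}\sigma\le C(\|u-U\|_{L^2(\de B_1)}+\|f-F\|_{L^\infty}+\gamma)$ with no logarithmic loss. The lemma follows since $D_{ee}u\ge -3K-\sigma$. This is precisely the paper's argument; your fixed-scale bound in Step 2 is not needed.
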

	\begin{proof}
		Take a corrector $\Phi$ solving
		\[
		\Delta\Phi = f-f_1\quad\text{in }B_{7/8},\quad\Phi = 0\quad\text{in }\de B_{7/8}.
		\]
		There is a universal $C>0$ such that
		\[
		|\Phi_{ee}(x)| \le C \|f-f_1\|_{C^{1/4}(B_1)} =:\tfrac12 A,\quad \forall x\in B_{7/8}.
		\]
		The function
		\[
		w(x):=
		\begin{cases}
			(\Phi_{ee}(x)-A -u_{ee}(x))_+&\text{ in } \{u>0\}\\
			0&\text{ otherwise}
		\end{cases}
		\]
		is continuous and subharmonic in $B_{7/8}$. 
		
		Indeed in $\{u>0\}$ we have
		\[
		\Delta w(x) = \lap \Phi_{ee} -\lap u_{ee} =f_{ee}-f_{ee}=0,
		\]
		and by \cref{eq:semiconvexity_weak}:
		\[
		\limsup_{u(x)\downarrow 0} \Phi_{ee}(x)-A-u_{ee}(x)\le -\tfrac12  A<0.
		\]
		Then the Weak Harnack inequality and $L^1\to L^{1,\infty}$ elliptic estimates for the laplacian give:
		\begin{align*}
			\|w\|_{L^\infty(B_{1/2})}&\le C \|w\|_{L^{1,\infty}(B_{3/4})} \le C\|(-u_{ee})_+\|_{L^{1,\infty}(B_{3/4})}\\
			&\le C\|(U_{ee}+\gamma-u_{ee})_+\|_{L^{1,\infty}(B_{3/4})}\\
			&\le C\|(U_{ee}-u_{ee})_+\|_{L^{1,\infty}(B_{3/4})} +C\gamma\\
			&\le  C\|(u-U)_{ee}\|_{L^{1,\infty}(B_{3/4})} +C\gamma\\
			&\le  C\|\lap (u-U)\|_{L^{1}(B_{7/8})}+C\|u-U\|_{L^{1}(B_{7/8})} +C\gamma.
		\end{align*}
		Now, in $B_{1/2}$,
		\[
		\|w\|_{L^\infty(B_{1/2})}\ge -u_{ee}(x)-2A,
		\]
		and in \Cref{lem:local bounds rhs} we proved
		\[
		\|\Delta(u-U)\|_{L^1(B_{7/8})} \le C(\|u-U\|_{L^2(\de B_1)} + \|f-F\|_{L^\infty(B_1)}),
		\]
		so the conclusion follows.
	\end{proof}
	
	We get to the precise form of the mechanism described above.
	
	\begin{lemma}[Convexity Inheritance]
		\label{lem:inherit convexity}
		There are universal constants $\sigma_\circ,$ $\rho_1$ small and $A_\circ$ large with the following property. 
		
		If $u,U$ solve \eqref{eq:obstacle} with right hand sides $f,$ and $F$ respectively and satisfy, for some $\eps>0$ and some $e\in\mathbb S^{n-1}$:
		\begin{enumerate}[label=(\roman*)]
			\item $D_{ee}U\ge-\eps$ in $B_1$ and $D_{ee}U\ge A_\circ \eps$ in $\{\dist(\cdot,\{u=0\})\ge\sigma_\circ\}$;
			\item $|u-U|\le\eps$ in $B_1$;
			\item $\|f-f_1\|_{C^{1,\alpha}(B_1)}\le \eps$;
			\item $\|f-F\|_{C^{1/4}(B_1)}\le \eps$.
		\end{enumerate}
		Then $D_{ee}u\ge0$ in $B_{\rho_1}$.
	\end{lemma}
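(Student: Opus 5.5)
We plan to follow the strategy of \cite[Lemma 3.4]{SY23}: a maximum principle argument for a suitable subharmonic quantity, with the bad set localized near the contact set. First we use \Cref{lem:semiconvexity}. Hypotheses (i), (ii), (iii), (iv) give
\[
D_{ee}u\ge -C_1\eps \quad\text{in }B_{3/4},
\]
with $C_1$ universal. Here the $C^{1/4}$ norm of $f-f_1$ is controlled by (iii), and $\|f-F\|_{L^\infty}$ by (iv).

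The second step is to upgrade the strict convexity of $U$ away from $\{u=0\}$ into strict convexity of $u$ there. In $\Omega_\sigma:=\{\dist(\cdot,\{u=0\})\ge\sigma_\circ/2\}$ both $u$ and $U$ are positive, at least after checking $U>0$ where $u\ge c\sigma_\circ^2\gg\eps$ by nondegeneracy. So $\Delta(u-U)=f-F$ there. Interior $C^{1,1}$ estimates then give
\[
|D_{ee}(u-U)|\le C(\sigma_\circ)\,\eps\quad\text{on } \{\dist(\cdot,\{u=0\})\ge\sigma_\circ\},
\]
using $|u-U|\le\eps$ and $\|f-F\|_{C^{1/4}}\le\eps$. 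Hence $D_{ee}u\ge (A_\circ-C(\sigma_\circ))\eps\ge \tfrac12 A_\circ\eps$ there, once $A_\circ$ is chosen large after $\sigma_\circ$.

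The third step, which is the core, is the barrier argument. Set $w:=D_{ee}u-\Phi_{ee}$, where $\Phi$ is the corrector of \Cref{lem:semiconvexity}, normalized so that $|\Phi_{ee}|\le C\eps$ by (iii). Then $w$ is harmonic in $\{u>0\}$. We suppose $D_{ee}u(x_0)<0$ at some $x_0\in B_{\rho_1}\cap\{u>0\}$ and consider
\[
h(x):=w(x)+C_2\eps-\kappa\eps\left(u(x)-\tfrac{\mu}{2n}|x-x_0|^2\right),
\]
in the connected component $\mathcal O$ of $\{u>0\}\cap B_{1/2}\cap\{\dist(\cdot,\{u=0\})<\sigma_\circ\}$ containing $x_0$. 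The function $h$ is superharmonic in $\mathcal O$ up to an $O(\eps)$ error, which is absorbed by choosing the constants, and $h(x_0)<0$ if the $C_2$ term is handled appropriately. By the minimum principle, $h$ attains a negative value on $\partial\mathcal O$. We examine the three boundary pieces:
\begin{itemize}
\item on $\partial\{u>0\}$, the limit of $D_{ee}u$ is nonnegative, since $u$ is $C^{1,1}$ and vanishes to second order;
\item on $\{\dist=\sigma_\circ\}$, the second step gives $D_{ee}u\ge\tfrac12A_\circ\eps$, which beats $\kappa\eps\,u\le C\kappa\eps\sigma_\circ^2$;
\item on $\partial B_{1/2}$, the term $\tfrac{\mu}{2n}|x-x_0|^2\kappa\eps$ dominates $C_1\eps$ once $\kappa$ is large and $\rho_1$ is small.
\end{itemize}
This gives a contradiction.

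The main obstacle is the bookkeeping of constants in this last step. The order of choices should be $\sigma_\circ$, then $\kappa$, then $A_\circ$, then $\rho_1$, so that the quadratic penalty beats $C_1\eps$ on $\partial B_{1/2}$ while $\kappa\sigma_\circ^2$ stays below $A_\circ$. It is also delicate to make the additive shift $C_2\eps$, needed to absorb $\Phi_{ee}$, compatible with $h(x_0)<0$. If that fails, I would instead run the argument on $(D_{ee}u-\Phi_{ee}+A\eps)$, deducing $D_{ee}u\ge -C\eps\cdot(\text{small})$ and iterating. Alternatively, I would take $\Phi$ with $\Phi_{ee}\le0$ by adding $-\eps x_e^2$.
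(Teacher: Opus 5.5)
Your Steps 1 and 2 are essentially the paper's verification of the hypotheses. Step 3, however, has a genuine gap, and it is not a matter of ordering constants. Since $\Delta w=0$ and $\Delta\bigl(u-\tfrac{\mu}{2n}|x-x_0|^2\bigr)=f-\mu\ge0$, your $h$ is superharmonic. On the free boundary all you know is $\liminf D_{ee}u\ge0$, so the values of $h$ there are nonnegative only if $C_2\eps$ essentially dominates $\sup\Phi_{ee}$. With that choice, the minimum principle only yields
\[
D_{ee}u(x_0)\ \ge\ \Phi_{ee}(x_0)-C_2\eps+\kappa\eps\,u(x_0).
\]
The right-hand side can be as negative as $-C\eps$ wherever $u(x_0)\lesssim 1/\kappa$, i.e.\ precisely near $\de\{u>0\}$, where the sign of $D_{ee}u$ is at stake. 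This is no better than Step 1. Replacing $\Phi$ by $\Phi-C\eps x_e^2$ is the same constant shift. The proposed iteration has no mechanism behind it: the bound $-C\eps$ does not improve near the free boundary, and the strict positivity away from the contact set is not reproduced at smaller scales. Your argument does close when $f$ is affine, or when $\|f-f_1\|_{C^{1,1}}\le\eps$: drop $\Phi$, use $\tfrac{\mu}{4n}|x-x_0|^2$, and absorb $|f_{ee}|\le\eps$ into $-\kappa\eps\mu/2$. That is the classical case. Under (iii), though, $\Delta D_{ee}u=\divergence\bigl((f_e-f_e(0))e\bigr)$ with a field that is only $C^\alpha$. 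An $O(\eps)$ corrector that does not vanish on $\de\{u>0\}$ destroys any exact sign conclusion.

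The missing idea is the content of \Cref{lem:caffarelli rhs holder}. The paper reduces the statement to it by checking, for $h=D_{ee}u/\eps$, your Steps 1--2 together with $\Delta h=\divergence V$, $\|V\|_{C^\alpha}\le1$. That lemma is proved in \Cref{app:caffarelli} with a barrier localized at $x_0$, at the scale $d_0=|x_0-z_0|$ where $z_0$ is a nearest free boundary point. The corrector ($\Delta\Phi=\divergence F$, the analogue of your $\Phi_{ee}$) has its first-order Taylor polynomial at $z_0$ subtracted, so $|\Phi|\le C|x-z_0|^{1+\alpha}$. The lower barrier is $g=a\,u^{p}+b\,q+\Phi$, with $p=\tfrac{1+\alpha}{2}$, $q=-(|x-x_0|^2+d_0^2)^{p}+\kappa_p d_0^{2p}$ and $\kappa_p\in(1,2^p)$.

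The homogeneity $1+\alpha<2$ is essential. Even with the corrector normalized at $z_0$, a quadratic penalty of size $\kappa\eps d_0^2$ cannot dominate the error $\eps d_0^{1+\alpha}$ at free boundary points near $z_0$. Nor can it supply the margin $\gtrsim\eps d_0^{1+\alpha}$ needed at $x_0$ when $d_0$ is small. The price is $\Delta q\approx-(|x-x_0|^2+d_0^2)^{p-1}$, and it is paid by the strongly subharmonic function $u^p$. Indeed $\Delta u^p=p\,u^{p-1}\bigl(f-(1-p)|\nabla u|^2/u\bigr)\ge c\,u^{p-1}$, by Glaeser's inequality $|\nabla u|^2\le2Mu$ (\Cref{lem:gleaser}) with $M\le\sup f+(n-1)\omega$ from \eqref{eq:semiconvexity_weak}. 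In addition, $u^{p-1}\ge c(|x-x_0|^2+d_0^2)^{p-1}$ by the quadratic upper bound on $u$.

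Two smaller inaccuracies. First, $\liminf D_{ee}u\ge0$ at $\de\{u>0\}$ does not follow from $C^{1,1}$ regularity and second-order vanishing (consider $x^2(1+\sin\log|x|)$); it follows from \eqref{eq:semiconvexity_weak}. Second, the pointwise bound $u\ge c\sigma_\circ^2$ on $\{\dist(\cdot,\{u=0\})\ge\sigma_\circ/2\}$ fails in general. At $(1,0)$ the rescaled paraboloid solution $R^{-2}P(R\,\cdot)$ is $\approx R^{-1/2}$, at distance $1$ from its contact set. So $U>0$ should be deduced from (i) as in the paper, using $D^2U=0$ a.e.\ on $\{U=0\}$. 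This only covers $\{\dist(\cdot,\{u=0\})\ge\sigma_\circ\}$, so you need a margin between that region and the one where you use positivity.
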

	\begin{remark}\label{oss:inherit mean convexity}
		\Cref{lem:inherit convexity} holds true, by replacing $D_{ee}U$ with $\Delta_{e^\perp}U$, where $e$ is a unit vector, in assumption (i). The conclusion in this case is $\Delta_{e^\perp} u\ge0$. The proof is the same, but one considers $h:=(u_{\xi_1\xi_1}+\ldots+u_{\xi_{n-1}\xi_{n-1}})/\eps$, where $\{\xi_1,\ldots,\xi_{n-1}\}$ is an orthonormal basis of $e^\perp$. The point is that also this $h$ is a supersolution.
	\end{remark}
	The proof uses the following result, which is classical for $F=0$ ({\cite[Lemma 11]{C98}}) and the original proof works for $F\in C^1$. We postpone to \Cref{app:caffarelli} the proof of the general $F\in C^\alpha$ case, which is more involved.
	
	\begin{lemma}\label{lem:caffarelli rhs holder}
		There are positive constants $A_1$ large and $\sigma_1,$ $\rho_1$ small with the following property.
		
		Let $u$ solve \eqref{eq:obstacle} in $B_1$ for some $f\ge\mu>0$.
		Let $h\colon \{u>0\}\cap B_1\to\R$ satisfy
		\[\begin{cases}
			h\ge0   &\text{on }\de\{u>0\},\\
			\Delta h\le \divergence F   &\text{in }\{u>0\},\\
			h\ge -1         &\text{in }\{u>0\},\\
			h\ge A_1        &\text{in }\{\dist(\cdot,\de\{u>0\})>\sigma_1\},
		\end{cases}\]
		for some $F\in C^\alpha(B_1;\R^n),$ with $\alpha>0$.
		Then
		\[
		h\ge 0\quad\text{ in }\{u>0\}\cap B_{\rho_1}.
		\]
		The constants $A_1$, $\sigma_1$ and $\rho_1$ depend on
		\begin{equation}\label{eq:dependencies_caffarelli}
			n,\quad\mu,\quad\alpha,\quad \|F\|_{C^\alpha(B_1)},\quad \|f\|_{C^\alpha(B_1)}.
		\end{equation}
	\end{lemma}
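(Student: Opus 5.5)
The plan is to run Caffarelli's barrier argument from \cite[Lemma 11]{C98}, but as an improvement-of-lower-bound statement rather than in one shot. The reason is that $\divergence F$ is only a distribution when $F\in C^\alpha$, so it cannot be absorbed pointwise by a barrier. Concretely, I would aim for the following one-step claim. For suitable $A_1,\sigma_1,\rho_1$ and some $\theta\in(0,1)$, the hypotheses of the lemma imply $h\ge-\theta$ in $\{u>0\}\cap B_{\rho_1}$. Rescaling then reproduces the hypotheses with better constants, and iterating gives $h\ge0$. The rescaling I have in mind is
\[
\tilde u(x)=u(\rho_1x)/\rho_1^2,\qquad \tilde h=h(\rho_1x)/\theta,\qquad \tilde F(x)=\rho_1F(\rho_1x)/\theta .
\]
Here $\tilde u$ solves \eqref{eq:obstacle} with $f(\rho_1\cdot)$, and $\tilde F$ has oscillation of order $\rho_1^{1+\alpha}/\theta$. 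The key point is that after subtracting the constant $F(x_0)$, which does not change $\divergence F$, the data $\tilde F$ gets smaller at every step provided $\rho_1^{1+\alpha}<\theta$. One must also recheck the condition $\tilde h\ge A_1$ far from the free boundary. I would obtain it either from a Harnack-type bound for the superharmonic part, or by carrying a slightly weaker, scale-invariant version of that hypothesis through the iteration.

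\textbf{Removing the divergence.} Fix $x_0\in\{u>0\}\cap B_{\rho_1}$ and solve
\[
\Delta\Psi=\divergence\big((F-F(x_0))\eta\big)\quad\text{in }\R^n,
\]
where $\eta$ is a cutoff. Schauder theory gives $\|\Psi\|_{C^{1,\alpha}}\le C\|F\|_{C^\alpha}$. Subtract from $\Psi$ its first-order Taylor polynomial $L$ at $x_0$; this does not affect the equation, since $L$ is harmonic. Then $g:=h-\Psi+L$ is superharmonic in $\{u>0\}$, and
\[
|g-h|\le C\|F\|_{C^\alpha}|x-x_0|^{1+\alpha}.
\]
So on the free boundary $g\ge -C|x-x_0|^{1+\alpha}$, and $g\ge -1-C$ everywhere.

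\textbf{Barrier.} Assume $h(x_0)<-\theta$ and take
\[
\phi:=g-K\big(u-c|x-x_0|^2\big),\qquad c=\mu/(4n).
\]
Then $\Delta\phi\le -K(f-\mu/2)<0$ in $\{u>0\}$, and $\phi(x_0)<-\theta$. By the minimum principle in $\{u>0\}\cap B_r(x_0)$, $\phi$ takes a value below $-\theta$ on the boundary of this region. There are three pieces of boundary to rule out.
\begin{itemize}
\item On the free boundary: $\phi\ge Kc|x-x_0|^2-C|x-x_0|^{1+\alpha}\ge -C'K^{-(1+\alpha)/(1-\alpha)}$, by optimizing in $|x-x_0|$. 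This is above $-\theta$ once $K$ is large.
\item On $\partial B_r(x_0)$ where $\dist(\cdot,\de\{u>0\})<\sigma_1$: by \eqref{eq:C11 regularity}, $u\le C\sigma_1^2$, so $\phi\ge -1-C-KC\sigma_1^2+Kcr^2>0$ once $Kcr^2\ge 3+C$ and $\sigma_1\ll r$.
\item On $\partial B_r(x_0)$ where $\dist(\cdot,\de\{u>0\})\ge\sigma_1$: $\phi\ge A_1-C-KC>0$ once $A_1$ is large.
\end{itemize}
These three bounds give a contradiction. The order of choices is $\theta$, then $K$, then $r\sim K^{-1/2}$, then $\sigma_1$, then $A_1$, and finally $\rho_1\le r$ small enough that $\rho_1^{1+\alpha}\ll\theta$.

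\textbf{Main obstacle.} I expect the hardest part to be the free-boundary term. The Hölder error $|x-x_0|^{1+\alpha}$ dominates the quadratic gain $Kc|x-x_0|^2$ near $x_0$. This is exactly why only the weaker conclusion $h\ge-\theta$, and not $h\ge 0$, comes out of one step, and why the iteration is needed. In the iteration, the delicate points are:
\begin{itemize}
\item keeping the constants $A_1,\sigma_1$ fixed along the rescaling;
\item controlling the error terms uniformly when $x_0$ is very close to the free boundary.
\end{itemize}
If the rescaling of the condition $h\ge A_1$ fails to close, I would first try to replace $|x-x_0|^2$ by a barrier comparable to $\dist^{1+\alpha}$ near the free boundary, still dominated by the superharmonicity $-K(f-\mu/2)$.
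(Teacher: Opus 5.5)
\textbf{Genuine gap: the iteration does not close.} Your one-step barrier argument is sound, and you locate the obstruction correctly. Near $x_0$ the corrector error $C|x-x_0|^{1+\alpha}$ beats the quadratic gain $Kc|x-x_0|^2$. So one step only gives $h\ge-\theta$, with $K$ (hence $A_1\gtrsim K$) depending on $\theta$.

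The iteration meant to remove $\theta$ fails because the hypothesis $h\ge A_1$ on $\{\dist(\cdot,\de\{u>0\})>\sigma_1\}$ is not scale invariant. After your rescaling you need $h\ge\theta A_1$ on $\{d>\sigma_1\rho_1\}\cap B_{\rho_1}$, where you only know $h\ge-\theta$. At such points your barrier yields only $h(x_0)\ge Ku(x_0)-CK^{-(1+\alpha)/(1-\alpha)}$. But $u(x_0)$ is not bounded below by $c\,d(x_0)^2$: take $u=\tfrac12((1-a)x_2^2+ax_1^2)$ with $a\to0$.

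Replacing the far region by $\{u>\sigma_1^2\}$, which is what your barrier really uses, does not rescue the scheme, for a homogeneity reason:
\begin{enumerate}
\item Points of $\de B_r(x_0)$ where $u\sim r^2$ force $A_1\gtrsim Kr^2$.
\item The near-boundary piece forces $\sigma_1\lesssim r$.
\item Reproducing the far-region hypothesis at the next scale then needs $K\sigma_1^2\rho_1^2\gtrsim A_1\theta$, i.e.\ $\theta\lesssim\rho_1^2$.
\item Keeping $\tilde F$ bounded needs $\theta\gtrsim\rho_1^{1+\alpha}\gg\rho_1^2$.
\end{enumerate}
Any barrier built from $Ku$ and $|x-x_0|^2$ is $2$-homogeneous. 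The error produced by a $C^\alpha$ divergence is $(1+\alpha)$-homogeneous. This mismatch reappears at every scale.

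\textbf{The missing idea: a one-shot $(1+\alpha)$-homogeneous barrier.} This is what the paper does. Set $p=\tfrac{1+\alpha}{2}$, $d_0=d(x_0)$, and let $z_0$ be a nearest free boundary point. One compares $h$ in $B_\rho(x_0)\cap\{u>0\}$ with
$g=a\,u^p+b\big(\kappa_pd_0^{2p}-(|x-x_0|^2+d_0^2)^p\big)+\Phi$,
where $1<\kappa_p<2^p$, $\Delta\Phi=\divergence F$ and $|\Phi|\le C|x-z_0|^{1+\alpha}$.

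The key input is the strong subharmonicity
$\Delta u^p=pu^{p-1}\big(f-(1-\alpha)\tfrac{|\nabla u|^2}{2u}\big)\ge c\,(|x-x_0|^2+d_0^2)^{p-1}$.
This comes from two facts:
\begin{enumerate}
\item A local Glaeser inequality $|\nabla u|^2\le 2Mu$, where $M=\sup D_{ee}u$ is only slightly larger than $f$ for $\rho$ small (by the equation and \eqref{eq:semiconvexity_weak}). Hence the bracket is at least $\tfrac{\alpha}{2}\mu$.
\item The quadratic bound $u\le C(|x-x_0|+d_0)^2$.
\end{enumerate}
This term blows up as $d_0\to0$ at the same rate as the Laplacian of the paraboloid-type term. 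A bounded term like your $-K(f-\mu/2)$ cannot do this. That is why your fallback, a $\dist^{1+\alpha}$-type barrier ``still dominated by $-K(f-\mu/2)$'', would also fail.

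Once every term has the same homogeneity, the argument closes in one step. On the free boundary, $b$ large dominates $C|x-z_0|^{1+\alpha}\le C(|x-x_0|^2+d_0^2)^p$. Comparison then gives $h(x_0)\ge g(x_0)\ge\big(b(\kappa_p-1)-C\big)d_0^{1+\alpha}\ge0$, with no iteration.
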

	\begin{proof}[Proof of \Cref{lem:inherit convexity} given \Cref{lem:caffarelli rhs holder}]
		We check that $h:=D_{ee}u/\eps$ satisfies the assumptions of \Cref{lem:caffarelli rhs holder}.
		
		By \eqref{eq:semiconvexity_weak}, we have  $(u_{ee})_-\to 0,$ as $x\to \de\{u>0\}$. Furthermore, in $\{u>0\}$,
		\[
		\lap h=\divergence V,\quad V:=\eps^{-1}(f_{e}-f_e(0))\, e.
		\]
		Notice that, by (iii),
		\[
		\|V\|_{C^\alpha(B_1)}\le\eps^{-1}\|\nabla f-\nabla f_1\|_{C^\alpha(B_1)}\le1.
		\]
		By the previous \Cref{lem:semiconvexity} with $\gamma=\eps$ we have
		\[
		\eps h\ge -C(\|u-U\|_{L^2(\de B_1)}+ \|f-f_1\|_{C^{1,\alpha}(B_1)}+\|f-F\|_{C^{1/4}(B_1)} +\eps)\ge -C\eps.
		\]
		As $D_{ee}U=0$ a.e. on $\{U=0\}$, assumption (i) implies $U>0$ a.e. on $\{\dist(\cdot,\{u=0\})\ge\sigma_\circ\}$. If $B_{\sigma}(z)\subset \{\dist(\cdot,\{u=0\})\ge\sigma_\circ\}$, we may use classical elliptic estimates on $u-U$ to find
		\[
		(u-U)_{ee}\ge -C(\sigma)(\|u-U\|_{L^2(\de B_1)}+ \|f-F\|_{C^{1/4}(B_1)})\ge -C(\sigma)\eps\quad\text{ in }B_{\sigma}(z),
		\]
		and thus
		\[
		u_{ee}(z)\ge -C(\sigma)\eps +U_{ee}(z)\ge -C(\sigma)\eps+A_\circ\eps.
		\]
		We choose $\sigma_\circ=\tfrac14\sigma_1$ and $A_\circ =2+C(\sigma_\circ)A_1$ where $\sigma_1,A_1$ are the constants of \Cref{lem:caffarelli rhs holder}. Applying this Lemma we find
		\[
		u_{ee}=h\ge0\text{ in }B_{\rho_1}.\qedhere
		\]
	\end{proof}
	
	\subsection{Regular points}
	Around regular points, the free boundary enjoys $C^{k+1,\alpha}$ estimates whenever $f\in C^{k,\alpha}$, see \cite[Theorem 6.17]{PSU2012}. In particular:
	\begin{lemma}\label{lem:bounds regular points}
		There are universal $C,$ and $\delta_0$ such that if
		\[
		|u-\tfrac{f(0)}2(x_n)_+^2|<\delta<\delta_0\quad\text{in }B_1
		\]
		then, writing $x=(x',x_n)$,
		\[
		\{u=0\}\cap B_{1/2} = \{x_n \le \Gamma(x')\}\cap B_{1/2}
		\]
		for some $\Gamma\colon B_{1/2}\cap\{x_n=0\}\to\R$ with 
		\[\|\Gamma\|_{C^{2,\alpha}}\le C,\quad |\Gamma(0)|\le C\sqrt{\delta}.\]
	\end{lemma}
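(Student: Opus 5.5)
The plan is to reduce the statement to the classical $\eps$-regularity theory for the obstacle problem, which the paper already invokes through \cite[Theorem 6.17]{PSU2012}, and then to extract the quantitative bound $|\Gamma(0)|\le C\sqrt\delta$ from an elementary comparison.

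\textbf{Step 1: flatness and nondegeneracy.} Writing $p:=\tfrac{f(0)}2(x_n)_+^2$, the assumption $|u-p|<\delta$ gives $u>0$ wherever $p>\delta$, that is in $\{x_n>\sqrt{2\delta/f(0)}\}$. In the other direction, standard nondegeneracy at free boundary points (the lower bound in \eqref{eq:C11 regularity}, applied at any $z\in\de\{u>0\}$) gives $\sup_{B_r(z)}u\ge cr^2$. If $z\in\de\{u>0\}\cap B_{3/4}$ had $z_n<-C\sqrt\delta$, then on $B_r(z)$ with $r=C\sqrt\delta/2$ we would have $p=0$, so $\sup_{B_r(z)}u\le\delta<cr^2$ once $C$ is chosen large. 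This is a contradiction. Hence
\[
\de\{u>0\}\cap B_{3/4}\subset\{|x_n|\le C\sqrt\delta\}.
\]
Moreover $u\equiv0$ on $\{x_n<-C\sqrt\delta\}$. Indeed, such a region contains no free boundary, and it cannot be entirely positive because there $\Delta u=f\ge\mu$ while $u\le\delta$ is small; this can be checked by comparison with a paraboloid using \Cref{lem:comparison principle}.

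\textbf{Step 2: $\eps$-regularity.} For $\delta_0$ small, the flatness from Step 1 means that every free boundary point in $B_{3/4}$ has a blow-up close to a half-space solution. Singular points are excluded because a quadratic blow-up $f(z)p_z$ with $p_z\in\mathcal P_2$ cannot vanish on a half-ball, and the contact set here contains a slab of definite density. The flatness-implies-$C^{1,\beta}$ theorem (Caffarelli's argument, or the improvement-of-flatness version with right-hand side $f\in C^{1,\alpha}$) then gives $\{u=0\}\cap B_{1/2}=\{x_n\le\Gamma(x')\}$ with $\|\Gamma\|_{C^{1,\beta}}\le C$. The higher regularity results quoted before the statement (\cite{DSS15}, \cite[Theorem 6.17]{PSU2012}) upgrade this to $\|\Gamma\|_{C^{2,\alpha}}\le C$ for $f\in C^{1,\alpha}$, with all constants universal. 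Finally, $0$ lies in $B_{1/2}$, so $(0,\Gamma(0))$ is a free boundary point, and Step 1 yields $|\Gamma(0)|\le C\sqrt\delta$.

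\textbf{Main obstacle.} The delicate part is making the $C^{2,\alpha}$ bound universal, i.e.\ depending only on universal quantities and not on the particular solution. This relies on the quantitative $\eps$-regularity with a $C^{1,\alpha}$ right-hand side, in particular the uniform exclusion of singular points near a flat profile. I would take that result as given from the cited literature rather than reprove it.
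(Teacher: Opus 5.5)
Your proposal is correct and takes the same route as the paper, which gives no separate proof and simply treats this lemma as a consequence of the classical $\eps$-regularity and higher-regularity theory near flat free boundaries (\cite[Theorem 6.17]{PSU2012}). Your nondegeneracy argument places $\de\{u>0\}\cap B_{3/4}$ inside $\{|x_n|\le C\sqrt\delta\}$, which makes explicit the bound $|\Gamma(0)|\le C\sqrt\delta$ (indeed $\|\Gamma\|_{L^\infty}\le C\sqrt\delta$). The exclusion of singular points, however, should rest on the quantitative flatness theorem you cite, not on the blow-up heuristic, because density information at scales $\gtrsim\sqrt\delta$ says nothing directly about blow-ups.
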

	
	We will make use of the following consequence of regularity properties of the free boundary close to a half-plane solution.
	\begin{lemma}\label{lemma:regular case}
		There are $\delta_1,$  $C_1$ universal such that if $u$ solves \eqref{eq:obstacle} with $u(0)=0$ and $$|u-\tfrac{f(0)}{2}(x_n)_+^2|<\delta \text{ in }B_1\subset\R^n$$ for some $\delta<\delta_1$ then
		\[
		D^2u(x) \ge - C(\delta +\|f-f(0)\|_{C^{1/4}(B_1)})|x|\quad \text{ in }B_{1/4}.
		\]
	\end{lemma}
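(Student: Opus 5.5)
We argue by comparing $u$ with an explicit half-space profile adapted to the free boundary near $0$, using \Cref{lem:bounds regular points} for the geometry and \Cref{lem:semiconvexity} for the second derivatives. Throughout, $\delta<\delta_1$ with $\delta_1\le\delta_0$ small and universal.

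\textbf{Step 1: the free boundary is a $C^{2,\alpha}$ graph through $0$.} By \Cref{lem:bounds regular points}, in $B_{1/2}$ we have
\[
\{u=0\}=\{x_n\le\Gamma(x')\},\qquad \|\Gamma\|_{C^{2,\alpha}}\le C .
\]
Since $u(0)=0$, the nondegeneracy \eqref{eq:C11 regularity} together with $|u-\tfrac{f(0)}2(x_n)_+^2|<\delta$ forces $0$ to lie within $C\sqrt\delta$ of the free boundary. We may assume $0\in\de\{u>0\}$: if instead $0$ is an interior point of $\{u=0\}$, the claim follows from the same argument centered at the nearest free boundary point. Then $\Gamma(0)=0$. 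Compactness and uniqueness of the half-space blow-up give $|\nabla\Gamma(0)|\le \omega(\delta)$, and flatness improves this to $|\nabla \Gamma|\le C\sqrt\delta$. After a rotation of size $O(\sqrt\delta)$ we may take $\nabla\Gamma(0)=0$.

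\textbf{Step 2: the comparison profile at each scale.} For each $r\in(0,1/4)$, consider the rescaling
\[
u^{(r)}:=r^{-2}u(r\,\cdot),\qquad \text{with right-hand side } f(r\,\cdot).
\]
Its free boundary is the graph of $\Gamma_r(x')=\Gamma(rx')/r$, which satisfies $|\Gamma_r|\le Cr|x'|^2$ and $|\nabla\Gamma_r|\le Cr$. Take the comparison solution $U=\tfrac{f(0)}2(x_n)_+^2$ with constant right-hand side $F=f(0)$; it is convex, so $\gamma=0$. We then need the key estimate
\[
\|u^{(r)}-U\|_{L^\infty(B_1)}\le C\big(\delta+\|f-f(0)\|_{C^{1/4}(B_1)}\big)\,r .
\]
Once this holds, \Cref{lem:semiconvexity} applies. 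Its error terms are bounded as follows:
\[
\|f(r\cdot)-f(0)\|_{L^\infty(B_1)}\le r\,\|f-f(0)\|_{C^{1/4}},\qquad \|f(r\cdot)-f_1(r\cdot)\|_{C^{1/4}}\le Cr^{1+\alpha}.
\]
The second term should in fact be absorbed into the $C^{1/4}$ norm of $f-f(0)$, using that $f$ is Lipschitz; some care is needed here. The lemma then gives
\[
D^2u\ge -C(\cdots)\,r \quad\text{on } B_{r/2}.
\]
Choosing $r\sim|x|$ yields the claim.

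\textbf{Step 3: proving the linear closeness (the main obstacle).} Two error sources must each be shown to be $O(r)$.

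\emph{Curvature of the boundary.} In $B_1$, the free boundary of $u^{(r)}$ deviates from $\{x_n=0\}$ by at most $Cr$. Moreover $u^{(r)}$ vanishes on it and has a $C^{2,\alpha}$ expansion up to the boundary. The boundary Schauder theory behind \Cref{lem:bounds regular points} gives, in the original scale,
\[
u(x)=\tfrac{f(0)}{2}\,d(x)^2+O(|x|^3),\qquad d=\text{signed distance to the free boundary},
\]
since $u$, $\nabla u$ vanish on the boundary and the normal second derivative there equals $f$. Then
\[
d(x)=x_n+O(|x|^2),
\]
so $|u-\tfrac{f(0)}2(x_n)_+^2|\le C|x|^3$, which is $Cr$ after rescaling.

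\emph{Dependence on $\delta$.} This estimate gives a universal constant $C$ rather than the product $C\delta$. To get a bound proportional to $\delta+\|f-f(0)\|_{C^{1/4}}$, I would run the argument linearly. The difference $u-\tfrac{f(0)}2(x_n)_+^2$, normalized by $\delta$, is controlled through the $C^{2,\alpha}$ norm of $\Gamma$, which is itself bounded by $C(\delta+\|f-f(0)\|_{C^{1/4}})$ by $\eps$-regularity applied linearly. This linear dependence of the $\Gamma$ estimate on the flatness is the delicate point; I would obtain it by a compactness/contradiction argument on the normalized differences.
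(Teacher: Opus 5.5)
There is a genuine gap, and it is the point you defer to the end: the factor $\delta+\|f-f(0)\|_{C^{1/4}(B_1)}$. Your Step 3 expansion only yields $\|u^{(r)}-U\|_{L^\infty(B_1)}\le Cr$ with a universal $C$, i.e.\ $D^2u\ge -C|x|$. That is strictly weaker than the lemma.

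The repair you propose is not a step but an unproved regularity theorem. You want $C^{2,\alpha}$ bounds on $\Gamma$ and cubic bounds on $u-\tfrac{f(0)}2(x_n)_+^2$ up to the free boundary, both \emph{linear} in the flatness and obtained ``by compactness''. A contradiction argument on $(u_k-U)/\delta_k$ gives convergence of the normalized differences. It does not give uniform $C^{2,\alpha}$ control of $\Gamma_k/\delta_k$, nor cubic decay uniform over scales $r_k\to0$; that would need a full linear improvement-of-flatness iteration.

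Your Step 1 also breaks linearity as written. You recentre at a free boundary point up to $C\sqrt\delta$ away and rotate by an angle $O(\sqrt\delta)$. Each of these changes $\tfrac{f(0)}2(x_n)_+^2$ by $O(\sqrt\delta)$ in $L^\infty(B_1)$, so comparing with the recentred profile gives at best $C\sqrt\delta\,|x|$. Separately, $\|f(r\cdot)-f(0)\|_{L^\infty(B_1)}$ is only bounded by $r^{1/4}\|f-f(0)\|_{C^{1/4}}$, not by $r\|f-f(0)\|_{C^{1/4}}$.

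The missing idea is that no scale-by-scale closeness is needed. The paper applies \Cref{lem:semiconvexity} once, at unit scale, with $U=\tfrac{f(0)}2(x_n)_+^2$ and $\gamma=0$. This gives $-u_{ee}\le A_2:=C(\delta+\|f-f(0)\|_{C^{1/4}(B_1)})$ in $B_{1/2}$, and it is the only place where $\delta$ enters, linearly.

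The decay in $|x|$ then comes from a boundary barrier that uses only the \emph{universal} $C^{2,\alpha}$ bound on $\Gamma$ from \Cref{lem:bounds regular points}, with no smallness. Let $h$ solve $\Delta h=-f_{ee}/A_2$ in $B_{1/2}\cap\{u>0\}$, with $h=1$ on $\de B_{1/2}$ and $h=0$ on the free boundary. Boundary Schauder estimates give $h\le C\dist(x,\de\{u>0\})\le C|x|$ in $B_{1/4}$; the last inequality holds because $u(0)=0$.

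The function $-u_{ee}-A_2h$ is harmonic in $\{u>0\}$ and $\le0$ on $\de B_{1/2}$. It has nonpositive $\limsup$ at the free boundary by \eqref{eq:semiconvexity_weak}. The maximum principle applied to $(-u_{ee}-A_2h-t)_+$, extended by zero, gives $-u_{ee}\le A_2h+t\le CA_2|x|+t$, and one lets $t\downarrow0$.

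So the size comes from a global semiconvexity bound and the linear decay from a harmonic-measure-type barrier at a $C^{2,\alpha}$ boundary. This decoupling also makes the position of $0$ relative to the free boundary, and any rotation, irrelevant.
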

	\begin{proof}
		By \Cref{lem:bounds regular points}, if $\delta_1$ is universally small we can write $B_{1/2}\cap\{u=0\}= \{x_n\le \Gamma(x')\}$ where, up to a rotation,
		\[
		\Gamma(0)\ge|\nabla \Gamma (0)|=0,\qquad \|\Gamma\|_{C^{2,\alpha}}\le C_1.
		\]
		Notice that \Cref{lem:semiconvexity} applied with $U:=f(0)(x_n)^2_+/2$ and $\gamma=0$, gives
		\[
		-u_{ee}\le C(\delta +\|f-f(0)\|_{C^{1/4}(B_1)}) =:A_2 \text{ in }B_{1/2}.
		\]
		Let $h$ solve
		\[\begin{cases}
			\Delta h=-\frac{f_{ee}}{A_2}  &\text{ in }B_{1/2}\cap\{x_n>\Gamma(x')\},\\
			h=1         &\text{ in }\de B_{1/2}\cap\{x_n>\Gamma(x')\},\\
			h=0         &\text{ on }B_{1/2}\cap\{x_n=\Gamma(x')\},
		\end{cases}\]
		then elliptic regularity and the smoothness of $\Gamma$ yields $\|h\|_{C^{1,1/4}(B_{1/4})}\le C(1+\|f_e/A_2\|_{C^{1/4}})\le C$ so 
		\[
		h(x)\le C|x|\quad \text{in }B_{1/4},
		\]
		for $C$ universal.

		For all $t>0$ the continuous function:
		\[
		w^t(x):=\begin{cases}
			(A_2h(x)-u_{ee}(x)-t)_+&\text{ in } u>0,\\
			0&\text{ otherwise },
		\end{cases}
		\]
		is subharmonic and nonnegative in $B_{1/2}$; so, by the $C^2$ smoothness of $\Gamma$, we have for all $x\in B_{1/4}\cap\{u>0\}$
		\[
		\begin{split}
			-u_{ee}(x) \le w^t(x) + A_2 h(x)+t\le CA_2\dist(x,\Gamma)+t\le C A_2|x|+t.
		\end{split}
		\]
		We conclude letting $t\downarrow 0$.
	\end{proof}

	\subsection{The Thin Obstacle Problem}
	
	The Thin Obstacle Problem arises in the second-order analysis of solutions of the obstacle problem at the singular stratum $\Sigma_{n-1}$, see \cite{FS19}. We recall here some basic, but important, facts.
	
	A function $v$ solves the thin obstacle problem in $B_1\subset \R^n$ if
	\begin{equation}\label{eq:thin obstacle}\begin{cases}
			\lap v\le 0   &\text{in }B_1,\\
			\lap v=0      &\text{in }B_1\setminus\{x_n=0,v=0\},\\
			v\ge0           &\text{in }B_1\cap\{x_n=0\}.
	\end{cases}\end{equation}
	We write in polar coordinates in 2D
	\begin{equation}\label{eq:1.5 homogeneous solution}
		\psi_{3/2}(r,\theta) := r^{3/2}\cos(3\theta/2),\quad x_{n-1}=r\cos\theta,\quad x_{n}=r\sin\theta. 
	\end{equation}
	We will make use of the following properties of solutions of \eqref{eq:thin obstacle}. In particular, it will be crucial for our arguments to work in dimension $n\ge 3$ the recent observation of \cite{FranceschiniSavin25}, which excludes $\lambda$-homogeneous solutions of \eqref{eq:thin obstacle} for any $\lambda\in(2,3)$.
	\begin{proposition}\label{prop:thin obstacle}
		Let $v$ be a nontrivial solution of \eqref{eq:thin obstacle} with 
		\[
		v(0)=0\quad\text{ and }\quad \phi(v)<3.
		\] 
		Then $\phi(0^+,v)\in\{1,3/2,2\}$ and, as $x\to0,$ one of the following happens:
		\begin{itemize}
			\item[(a)] For some constants $a_\pm$, with $a_++a_-\le0$ and not both $0$ it holds
			\[v(x) = a_+(x_n)_+ + a_- (x_n)_- + O(|x|^2);\]
			\item[(b)] For some $a>0$ and $R\in O(n)$ satisfying $Re_n=e_n,$ it holds
			\[v(x) = a\psi_{3/2}(Rx) + O(|x|^{2}),\] 
			where $\psi_{3/2}$ is given by \eqref{eq:1.5 homogeneous solution};
			\item[(c)] For some symmetric matrix $A$ satisfying $A\not\equiv0,$ $\tr A=0$ and $e\cdot Ae\ge0$ for all $e\perp e_n,$ it holds
			\[
			v(x) = x\cdot Ax + o(|x|^2).
			\]
		\end{itemize}
	\end{proposition}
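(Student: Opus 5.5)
The plan is to combine the standard blow-up analysis of the thin obstacle problem with the spectral gap of \cite{FranceschiniSavin25}. Write $\phi(v)=\phi(1,v)$.

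\textbf{Step 1: frequency monotonicity and homogeneous blow-ups.} Since $v(0)=0$, Almgren's frequency $r\mapsto\phi(r,v)$ is monotone nondecreasing. Hence $\lambda:=\phi(0^+,v)\le\phi(v)<3$. The rescalings $\tilde v_r$ are bounded in $W^{1,2}(B_1)$ and, by the optimal $C^{1,1/2}$ regularity on each side of $\{x_n=0\}$, precompact in $C^1_\loc$ of each closed half ball. Along a subsequence they converge to a nontrivial solution $v_0$ of \eqref{eq:thin obstacle} with constant frequency, so $v_0$ is $\lambda$-homogeneous.

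\textbf{Step 2: the admissible frequencies.} The classification of homogeneous solutions of the thin obstacle problem with frequency below $2$ gives $\lambda\in\{1,3/2\}\cup[2,\infty)$; the gap $(1,3/2)$ and the range $(3/2,2)$ are excluded by the known spectral analysis. Homogeneous solutions in the range $(2,3)$ are ruled out precisely by \cite{FranceschiniSavin25}. Hence $\lambda\in\{1,3/2,2\}$.

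\textbf{Step 3: identifying $v_0$ in each case.}
\begin{itemize}
\item If $\lambda=1$, then $v_0$ is harmonic off the thin space and $1$-homogeneous with $\Delta v_0\le0$. A Liouville-type argument in each half space gives $v_0=a_+(x_n)_++a_-(x_n)_-+b\cdot x$ with $b\perp e_n$. Nonnegativity on $\{x_n=0\}$ forces $b=0$, and $\Delta v_0\le0$ forces $a_++a_-\le0$.
\item If $\lambda=3/2$, the classification gives $v_0=a\psi_{3/2}(Rx)$.
\item If $\lambda=2$, $v_0$ is a quadratic harmonic polynomial. One would like to rule out a possible singular measure on $\{x_n=0\}$; homogeneity and sign restrictions do this. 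The constraint $v_0\ge0$ on the thin space gives $e\cdot Ae\ge0$ for $e\perp e_n$.
\end{itemize}

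\textbf{Step 4: rates (the main obstacle).} I expect upgrading subsequential convergence to the stated expansions to be the hard step.
\begin{itemize}
\item For $\lambda=2$, only $o(|x|^2)$ is claimed. This follows from uniqueness of the blow-up via Monneau-type monotonicity for the quadratic profiles, exactly as in the integer-frequency theory.
\item For $\lambda\in\{1,3/2\}$, I need the error to be $O(|x|^2)$. I would use Weiss' energy $W_\lambda$ together with an epiperimetric inequality at these frequencies, which gives a power decay of $\tilde v_r$ toward $v_0$. To obtain the sharp $O(|x|^2)$ instead, the cleaner route is to apply the frequency gap once more to $v-v_0$. For $\lambda=3/2$, $\eps$-regularity shows the free boundary is $C^{1,\beta}$ near $0$. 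A Fourier/hodograph expansion then shows the next term has homogeneity at least $2$, since frequencies in $(3/2,2)$ are excluded. For $\lambda=1$, the function is harmonic in each half-ball, and after subtracting $v_0$ the remainder has its next homogeneity $\ge2$.
\end{itemize}
The two ingredients I expect to carry this step are the uniqueness of the blow-up with a quantitative rate, and the gap between $3/2$ and $2$.
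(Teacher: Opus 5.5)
Your route (monotone frequency, homogeneous blow-ups, classification below $2$ together with \cite{FranceschiniSavin25} for $(2,3)$, then upgrading subsequential convergence to expansions) is the same as the paper's. The paper's proof only sketches this and defers the blow-up analysis to \cite{PSU2012,XaviSignorini}.

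\textbf{The gap: Step 4 for $\lambda=1$.} Harmonicity in each half-ball gives an expansion in integer homogeneities only if the trace of $v$ on $\{x_n=0\}$ is smooth near $0$, for instance if $0$ is interior to the thin contact set. For \emph{even} solutions this is automatic at frequency $1$: at a thin free boundary point $\nabla v(0^\pm)=0$, so the frequency there is at least $3/2$. However, \eqref{eq:thin obstacle} imposes no symmetry, and in the paper's application $v$ is not even.

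Split $v=v^{\mathrm{even}}+v^{\mathrm{odd}}$ in $x_n$. Since $\Delta v$ is a measure carried by $\{x_n=0\}$, $v^{\mathrm{odd}}$ is harmonic and $v^{\mathrm{even}}$ is a symmetric solution. The slower-decaying part determines $\phi(0^+,v)$. So frequency $1$ can come from a term $b\,x_n$ in $v^{\mathrm{odd}}$ while $0$ is a regular free boundary point of $v^{\mathrm{even}}$.

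\textbf{Explicit counterexample.} Take $v=x_n+\psi_{3/2}(x)$. It satisfies \eqref{eq:thin obstacle}:
\begin{itemize}
\item $\Delta v=\Delta\psi_{3/2}\le0$ is carried by $\{x_n=0,\ x_{n-1}\le0\}$, where $v=0$;
\item on the thin space, $v=(x_{n-1})_+^{3/2}\ge0$.
\end{itemize}
Cross terms between the odd and even parts vanish in $D$ and $H$, so
\[
\phi(r,v)=\frac{r^2H(1,x_n)+\tfrac32 r^3H(1,\psi_{3/2})}{r^2H(1,x_n)+r^3H(1,\psi_{3/2})}\in(1,\tfrac32),\qquad \phi(0^+,v)=1 .
\]
Yet $v-x_n=\psi_{3/2}$ is exactly $\tfrac32$-homogeneous. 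The linear part forces $a_+=1$, $a_-=-1$ in (a), and then (a) fails with an $O(|x|^2)$ remainder; (b) and (c) fail as well.

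So case (a) as stated is false for non-even $v$, and no refinement of your Step 4 can prove it. What the even/odd splitting actually gives is
\[
v=a_+(x_n)_++a_-(x_n)_-+O(|x|^{3/2}),
\]
improving to $O(|x|^2)$ exactly when $\phi(0^+,v^{\mathrm{even}})\neq\tfrac32$, or under an evenness assumption.

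\textbf{A secondary issue: Step 4 for $\lambda=3/2$.} Your argument there is too loose, though fixable.
\begin{itemize}
\item $C^{1,\beta}$ regularity of the free boundary only yields an $O(|x|^{3/2+\beta})$ remainder.
\item The exclusion of homogeneous solutions with frequency in $(3/2,2)$ does not apply to $v-a\psi_{3/2}(Rx)$, which is not a solution.
\item You need the higher regularity of the regular free boundary (at least $C^{1,1/2}$) together with a boundary Harnack expansion for $v^{\mathrm{even}}$.
\item $v^{\mathrm{odd}}$ is $O(|x|^2)$ in this case because its frequency is an integer $\ge2$.
\end{itemize}
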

	\begin{proof}
		The frequency is monotone for solutions of \eqref{eq:thin obstacle}, so $\phi(0^+,v)$ is well defined and $\phi(0^+,v)<3$. Then by \cite{PSU2012,FranceschiniSavin25} we must have $\phi(0^+,v)\in\{1,3/2,2\}$. A blow-up analysis at these points allows to conclude. We refer to \cite{PSU2012,XaviSignorini} for more details.
	\end{proof}
	We also recall the comparison principle for solutions of \eqref{eq:thin obstacle}, see \cite{PSU2012}.
	\begin{lemma}\label{lem:comparison thin obstacle}
		Let $v,V$ solve \eqref{eq:thin obstacle} in $B_1$ with $v\le V$ on $\de B_1$. Then $v\le V$ on $\overline{B_1}$.
	\end{lemma}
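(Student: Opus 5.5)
The plan is to reduce to the classical maximum principle for the thin obstacle problem, applied to the difference $w:=v-V$ on the set where $v$ is strictly positive. First, since both $v$ and $V$ are continuous up to $\overline{B_1}$, the set
\[
\Omega:=\{w>0\}\cap B_1
\]
is open. We argue by contradiction and assume $\Omega\neq\emptyset$.

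The key step is to show that $w$ is subharmonic in $\Omega$, which follows from the structure of \eqref{eq:thin obstacle}. Consider first a point $x\in\Omega$ with $x_n=0$. There $v(x)>V(x)\ge0$, because $V\ge0$ on $\{x_n=0\}$. Hence $v>0$ in a neighborhood of $x$ within $\{x_n=0\}$, so $\Delta v=0$ near $x$. Next, consider a point of $\Omega$ off $\{x_n=0\}$; there $\Delta v=0$ holds by definition. In both cases $\Delta v=0$ in $\Omega$. On the other hand, $\Delta V\le0$ everywhere in $B_1$ as a measure. Therefore
\[
\Delta w=-\Delta V\ge0\quad\text{in }\Omega,
\]
in the distributional sense. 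This is the only point where care is needed: $\Delta V$ is a nonpositive measure concentrated on $\{x_n=0,\ V=0\}$, and one must check that $\Delta v$ genuinely vanishes in all of $\Omega$, including on the thin space. The argument above, which uses $v>V\ge0$ there, is exactly what handles this.

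To conclude, note that $w=0$ on $\de\Omega\cap B_1$ by continuity, and $w\le0$ on $\de B_1$ by hypothesis. The weak maximum principle for the continuous subharmonic function $w$ on the bounded open set $\Omega$ then gives $w\le0$ in $\Omega$. This contradicts the definition of $\Omega$, so $\Omega=\emptyset$, and we obtain $v\le V$ on $\overline{B_1}$.
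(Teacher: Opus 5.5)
Your proof is correct. The paper does not actually prove this lemma: it only recalls it with a pointer to \cite{PSU2012}, so your argument supplies a self-contained proof.

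Your key observation is this. On $\Omega=\{v>V\}$, the constraint $V\ge 0$ on $\{x_n=0\}$ forces $v>0$ at thin-space points of $\Omega$. Hence $\Omega\subset B_1\setminus\{x_n=0,\,v=0\}$, so $\Delta v=0$ there, while $\Delta V\le 0$ everywhere. This is the same mechanism as the comparison principle \Cref{lem:comparison principle} for the classical obstacle problem, transplanted to the thin setting.

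The usual alternative proof is variational. One identifies solutions of \eqref{eq:thin obstacle} with Dirichlet-energy minimizers in the convex set of $H^1$ functions that are nonnegative on $\{x_n=0\}$ and have prescribed boundary values. One then uses the lattice property: $\min\{v,V\}$ and $\max\{v,V\}$ remain admissible, the energies of $\min$ and $\max$ add up to those of $v$ and $V$, and uniqueness of the minimizer gives $\min\{v,V\}=v$.

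The two routes trade off as follows:
\begin{itemize}
\item The variational route works with boundary inequalities in the trace sense and needs no continuity. However, it first requires the equivalence between the PDE formulation \eqref{eq:thin obstacle} and minimality.
\item Your route works directly from \eqref{eq:thin obstacle} as stated. It does need $v$ continuous in $B_1$, so that $\{x_n=0,\,v=0\}$ is relatively closed and the harmonicity condition lives on an open set. It also needs $v-V$ continuous on $\overline{B_1}$ for the maximum principle. Both are implicit in the statement and hold in the paper's application (comparing $\zeta_r$ with $-L_\circ|x_n|$, where the boundary datum is Lipschitz).
\end{itemize}

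A cosmetic point: your opening sentence refers to the set where $v$ is strictly positive, but the argument (correctly) works on $\{v>V\}$.
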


	\subsection{Estimates at singular points in $\bm{\Sigma_{n-1}}$}
	We collect useful results at singular points adapted from \cite{FS19}. In order to deal with the right-hand side $f$, we need to use as blow-up models more precise functions compared to the family $\tfrac{f(0)}{2}(x\cdot \nu)^2$.
	
	Given $\nu\in\mathbb S^{n-1}$ set
	\begin{equation}\label{eq:def_pnu_RHS}
		p_{\nu}:= \tfrac12 {f(0)} (x\cdot\nu)^2(1 + (x\cdot v_\nu))
	\end{equation}
	where $v_\nu\in\R^n$ is uniquely determined by the requirement that
	\[
	\lap p_\nu = f(0) + \nabla f(0)\cdot x=f_1.
	\]
	A computation shows:
	\[
	v_\nu := \big(\nabla f(0) - \tfrac23 (\nabla f(0)\cdot\nu)\nu\big)/f(0).
	\]
	Explicitly, when $\nu=e_n$,
	\begin{equation}\label{eq:rhs-singular-ansatz}
		p_{e_n}(x):=\tfrac12
		{f(0)}x_n^2
		+\tfrac 12\sum_{i<n}\partial_i f(0)\,x_i x_n^2
		+\tfrac16\partial_n f(0)\,x_n^3 .
	\end{equation}
	We do not display the dependence of $p_\nu$ from $f$, since it will always be clear from the context.
	Note that, provided
	\begin{equation}\label{eq:up to rescaling}
		|\nabla f(0)|\le cf(0)
	\end{equation}
	for a universal constant $c>0$ (which holds up to a universal rescaling), we have also
	\[
	p_\nu(x)\ge0\text{ for all }|x|\le1.
	\]
	If $u$ solves \eqref{eq:obstacle}, a direct computation shows the crucial properties
	\begin{equation}\label{eq:pde u-p_nu}
		\Delta (u-p_\nu)_r = O(r^{3+\alpha}) - r^2 f_r\chi_{\{u_r=0\}}\le O(r^{3+\alpha}),\qquad (u-p_\nu)_r\Delta(u-p_\nu)_r \ge -O(r^{3+\alpha})|w_r|.
	\end{equation}
	We remark that $p_\nu$ fails to be convex by a quadratic reminder, indeed under assumption \eqref{eq:up to rescaling}
	\begin{equation}
		\label{eq:p_k almost convex}
		D_{ee}p_\nu(x) \ge \tfrac 12 f(0)(e\cdot\nu)^2-C(n)\frac{|\pi_{\nu^\perp}\nabla f(0)|^2}{f(0)}(x\cdot\nu)^2\quad \forall e\in\S^{n-1}.
	\end{equation}
	Note that
	\begin{equation}
		D_{ee}p \equiv0\quad\text{for any }e\perp\nu\qquad\text{and}\qquad |\nabla p(x)|\ge cf(0)|x\cdot\nu|\quad\text{in }B_1.\label{eq:p_k almost convex direction}
	\end{equation}
	
	We are ready to prove a useful Lipschitz bound:
	\begin{lemma}\label{lem:lip estimate rhs}
		If $u$ is a solution of \eqref{eq:obstacle} for some $f$ satisfying \eqref{eq:up to rescaling} and $\nu\in\S^{n-1}$, then
		\begin{equation}\label{eq:lip estimates}
			\|\nabla(u-p_\nu)_r\|_{L^\infty(B_1)}\le C(\|(u-p_\nu)_r\|_{L^2(\de B_2)}+r^{3+\alpha})
		\end{equation}
		and
		\begin{equation}
			\{u_r=0\}\cap B_1\subset\{|x\cdot\nu|\le Cr^{-2}( \|(u-p_\nu)_r\|_{L^2(\de B_2)}+r^{3+\alpha})\}.\label{eq:bound contact set}
		\end{equation}
		for some universal $C$ and all $r\in(0,\tfrac34)$.
	\end{lemma}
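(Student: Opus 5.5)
Set $w:=u-p_\nu$ and $\eps_r:=\|w_r\|_{L^2(\de B_2)}+r^{3+\alpha}$. The plan has two parts. First we bound $w_r$ in $L^\infty(B_{3/2})$ and $\nabla w_r$ in $L^\infty(B_1)$ by $C\eps_r$, using the one-sided PDE \eqref{eq:pde u-p_nu}. Then we deduce the contact set inclusion from the nondegeneracy of $\nabla p_\nu$ in \eqref{eq:p_k almost convex direction}.

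\emph{Step 1 ($L^\infty$ bound).} By \eqref{eq:pde u-p_nu}, $w_r\Delta w_r\ge -Cr^{3+\alpha}|w_r|$ in the distributional sense. Hence $|w_r|$ satisfies $\Delta|w_r|\ge -Cr^{3+\alpha}$: where $w_r\ne0$ it is the Kato-type inequality, and across the zero set $|w_r|$ is a maximum of two functions. Adding $Cr^{3+\alpha}|x|^2/(2n)$ produces a subharmonic function. The mean value property on balls $B_{1/4}(x)$, $x\in B_{3/2}$, together with the Poisson representation in $B_2$, then gives
\[
\sup_{B_{3/2}}|w_r|\le C\eps_r .
\]

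\emph{Step 2 (gradient bound).} We write $\Delta w_r=g-\mu_r$ with $|g|\le Cr^{3+\alpha}$ and $\mu_r:=r^2f_r\chi_{\{u_r=0\}}\ge0$. To estimate $\nabla w_r$ we need control on $\mu_r$ beyond its mass. There are two facts. First, by the $L^1$ argument of \Cref{lem:local bounds rhs} (test against a cutoff), $\mu_r(B_{5/4})\le C\eps_r$. Second, $\mu_r$ is supported on $\{u_r=0\}$, where $w_r=-p_{\nu,r}\le0$; since $p_\nu\ge0$ in $B_1$ by \eqref{eq:up to rescaling}, on the contact set $|w_r|=p_{\nu,r}\ge c\,r^2(x\cdot\nu)^2$.

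For the pointwise gradient bound we argue through the structure of the problem. At points of $\{u_r=0\}$ we have $\nabla u_r=0$, so $|\nabla w_r|=|\nabla p_{\nu,r}|$. Since $p_{\nu,r}\ge0$ vanishes to second order along $\{x\cdot\nu=0\}$, a quadratic function bounded by $C\eps_r$ has gradient of size $C\eps_r^{1/2}\,r$. That is not linear in $\eps_r$, so the bound must come from elsewhere. Instead we use interior $C^{1,1}$ estimates for the obstacle problem on the positivity set, together with the fact that $w_r$ is harmonic up to $O(r^{3+\alpha})$ on $\{u_r>0\}$. For $x\in\{u_r>0\}\cap B_1$, let $d:=\dist(x,\{u_r=0\})$. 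If $d\ge1/8$, interior gradient estimates give $|\nabla w_r(x)|\le C\eps_r$. If $d<1/8$, we rescale to $B_d(x)$ and use $|w_r|\le C\eps_r$ together with the gradient at the nearest contact point $y$, where $|\nabla w_r(y)|=|\nabla p_{\nu,r}(y)|\le Cr^2|y\cdot\nu|$.

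\emph{Main obstacle.} This reduces Step 2 to the contact set bound \eqref{eq:bound contact set} itself, which makes it the key estimate. We plan to prove \eqref{eq:bound contact set} first, by a barrier argument. Suppose $y\in\{u_r=0\}\cap B_1$ with $t:=|y\cdot\nu|$. Then $p_{\nu,r}\ge c r^2 t^2$ on $B_{t/2}(y)$. By nondegeneracy of $u$ at the free boundary point $y$ (from \eqref{eq:C11 regularity} applied at $y$), $\sup_{B_{t/2}(y)}u_r\ge c r^2t^2$, while on the contact part $u_r=0$. Comparing $u_r$ with $p_{\nu,r}$ on $B_{t/2}(y)$, the difference $w_r$ must have oscillation at least $cr^2t^2$ there: either $w_r(y)=-p_{\nu,r}(y)$, or, if $y$ lies near $\{x\cdot\nu=0\}$, we use a point where $u_r$ is small but $p_{\nu,r}\sim r^2t^2$. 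The $L^\infty$ bound then gives $r^2t^2\le C\eps_r$, which only yields $t\lesssim(\eps_r r^{-2})^{1/2}$.

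To get the linear rate $t\le Cr^{-2}\eps_r$ we must exploit the gradient. The linear rate should come from combining the gradient bound with $|\nabla p_{\nu,r}(y)|\ge c r^2 t$ from \eqref{eq:p_k almost convex direction}, evaluated at the contact point $y$ where $\nabla u_r(y)=0$:
\[
c\,r^2t\le|\nabla w_r(y)|\le C\eps_r .
\]
So the true crux is the gradient bound \eqref{eq:lip estimates}. We would prove it via a Bernstein/Caffarelli-type argument: the functions $(\pm\partial_e w_r)_+$, extended by zero off $\{u_r>0\}$, behave like subsolutions up to $O(r^{3+\alpha})$, because on $\{u_r=0\}$ we have $\partial_e w_r=-\partial_e p_{\nu,r}$, and one must handle this carefully. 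We would first try comparing $\partial_e u_r$ with $\partial_e p_{\nu,r}$ using the harmonicity of $\partial_e w_r$ in $\{u_r>0\}$ and Step 1, with $L^1$ control of $\Delta w_r$ supplying the needed $W^{1,1}$-to-$L^\infty$ gain through the mean value estimate. This step is where I expect the real difficulty.
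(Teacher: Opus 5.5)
Your Step~1 and your last step, deducing \eqref{eq:bound contact set} from the gradient bound via $\nabla u_r=0$ on the contact set and $|\nabla (p_\nu)_r(x)|\ge cr^2|x\cdot\nu|$, are correct and coincide with the paper. But there is a genuine gap: the Lipschitz estimate \eqref{eq:lip estimates}, which you correctly single out as the crux, is never proved, and none of the routes you sketch closes.

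\begin{itemize}
\item The interior-estimate route is circular, since it needs the contact-set bound that you then derive from the gradient bound. Even granting that bound, combining $|\nabla w_r(x)|\le C\eps_r/d$ with the contact-point value through the $C^{1,1}$ bound $|D^2w_r|\le Cr^2$ gives at best $|\nabla w_r|\lesssim r\eps_r^{1/2}$, not a linear rate.
\item The Bernstein-type idea fails as stated. The function $(\pm\de_e w_r)_+$ extended by zero off $\{u_r>0\}$ is not continuous, because $\de_e w_r=-\de_e (p_\nu)_r\neq0$ on the contact set. A maximum principle inside $\{u_r>0\}$ would again require $|\de_e(p_\nu)_r|\le C\eps_r$ on the free boundary, i.e.\ the contact-set bound.
\item $L^1$ (or measure) control of $\Delta w_r$ cannot give $L^\infty$ control of $\nabla w_r$. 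The gradient of the Newtonian potential of a measure is in general only in weak $L^{n/(n-1)}$.
\end{itemize}

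The missing idea is to use one-sided bounds of size $\eps_r$ on pure second derivatives, in the directions where the profile is flat.

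For $e\perp\nu$ one has $D_{ee}p_\nu\equiv0$ by \eqref{eq:p_k almost convex direction}. So \Cref{lem:semiconvexity}, applied after rescaling and covering to $r^{-2}u_r$ and to the solution $U=r^{-2}(p_\nu)_r$ (right-hand side $f_1(r\cdot)$, $\gamma=0$), gives $D_{ee}w_r=D_{ee}u_r\ge -C\eps_r$ in $B_{3/2}$. The right-hand side errors contribute only $O(r^{3+\alpha})$. This is where Caffarelli's subharmonicity of $(D_{ee}u)_-$ enters, at the level of second rather than first derivatives.

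In the normal direction, \eqref{eq:pde u-p_nu} then gives the opposite one-sided bound
\[
D_{\nu\nu}w_r=\Delta w_r-\Delta_{\nu^\perp}w_r\le Cr^{3+\alpha}+(n-1)C\eps_r\le C\eps_r .
\]

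Combine this with your bound $|w_r|\le C\eps_r$ in $B_{3/2}$ and the elementary fact that $|g|\le L$ and $g''\ge-K$ on $[-\tfrac12,\tfrac12]$ imply $|g'(0)|\le 4L+K/4$. Applying it to $t\mapsto w_r(x+te)$ for $e\perp\nu$, and to $t\mapsto -w_r(x+t\nu)$, yields $|\nabla w_r|\le C\eps_r$ in $B_1$. Your final step then finishes the proof.
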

	\begin{remark}
		If $f$ is affine there is no $+r^{3+\alpha}$ error on the right-hand side of both estimates.
	\end{remark}
	\begin{proof}
		We may assume $\nu=e_n$. We use \Cref{lem:semiconvexity} on $r^{-2}u_r$ and $U:=r^{-2}p_r$ with $e:=e_i$ for $i<n$, so that $D_{ii}U=0$  because of \eqref{eq:p_k almost convex direction}. We find that in $B_{3/2}$
		\[
		\begin{split}
			D_{ii}(u-p)(r\cdot)&=D_{ii}u(r\cdot)\ge -C\big(r^{-2}\|(u-p)_r\|_{L^2(\partial B_{2})}+\|(f_2)_r\|_{C^{1/4}(B_{2})} \big)\\
			&\ge -Cr^{-2}\|(u-p)_r\|_{L^2(\partial B_2)}-Cr^{1+\alpha},
		\end{split}
		\]
		while the equation implies in the $e_n$  direction
		\[
		\begin{split}
			D_{nn}(u-p)(r\cdot)&= f_r\ind_{\{u_r>0\}}-(f_1)_r -\sum_{i<n}D_{ii}u(r\cdot) \\
			&\le -f_r\ind_{\{u_r=0\}}+\|(f_2)_r\|_{L^\infty(B_2)} -\sum_{i<n}D_{ii}u(r\cdot)\\
			&\le Cr^{-2}\|(u-p)_r\|_{L^2(\partial B_2)}+Cr^{1+\alpha},
		\end{split} 
		\]
		in $B_{3/2}$. Then Lipschitz bounds on convex functions imply
		\[
		r^{-2}\|\nabla(u-p)_r\|_{L^\infty(B_1)}\le C \big(r^{-2}\|(u-p)_r\|_{L^2(\de B_2)}+ r^{1+\alpha}\big).
		\]
		% We conclude as \Cref{lem:L2 Linfty} and a covering argument implies $\|u-\tfrac12x_n^2\|_{L^\infty(B_{3/2})}\le C\|u-\tfrac12x_n^2\|_{L^2(\de B_2)}$.
		As a consequence of this estimate we can trap the contact set around the zero set of $p$. Indeed since $\nabla u=0$ on $\{u=0\}$ we find that
		\[
		\{u_r=0\}\cap B_1 \ \subset\ \{r^{-1}|\nabla p(r\cdot)|<C\big(r^{-2}\|(u-p)_r\|_{L^2(\de B_2)}+ r^{1+\alpha}\big)\}.
		\]
		We conclude as $r^{-1}|\nabla p(r\cdot)|\ge \tfrac\mu8 |x_n|$ if $r$ is universally small (see \eqref{eq:p_k almost convex direction}).
	\end{proof}
	
	% \begin{lemma}\label{lem:bound contact set}
		%     There is $C(n)>0$ such that
		%     \[
		%         \{u=0\}\cap B_1\subset\{|x_n|\le C\|u-\tfrac12x_n^2\|_{L^2(\de B_2)}\}.
		%     \]
		% \end{lemma}
	% \begin{proof}
		%     \Cref{lem:lipschitz estimates} yields
		%     \[
		%         \|\nabla(u-\tfrac12x_n^2)\|_{L^\infty(B_1)}\le C\|u-\tfrac12x_n^2\|_{L^2(\de B_2)}.
		%     \]
		
		% \end{proof}

	\section{Proof in 2D with a Constant Right Hand Side}\label{sec:model case}
	
	\subsection{} To better illustrate the ideas, we consider in this section the simplified case where 
	\[
	n=2\quad \text{ and }\quad f\equiv1,
	\]
	that is we consider $u$ solving
	\begin{equation}\label{eq:classical obstacle}
		\Delta u = \ind_{\{u>0\}}\quad\text{and}\quad u\ge0\qquad\text{in }B_1\subset\R^2.
	\end{equation}
	In this case, \Cref{theorem:linear decay semiconvexity} becomes
	\begin{theorem}\label{theorem:2D 1}
		If $u$ solves \eqref{eq:classical obstacle}, then
		\[
		D^2u(x) \ge -C\, \dist(x,\de\{u>0\})\id \quad \text{ for all }x\in B_{1/2}.
		\]
		for a universal $C>0$.
	\end{theorem}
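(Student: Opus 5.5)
We reduce \Cref{theorem:2D 1} to a statement at free boundary points. Fix $x\in B_{1/2}\cap\{u>0\}$ with $\delta=\delta(x)=\dist(x,\de\{u>0\})$, and let $z\in\de\{u>0\}$ be a nearest point, so that $|x-z|=\delta$. It suffices to prove the following: for every $z\in\de\{u>0\}\cap B_{1/2}$ and every $r\in(0,1/4)$,
\[
D^2u\ge -Cr\quad\text{in }B_{r/2}(z).
\]
We will obtain this from the cubic approximation \eqref{eq:cubicapprox}. Namely, for each scale $r$ we want a solution $v^{(r)}$ that is convex, so that \Cref{lem:semiconvexity} with $\gamma=0$, applied to the rescalings $r^{-2}u(z+r\cdot)$ and $r^{-2}v^{(r)}(z+r\cdot)$, gives $D^2u\ge -Cr^{-2}\cdot r^3=-Cr$ in $B_{r/2}(z)$. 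We then take $r=4\delta$. The candidates for $v^{(r)}$ are either a singular profile $p\in\mathcal P_2$ (in 2D this means $\tfrac12 x\cdot Ax$ with $A\ge0$) or the half-plane solution.

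The case analysis at $z$ runs as follows. If $z\in\reg(u)$ and $u$ is $\delta_1$-close to a half-plane solution at some universal scale, \Cref{lemma:regular case} gives the linear bound directly. Otherwise, Weiss monotonicity implies that at large scales $u$ is close to some $p\in\mathcal P_2$. We consider the blow-up $p$ at $z$ when $z$ is singular, or otherwise the polynomial best approximating $u$ at the scale where $u$ stops looking singular. We then split the scales according to the Almgren frequency $\phi(r,u-p)$ of the difference $u-p$.

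\begin{itemize}
\item In the range where $\phi(r,u-p)\ge3$, the standard $H'/H$ formula gives $H(r,u-p)^{1/2}\le Cr^3$, hence \eqref{eq:cubicapprox} holds with $v^{(r)}=p$, which is convex.
\item The frequency is almost monotone while it stays above $2$. Once $\phi$ drops below $3-\delta_\circ$, we invoke the low-frequency proposition in the spirit of \cite{SY23}. Here $u-p$ is close to a solution of the thin obstacle problem with frequency in $\{1,3/2,2\}$ by \Cref{prop:thin obstacle}. In each of these cases, \Cref{lem:inherit convexity} (or \Cref{oss:inherit mean convexity}) propagates a linear improvement of semiconvexity down through the scales until the regular regime is reached, where \Cref{lemma:regular case} closes the argument.
\end{itemize}

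The remaining transition window $[\theta r,r]$, where $\phi$ decreases from $3$ to $3-\delta_\circ$ and $\theta$ is not quantified, is handled by the refined Monneau estimate \eqref{eq:point}: $\sup_{B_{\theta r}}|u-p|\le C_\circ\theta^3\sup_{B_r}|u-p|$. This bound preserves cubic decay across the window at the cost of only a fixed constant. The estimate itself rests on an epiperimetric inequality; in 2D this is \Cref{lem:apply projection epiperimetric}. It is applied to the Weiss energy $W_3$ of $u-p$, which combined with the frequency lying in $(3-\delta_\circ,3)$ yields a decay of $W_3$ and hence the pointwise bound.

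The main obstacle is precisely this transition regime. The difficulty is to show that the only loss incurred there is the universal constant $C_\circ$, which requires the new epiperimetric inequality. Matching the three regimes (frequency at least $3$, the window, frequency below $3-\delta_\circ$) with constants independent of the scale is then routine bookkeeping.
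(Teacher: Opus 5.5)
Your three-regime architecture (cubic approximation while $\phi\ge 3$, \Cref{prop:extended monneau} across the window, \Cref{prop:low frequency} below $3-\delta_\circ$) is the paper's. The gap is in the gluing, which is the whole content of the proof of this theorem and which you dismiss as routine bookkeeping; as you set it up, it does not work.

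You track $\phi(r,u-p)$ for a \emph{single} profile $p$ (the blow-up at $z$, or a best approximant at some scale), and you invoke the low-frequency proposition once this one frequency drops below $3-\delta_\circ$. But \Cref{prop:low frequency} requires $\phi(1,u-p')\le 3-\delta$ for \emph{every} $p'\in\Sigma_1$, and this uniformity is genuinely used:
\begin{itemize}
\item in case (c) of \Cref{lem:low frequency close poly}, the cap is applied to the rotated profile $\tfrac12(x\cdot\nu_k)^2$ to force $\alpha>0$;
\item in the compactness step, it is applied to the blow-up $p^*$ of the limit $u_\infty$, which has nothing to do with your $p$.
\end{itemize}
A low frequency relative to one $p$ says nothing about a nearby $p'$. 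If $\phi(r,u-p')\ge 3$ and $p'$ is a small rotation of $p$, then at small scales $u-p$ is dominated by the harmonic quadratic $p'-p$, so $\phi(r,u-p)\to 2$. Moreover, when $z\in\Sigma_0$ your blow-up $p$ is not in $\Sigma_1$ at all, so $\phi(r,u-p)$ is not the quantity either proposition controls. The thin-obstacle linearization you describe only arises around $\Sigma_1$ profiles, and only when $\dist_{\Sigma_1}(u)$ is small.

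The missing idea is to choose the threshold scale jointly in $r$ and in the direction. Set $r_u:=\inf\{r\in(0,\tfrac12):\phi(r,u-\tfrac12(\nu\cdot x)^2)\ge 3-\delta_\circ\text{ for some }\nu\in\mathbb S^1\}$, with $r_u:=\tfrac12$ if the set is empty, and let $\nu_u$ attain it. By frequency monotonicity, for each $\nu$ the admissible scales form an interval reaching up to $\tfrac12$, so $\nu_u$ works on all of $[r_u,\tfrac12]$. There, \Cref{prop:extended monneau} together with the universal bound $\|u-p\|_{L^2(\de B_1)}\le C$ gives $\dist_{\Sigma_1}(r^{-2}u(r\cdot))\le C_\circ r$, and then \Cref{lem:semiconvexity} gives $D^2u\ge -Cr$ in $B_r$.

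At $r_u$ the cap holds for all $\nu$ by construction. So \Cref{prop:low frequency} applies once to $r_u^{-2}u(r_u\cdot)$, whose distance to $\Sigma_1$ is at most $C_\circ r_u$. This yields $D^2u\ge -Cr$ in $B_r$ for all $r<r_u/2$ simultaneously, with no scale-by-scale propagation needed.

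This choice also makes your claim that Weiss monotonicity forces $u$ to be close to some $p\in\mathcal P_2$ at large scales unnecessary, which is fortunate since it is false. Nothing about $u$ at unit scale is needed, because the compactness argument inside \Cref{prop:low frequency} handles arbitrary $u$, including $\Sigma_0$ and regular behaviour.
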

	
	% And we deduce the following
	% \begin{theorem}\label{theorem:2D 2}
		%     If $u$ solves \eqref{eq:obstacle} with $n=2$ and $f\equiv1$, then $\de\{u>0\}$ has curvature bounded above by a universal constant.
		% \end{theorem}
	Here ``universal'' means it does not depend on anything. The rest of this section is devoted to the proof of \Cref{theorem:2D 1}.
	
	\subsection{Monotonicity formulae}\label{sec:monotonicity}
	In this section we note that some monotonicity formulae proven in \cite{FS19} at singular points can be extended to all free boundary points ``up to certain scales''. We keep $n$ general since it it does not change anything.
	
	We start with the classical Weiss' energy \cite{Weiss}, setting
	\[
	W_{\mathrm{obst}}(r,u) := \frac1{r^{n+2}}\int_{B_r}(|\nabla u|^2 +2 u )-\frac1{r^{n+3}}\int_{\de B_r} u^2.
	\]
	There is a dimensional constant $E_\circ$ such that $ W_{\mathrm{obst}}(r,p) = E_\circ$ for all $r>0$ and $p\in\mathcal P_2$. In addition, if $u$ solves \eqref{eq:classical obstacle} then
	\begin{equation}\label{eq:weiss monotonicity}
		\frac{d}{dr} W_{\mathrm{obst}}(r,u) \ge0.
	\end{equation}
	Finally, if $u$ solves \eqref{eq:classical obstacle} and $p\in\mathcal P_2$, writing $w = u-p$ and recalling \eqref{eq:def frequency}, it is proven in \cite[Lemma 2.3]{FS19} that
	\begin{equation}\label{eq:weiss vs frequency}
		W_{\mathrm{obst}}(r,u) - E_\circ = \frac1{r^{n+3}}\|w\|^2_{L^2(\de B_r)}(\phi(r,w)-2).
	\end{equation}
	\begin{lemma}\label{lem:frequency formula}
		Suppose $u$ solve \eqref{eq:classical obstacle} in $B_1$ and $p\in\mathcal P_2$. Set $w=u-p$. Assume also that $\phi(w_s)\ge2$ for some $s\in(0,1)$. Then $\phi(w_r)\ge2$ for all $r\in(s,1)$ and
		\[
		\frac{d}{dr}\phi(w_r) \ge \frac{2}{r}\left(\int_{B_1}\tilde w_r\Delta (\tilde w_r)\right)^2\quad\forall r\in(s,1).
		\]
	\end{lemma}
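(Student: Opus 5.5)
We will compute the derivative of $\phi(r,w)=D(r,w)/H(r,w)$ directly and use two sign properties of $w=u-p$. The first is that $\Delta w=-\ind_{\{u=0\}}\le 0$. The second is that $w\,\Delta w = -p\,\ind_{\{u=0\}}\ge 0$, because $u=0$ on the contact set and $p\ge0$. With these, the key identity will show that $\phi'$ equals a nonnegative Cauchy--Schwarz defect, plus a term with the sign of $\phi-2$, plus the desired square. A continuity argument then gives that $\phi\ge2$ propagates upward in $r$.

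First we write the standard formulas in rescaled form, with $w_r=w(r\cdot)$ and $\Delta w_r=r^2(\Delta w)(r\cdot)$:
\[
H'(r)=\frac{2}{r}\int_{\de B_1} w_r\,\de_\nu w_r,\qquad \int_{\de B_1}w_r\de_\nu w_r = D(r)+\int_{B_1}w_r\Delta w_r .
\]
For $D'$ we use the Rellich--Pohozaev identity (first variation with respect to domain dilations):
\[
D'(r)=\frac1r\Big(2\int_{\de B_1}(\de_\nu w_r)^2-(n-2)D(r)\cdot 0\Big)\ldots
\]
This is more cleanly done as in \cite{FS19}, via $\frac{d}{dr}$ of the Weiss energy $W_2(r,w)$. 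The route is to differentiate $W_{\mathrm{obst}}(r,u)-E_\circ$, which by \eqref{eq:weiss vs frequency} equals $r^{-n-3}\|w\|^2_{L^2(\de B_r)}(\phi-2)$, i.e.\ $H(r,w)(\phi(r,w)-2)$ in our normalization. Weiss' formula gives
\[
\frac{d}{dr}\big(W_{\mathrm{obst}}(r,u)-E_\circ\big)=\frac{2}{r}\int_{\de B_1}(x\cdot\nabla u_r-2u_r)^2\ge0 .
\]
Since $p$ is $2$-homogeneous, $x\cdot\nabla u_r-2u_r=x\cdot\nabla w_r-2w_r$, so the right-hand side equals $\frac2r\int_{\de B_1}(\de_\nu w_r-2w_r)^2$.

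Next we expand $\frac{d}{dr}[H(\phi-2)]=H'(\phi-2)+H\phi'$. We normalize by $H$, set $\tilde w=\tilde w_r$, $a:=\int_{\de B_1}\tilde w\de_\nu\tilde w=\phi+\int_{B_1}\tilde w\Delta\tilde w=:\phi+\beta$ and $b:=\int_{\de B_1}(\de_\nu\tilde w)^2$, and solve for $\phi'$:
\[
\frac r2\phi' = b-4a+4-a(\phi-2) = \big(b-a^2\big)+a^2-a\phi-2a+4 .
\]
Substituting $a=\phi+\beta$ turns the last four terms into $\beta^2+(\phi-2)(\beta+\phi-2)$. Hence
\[
\frac r2\phi'=(b-a^2)+\beta^2+(\phi-2)\big((\phi-2)+\beta\big).
\]
Here $b-a^2\ge0$ by Cauchy--Schwarz, since $\|\tilde w\|_{L^2(\de B_1)}=1$. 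Also $\beta=\int_{B_1}\tilde w\Delta\tilde w\ge0$ by the sign property $w\Delta w\ge0$. Therefore, whenever $\phi\ge2$, every term is nonnegative and $\phi'\ge\frac2r\beta^2$, which is the claimed inequality.

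The propagation statement follows by continuity. Let $r^*=\sup\{r\in[s,1):\phi\ge2\text{ on }[s,r]\}$. On $[s,r^*)$ the derivative bound gives $\phi'\ge0$, so $\phi(r^*)\ge\phi(s)\ge2$. Near any point where $\phi$ is close to $2$, the derivative identity gives $\phi'\ge -C|\phi-2|$ from below, since the term $(\phi-2)\beta$ is bounded. A Gronwall-type argument then prevents $\phi$ from dropping below $2$, so $r^*=1$. One also needs $H>0$, which holds because $w\not\equiv0$ on $\de B_r$: otherwise unique continuation-type arguments, or the Weiss monotonicity, give $u\equiv p$. The main technical care is justifying the differentiation for $w\in C^{1,1}$, which is standard since $D,H$ are absolutely continuous. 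The only genuinely delicate point is the algebraic completion of squares, and we expect it to close as above.
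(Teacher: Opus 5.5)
Your strategy matches the paper's. The inequality is the frequency identity behind \cite[Proposition 3.4]{FROS24}, combined with $\beta=\int_{B_1}\tilde w_r\Delta\tilde w_r\ge0$. Deriving it by differentiating \eqref{eq:weiss vs frequency} with Weiss' formula is a legitimate shortcut. \textbf{However, the central computation is wrong as written.}

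Since $\|w\|^2_{L^2(\de B_r)}=r^{n-1}H(r,w)$, \eqref{eq:weiss vs frequency} reads $W_{\mathrm{obst}}(r,u)-E_\circ=r^{-4}H(r,w)(\phi(r,w)-2)$. Weiss' formula reads $\frac{d}{dr}W_{\mathrm{obst}}(r,u)=\frac2r\,r^{-4}\int_{\de B_1}(x\cdot\nabla w_r-2w_r)^2$. You dropped $r^{-4}$ on both sides. That is not harmless, because the factor depends on $r$: differentiating it contributes an extra term $-\frac4r\,r^{-4}H(\phi-2)$.

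Consequently your formula $\frac r2\phi'=b-4a+4-a(\phi-2)$ is false. Substituting $a=\phi+\beta$ into it correctly gives $(b-a^2)+\beta^2+(\phi-2)(\beta-2)$. Its last term is negative whenever $\phi>2>\beta$, which happens for instance when the contact set has measure zero in $B_r$, so $\beta=0$. So the argument as written does not close. The identity you finally display, with $(\phi-2)\big((\phi-2)+\beta\big)$, is also incorrect: it differs from the true one by $(\phi-2)^2$, comes from a second algebra slip, and only coincidentally yields the right sign.

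Keeping the factor $r^{-4}$ one gets
\[
\frac r2\,\phi'=b-4a+4-a(\phi-2)+2(\phi-2)=(b-a^2)+\beta^2+(\phi-2)\beta .
\]
After dropping $b-a^2\ge0$, this is \eqref{eq:frequency ode} without truncation, using $x\cdot\nabla w_r=2w_r$ on $\{u_r=0\}$. Both claims follow from it:
\begin{itemize}
\item $\phi'\ge\frac2r\beta^2$ wherever $\phi\ge2$;
\item $(\phi-2)\exp(-\int_s^r 2\beta(t)\,dt/t)$ is nondecreasing, which gives the propagation (your Gronwall step).
\end{itemize}
The paper propagates more directly: $r^{-4}H(\phi-2)=W_{\mathrm{obst}}(r,u)-E_\circ$ is nondecreasing by \eqref{eq:weiss monotonicity}, so its sign at $s$ persists for $r>s$.

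Two smaller corrections:
\begin{itemize}
\item On the contact set $w\Delta w=+p\,\ind_{\{u=0\}}$, not $-p\,\ind_{\{u=0\}}$.
\item $H(r,w)>0$ for $r\in(s,1)$ follows from uniqueness for the Dirichlet obstacle problem, not from unique continuation, which does not apply to the non-harmonic $w$. If $u=p$ on $\de B_r$, then $u\equiv p$ in $B_r$ by \Cref{lem:comparison principle}. Hence $H(s,w)=0$ and $\phi(w_s)$ would be undefined.
\end{itemize}
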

	\begin{proof}
		The same proof of \cite[Proposition 3.4]{FROS24} applies here, as the only step where it is used that $0$ is singular is to imply that $\phi(w_r)\ge2$ for all $r\ge s$. In our case, this follows by assumption and \cref{eq:weiss monotonicity,eq:weiss vs frequency}.
	\end{proof}
	Given $\lambda\ge 0$ we define the height functionals
	\[
	H(w) := \int_{\de B_1}w^2,\quad H_\lambda(r,w) := H(r^{-\lambda}w_r),
	\]
	and the frequency gap
	\[
	E_\lambda(r,w) := \phi(r,w)-\lambda.
	\]
	\begin{lemma}\label{lem:monneau}
		Suppose that $w=u-p$ where $u$ solves \eqref{eq:classical obstacle} in $B_1$ and $p\in\mathcal P_2$. Then for any $r,\lambda>0$
		\begin{equation}\label{eq:derivative monneau}
			\frac{d}{dr} \log H_\lambda(r,w)= \frac2rE_\lambda(r,w) + \frac2r\int_{B_1}\tilde w_r\Delta(\tilde w_r)\ge \frac 2rE_\lambda(r,w).
		\end{equation}
		In particular, if $\phi(s,w)\ge\lambda\ge2$ for some $s\in(0,1),$ then $\log H_\lambda(r,w)$ is increasing for $r\ge s$.
	\end{lemma}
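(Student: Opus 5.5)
The plan is to compute the logarithmic derivative of $H(r,w)=\int_{\de B_1}w_r^2$ directly, using the scaling identity $\frac{d}{dr}w_r(x)=\frac1r\, x\cdot\nabla w_r(x)$. On $\de B_1$ we have $x\cdot\nabla w_r=\de_\nu w_r$, so
\[
\frac{d}{dr}H(r,w)=\frac2r\int_{\de B_1}w_r\,\de_\nu w_r .
\]
An integration by parts (the divergence theorem applied to $w_r\nabla w_r$, which is legitimate since $w_r\in C^{1,1}$) gives
\[
\int_{\de B_1}w_r\,\de_\nu w_r=\int_{B_1}|\nabla w_r|^2+\int_{B_1}w_r\Delta w_r .
\]
Dividing by $H(r,w)$ produces
\[
\frac{d}{dr}\log H(r,w)=\frac2r\Big(\phi(r,w)+\int_{B_1}\tilde w_r\Delta\tilde w_r\Big).
\]
Since $\log H_\lambda(r,w)=\log H(r,w)-2\lambda\log r$, subtracting $2\lambda/r$ yields the equality in \eqref{eq:derivative monneau}.

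For the inequality, I will show that $w\Delta w\ge0$ pointwise a.e. We have $\Delta w=\ind_{\{u>0\}}-1=-\ind_{\{u=0\}}$. On $\{u=0\}$ this gives $w=-p\le0$ because $p\ge0$, so $w\Delta w=p\ge0$. Off the contact set $\Delta w=0$. Hence $\int_{B_1}\tilde w_r\Delta\tilde w_r\ge0$, which is the stated inequality. (We may assume $H(r,w)>0$, since otherwise the quantities are not defined.)

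For the final claim, assume $\phi(s,w)\ge\lambda\ge2$. Then $\phi(s,w)\ge2$, so \Cref{lem:frequency formula} shows that $\phi(r,w)$ is nondecreasing on $(s,1)$. Therefore $E_\lambda(r,w)\ge E_\lambda(s,w)\ge0$ for all $r\ge s$, and by \eqref{eq:derivative monneau} $\log H_\lambda(r,w)$ is increasing. The only point that needs care is the justification of the differentiation and integration by parts in $C^{1,1}$, but this is routine because $w_r\in W^{2,\infty}$ and $H$ is locally Lipschitz in $r$.
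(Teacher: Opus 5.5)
Your proposal is correct and follows the paper's route. The paper cites \cite[Lemma 2.6]{FS19} for the identity, which is exactly your direct computation, with the sign coming from $w\Delta w=p\,\ind_{\{u=0\}}\ge0$. It then uses \Cref{lem:frequency formula} to propagate $\phi\ge\lambda$ to $(s,1)$, just as you do.
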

	\begin{proof}
		Identity \eqref{eq:derivative monneau} is proven for example in \cite[Lemma 2.6]{FS19}. If $\phi(s,u-p)\ge \lambda\ge 2$ for some $s\in(0,1)$ then $\phi(r,u-p)\ge\lambda$ for any $r\in(s,1)$, thanks to \Cref{lem:frequency formula}, so the right hand side in \eqref{eq:derivative monneau} is nonnegative for $r\in(s,1)$.
	\end{proof}
	We have also the following doubling property, which follows from an upper bound on the frequency.
	\begin{lemma}\label{lem:doubling}
		If $\phi(1,u-p)\ge2$ and $\phi(2,u-p)\le10,$ then
		\[
		\|u-p\|_{L^2(\de B_2)}\le C\|u-p\|_{L^2(\de B_1)},
		\]
		for a universal $C>0$.
	\end{lemma}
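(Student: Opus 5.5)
We integrate the Monneau-type identity \eqref{eq:derivative monneau} with $\lambda=0$ between radii $1$ and $2$, using the frequency bounds to control the integrand. Write $w=u-p$ and $H(r):=H(r,w)=\int_{\de B_1}w_r^2$. Taking $\lambda=0$ in \Cref{lem:monneau} gives
\[
\frac{d}{dr}\log H(r)=\frac2r\phi(r,w)+\frac2r\int_{B_1}\tilde w_r\Delta\tilde w_r .
\]
Hence
\[
\log\frac{H(2)}{H(1)}=\int_1^2\frac{2}{r}\Big(\phi(r,w)+\int_{B_1}\tilde w_r\Delta\tilde w_r\Big)dr .
\]
It therefore suffices to bound the integrand from above by a universal constant for $r\in(1,2)$. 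Here $\phi(r,w)$ is understood via the obvious rescaling when the domain allows radius $2$, i.e. $u$ is defined on a ball containing $B_2$.

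The frequency term is the easier one. By assumption $\phi(1,w)\ge2$, so \Cref{lem:frequency formula}, applied after rescaling so that $B_2$ becomes the unit ball, shows that $r\mapsto\phi(r,w)$ is nondecreasing on $[1,2]$. Consequently
\[
2\le\phi(r,w)\le\phi(2,w)\le10\qquad\text{for all } r\in[1,2].
\]

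The main step is the correction term $\int_{B_1}\tilde w_r\Delta\tilde w_r$. Since $f\equiv1$ and $p\in\mathcal P_2$,
\[
\Delta w=\ind_{\{u>0\}}-1=-\ind_{\{u=0\}}\le0,
\]
and $w=-p\le0$ on $\{u=0\}$. So $w\,\Delta w=p\,\ind_{\{u=0\}}\ge0$, which is the wrong sign for an upper bound, and this is where care is needed. Two routes are available. The first is to avoid the correction term entirely, which seems cleanest: write $\frac{d}{dr}H=\frac{2}{r}\int_{\de B_1}w_r\,\de_\nu w_r$ and use $\int_{\de B_1}w_r\de_\nu w_r=D(r,w)+\int_{B_1}w_r\Delta w_r$. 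The second is to bound the extra term $\int_{B_1} w_r\Delta w_r=r^2\int_{\{u_r=0\}}p_r$ via \Cref{lem:lip estimate rhs}.

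For the second route, the contact set lies in a thin slab around $\{p=0\}$ of width $\lesssim\|w\|_{L^2(\de B_{2})}$, on which $p\lesssim \|w\|^2$. This gives the bound $\int_{B_1} w_r\Delta w_r\lesssim \|w\|^2$ times the slab volume, i.e. $\lesssim\|w\|_{L^2}^2\cdot\|w\|_{L^2}$. After normalizing by $H$, the correction term is then $\lesssim\|w_r\|_{L^2(\de B_1)}$ up to constants, which is bounded by the universal $L^\infty$ bound \eqref{eq:universal upper bound}.

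Plugging $\phi\le10$ and the bounded correction into the integral gives
\[
\log\frac{H(2)}{H(1)}\le C,
\]
and exponentiating yields the doubling estimate. The expected obstacle is exactly this correction term: making the trapping of the contact set quantitatively compatible with normalization by $H(r)$. If the case $\|w\|$ tiny is delicate, it should work directly, since the correction term is then even smaller.
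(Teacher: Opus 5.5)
Your handling of the frequency term is fine, but the bound on the correction term $\int_{B_1}\tilde w_r\Delta\tilde w_r$ fails.

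\textbf{The first route.} It is not a separate route. The relation $\int_{\de B_1}w_r\de_\nu w_r=D(r,w)+\int_{B_1}w_r\Delta w_r$ is exactly the identity \eqref{eq:derivative monneau}, so nothing is avoided.

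\textbf{The second route is circular.} \Cref{lem:lip estimate rhs} traps the contact set of $w_r$ in $B_1$ in a slab whose width is controlled by the $L^2$ norm of $w_r$ on $\de B_2$. That is a strictly larger sphere: $\de B_{2r}$ in original variables, which for $r\in(1,2)$ lies outside the domain. At best, with $u$ only defined on $B_2$, you can use $H(2,w)$, with constants degenerating as $r\to2$. So what you actually obtain is
\[
\int_{B_1}\tilde w_r\Delta\tilde w_r\lesssim \frac{H(2,w)^{3/2}}{H(r,w)}=H(2,w)^{1/2}\,\frac{H(2,w)}{H(r,w)} .
\]
The ratio $H(2,w)/H(r,w)$ is precisely the doubling quantity you are trying to bound. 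The universal $L^\infty$ bound only controls the factor $H(2,w)^{1/2}$. Integrating then yields an inequality of the form $\log K\le C+C'\|w\|K$ for $K=H(2,w)/H(1,w)$, which gives no bound on $K$.

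Your closing remark about tiny $\|w\|$ is therefore unjustified: the correction is small relative to $H(2,w)$, not relative to $H(r,w)$. In addition, \Cref{lem:lip estimate rhs} applies only to $p=p_\nu$, whereas the lemma is stated for $p\in\mathcal P_2$.

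\textbf{The missing idea.} Use the positive square term in the frequency formula; this is the paper's argument, following \cite[Lemma 3.6]{FROS24}. By \Cref{lem:frequency formula}, rescaled to $B_2$, one has $\frac{d}{dr}\phi(r,w)\ge\frac2r\big(\int_{B_1}\tilde w_r\Delta\tilde w_r\big)^2$ on $(1,2)$. Integrating,
\[
\int_1^2\frac2r\Big(\int_{B_1}\tilde w_r\Delta\tilde w_r\Big)^2dr\le\phi(2,w)-\phi(1,w)\le 8 .
\]
By Cauchy--Schwarz, $\int_1^2\frac2r\int_{B_1}\tilde w_r\Delta\tilde w_r\,dr\le(2\log 2)^{1/2}\,8^{1/2}$. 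Inserting this and $\phi\le 10$ into the integrated identity \eqref{eq:derivative monneau} with $\lambda=0$ gives $\log\frac{H(2,w)}{H(1,w)}\le C$. This argument needs no information on the contact set and works for every $p\in\mathcal P_2$.
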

	\begin{proof}
		The same proof of \cite[Lemma 3.6]{FROS24} applies here, as it relies only on \cite[Equations (3.6) and (3.7)]{FROS24}, which hold also in our case, thanks to \Cref{lem:frequency formula,lem:monneau}.
	\end{proof}
	Now we compute the derivative of the frequency gap.
	\begin{lemma}\label{lem:derivative weiss}
		Given $w\in C^{1,1}$ we have
		\[
		\frac{d}{dr} E_\lambda(r,w) \ge\frac{n-2+2\lambda}{r}(E_\lambda(z_r)-E_\lambda(w_r))-E_\lambda(r,w)\frac{d}{dr} \log H_\lambda(r,w),
		\]
		where $z_r$ is the $\lambda$-homogeneous extension of $\tilde w_r|_{\de B_1}$.
		
		In particular, if $w = u-p$ where $u$ solves \eqref{eq:classical obstacle} and $p\in\mathcal P_2$ with $E_{\lambda}(r,w)\le0$ then
		\[
		\frac{d}{dr} E_\lambda(r,w) \ge\frac{n-2+2\lambda}{r}(E_\lambda(z_r)-E_\lambda(w_r))-\frac2r E_\lambda(r,w)^2.
		\]
	\end{lemma}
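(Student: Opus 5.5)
The proof is a direct computation of $\frac{d}{dr}\phi(r,w)$ in terms of the normalized rescaling $\omega:=\tilde w_r$. The term that does not appear in the statement will turn out to be a nonnegative square, which we then drop.

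\emph{Scaling identities.} By scaling, $D(r,w)=r^{2-n}\int_{B_r}|\nabla w|^2$. Hence
\[
r\,\tfrac{d}{dr}D(r,w)=(2-n)D(r,w)+\int_{\de B_1}|\nabla w_r|^2,
\qquad
r\,\tfrac{d}{dr}H(r,w)=2\int_{\de B_1}w_r\,\de_\nu w_r .
\]
Dividing by $H(r,w)$ and writing $a:=\int_{\de B_1}\omega\,\omega_\nu$, we get
\[
r\,\tfrac{d}{dr}\phi(r,w)=(2-n)\phi+\int_{\de B_1}|\nabla_\tau\omega|^2+\int_{\de B_1}\omega_\nu^2-2\phi\, a .
\]
Since $H_\lambda(r,w)=r^{-2\lambda}H(r,w)$, we also have $r\,\frac{d}{dr}\log H_\lambda(r,w)=2(a-\lambda)$.

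\emph{Comparison with the homogeneous extension.} Let $z_r$ be the $\lambda$-homogeneous extension of $\omega|_{\de B_1}$; recall $\int_{\de B_1}\omega^2=1$. Integrating in polar coordinates gives
\[
\int_{B_1}|\nabla z_r|^2=\frac{1}{n-2+2\lambda}\int_{\de B_1}\big(|\nabla_\tau\omega|^2+\lambda^2\omega^2\big),
\]
so that
\[
\int_{\de B_1}|\nabla_\tau\omega|^2=(n-2+2\lambda)\big(E_\lambda(z_r)+\lambda\big)-\lambda^2 .
\]
Substitute this into the formula for $r\phi'$ and write $\phi=E+\lambda$ with $E:=E_\lambda(r,w)=E_\lambda(w_r)$. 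Adding and subtracting $(n-2+2\lambda)E$, the constant terms collapse to $\lambda^2$ and we obtain
\[
r\,\tfrac{d}{dr}E_\lambda(r,w)=(n-2+2\lambda)\big(E_\lambda(z_r)-E_\lambda(w_r)\big)+\Big(\int_{\de B_1}\omega_\nu^2-2\lambda a+\lambda^2\Big)+2E(\lambda-a).
\]
By Cauchy--Schwarz and $\int_{\de B_1}\omega^2=1$ we have $\int_{\de B_1}\omega_\nu^2\ge a^2$, so the bracket is at least $(a-\lambda)^2\ge0$ and can be dropped. The last term equals $-E\cdot r\frac{d}{dr}\log H_\lambda(r,w)$ by the identity above. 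This proves the first inequality. The only regularity used is $w\in C^{1,1}$, which justifies the differentiation and the integrations by parts for a.e.\ $r$.

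\emph{The case $w=u-p$.} By \Cref{lem:monneau},
\[
\tfrac{d}{dr}\log H_\lambda(r,w)=\tfrac2r E+\tfrac2r\int_{B_1}\tilde w_r\Delta\tilde w_r .
\]
Since $\Delta w=\ind_{\{u>0\}}-1=-\ind_{\{u=0\}}$ and $w=-p$ on $\{u=0\}$, we get $w\,\Delta w=p\,\ind_{\{u=0\}}\ge0$, so $\int_{B_1}\tilde w_r\Delta\tilde w_r\ge0$. Because $E\le0$, we have $-E\ge0$, and therefore
\[
-E\,\tfrac{d}{dr}\log H_\lambda\ge -E\cdot\tfrac{2}{r}E=-\tfrac2rE^2 .
\]
Inserting this into the first inequality gives the second one.

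The step requiring most care is the bookkeeping in the expansion, namely checking that the leftover terms form exactly the square $(a-\lambda)^2$ plus the cross term $2E(\lambda-a)$, and fixing the sign convention for $\nu$ in $a$ consistently with \Cref{lem:monneau}. Everything else is standard.
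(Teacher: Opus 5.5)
Your proof is correct and follows the paper's route. The paper writes $E_\lambda=W_\lambda/H_\lambda$ and cites \cite[Equation (2.1)]{CSV20} for $\frac{d}{dr}W_\lambda$; you carry out that same computation by hand, and the bracket you drop is exactly $\int_{\de B_1}(\de_\nu\tilde w_r-\lambda\tilde w_r)^2\ge0$, the nonnegative remainder in that Weiss-energy formula. Your second part (using $w\,\Delta w=p\,\ind_{\{u=0\}}\ge0$ to get $\frac{d}{dr}\log H_\lambda\ge\frac2rE_\lambda$, then $E_\lambda\le0$) is the same as the paper's use of \Cref{lem:monneau}.
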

	\begin{proof}
		For $\lambda,r>0$ set
		\[
		W_\lambda(r,w) = E_\lambda(r,w) H_\lambda(r,w),
		\]
		so that
		\[
		\frac{d}{dr} E_\lambda(r,w) = \frac{1}{H_\lambda(r,w)}\frac{d}{dr}W_\lambda(r,w) - E_\lambda(r,w)\frac{d}{dr}\log H_\lambda(r,w).
		\]
		Recalling \Cref{lem:monneau}, the result follows if we prove
		\[
		\frac{d}{dr}W_\lambda(r,w) \ge \frac{n-2+2\lambda}{r}(E_\lambda(z_r)-E_\lambda(r,w))H_{\lambda}(r,w).
		\]
		But this is a consequence of \cite[Equation (2.1)]{CSV20}.
	\end{proof}
	
	We also include a frequency gap at singular points.
	Recall that $0\in\Sigma_{n-1}(u)$ if and only if $u(x) = \tfrac{1}{2}(x\cdot e )^2 + o(r^2)$ in $B_r$ for some $e\in\mathbb S^{n-1}$.
	\begin{lemma}\label{lem:frequency gap}
		Let $u$ be a solution of \eqref{eq:classical obstacle} in $B_1\subset\R^n$ with $0\in\Sigma_{n-1}(u)$ and blow-up $p^*$. Then either $u\equiv p^*$ or $\phi(r,u-p^*)\ge3$ for all $r\in(0,1)$.
	\end{lemma}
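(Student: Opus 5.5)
Since the frequency of $w=u-p^*$ is effectively monotone once it is at least $2$, the plan is to get the bound at $r\to0^+$ and propagate it upward.

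\emph{Step 1: frequency at least $2$ at all scales.} At a singular point Weiss' energy satisfies $W_{\mathrm{obst}}(0^+,u)=E_\circ$, and by \eqref{eq:weiss monotonicity} we get $W_{\mathrm{obst}}(r,u)\ge E_\circ$ for every $r$. By \eqref{eq:weiss vs frequency} this gives $\phi(r,w)\ge2$ for all $r\in(0,1)$ whenever $w\not\equiv0$. Then \Cref{lem:frequency formula} applies at every scale, so $r\mapsto\phi(r,w)$ is nondecreasing on $(0,1)$. The limit $\lambda_0:=\phi(0^+,w)\ge2$ exists, and it suffices to show $\lambda_0\ge3$.

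\emph{Step 2: blow-up of $w$.} Suppose by contradiction that $\lambda_0<3$. Consider the normalized rescalings $\tilde w_r=w_r/\|w_r\|_{L^2(\de B_1)}$. The monotone frequency gives a doubling bound via \Cref{lem:doubling}. The Lipschitz bound \eqref{eq:lip estimates} and the trapping of the contact set \eqref{eq:bound contact set} (with $f\equiv1$, so there are no error terms) control the family uniformly. We also use $\|w_r\|_{L^2(\de B_1)}=o(r^2)$, which holds because $p^*$ is the blow-up. Together these give that the contact set of $u_r$ collapses to $\{x\cdot e=0\}$ at a rate much smaller than the size of $w_r$ relative to $r^2$.

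From \eqref{eq:pde u-p_nu}, $\Delta\tilde w_r\le0$ is supported near this hyperplane and $\tilde w_r\Delta\tilde w_r\ge0$. Passing to the limit along a subsequence gives $\tilde w_r\to q$ in $C^1_\loc$ (weakly in $H^1$), and $q$ solves the thin obstacle problem \eqref{eq:thin obstacle} with thin space $e^\perp$. Note that $\tilde w_r\ge -p^*/\|w_r\|$, which vanishes on the hyperplane, so $q\ge0$ there. The strong convergence needed for the frequency to pass to the limit comes from the bound on $\int\tilde w_r\Delta\tilde w_r$. Integrating the inequality of \Cref{lem:frequency formula} shows this quantity is small on average. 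Hence $q$ is $\lambda_0$-homogeneous, nontrivial, and has $\phi(q)=\lambda_0\in[2,3)$.

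\emph{Step 3: classify $q$ and reach a contradiction.} By \Cref{prop:thin obstacle} and homogeneity, $\lambda_0=2$ and $q=x\cdot Ax$ with $\tr A=0$ and $A\ge0$ on $e^\perp$. We must exclude this. If $\lambda_0=2$, then $W_{\mathrm{obst}}(r,u)-E_\circ=o(\cdot)$, and the quadratic part of $q$ can be absorbed into a new polynomial $p^*+\epsilon q\in\mathcal P_2$ approximating $u$ better. This contradicts the uniqueness of the blow-up $p^*$, since $\|w_r\|\gg r^{2}\cdot$(the corresponding correction) would force $p^*$ to be a different limit.

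The cleanest route, following \cite{FS19}, uses Monneau's monotonicity. By \Cref{lem:monneau} with $\lambda=2$, $H_2(r,u-p)$ is increasing for every $p\in\mathcal P_2$ with $\phi\ge2$. We also have the orthogonality $\int_{\de B_1}w_r\,(p-p^*)\le$ small for all $p\in\mathcal P_2$. Passing to the limit gives $\int_{\de B_1}q\,(p-p^*)\le0$ for all $p\in\mathcal P_2$. Since $p^*+tq$ and $p^*-tq$ both lie in $\mathcal P_2$ for small $t$ up to the $e^\perp$ constraint, this forces $q\equiv0$ on the admissible directions, which is the contradiction. I expect Step 3, specifically this orthogonality argument for degree-$2$ limits, to be the main obstacle. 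Once $\lambda_0\ge3$, monotonicity of the frequency gives $\phi(r,w)\ge3$ for all $r$.
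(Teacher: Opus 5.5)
Your proof is correct in substance but takes a different route from the paper, and two steps need repair. The paper's proof is a pure citation. \cite[Proposition 2.10(b)]{FS19} supplies both $\lambda_*=\phi(0^+,u-p^*)\ge2+\alpha_\circ$ and a nontrivial $\lambda_*$-homogeneous solution of \eqref{eq:thin obstacle}. Then \cite[Theorem 2]{FranceschiniSavin25} excludes $\lambda_*\in(2,3)$, and \Cref{lem:frequency formula} propagates $\phi\ge3$ to all $r$.

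You instead re-derive the FS19 input from the paper's own toolbox (Weiss monotonicity, \Cref{lem:frequency formula}, \Cref{lem:doubling}, \Cref{lem:lip estimate rhs}). You then let \Cref{prop:thin obstacle}, which already packages the Franceschini--Savin gap, reduce everything to the single case $\lambda_0=2$. This makes the FS19 intermediate gap $(2,2+\alpha_\circ)$ unnecessary: the only extra ingredient is the Monneau-type exclusion of quadratic blow-ups. The price is that you must carry out the blow-up compactness and that exclusion yourself.

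\textbf{The final step is not right as written.} Take $p^*=\tfrac12(x\cdot e)^2$ and $q=x\cdot Ax$. Then $p^*-tq\in\mathcal P_2$ would require $A|_{e^\perp}\le0$. Even $p^*+tq$ fails to be convex for every $t>0$ when $\pi_{e^\perp}(Ae)$ has a component in $\ker(A|_{e^\perp})$, by a Schur complement computation.

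Here is the correct version. By Step 1, $\phi(r,u-p)\ge2$ for every $p\in\mathcal P_2$. Hence \Cref{lem:monneau} gives $H_2(r,u-p)\ge\|p^*-p\|^2_{L^2(\de B_1)}$. Expanding $r^{-2}u_r-p=r^{-2}w_r+(p^*-p)$ yields $\int_{\de B_1}\tilde w_r(p-p^*)\le\tfrac12r^{-2}\|w_r\|_{L^2(\de B_1)}\to0$. So $\int_{\de B_1}q(p-p^*)\le0$ for all $p\in\mathcal P_2$.

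\emph{Killing the mixed block.} Test with $p_\theta=\tfrac12\big(x\cdot(\cos\theta\, e+\sin\theta\, b)\big)^2$, where $b\perp e$ is a unit vector. Since $p_\theta-p^*=\theta(x\cdot e)(x\cdot b)+O(\theta^2)$ and both signs of $\theta$ are allowed, you get $\int_{\de B_1}q\,(x\cdot e)(x\cdot b)=0$. Using $\int_{\de B_1}(x\cdot Mx)(x\cdot Nx)=c_n(\tr M\tr N+2\tr(MN))$ and $\tr A=0$, this says $e\cdot Ab=0$.

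\emph{Killing the rest.} So $A=\mathrm{diag}(-\tr A',A')$ with $A'=A|_{e^\perp}\ge0$. Now $p^*+tq=\tfrac12 x\cdot\mathrm{diag}(1-2t\tr A',2tA')x\in\mathcal P_2$ for small $t>0$. Testing with it gives $t\int_{\de B_1}q^2\le0$, contradicting $\|q\|_{L^2(\de B_1)}=1$.

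The sign condition in \Cref{prop:thin obstacle}(c) is exactly what places $q$ in the tangent cone of the convex set $\mathcal P_2$ at $p^*$. Your first heuristic in Step 3, absorbing $q$ into a new polynomial and invoking uniqueness of the blow-up, is not an argument and should be dropped.

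\textbf{Step 2 also needs adjusting.} You cannot claim $C^1_{\loc}$ convergence, since $\Delta\tilde w_r=-r^2\|w_r\|^{-1}_{L^2(\de B_1)}\chi_{\{u_r=0\}}$ blows up on the slab. Smallness ``on average'' from integrating \Cref{lem:frequency formula} also does not by itself let $\phi(\rho,\cdot)$ pass to the limit at every $\rho$.

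You already have a stronger bound. On $\{u_r=0\}\cap B_1$, the trapping \eqref{eq:bound contact set} plus doubling give $0\le\tilde w_r\Delta\tilde w_r=r^4p^*\|w_r\|^{-2}_{L^2(\de B_1)}\le C$. This holds on a slab of width $Cr^{-2}\|w_r\|_{L^2(\de B_1)}\to0$, so $\int_{B_1}\tilde w_r\Delta\tilde w_r\to0$ along every sequence $r\to0$. Together with $q\Delta q=0$ and a cutoff, this gives strong $H^1_{\loc}$ convergence. Hence $\phi(\rho,q)=\lambda_0$ for every $\rho$, and $q$ is $\lambda_0$-homogeneous.

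With these two repairs your proof is complete.
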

	\begin{proof}
		By \cite[Proposition 2.10(b)]{FS19} we know $\lambda_*:=\phi(0+,u-p^*)\ge2+\alpha_\circ$ for some $\alpha_\circ>0$, and there exists a nontrivial $\lambda_*$-homogeneous solution of \eqref{eq:thin obstacle}. By \cite[Theorem 2]{FranceschiniSavin25}, we must have in fact $\lambda_*\ge3$. Then \Cref{lem:frequency formula} implies the assertion.
	\end{proof}

	\subsection{Paraboloid Solution in 2D}\label{sec:paraboloid}
	In the proof of \Cref{lem:low frequency close poly}, in Case (b), we will make use of a special global solution of \eqref{eq:classical obstacle}. This is for brevity only, in the general case we bypass this (cf. \Cref{lem:low frequency close poly f}). 
	
	There exists a global solution $P$ of the obstacle problem in $\R^2$ such that
	\begin{equation}\label{eq:blow down paraboloids}
		R^{-2}P(R\cdot) = \tfrac12x_2^2 + R^{-1/2}\psi_{3/2}(x) + o(R^{-1/2}),\quad\{P=0\}=\{x_1\le -\bar ax_2^2\},
	\end{equation}
	where $\bar a>0,$ $R\to+\infty$ and $\psi_{3/2}$ is given by \eqref{eq:1.5 homogeneous solution} (see for example Propositions 2.11 and 7.3 in \cite{2dglobalsolutions}). 
	
	The contact set of this solution is a parabola with vertex in the origin, hence the name.
	
	We will use that $P$ is strictly convex in $\R^2$, more precisely:
	\begin{lemma}
		Given $\sigma>0,$ there are positive constants $R_\sigma$ large and $c_\sigma$ small such that
		\begin{equation}\label{eq:strict convexity paraboloids}
			D^2 (R^{-2}P(R\cdot)) > c_\sigma R^{-1/2}\quad \text{in }\{\dist(\cdot,\{x_2=0,x_1\le-\sigma/2\})>\sigma\}\cap B_1
		\end{equation}
		for $R>R_\sigma$.
	\end{lemma}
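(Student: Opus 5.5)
The idea is to write $P_R:=R^{-2}P(R\cdot)=\tfrac12x_2^2+R^{-1/2}w_R$. Here $w_R\to\psi_{3/2}$ locally uniformly by \eqref{eq:blow down paraboloids}; I read the $o(R^{-1/2})$ there as uniform on compact sets, say on $\overline{B_2}$. The plan is to upgrade this convergence to $C^2$ convergence on the region in question, and then to read off the strict convexity from the explicit Hessian of $\psi_{3/2}$.

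\emph{Step 1: harmonicity away from the contact set.} Set $K_\sigma:=\{\dist(\cdot,\{x_2=0,x_1\le-\sigma/2\})>\sigma\}\cap B_1$. If $x\in K_\sigma$, then $\dist(x,\{x_2=0,x_1\le0\})>\sigma/2$, because the segment $\{x_2=0,\,-\sigma/2<x_1\le 0\}$ lies within $\sigma/2$ of the point $-\tfrac\sigma2 e_1$. The contact set of $P_R$ is $\{x_1\le -\bar a R x_2^2\}$. Inside $B_2$ it is contained in $\{x_1\le0,\ |x_2|\le (2/(\bar aR))^{1/2}\}$, which is the $CR^{-1/2}$-neighbourhood of the half-line $\{x_2=0,x_1\le 0\}$. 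So for $R\ge R_\sigma$ large, every ball $B_{\sigma/4}(x)$ with $x\in K_\sigma$ lies in $\{P_R>0\}$. On such a ball $\Delta P_R=1=\Delta(\tfrac12x_2^2)$, hence $w_R$ is harmonic there. The same holds for $\psi_{3/2}$, which is harmonic off $\{x_2=0,x_1\le0\}$.

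\emph{Step 2: $C^2$ convergence.} By interior estimates for harmonic functions,
\[
\|D^2w_R-D^2\psi_{3/2}\|_{L^\infty(K_\sigma)}\le C\sigma^{-2}\|w_R-\psi_{3/2}\|_{L^\infty(B_2)}\to0 .
\]

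\emph{Step 3: the Hessian of $\psi_{3/2}$.} Write $\psi_{3/2}=\mathrm{Re}(z^{3/2})$, with the branch cut along the negative real axis and $z=x_1+ix_2$. Then
\[
\partial_{11}\psi_{3/2}=\tfrac34\,r^{-1/2}\cos(\theta/2).
\]
On $K_\sigma$ we have $r\le1$ and the angle $\theta$ stays at distance $\ge c(\sigma)$ from $\pm\pi$, so $\partial_{11}\psi_{3/2}\ge 2c_\sigma>0$. Moreover $|D^2\psi_{3/2}|\le C_\sigma$, since $r\ge\sigma/2$ there. By Step 2, for $R\ge R_\sigma$ we get $\partial_{11}w_R\ge c_\sigma$ and $|D^2w_R|\le 2C_\sigma$ on $K_\sigma$.

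\emph{Step 4: conclusion.} On $K_\sigma$,
\[
D^2P_R=\begin{pmatrix}R^{-1/2}a& R^{-1/2}b\\ R^{-1/2}b& 1+R^{-1/2}d\end{pmatrix},\qquad a\ge c_\sigma,\quad |b|,|d|\le 2C_\sigma .
\]
The trace is $\approx1$ and the determinant is $R^{-1/2}a+O(R^{-1})\ge \tfrac12 c_\sigma R^{-1/2}$ for $R$ large. Hence the smallest eigenvalue is at least $\det/\tr\ge \tfrac14 c_\sigma R^{-1/2}$, which gives \eqref{eq:strict convexity paraboloids} after renaming $c_\sigma$.

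The main obstacle is Step 2, namely the uniform-in-$R$ control of the contact set near the vertex. This is exactly why the region must exclude a neighbourhood of the origin, where $D^2\psi_{3/2}$ blows up. I handle it using the explicit parabola $\{x_1\le-\bar aR x_2^2\}$ together with the uniformity of the expansion \eqref{eq:blow down paraboloids} on compacts, which I take from \cite{2dglobalsolutions}.
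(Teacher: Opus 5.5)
Your proposal is correct and follows essentially the same route as the paper. Both arguments expand $D^2(R^{-2}P(R\cdot))$ as $D^2(\tfrac12x_2^2)+R^{-1/2}D^2\psi_{3/2}+o(R^{-1/2})$, use $\partial_{11}\psi_{3/2}=\tfrac34 r^{-1/2}\cos(\theta/2)$ bounded below on the region, and finish with the $2\times 2$ positivity check. The only difference is that your Steps 1--2 make explicit the $C^2$ upgrade of \eqref{eq:blow down paraboloids}, which the paper simply asserts, via harmonicity off the thin contact set $\{x_1\le-\bar aRx_2^2\}$ together with interior estimates.
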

	\begin{proof}
		By \eqref{eq:blow down paraboloids}, away from $\{x_2=0,x_1\le0\}$ we have, for $R$ large,
		\[
		D^2 (R^{-2}P(R\cdot)) = D^2(\tfrac12x_2^2) + R^{-1/2}D^2\psi_{3/2}+o(R^{-1/2}).
		\]
		As $D_{11}\psi_{3/2}=\tfrac34 r^{-1/2} \cos(\theta/2)$ we have 
		\[
		D_{11}\psi_{3/2} \ge c_\sigma>0\quad  \text{ in } \{\dist(\cdot,\{x_2=0,x_1\le-\sigma/2\})>\sigma\}\cap B_1.
		\]
		Thus in the same region, for $R$ large, we have
		\[
		D^2(R^{-2} P(R\cdot))  = \begin{pmatrix}
			R^{-1/2}D_{11}\psi_{3/2}+o\big(R^{-1/2}\big)&O\big(R^{-1/2}\big)\\ O\big(R^{-1/2}\big) & 1+O\big(R^{-1/2}\big)
		\end{pmatrix}>c_\sigma'
		\,R^{-1/2}
		\]
		as we wanted.
	\end{proof}
	
	\subsection{Semiconvexity decay assuming frequency $\bm{<3}$}
	
	Recall that we write $p\in\Sigma_1$ if $p(x)=\tfrac12(x\cdot\nu)^2$ for some $\nu\in\mathbb S^1$.
	Given $u\colon \overline B_1\to\R$ we set
	\begin{equation}\label{eq:distance from Sigma1}
		\dist_{\Sigma_{1}}(u) = \inf_{p\in\Sigma_{1}}\|u-p\|_{L^2(\de B_1)}.
	\end{equation}
	
	\begin{proposition}\label{prop:low frequency}
		Given $\delta>0,$ there is $C_\delta>0$ large such that, if $u$ solves \eqref{eq:classical obstacle} in $B_1\subset\R^2$ with $0\in\de\{u>0\}$ and
		\[
		\phi(1,u-p)\le 3-\delta\quad\forall p\in\Sigma_1,
		\]
		then
		\[
		D^2u(x)\ge -C_\delta \dist_{\Sigma_1}(u)|x|\quad \text{in }B_{1/2}.
		\]
	\end{proposition}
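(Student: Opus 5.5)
We argue by compactness and iteration, in the spirit of the improvement-of-semiconvexity scheme of \cite{SY23}. Set $\eps:=\dist_{\Sigma_1}(u)$. If $\eps\ge\eps_\delta$ for a small constant $\eps_\delta$, the conclusion reduces to a bound $D^2u\ge -C|x|$ in $B_{1/2}$. We do not yet know how to obtain such a linear bound in this regime. We would try to reduce it to the small-$\eps$ case by first passing, via compactness and the frequency bound, to a scale where $u$ is close either to a regular cone or to some $p\in\Sigma_1$. The real content, however, is the small-$\eps$ regime.

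\textbf{Step 1: a dichotomy at the first scale.} Take $p\in\Sigma_1$ nearly realizing $\eps$, say $p=\tfrac12x_2^2$. Normalize $\tilde w=(u-p)/\|u-p\|_{L^2(\de B_1)}$. By \Cref{lem:lip estimate rhs} and the frequency bound $\phi\le 3-\delta$, the family $\tilde w$ is compact as $\eps\to0$. Its limits solve the thin obstacle problem \eqref{eq:thin obstacle}, since $\Delta \tilde w\le0$ and the contact set collapses onto $\{x_2=0\}$ by \eqref{eq:bound contact set}. Moreover, because $\phi\le 3-\delta$, \Cref{prop:thin obstacle} applies: at the origin, and at every point of the thin space with frequency $<3$, the limit $v$ falls into one of the three cases (a), (b), (c).

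Arguing by contradiction and compactness, we show that at some universal scale $\rho=\rho_\delta$ the function $u$ is $\eps$-close (up to constant $C_\delta$) to a solution $U$ of one of two kinds, each strictly convex in a quantified way away from the contact set, with strict convexity of order $A_\circ\eps$:
\begin{itemize}
\item[(a)] In case (a), $U$ is essentially a translate of $\tfrac12(x_2)_+^2$, so we are in the regular regime. There \Cref{lemma:regular case} directly gives $D^2u\ge -C\eps|x|$.
\item[(b)] In case (b), $U$ is a rescaled paraboloid solution $R^{-2}P(R\,\cdot)$ with $R^{-1/2}\sim\eps$. The strict convexity \eqref{eq:strict convexity paraboloids} lets us apply \Cref{lem:inherit convexity} to get $D^2u\ge0$ near $0$.
\item[(c)] In case (c), $U=p+\eps\,x\cdot Ax$, corrected to a genuine solution, with $e\cdot Ae\ge 0$. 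Here convexity inheritance again applies, after perturbing $p$ to $p'\in\Sigma_1$ or $\mathcal P_2$ in such a way that the quadratic correction becomes convex.
\end{itemize}

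\textbf{Step 2: iteration.} In the cases where we only obtain an improved approximation, we iterate. At scale $\rho^k$ we get $\dist_{\Sigma_1}(u_{\rho^k})\rho^{-2k}\le C\eps\rho^{k}$, which is a linear decay. The iteration continues until either convexity is inherited (cases (b), (c)) or the regular regime is entered (case (a), via \Cref{lemma:regular case}). At each step, \Cref{lem:semiconvexity} converts the $L^2$ distance into the semiconvexity bound $D^2u\ge -C\eps\rho^k$ on $B_{\rho^k}$, which gives the $|x|$ rate.

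The frequency hypothesis must be propagated down the scales. \Cref{lem:frequency formula} and \Cref{lem:monneau} give this, together with the fact that the frequency at the new scale stays below $3-\delta/2$ for the new $p$. Without this, the decay would stall at frequency $3$, and that cubic regime is exactly what \Cref{prop:extended monneau} is for.

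\textbf{Main obstacle.} We expect the main obstacle to be the compactness step. Blow-up limits may have frequency exactly $2$ with a traceless $A$ that is not convex. In that case the Hessian correction has to be absorbed by rotating and reselecting $p$ while still gaining a definite factor of decay. Controlling this reselection uniformly, so that constants do not accumulate along the iteration, is the delicate point. We would first try to handle it through a quantitative version of case (c) combined with the doubling estimate of \Cref{lem:doubling}.
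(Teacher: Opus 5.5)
Your proposal has genuine gaps. The scaffolding you build around them, a universal scale $\rho_\delta$ and an iteration, is also not what closes the argument.

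\textbf{The regime $\dist_{\Sigma_1}(u)\ge\eps_\delta$.} You leave this regime open, and your sketched reduction omits the case where the limit has a strictly convex blow-up. The paper argues by contradiction. If $D_{e_ke_k}u_k(x_k)<-k\eps_\delta|x_k|$, the universal $C^{1,1}$ bound forces $x_k\to0$, and $u_k\to u_\infty$ in $C^1$. Because $x_k\to0$, it suffices to find \emph{one} scale $\bar r>0$, allowed to depend on $u_\infty$, at which $\bar r^{-2}u_k(\bar r\cdot)$ is convex near $0$ or obeys a linear lower bound. Caffarelli's dichotomy for $u_\infty$ at $0$ supplies it:
\begin{itemize}
\item if $0$ is regular, \Cref{lemma:regular case} applies;
\item if $0\in\Sigma_0(u_\infty)$, the blow-up is strictly convex and \Cref{lem:inherit convexity} gives $D^2u_k\ge0$ in $B_{\rho_1\bar r}$;
\item $0\in\Sigma_1(u_\infty)$ is impossible. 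Since $u_\infty\not\equiv p^*$ (its distance to $\Sigma_1$ is at least $\eps_\delta$), \Cref{lem:frequency gap} gives $\phi(1,u_\infty-p^*)\ge3$, contradicting the hypothesis passed to the limit.
\end{itemize}
Your idea of reducing to \Cref{lem:low frequency close poly} at scale $\bar r$ would also handle the $\Sigma_1$ case, since \Cref{lem:frequency formula} propagates the frequency bound to smaller scales. It only works inside this compactness framework, because $\bar r$ is not uniform in $u$.

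\textbf{The small-distance regime.} The same mechanism closes the argument, with no iteration. \Cref{lem:semiconvexity} with $U=\tfrac12x_2^2$ gives $-D^2u_k\le C\eps_k$, so a violation $-k\eps_k|x_k|$ forces $x_k\to0$. It then suffices to find, for each thin-obstacle limit $v$, a scale $\bar r(v)$ at which the rescaled $u_k$ is convex or falls under \Cref{lemma:regular case}. This scale depends on the blow-up of $v$ at $0$ and on how fast $v$ approaches it, so it is not universal.

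Your Step 2 presupposes a case that yields only an improved approximation; none of (a), (b), (c) does. Moreover, a sustained decay $\dist_{\Sigma_1}(\rho^{-2k}u(\rho^k\cdot))\le C\eps\rho^k$ is a cubic rate. Combined with \Cref{lem:frequency gap}, that is the frequency-$3$ regime the hypothesis excludes.

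The obstacle you single out, a non-convex traceless $A$, is ruled out precisely by $\delta$, which your argument never uses. In case (c), rotate $p$ to remove the $x_1x_2$ term, leaving $\hat v=\alpha(x_1^2-x_2^2)+o(|x|^2)$ with $\alpha\ge0$ and $\phi(1,\hat v)\le3-\delta$. If $\alpha=0$, \Cref{prop:thin obstacle} would force $\phi(1,\hat v)\ge3$. Hence $\alpha>0$, so (after rotation) $(\tfrac12-\eps_k\alpha)x_2^2+\eps_k\alpha x_1^2$ is convex at order $\eps_k$, and convexity is inherited.

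Cases (a) and (b) also need barrier arguments that locate the contact set, which you do not mention.
\begin{itemize}
\item In (a), $u_k$ is $\eps_k$-close to $\tfrac12x_2^2$, not to a translate of $\tfrac12(x_2)_+^2$. You must first show $a_\pm\le0$, and then $B_{\bar r}\cap\{x_2=\beta_k\}\subset\{u_k=0\}$. Only then does $u_k$ split into two one-sided solutions to which \Cref{lemma:regular case} applies.
\item In (b), you must show $\{x_2=0,\ -1+\tfrac12\sigma\le x_1\le-\tfrac12\sigma\}\subset\{\bar r^{-2}u_k(\bar r\cdot)=0\}$. This places the region where the paraboloid solution fails to be strictly convex, cf.\ \eqref{eq:strict convexity paraboloids}, within $\sigma_\circ$ of the contact set of $u_k$, as hypothesis (i) of \Cref{lem:inherit convexity} requires.
\end{itemize}
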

	Here is a summary of the proof. When $u$ is a 2D solution, one of the following will happen: either $u$ is close to a regular point, in which case we conclude by $\eps$-regularity; or $u$ is close to a singular point in the smallest stratum (i.e., a strictly convex $p$), in which case $u$ inherits convexity; or to a singular point in the maximal stratum (close to $p\in\Sigma_1$). This last case is the most delicate, to understand it we employ ideas in the spirit of \cite{SY23} and crucially exploit the separation $\delta>0$ from the frequency level $3$.
	
	It may be helpful to the reader to start from the proof of \Cref{prop:low frequency} giving momentarily for granted \Cref{lem:low frequency close poly}.
	
	\begin{lemma}\label{lem:low frequency close poly}
		Given $\delta>0$ there are $\eps_\delta$ and $C_\delta'$ such that if $u$ solves \eqref{eq:classical obstacle} in $B_1\subset\R^2$ with $0\in\de\{u>0\}$ and
		\begin{enumerate}[label=\roman*)]
			\item $\dist_{\Sigma_1}(u)<\eps_\delta$;
			\item $\phi(1,u-p)\le 3-\delta$ for all $p\in\Sigma_1$.
		\end{enumerate}
		Then
		\[
		D^2 u(x)\ge -C'_\delta\dist_{\Sigma_1}(u)|x|\quad \text{in }B_{1/2}.
		\]
	\end{lemma}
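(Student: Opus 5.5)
We argue by compactness and contradiction, linearizing around $p\in\Sigma_1$. Suppose there are solutions $u_k$ with $0\in\de\{u_k>0\}$, $\eps_k:=\dist_{\Sigma_1}(u_k)\to0$ and $\phi(1,u_k-p)\le3-\delta$ for all $p\in\Sigma_1$. Assume also that the linear bound fails with constants $C'_k\to\infty$. Rotate so that the almost-minimizing polynomial is $p_k=p=\tfrac12x_2^2$. Set
\[
w_k:=u_k-p,\qquad v_k:=\frac{w_k}{\|w_k\|_{L^2(\de B_1)}}.
\]
By \Cref{lem:lip estimate rhs}, applied with $f\equiv1$ and no error term, the $v_k$ are uniformly Lipschitz in $B_{3/4}$. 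By \eqref{eq:bound contact set}, the contact sets collapse onto $\{x_2=0\}$ at scale $C\eps_k$. By \eqref{eq:pde u-p_nu}, $v_k$ is superharmonic and harmonic off the contact set, and $v_k\Delta v_k\ge0$. Passing to the limit, $v_k\to v$ locally uniformly, where $v$ is a nontrivial solution of the thin obstacle problem \eqref{eq:thin obstacle} with $v(0)=0$. Upper semicontinuity of the frequency gives $\phi(1,v)\le 3-\delta$. Lower bounds on $\phi$ come from \Cref{lem:frequency formula} and \Cref{lem:doubling}, which control frequencies at nearby scales.

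By \Cref{prop:thin obstacle}, $v$ falls into one of the cases (a), (b), (c) at the origin. Moreover, by monotonicity of the thin-obstacle frequency, $v$ is uniformly (in terms of $\delta$) close at a fixed small scale $r_\delta$ to its homogeneous blow-up. We treat the three cases separately.

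In case (a), $v\approx a_+(x_2)_++a_-(x_2)_-$ near $0$ with $a_++a_-\le0$ and not both zero. Then $u_k\approx\tfrac12x_2^2+\eps_k(a_+(x_2)_++a_-(x_2)_-)$. This profile is a translate of a one-dimensional profile: it is close to a half-space solution $\tfrac12(x_2-t)_+^2$ when the contact region is one-sided. In the symmetric case $a_+=-a_-\neq0$ it is close to $\tfrac12(x_2-t)^2$, i.e. to $\Sigma_1$ again with frequency $1$, which contradicts near-minimality of $p$ unless $\eps_k$ also shrinks. So after rescaling to $B_{r_\delta}$ we are $C\eps_k$-close to a regular profile, and \Cref{lemma:regular case} gives $D^2u_k\ge -C\eps_k|x|$. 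Together with \Cref{lem:semiconvexity} away from the origin, this yields the bound.

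In case (c), $v\approx x\cdot Ax$ with $\tr A=0$, $A\ne0$ and $A_{11}\ge0$. If $A_{11}>0$, then $U=\tfrac12 x_2^2+\eps_k x\cdot Ax$ (suitably corrected to be a solution, e.g. by re-choosing the polynomial in $\mathcal P_2$) is strictly convex at level $c\eps_k$. Then \Cref{lem:inherit convexity} applies at scale $r_\delta$ and gives $D^2u_k\ge0$ near $0$, which is better than needed. If $A_{11}=0$, then $A$ is a rotation generator of $\tfrac12x_2^2$, which contradicts the minimality of $p$ in $\dist_{\Sigma_1}$ (first-order optimality makes $v$ orthogonal to such directions).

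In case (b), $v\approx a\psi_{3/2}$, and this is where the paraboloid solution is used. We compare $u_k$, rescaled to $B_{r_\delta}$, with $R^{-2}P(R\cdot)$, choosing $R$ such that $R^{-1/2}\sim a\eps_k$ and adjusting by a translation and rotation. By \eqref{eq:blow down paraboloids} this comparison solution is $o(\eps_k)$-close to $u_k$. By \eqref{eq:strict convexity paraboloids} it is strictly convex at rate $c_\sigma\eps_k$ away from the contact slit, while it is always convex. Hence \Cref{lem:inherit convexity} again yields $D^2u_k\ge0$ in $B_{\rho_1 r_\delta}$. At positive distance from $0$ we still need the linear bound, and there we apply \Cref{lem:semiconvexity} with this convex $U$; the resulting error $C\eps_k$ is bounded by $C\eps_k|x|$ for $|x|\ge\rho_1r_\delta$.

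The main obstacle is making the case analysis quantitative. We must show that closeness to the blow-up profile holds uniformly at a scale depending only on $\delta$, with error $o(\eps_k)$ and not just $O(\eps_k)$, so that the strict convexity margins $A_\circ\eps$ in \Cref{lem:inherit convexity} dominate. This is the step we would attack first, combining the compactness above with the gap $\phi\le3-\delta$, which rules out frequencies accumulating at $3$.
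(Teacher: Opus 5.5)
Your outline (compactness, a thin-obstacle limit $v$, the trichotomy of \Cref{prop:thin obstacle}, \Cref{lemma:regular case} in case (a), convexity inheritance in (b) and (c)) matches the paper's. However, the ingredient that makes cases (a) and (b) work is missing: barrier arguments that locate the contact set $\{u_k=0\}$. Uniform $o(\eps_k)$-closeness cannot provide this.

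In case (a) your reduction is incorrect as stated. The rescaling $\bar r^{-2}u_k(\bar r\cdot)$ stays $O(\bar r^{-2}\eps_k)$-close to $\tfrac12x_2^2$, so it is at distance of order one from every half-space profile $\tfrac12(x\cdot e-t)_+^2$. Hence \Cref{lemma:regular case} cannot be applied to $u_k$ itself. The actual picture is two regular pieces, roughly $\tfrac12(x_2-|a_+|\eps_k)_+^2$ and $\tfrac12(x_2+|a_-|\eps_k)_-^2$, separated by a contact strip of width $O(\eps_k)$. To use this you need two steps.
\begin{enumerate}
\item Show $a_+\le0$ and $a_-\le0$. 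The case $a_+=-a_-\neq0$ is not excluded by near-minimality of $p$, since translates of $p$ are not in $\Sigma_1$; it is excluded by $u_k(0)=0$. The predicted value $u_k(0)\sim\eps_k^2$ is far below the resolution $o(\eps_k)$, so this requires a comparison, e.g.\ with the subsolution $\tfrac12(1+\bar K\eps_k)(x_2+\eps_k^2)^2-\tfrac12\bar K\eps_kx_1^2$ on $B_r\cap\{x_2\ge-\eps_k^2\}$.
\item If, say, $a_+<0$, set $t_k:=\tfrac12|a_+|\eps_k$ and use upper barriers $\tfrac12(1-\bar K\eps_k)(x_2-t_k)^2+\tfrac12\bar K\eps_k(x_1-\xi)^2$. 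These are solutions vanishing at $(\xi,t_k)$, and they show that the whole segment $\{x_2=t_k\}\cap B_{\bar r}$ lies in $\{u_k=0\}$.
\end{enumerate}
Only then does $u_k$ split into the two solutions $u_k\chi_{\{x_2>t_k\}}$ and $u_k\chi_{\{x_2<t_k\}}$, each $O(\eps_k)$-close to $\tfrac12(x_2)_\pm^2$, to which \Cref{lemma:regular case} applies.

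Case (b) has the same gap. Hypothesis (i) of \Cref{lem:inherit convexity} asks for strict convexity of the comparison function on $\{\dist(\cdot,\{u=0\})\ge\sigma_\circ\}$. That region is defined by the contact set of $u_k$, not of $P_k$. Since $P_k$ vanishes in a strip of width $\sim\eps_k$ around the slit $\{x_2=0,\,x_1<0\}$ and is not strictly convex there, you must prove that the rescaled $u_k$ vanishes on $\{x_2=0,\,-1+\sigma/2\le x_1\le-\sigma/2\}$. Sup-closeness $o(\eps_k)$ to $P_k$ does not imply this. The paper gets it from $\psi_{3/2}\le-\lambda_\sigma|x_2|$ near the negative $x_1$-axis, together with local upper barriers $\tfrac12x_2^2+t_k[(x_1-\xi)^2-x_2^2]$, $t_k\sim\eps_k$, which vanish at $(\xi,0)$.

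Relatedly, the difficulty you single out at the end is not the real one. In a contradiction argument the scale $\bar r$ may depend on the limit $v$; you already get $x_k\to0$ from \Cref{lem:semiconvexity}. The $o(\eps_k)$-closeness at that scale is then automatic from $v_k\to v$, so no uniformity in $\delta$ is needed.

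Your case (c), by contrast, is a valid variant of the paper's argument. Taking $p$ to be an exact minimizer gives $v\perp x_1x_2$ in $L^2(\de B_1)$. Since the odd-in-$x_2$ part of a thin-obstacle solution is harmonic, this orthogonality kills the $x_1x_2$ coefficient of the blow-up, so $A_{11}=0$ forces $A=0$. You should state this harmonicity step explicitly, because orthogonality on $\de B_1$ alone says nothing about the blow-up at $0$. The paper instead rotates $p$ to remove the $x_1x_2$ term and applies hypothesis (ii) to the rotated polynomial to get $A_{11}>0$.
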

	\begin{proof}
		Assume by contradiction that there exist $u_k$ solving \eqref{eq:classical obstacle} with $0\in\de\{u_k>0\}$ and $\eps_k:=\dist_{\Sigma_1}(u_k)\to0$ but
		\begin{equation}\label{eq:contradiction low frequency close poly}
			D_{e_ke_k}u_k(x_k)<-k\eps_k|x_k|
		\end{equation}
		for some $x_k\in B_{1/2}$ and $e_k\in\mathbb S^1$.
		Notice that \Cref{lem:semiconvexity} implies
		\[
		k \eps_k |x_k|<-D_{e_ke_k}u_k(x_k)\le C\eps_k,
		\]
		and in particular 
		\[
		|x_k|\to 0\text{ as } k\to +\infty.
		\]
		Up to a rotation we can assume that
		\[
		\eps_k=\dist_{\Sigma_1}(u_k) = \|u_k-\tfrac12x_2^2\|_{L^2(\de B_1)}.
		\]
		Setting
		\[
		v_k := (u_k-\tfrac12x_2^2),\qquad \tilde v_k:=v_k/{\eps_k},
		\]
		by assumption we have
		\[
		\phi(1,v_k)\le 3-\delta,
		\]
		which implies 
		\[
		\|\tilde v_k\|_{H^1(B_1)}\le C\quad\text{ and }\quad \|\tilde v_k\|_{L^2(\de B_1)}=1.
		\] 
		Combining \Cref{lem:lip estimate rhs} with a covering argument gives
		\[
		\tilde v_k\to v\quad\text{weakly in }H^1(B_1)\text{ and in }C^0_\loc(B_1).
		\]
		We now claim that $v$ solves \eqref{eq:thin obstacle} with
		\begin{equation}\label{eq:properties limit solution}
			\phi(1,v)\le 3-\delta,\quad v(0)=0\quad\text{and}\quad \|v\|_{L^2(\de B_1)}=1.
		\end{equation}
		Indeed, \eqref{eq:properties limit solution} holds for $v$ since it holds for $v_k$ and it is stable under $C^0_\loc$ and weak $H^1$ convergence. We now prove that it solves \eqref{eq:thin obstacle}. Note that
		\begin{equation}\label{eq:delta of u - p}
			\lap \tilde v_k = -\tfrac1{\eps_k}\ind_{\{u_k=0\}} \le0,
		\end{equation}
		so $v$ is superharmonic.
		Moreover $\tilde v_k = u_k/\eps_k\ge0$ on $\{x_2=0\}$, so also
		\[
		v\ge0\quad \text{on }\{x_2=0\}.
		\]
		In addition, as $u_k\to \tfrac12x_2^2$, given any $\delta>0$ we have $u_k>0$ on $\{|x_2|>\delta\}$ for $k$ large, so
		\begin{equation}\label{eq:consequence of convergence to thin space}
			\Delta v =0\quad\text{on }\{x_2\neq0\}.
		\end{equation}
		Finally, if $v(x_1,0)>0$ we also have $v>0$ in $B_{\bar\delta}(x_1,0)$ for some $\bar{\delta}>0$, so we deduce by uniform convergence that $u_k(x)> 0$ in $B_{\bar\delta}(x_1,0)$ for $k$ large enough. It follows from this and \eqref{eq:delta of u - p} that $\Delta v_k=0$ in $B_{\bar\delta}(x_1,0)$. Together with \eqref{eq:consequence of convergence to thin space} this yields
		\[
		\Delta v = 0\quad\text{in }B_1\setminus\{v=0,x_2=0\}.
		\]
		So $v$ solves the thin obstacle problem. According to \Cref{prop:thin obstacle}, there are 3 possible cases. We show that each one leads to a contradiction with \eqref{eq:contradiction low frequency close poly}, following the ideas of \cite[Lemma 4.1, Step 1]{SY23}.

		More precisely, we show that in each case there is a small scale $\bar r>0$ ---depending only on $v$---such that, for $k$ large enough the rescaled solution 
		\[
		\bar u_k:=\bar r^{-2}  u_k(\bar r \cdot) 
		\]
		in fact satisfies the linear decay estimate
		\[
		D^2\bar u_k(y) \ge -C\bar r^{-2}\eps_k |y|\quad\text{ for all } y\in  B_{1/2}.
		\]
		This will give the sought contradiction: since $x_k\to 0$ we can eventually take $y:=x_k/\bar r$ and find:
		\[
		D^2\tilde u_k(z_k) \ge -C \bar r^{-3}\eps_k |z_k|,
		\]
		and thus comparing it with \eqref{eq:contradiction low frequency close poly}, get  
		\[
		-k \eps_k |z_k| >-C \bar r^{-3}\eps_k |z_k|. 
		\]
		which is impossible for $k$ large enough.
		
		Heuristically, the scale $\bar r$ is chosen so that the function $v(x)$ agrees with its blow-up at $x=0$.
		
		% We also set
		% \[
		% \|\tilde v_k - v\|_{L^\infty(B_{1/2})}=:\eta_k\to 0.
		% \]
		
		\medskip\noindent\textbf{Case (a) of \Cref{prop:thin obstacle}.}  In this case there is some constant $\bar K\ge1$ such that 
		\[
		| v(x) -\big( a_+(x_2)_+ + a_- (x_2)_-\big)|\le \tfrac18\bar K |x|^2,
		\]
		where $a_++a_-\le0$ and not both $a_+,a_-$ are 0, see \Cref{figure:case a}.
		\begin{figure}
			\centering
			\begin{subfigure}{0.48\textwidth}
				\centering
				\includegraphics[page=4,width=\textwidth]{images.pdf}
			\end{subfigure}
			\hfill
			\begin{subfigure}{0.48\textwidth}
				\centering
				\includegraphics[page=1,width=\textwidth]{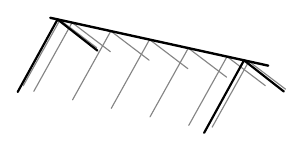}
			\end{subfigure}
			\caption{The approximating solutions $\bar r^{-2} u_k(\bar r\cdot)$ and the limit solution $v(x)$, in case (a).}
			\label{figure:case a}
		\end{figure}
		We show that there is $\bar r$ independent from $k$ and some $\beta_k\to0$ such that, for $k$ large,
		\begin{equation}\label{eq:contact set splits}
			B_{\bar r}\cap\{x_2=\beta_k\}\subset \{u_k=0\}.
		\end{equation}
		This concludes because we can split the free boundary into two regular regions in $B_{\overline r}$. Rigorously, setting
		\[
		u_k^\pm = \bar r^{-2}u_k(\bar r\cdot)\chi_{\{\pm(\bar rx_2-\beta_k)>0\}}  
		\]
		\eqref{eq:contact set splits} implies that the functions $u_k^\pm$ solve the obstacle problem in $B_1$ with $u_k^\pm(0)=0$ and
		\[
		|u_k^\pm-\tfrac12(x_2)^2_\pm|< \bar r^{-2}\eps_k\quad \text{in }B_1.
		\]
		So if $k$ is large enough we can apply \Cref{lemma:regular case} to $u_k^+,u_k^-$, and find
		\[
		- D^2 u_k^\pm(z) \le C \bar  r^{-2}\eps_k|z|,\quad \forall z\in B_{\overline{r}/2}.
		\]
		Since $x_k\in B_{\overline r /2}$ for $k$ large we get a contradiction with \eqref{eq:contradiction low frequency close poly}:
		\[
		k\eps_k |x_k|\le- D^2 u_k^\pm(x_k) \le C \bar  r^{-2}\eps_k|x_k|.
		\]
		
		Now we prove our claim \eqref{eq:contact set splits} with some barriers.
		
		First we prove that both
		\[
		a_+\le 0\quad\text{ and } \quad a_-\le 0.
		\]
		To see this it is enough to prove that $a_+>0$ implies $u_k(0)>0$ for $k$ large (the argument for $a_-$ is identical). Suppose that $a_+>0$ and choose $r=r(a_+,\bar K)$ so small that that the barrier
		\[
		U_k(x) := \tfrac12 {(1+\bar K \eps_k)}(x_2+\eps_k^2)^2-\tfrac12\bar K\eps_kx_1^2,
		\]
		satisfies, for $k$ large enough,
		\[
		{\eps_k^{-1}}(U_k-\tfrac12x_2^2)\le \tfrac{1}{2}\bar K(x_2^2-x_1^2)+C\eps_k\le \tfrac12  a_+(x_2)_+ -\bar Kr^2<v\quad\text{on }\de B_r\cap\{x_2\ge-\eps_k^2\}.
		\] 
		% Indeed with these choices, if $x_2=r$ we have
		% \begin{align*}
			% {\eps_k^{-1}}(U_k-\tfrac12x_2^2)\le\tfrac12 \bar K x_2^2 +C\eps_k< \tfrac12 r^2 +C\eps_k \le (a_+ -o(1))r\le v(x),
			% \end{align*}
		% and if $|x_1|=r,$ $-\eps_k^2\le x_2\le r$ we have
		% \begin{align*}
			% {\eps_k^{-1}}(U_k-\tfrac12x_2^2)&\le\tfrac12 x_2^2-\tfrac12 x_1^2 +C\eps_k\\
			% &\le \tfrac12 x_2^2 -\tfrac14 r^2 \le (a_+ -o(1)) (x_2)_+ -O(r^2) < v(x).
			% \end{align*}
		As $v_k\to v$ uniformly in $B_{1/2}$, this implies $U_k\le u_k$ on $\de B_r\cap\{x_2\ge-\eps_k^2\}$ for all $k$ large enough.
		On the other hand, $U_k\le0\le u_k$ on $\{x_2=-\eps_k^2\}$. Thus, for $k$ large enough
		\[
		U_k\le u_k\quad\text{on }\de(B_r\cap\{x_2\ge-\eps_k^2\}).
		\]
		As $\Delta U_k=1$, \Cref{lem:comparison principle} implies $U_k\le u_k$ on $B_r\cap\{x_2\ge-\eps_k^2\}$, and in particular $0<U_k(0)\le u_k(0)$, as claimed.
		
		So to prove \eqref{eq:contact set splits} we can now assume $a_+<0$, as not both $a_+,a_-$ vanish. We show that \eqref{eq:contact set splits} holds with $\beta_k := a_+\eps_k/2$, $k$ large enough and $\bar r=\bar r(a_+,\bar K)$ small enough. For each $\xi\in(-\bar r,\bar r)$ set
		\[
		U_{k,\xi}(x) = \tfrac12{(1-\bar K\eps_k)}(x_2+\beta_k)^2+ \tfrac12\bar K{\eps_k}(x_1-\xi)^2,
		\]
		so that on $\de B_{\bar r}((\xi,0))$ we have
		\[\begin{split}
			{\eps_k^{-1}}(U_{k,\xi}-\tfrac12x_2^2) &= \tfrac12a_+x_2 - \tfrac12\bar Kx_2^2 + \tfrac12\bar K(x_1-\xi)^2-C\eps_k\ge \tfrac12a_+(x_2)_+ -\tfrac12\bar K r^2 -C\eps_k>v(x).
		\end{split}\]
		As $\tilde v_k\to v$ uniformly in $B_{1/2}$, we have $u_k\le U_{k,\xi}$ in $\de B_{\bar r}$ for $k$ large, independently from $\xi$.
		As $U_{k,\xi}$ solve the obstacle problem as well, \Cref{lem:comparison principle} yields $u_k \le U_{k,\xi}$ in $B_{\bar r}$. Since $U_{k,\xi}(\xi,-a_+\eps_k/2)=0$, \eqref{eq:contact set splits} follows.
		
		\medskip\noindent\textbf{Case (b) of \Cref{prop:thin obstacle}.} In this case, up to a reflection we have
		\[
		v(x) = a\, \psi_{3/2}(x) + o(|x|^{3/2}),
		\]
		where $a>0$ and $\psi_{3/2}$ is given by \eqref{eq:1.5 homogeneous solution}, see \Cref{figure:case b}.
		\begin{figure}
			\centering
			\begin{subfigure}{0.48\textwidth}
				\centering
				\includegraphics[page=5,width=\textwidth]{images.pdf}
			\end{subfigure}
			\hfill
			\begin{subfigure}{0.48\textwidth}
				\centering
				\includegraphics[page=2,width=\textwidth]{images.pdf}
			\end{subfigure}
			
			\caption{The approximating solutions $\bar r^{-2} u_k(\bar r\cdot)$ and the limit solution $v(x)$, in case (b).}
			\label{figure:case b}
		\end{figure}
		We first show that given $\sigma>0$ small there is $r_\sigma$ such that
		\begin{equation}\label{eq:contact set close paraboloids}
			\{x_2=0,-1+\tfrac12\sigma\le x_1\le -\tfrac12\sigma\}\subset\{r^{-2}u_k(r\cdot)=0\}\quad \text{for }r<r_\sigma
		\end{equation}
		and $k$ large enough. Indeed, given $\sigma>0$, one sees using \eqref{eq:1.5 homogeneous solution} that 
		$$\psi_{3/2}(x)\le -\lambda_\sigma|x_2|\quad \text{ in }\{-1<x_1\le -\tfrac12 \sigma,|x_2|<\sigma\}\text{ for some }\lambda_\sigma>0.$$ 
		Then take any $\xi\in(-1+\tfrac12\sigma,-\tfrac12\sigma)$ and choose some $\eta<a\lambda_\sigma/8$ and $r(\sigma,\eta)$ small so that for $r<r(\sigma,\eta)$ and $k\ge k(\sigma,\eta)$ large we have
		\[
		r^{-3/2}v(rx) \le -a\lambda_\sigma|x_2|+\tfrac18 \eta^2< (x_1-\xi)^2 -x_2^2 -\tfrac18\eta^2 \quad \text{ on } \de B_\eta(\xi).
		\]
		Thus, by uniform convergence
		\[
		r^{-2} u_k(rx)=\tfrac12x_n^2+\eps_kr^{-1/2}\le  v_k(rx)\le \tfrac12 x_n^2 +\eps_k r^{-1/2} [(x_1-\xi)^2 -x_2^2 ]:=U_{k,\xi}\quad \text{ on } \de B_\eta(\xi).
		\] 
		As $ U_{k,\xi}=1$ solves \eqref{eq:classical obstacle} as well, \Cref{lem:comparison principle} yields $r^{-2}u_k(r\cdot)\le U_{k,\xi}$ in $B_\eta(\xi,0)$. As $\xi$ was arbitrary, \eqref{eq:contact set close paraboloids} follows.
		
		We turn to the proof that $D^2u_k \ge0$ in $B_{\bar r}$ when $\bar r$ is small enough.
		
		Denoting by $P$ the global solution satisfying \eqref{eq:blow down paraboloids}, note that
		\[
		\tfrac{1}{\eps_k}\left(u_k-(a\eps_k)^4P((a\eps_k)^{-2}\cdot)\right)\to o(|x|^{3/2})\quad\text{in }C^0_\loc(B_1).
		\]
		Given $\sigma_\circ,A_\circ$ from \Cref{lem:inherit convexity}, let $c_{\sigma_\circ}>0$ be given by \eqref{eq:strict convexity paraboloids}.
		We can find $\bar r<r_{\sigma_\circ}$ small such that $o(\bar r^{3/2})<c_{\sigma_\circ}a\bar r^{3/2}/A_\circ$. In this way, setting $P_k:=(a\eps_k)^4P((a\eps_k)^{-2}\cdot)$ we have
		\[
		|\bar r^{-2}u_k(\bar r\cdot)-\bar r^{-2} P_k(\bar r\cdot)|<c_{\sigma_\circ}a\eps_k\bar r^{-1/2}/A_\circ\quad\text{in }B_1,
		\]
		while, for $k$ large enough, \eqref{eq:strict convexity paraboloids} and \eqref{eq:contact set close paraboloids} yield
		\[
		D^2(\bar r^{-2}P_k(\bar r\cdot))>c_{\sigma_\circ}a\eps_k\bar r^{-1/2}\quad\text{in }\{\dist(\cdot,\{u_k=0\})>\sigma_\circ\}\cap B_1.
		\]
		As $P$ is convex, \Cref{lem:inherit convexity} implies $D^2u_k\ge0$ in $B_{\rho_1 \bar r}$, contradicting \eqref{eq:contradiction low frequency close poly}.
		
		\medskip\noindent\textbf{Case (c) of \Cref{prop:thin obstacle}.} In this case for suitable $\alpha\ge 0$, $\beta\in \R$ (not both zero) it holds
		\[
		v(x) = \alpha x_1^2 -\alpha x_2^2 +\beta x_1 x_2 +o(|x|^2),
		\]
		as shown in \Cref{figure:case c}.
		\begin{figure}
			\centering
			\begin{subfigure}{0.48\textwidth}
				\centering
				\includegraphics[page=6,width=\textwidth]{images.pdf}
			\end{subfigure}
			\hfill
			\begin{subfigure}{0.48\textwidth}
				\centering
				\includegraphics[page=3,width=\textwidth]{images.pdf}
			\end{subfigure}
			\caption{The approximating solutions $\bar r^{-2} u_k(\bar r\cdot)$ and the limit solution $v(x)$, in case (c).}
			\label{figure:case c}
		\end{figure}
		We first take $\beta=0$ performing a small rotation. Setting $\nu_k := (e_2+\beta \eps_ke_1)/|e_2+\beta \eps_ke_1|$ yields
		\[
		\tfrac1{\eps_k}(\tfrac12(x\cdot\nu_k)^2-\tfrac12x_2^2) = \tfrac12[(e_2+\nu_k)\cdot x][\tfrac1{\eps_k}(\nu_k-e_2)\cdot x)]\to \beta x_1x_2,
		\]
		thus there are rotation matrices $S_k\to \mathrm{Id}$ such that
		\[
		\hat v_k:=\tfrac1{\eps_k}(u_k\circ S_k-\tfrac12x_2^2)\to \hat v=\alpha x_1^2-\alpha x_2^2 +o(|x|^2).
		\]
		Now we prove that we must have $\alpha>0$.
		As $\phi(u_k-p,1)<3-\delta$ for all $p\in\Sigma_1$, we deduce that $\phi(\tilde v_k,1)\le 3-\delta$. By minimality of $\tfrac12x_2^2$ and the triangular inequality we have 
		$$1\le \|\hat v_k\|_{L^2(\de B_1)}\le \|\tilde v_k\|_{L^2(\de B_1)} +\|\tfrac1{\eps_k}(\tfrac12(x\cdot\nu_k)^2-\tfrac12x_2^2)\|_{L^2(\de B_1)} \le C,$$ 
		so by compactness of the trace we deduce $\hat v\not\equiv 0$ and $\phi(\hat v,1)\le 3-\delta<3$. As $\hat v$ solves the thin obstacle as well, if we had $\alpha=0$ then \Cref{prop:thin obstacle} would imply $\phi(\hat v ,1)\ge 3$, a contradiction.
		
		It is exactly at this point that we use our assumption that the frequency is blocked away from $3$.
		
		Let us now set
		\[
		p_k := (\tfrac12-\eps_k\alpha)x_2^2 + \eps_k\alpha x_1^2,\quad \tilde p_k := p_k\circ S_k^{-1},\quad 
		\]
		so that
		\[
		\tfrac1{\eps_k}(u_k-\tilde p_k)\to o(|x|^2)\quad \text{in }C^0_\loc(B_1).
		\]
		Given $A_\circ$ by \Cref{lem:inherit convexity} choose $\bar r$ such that $o(\bar r^2)<\alpha\bar r^2/A_\circ$. In this way in $B_1$ we have
		\[
		|\bar r^{-2}u_k(\bar r\cdot)-\tilde p_k| < \alpha\eps_k/A_\circ\quad\text{and}\quad D^2\tilde p_k>\alpha\eps_k,
		\]
		for $k$ large enough. So \Cref{lem:inherit convexity} yields $D^2 u_k\ge0$ in $B_{ \rho_1 \bar r}$, contradicting \eqref{eq:contradiction low frequency close poly}.
	\end{proof}
	
	\begin{proof}[Proof of \Cref{prop:low frequency}]
		Suppose by contradiction that there exist $u_k$ solving \eqref{eq:classical obstacle} with $0\in\de\{u_k>0\}$ and
		\begin{equation}\label{eq:frequency cap}
			\phi(1,u_k-p)\le 3-\delta\quad \forall p\in\Sigma_1
		\end{equation}
		but
		\[
		D_{e_ke_k}u_k(x_k)<-k\dist_{\Sigma_1}(u_k)|x_k|
		\]
		for some $x_k\in B_{1/2}$ and $e_k\in\mathbb S^1$. It follows from \Cref{lem:low frequency close poly} that
		\begin{equation}\label{eq:proof low frequency bound distance}
			\dist_{\Sigma_1}(u_k)\ge\eps_\delta\quad \text{for }k>C_\delta',
		\end{equation}
		so in particular, for $k$ large,
		\begin{equation}\label{eq:contradiction prop low frequency}
			D_{e_ke_k}u_k(x_k)<-k\eps_\delta |x_k|,\quad x_k\to0.
		\end{equation}
		By \eqref{eq:C11 regularity}, up to a subsequence we have
		\[
		u_k\to u_\infty\quad \text{in }C^1(B_{2/3}),
		\]
		where $u_\infty$ solves \eqref{eq:classical obstacle} in $B_{2/3}$ with $0\in\de \{u_\infty>0\}$.
		We split in two cases, according to Caffarelli's dichotomy. If $0\in\mathrm{Reg}(u_\infty)$ there is $\bar r$ small so that, up to a rotation,
		\[
		\left|\bar r^{-2}u_\infty(\bar r\cdot)-\tfrac12(x_2)_+^2\right|< \delta_1\quad \text{in }B_1,
		\]
		with $\delta_1$ given by \Cref{lemma:regular case}. Since the same holds for $\bar r^{-2}u_k(\bar r\cdot)$, \Cref{lemma:regular case} gives a contradiction with \eqref{eq:contradiction prop low frequency} for $k$ large enough.
		
		If $0\in\Sigma(u_\infty)$, denote by $p^*$ the blow-up of $u_\infty$ at $0$. We first note that $0\in\Sigma_0(u_\infty)$. Indeed, if $0\in\Sigma_1(u_\infty)$, \eqref{eq:proof low frequency bound distance} implies $u_\infty\not\equiv p^*$, so \Cref{lem:frequency gap} gives $\phi(1,u_\infty-p^*)\ge 3$, contradicting \eqref{eq:frequency cap}. As $p^*\in\Sigma_0$, in particular there is $\bar\eta>0$ such that
		\[
		D^2p^*\ge \bar\eta >0.
		\]
		Given $A_\circ>0$ from \Cref{lem:inherit convexity} choose $\bar r$ such that
		\[
		|\bar r^{-2}u_\infty(\bar r\cdot)-p^*|<\frac{\bar\eta}{A_\circ}\quad \text{in }B_1.
		\]
		By uniform convergence, the same holds for $\bar r^{-2}u_k(\bar r\cdot)$ with $k$ sufficiently large. So \Cref{lem:inherit convexity} yields $D^2 u_k\ge 0$ on $B_{\rho_1 \bar r}$, contradicting \eqref{eq:contradiction prop low frequency} for $k$ large enough.
	\end{proof}

	\subsection{Uniform error integrability in Monneau monotonicity}
	In this subsection we prove the following crucial proposition. We recall that we are implicitly assuming the universal bound \eqref{eq:universal upper bound}.
	\begin{proposition}\label{prop:extended monneau}
		There are universal $\delta_\circ,C_\circ>0$ such that if $u$ solves \eqref{eq:classical obstacle} in $B_2\subset \R^2$ with $0\in\partial\{u>0\}$ and for some $\rho\in(0,1)$ 
		\[
		\phi(\rho,u-\tfrac12x_2^2)\ge 3-\delta_\circ,
		\]
		then
		\begin{equation}\label{eq:nice}
			\|(u-\tfrac12 x_2^2)(\rho\cdot)\|_{L^2(\de B_1)}\le C_\circ \rho^3.
		\end{equation}
	\end{proposition}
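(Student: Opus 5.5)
Set $w:=u-\tfrac12x_2^2$. The goal is to integrate the Monneau-type identity \eqref{eq:derivative monneau} with $\lambda=3$ from $\rho$ up to scale $1$. Since $H_3(1,w)\le C$ by \eqref{eq:universal upper bound}, it suffices to prove
\[
\log H_3(1,w)-\log H_3(\rho,w) \ \ge\ -C,
\]
with $C$ universal. By \eqref{eq:derivative monneau},
\[
\frac{d}{dr}\log H_3(r,w)\ \ge\ \frac2r E_3(r,w).
\]
So the whole task is to show that the negative part of the frequency gap has uniformly bounded logarithmic integral:
\[
\int_\rho^1 \frac{(E_3(r,w))_-}{r}\,dr\le C .
\]
This is the ``uniform error integrability'' of the title.

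\emph{Step 1: reduction to the window.} First, $\phi(\rho,w)\ge 3-\delta_\circ\ge 2$. By \Cref{lem:frequency formula}, $\phi(r,w)$ is nondecreasing on $(\rho,1)$. Let $r_*:=\sup\{r\in[\rho,1]:\phi(r,w)\le 3\}$. On $(r_*,1)$ we have $E_3\ge0$, so $\log H_3$ is increasing there and that range costs nothing. It remains to control $\int_\rho^{r_*}(E_3)_-/r$, where $E_3\in[-\delta_\circ,0]$.

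\emph{Step 2: differential inequality for the gap.} Apply \Cref{lem:derivative weiss} with $\lambda=3$, $n=2$. On $(\rho,r_*)$, where $E_3\le 0$, it gives
\[
\frac{d}{dr}E_3(r,w)\ \ge\ \frac{6}{r}\bigl(E_3(z_r)-E_3(r,w)\bigr)-\frac2r E_3^2 .
\]
The key new input is an \emph{epiperimetric inequality at frequency $3$ in the negative-gap regime}. For $\tilde w_r$ with frequency in $[3-\delta_\circ,3]$ (and $u$ a solution), I want a competitor $\zeta$ with $\zeta|_{\de B_1}=\tilde w_r|_{\de B_1}$, an admissible perturbation of $\tfrac12x_2^2$, such that
\[
E_3(\zeta)\le (1-\kappa)E_3(z_r)+(\text{small error}).
\]
Combined with minimality of $w_r$ relative to such competitors, this yields $E_3(z_r)-E_3(w_r)\ge c\,|E_3(w_r)|$. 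Substituting, and choosing $\delta_\circ$ small so that the quadratic term is absorbed, gives
\[
\frac{d}{dr}(E_3)_-\le -\frac{c}{r}(E_3)_- .
\]
Equivalently, $(E_3)_-(r)\le \delta_\circ (r/r_*)^{c}$ on $(\rho,r_*)$, and
\[
\int_\rho^{r_*}\frac{(E_3)_-}{r}\,dr\le \frac{\delta_\circ}{c}.
\]
This bound is independent of $\rho$, which is the point.

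\emph{Main obstacle.} The hard step is the epiperimetric inequality. In 2D I would prove it by Fourier decomposition of the trace on $\de B_1$ into modes $\cos(k\theta),\sin(k\theta)$, using homogeneous extensions $r^{k}$ modes with an additional $3/2$-type correction near the thin set $\{x_2=0\}$. The spectral gap between frequency $3$ and the neighbouring admissible frequencies $2$ and $4$ (non-existence of homogeneous thin-obstacle solutions with frequency in $(2,3)$) gives the coercivity $\kappa$. The error produced by the constraint $u\ge0$, i.e.\ the term $\int\tilde w\Delta\tilde w$ on the contact set, must be controlled by $E_3$ itself. For this I would trap the contact set near $\{x_2=0\}$ with thickness $\lesssim\|w_r\|$ via \eqref{eq:bound contact set}, and use the doubling \Cref{lem:doubling}. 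If the clean inequality fails, a fallback is a contradiction/compactness argument near frequency $3$ that yields only a log-epiperimetric decay of the form $(E_3)_-'\le -\tfrac cr(E_3)_-^{1+\gamma}$; this still gives an integrable bound, provided the rate is integrated against $dr/r$ carefully.

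Finally, combining the steps, $\log H_3(\rho,w)\le \log H_3(1,w)+2\delta_\circ/c\le C$, which is \eqref{eq:nice}.
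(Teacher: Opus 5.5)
Your outline has the same architecture as the paper's proof: integrate \Cref{lem:monneau} at $\lambda=3$, get an ODE for $E_3$ from \Cref{lem:derivative weiss}, feed it an epiperimetric inequality for the thin-obstacle replacement of the trace, and pay for the non-minimality of $\tilde w_r$ by trapping the contact set. However, both load-bearing steps fail as written.

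\textbf{The epiperimetric inequality has the wrong form.} If $E_3(z_r)\ge0$, the ODE already gives what you want, so the only relevant case is $E_3(z_r)<0$. There, $E_3(\zeta)\le(1-\kappa)E_3(z_r)$ is satisfied by $\zeta=z_r$ itself. Combined with $E_3(w_r)\le E_3(\zeta)$, it only yields $E_3(z_r)-E_3(w_r)\ge-\tfrac{\kappa}{1-\kappa}|E_3(w_r)|$, which has the wrong sign. What is needed, and what \Cref{lem:apply projection epiperimetric} provides, is $(1-\eps_\circ)E_3(\zeta_r)\le E_3(z_r)$: the replacement must make the negative energy more negative by a fixed factor.

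Proving this is the heart of the proposition, and your Fourier/spectral-gap sketch does not reach the point where the paper needs input beyond the trace. In the compactness argument (\Cref{lem:epiperimetric projection}), the cross term $\int_{B_1}(c-\Pi_3c)\Delta\Pi_3c$ is quadratically small only under the hypothesis $\Delta\Pi_3c\le M\|c-\Pi_3c\|_{L^2(\de B_1)}\delta_0(x_2)$. That is, the $q^\even$-coefficient of $\Pi_3c$ must not be very negative compared to the distance from $\mathcal P_3$ (recall $\Delta(-q^\even)\ge0$ on the thin line). Otherwise the normalized sequence has no $H^1$ bound. This is not a property of general admissible traces. \Cref{lem:control projection P3} derives it from the fact that $u-\tfrac12x_2^2$ is superharmonic and vanishes at $0$, while the harmonic replacement of $-q^\even$ is positive near $0$.

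\textbf{The non-minimality error cannot be ``controlled by $E_3$ itself''.} In $E_3(r)-E_3(\zeta_r)=2\int_{B_1}\Delta\tilde w_r(\zeta_r-\tilde w_r)-\int_{B_1}|\nabla(\zeta_r-\tilde w_r)|^2$, the harmful term is $\int_{B_1}\zeta_r\Delta\tilde w_r$; the piece $-\int\tilde w_r\Delta\tilde w_r$ has a good sign. The harmful term is of order $r^{-2}\|w_r\|_{L^2(\de B_1)}$, and nothing forces it to vanish when $E_3$ does, for instance near $r_*$. The paper instead makes it integrable in $r$:
\begin{itemize}
\item take $\zeta_r$ to be the thin-obstacle replacement;
\item obtain $\zeta_r\ge-L_\circ|x_2|$ from the Lipschitz bound on $z_r$ (\Cref{lem:lip estimate rhs} plus \Cref{lem:doubling}) and comparison with the solution $-L_\circ|x_2|$;
\item trap $\{u_r=0\}$ in $\{|x_2|\le Cr^{-2}\|w_r\|_{L^2(\de B_1)}\}$;
\item use \Cref{lem:monneau} at level $\lambda=3-\delta_\circ$ to get $r^{-2}\|w_r\|_{L^2(\de B_1)}\le Cr^{1-\delta_\circ}$. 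This is a second use of the hypothesis $\phi(\rho,w)\ge 3-\delta_\circ$ that your plan does not make.
\end{itemize}
The result is $E_3'\ge\tfrac2r(3\eps_\circ+E_3)(-E_3)-Cr^{-\delta_\circ}$, and with $\delta_\circ=\eps_\circ$ the integral $\int(E_3)_-\,dr/r$ is universally bounded.

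Two smaller slips:
\begin{itemize}
\item The decay you actually obtain is $(E_3)_-(r)\le\delta_\circ(\rho/r)^c$, not $\delta_\circ(r/r_*)^c$. Your integral bound is nonetheless right.
\item Applying the Lipschitz and doubling bounds at scales $r\in(r_*/2,r_*)$ requires control of $\phi(2r,w)$. This is why the paper treats $[r_*/2,1]$ directly and then rescales by $r_*/2$.
\end{itemize}
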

	Note that, writing $w = u-\tfrac12x_2^2$ and $E_3(r,w) = \phi(r,w)-3$, \Cref{lem:monneau} gives
	\begin{equation}\label{eq:explainer}
		\frac{d\log H_3(r,w)}{dr} \ge \frac{2}{r}E_3(r,w).
	\end{equation}
	Thus integrating in $(\rho,1)$ one finds
	\[
	H_3(\rho,w)\le \exp\left(-2\int_r^1 E_3(r,w) \frac{dr}{r}\right) \, H(1,w).
	\]
	In order to derive \eqref{eq:nice} from this we need to prove that, as long as $r$ satisfies $E_3(r,w)\ge-\delta_\circ$:
	\[
	\int_r^1 (E_3(r,w))_- \frac{dr}{r}\le C_\circ<+\infty,
	\]
	The difficulty is that $\rho$ could be arbitrarily small, thus this estimate must be uniform in $\rho$. We show this bound using an appropriate epiperimetric inequality.
	
	\subsubsection{An epiperimetric inequality for \texorpdfstring{$E_3\le0$}{}}\label{sec:epi}
	Recall that we write
	\[
	w = u-\tfrac12x_2^2,\quad w_r=w(r\cdot),\quad \tilde w_r=\frac{w_r}{\|w_r\|_{L^2(\de B_1)}}.
	\]
	and we denote by $z_r$ the 3-homogeneous extension of $\tilde w_r$ in $B_1$.
	
	Since large frequencies are not a problem, we may as well work under the assumption that the frequency is bounded above by a large constant, which in turns gives a doubling property (cf. \Cref{lem:doubling}).
	
	We fix $r\in(0,1/2)$.
	\begin{lemma}\label{lem:apply projection epiperimetric}
		There are universal $\eps_\circ,L_\circ$ such that, if
		\[
		2\le \phi(1,w_r)\quad\text{ and }\quad \phi(2,w_r)\le 10
		\]
		and $z_r$ is the 3-homogeneous extension of $\tilde w_r|_{\de B_1}$ to $B_1$, then there is $\zeta_r\in H^1(B_1)$ with $\zeta_r = \tilde w_r=z_r$ on $\de B_1$ and satisfying $$(1-\eps_\circ)E_3(\zeta_r)\le E_3(z_r)\quad\text{ and }\quad\zeta_r(x) \ge -L_\circ |x_2| \text{ in }B_1.$$
	\end{lemma}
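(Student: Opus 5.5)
We plan to build $\zeta_r$ from a decomposition of the trace $c:=\tilde w_r|_{\de B_1}$ into spherical harmonics. On $\de B_1\subset\R^2$ write
\[
c(\theta)=\sum_{k\ge0}\big(a_k\cos k\theta+b_k\sin k\theta\big),
\]
and note that the $\lambda$-energy of the $k$-homogeneous harmonic extension of the $k$-th mode is $(k-\lambda)$ times its squared trace norm. Since every mode has both extensions available, the excess satisfies
\[
E_3(z_r)-E_3(\text{harmonic ext.})=\sum_k \tfrac{(k-3)^2}{2k+n-2+\dots}|c_k|^2\cdot(\text{explicit positive weight}).
\]
This is the standard computation from \cite{Weiss,CSV20}. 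We then define $\zeta_r$ as the $3$-homogeneous extension on low modes $k\le 2$ and on the mode $k=3$, and as the harmonic ($k$-homogeneous) extension on high modes $k\ge4$. The low modes are treated carefully below, because replacing them would lower the energy in the wrong direction. The epiperimetric gain then comes from the high modes. It is a fixed fraction of $\sum_{k\ge4}(k-3)|c_k|^2$, while $E_3(z_r)$ is bounded by a universal multiple of that same sum plus the (nonpositive) contributions of the modes $k\le2$. This yields $(1-\eps_\circ)E_3(\zeta_r)\le E_3(z_r)$, provided the low modes do not spoil the sign. Here we use that $E_3(z_r)$ is compared with $E_3(\zeta_r)$ mode by mode and each mode only improves. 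So the energy inequality is automatic once $\zeta_r$ is defined modewise; the work is in the lower bound.

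The main obstacle is the one-sided bound $\zeta_r\ge -L_\circ|x_2|$. For this we use the structure of $w$. From $2\le\phi(1,w_r)$ and $\phi(2,w_r)\le10$, \Cref{lem:doubling} gives $\|w_r\|_{L^2(\de B_2)}\le C\|w_r\|_{L^2(\de B_1)}$. Then \Cref{lem:lip estimate rhs} (with $f\equiv1$, so there is no error term) gives $\|\nabla\tilde w_r\|_{L^\infty(B_1)}\le C$. Moreover $\tilde w_r=u_r/\|w_r\|\ge0$ on $\{x_2=0\}$. Hence the trace $c$ is Lipschitz and satisfies $c(\theta)\ge -C|\sin\theta|$.

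The difficulty is that harmonic extensions of high modes need not preserve such a bound. To handle this, rather than splitting by modes directly, we take $\zeta_r:=z_r+t(h-z_r)$ with $t=\eps$ small and universal, where $h$ is the harmonic extension of $c$. By convexity of the Dirichlet energy, $E_3$ along this segment decreases by at least a universal fraction of the gap between $z_r$ and the high-mode part; the low modes $k\le 2$ we keep $3$-homogeneous as above. Both $z_r$ and $h$ inherit $\ge -C|x_2|$. For $z_r=|x|^3c(x/|x|)$ this is immediate: it is even $\ge -C|x|^2|x_2|$. For $h$ we compare with the harmonic function $-C|x_2|$ plus a Poisson correction; more precisely, $h+Cx_2$ is harmonic in the upper half-ball with nonnegative boundary values on both boundary pieces, and similarly in the lower half-ball. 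Finally, the Lipschitz bound keeps all constants universal, which gives $\zeta_r\ge -L_\circ|x_2|$.
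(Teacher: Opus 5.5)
There is a genuine gap in both halves of the argument. Both failures trace back to the constraint hidden in $\zeta_r\ge -L_\circ|x_2|$, which forces $\zeta_r\ge0$ on $\{x_2=0\}$.

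\textbf{The energy inequality.} If $E_3(z_r)\ge0$ you can take $\zeta_r=z_r$, so the real content is the regime $E_3(z_r)<0$. There, $(1-\eps_\circ)E_3(\zeta_r)\le E_3(z_r)$ demands an energy \emph{drop} of at least $\eps_\circ|E_3(z_r)|$, so bounding $E_3(z_r)$ from above by high-mode terms is useless. By your own modewise formula $E_3(z_r)=\tfrac16\sum_k(k^2-9)|c_k|^2$, the negative part sits in the modes $k\le2$, which your $\zeta_r$ leaves untouched. A concrete failure: $u=(\tfrac12-\epsilon)x_2^2+\epsilon x_1^2$ gives $w=\epsilon(x_1^2-x_2^2)$ and $\phi\equiv2$, so the hypotheses hold. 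The trace is a pure $k=2$ mode, your $\zeta_r$ equals $z_r$, and $E_3(z_r)=-\tfrac56$, so the inequality fails.

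Replacing low modes does not lower the energy ``in the wrong direction''; any decrease helps. Harmonic replacement of all modes would give the needed gain, but it is not admissible, and that is where the difficulty lies. The function $q^{\mathrm{even}}=a_e|x_2|(x_2^2-3x_1^2)$ is a $3$-homogeneous thin obstacle solution with $W_3(q^{\mathrm{even}})=0$ and nonzero low and high modes. Near it, the gain can only come from a compactness/linearization argument, which is what \Cref{lem:epiperimetric projection} provides.

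\textbf{The lower bound.} On the flat part of the half ball, $h+Cx_2$ equals $h(\cdot,0)$, which you never control. Harmonic extension does not preserve nonnegativity on $\{x_2=0\}$.
\begin{itemize}
\item Take the trace of $q^{\mathrm{even}}$, the typical profile at frequency $3$ (e.g.\ $w\approx\epsilon q^{\mathrm{even}}$ when $\{u=0\}\approx\{|x_2|\lesssim\epsilon x_1^2\}$). Its full harmonic extension has $h(0)=\fint_{\de B_1}q^{\mathrm{even}}<0$.
\item Even your mixed extension (harmonic only on $k\ge4$) is negative on the thin space. The trace vanishes at $(\pm1,0)$ and its $\cos 2j\theta$ coefficients $c_{2j}$ are positive for $j\ge2$, so $h(\rho,0)=\sum_{j\ge2}(\rho^{2j}-\rho^3)c_{2j}<0$ for $0<\rho<1$.
\end{itemize}
Since $z_r=0$ there, $z_r+t(h-z_r)$ is negative on $\{x_2=0\}$ and violates $\zeta_r\ge -L_\circ|x_2|$.

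\textbf{How the paper avoids this.} It takes $\zeta_r$ to be the thin obstacle solution with datum $z_r$.
\begin{itemize}
\item The lower bound then follows from your correct Lipschitz bound $z_r\ge-L_\circ|x_2|$ on $\de B_1$ and comparison (\Cref{lem:comparison thin obstacle}) with the thin obstacle solution $-L_\circ|x_2|$.
\item The energy bound follows from minimality against the admissible competitor of \Cref{lem:epiperimetric projection}. That lemma's hypothesis $\Delta\Pi_3 z_r\le C_\circ\|\Pi_3z_r-z_r\|_{L^2(\de B_1)}\delta_0(x_2)$ is supplied by \Cref{lem:control projection P3}, which uses that $w$ is superharmonic and vanishes at $0$ to rule out traces near $-q^{\mathrm{even}}$.
\end{itemize}
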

	We will set $\zeta_r$ as the solution of the thin obstacle problem in $B_1$, with data $z_r=\tilde w_r$ on $\de B_1$. This Lemma in particular refines the epiperimetric inequality of \cite[Proposition 6.1]{cubicEpiperimetric} in the 2D case, as the result proved there is not sufficient to deduce \Cref{prop:extended monneau}. As a general approach we adopt the one of \cite{Weiss}.
	
	We define the linear space
	\[
	\mathcal P_3:=\big\{p(x)\text{ 3-homogeneous},\quad \supp\Delta p\subset\{x_2=0\},\quad p\equiv0\text{ on }\{x_2=0\}\big\}.
	\]
	As we work in 2D, an orthonormal basis is given by $q^\odd := a_o \, x_2(x_2^2-3x_1^2),$ $q^\even := a_e\, |x_2|(x_2^2-3x_1^2)$ for suitable positive constants $a_o,$ $a_e$. We denote by $\Pi_3 $ the orthogonal projection onto $\mathcal P_3$ in $L^2(\de B_1)$. Note that
	\[
	\Delta \Pi_3 c = \scalar{c,q^\even} \Delta q^\even,\quad\lap q^\even = -6 c_e\,  x_1^2 \delta_0(x_2)\le 0,
	\]
	where $\delta_0$ is Dirac's delta function.
	
	For notational convenience, given functions $g,h\in H^1(B_1)$ we set
	\[
	W_3(g) := \int_{B_1} |\nabla g|^2-3\int_{\de B_1} g^2,\qquad W_3(g;h):= \int_{B_1}\nabla g\cdot\nabla h - 3\int_{\de B_1} gh.
	\]
	As $W_3$ is, up to a renormalization factor, equal to $E_3$, we prove an epiperimetric inequality for $W_3$.
	Using that $q\lap q=0$ and $r\de_r q=3q$ for all $q\in \mathcal{P}_3,$ one readily finds
	\begin{equation}\label{eq:weiss on P3}
		W_3(q)=0\quad\forall q\in\mathcal P_3.
	\end{equation}
	We will also use repeatedly the identity:
	\begin{equation}
		W_3(q+ g) = W_3(g) -2\int_{B_1} g \lap q,\qquad \forall q\in\mathcal  P_3,\ g\in H^1(B_1).\label{eq:weiss on P3+something}
	\end{equation}
	\begin{lemma}\label{lem:epiperimetric projection}
		For all $M>0$ there is $\eps=\eps(M)>0$ such that for any 3-homogeneous $c\in H^1(B_1)$ satisfying $$c(\cdot,0)\ge0\quad\text{ and  }\quad\Delta\Pi_3 c\le M\|\Pi_3c-c\|_{L^2(\de B_1)}\delta_0(x_2),$$ there is $\tilde\zeta\in c+H^1_0(B_1)$ satisfying 
		\[
		\tilde\zeta(\cdot,0)\ge0\quad\text{  and }\quad (1-\eps)W_3(\tilde\zeta)\le W_3(c).\]
	\end{lemma}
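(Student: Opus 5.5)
The plan is to follow Weiss' approach and split $c$ along $\mathcal P_3$. Write
\[
c=q+g,\qquad q:=\Pi_3c=a\,q^\odd+b\,q^\even,\qquad g:=c-\Pi_3 c .
\]
Since $q\equiv0$ on $\{x_2=0\}$, we have $g(\cdot,0)=c(\cdot,0)\ge0$. Identity \eqref{eq:weiss on P3+something} gives
\[
W_3(c)=W_3(g)-2b\int_{B_1}g\,\Delta q^\even .
\]
Because $\Delta q^\even\le0$ is a measure on the thin line and $g\ge0$ there, the cross term has the sign of $b$. The hypothesis $\Delta\Pi_3c\le M\|g\|_{L^2(\de B_1)}\delta_0(x_2)$ says exactly that $b\ge -C M\|g\|_{L^2(\de B_1)}$. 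Hence, when $b<0$, the cross term is bounded by $CM\|g\|_{L^2(\de B_1)}\|g\|_{L^1(\{x_2=0\}\cap B_1)}\le CM\|g\|^2_{L^2(\de B_1)}$, using 3-homogeneity to control the trace on the line. So everything is quadratic in $g$. The competitor will have the form $\tilde\zeta=q+\hat g$, with $\hat g=g$ on $\de B_1$ and $\hat g(\cdot,0)\ge0$; then $\tilde\zeta(\cdot,0)=\hat g(\cdot,0)\ge0$ automatically.

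\textbf{Explicit computation for the odd part.} Split $g=g^\odd+g^\even$ in the $x_2$ variable. For $g^\odd$ the obstacle is irrelevant, since it vanishes on $\{x_2=0\}$. Expand its trace in $\sin(k\theta)$ and take the harmonic extension mode by mode. For a mode of frequency $k$, the 3-homogeneous extension has $W_3=\tfrac{k^2-9}{6}$ (normalised), while the harmonic extension $r^k$ has $W_3=k-3$. The ratio is $\tfrac{6}{k+3}$: it is $\le\tfrac67$ for $k\ge4$ and $\ge\tfrac65$ for $k\le2$ (negative energies become more negative). The only critical odd mode is $k=3$, namely $x_2(3x_1^2-x_2^2)$, which is proportional to $q^\odd$ and is therefore removed by the projection. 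This gives a uniform gain on $g^\odd$, and the cross terms between even and odd parts vanish by symmetry.

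\textbf{The even part, by compactness.} For $g^\even$ I would argue by contradiction. Take a sequence $c_k$ with $\eps_k\to0$ for which no admissible competitor improves by the factor $1-\eps_k$. Normalise $\bar g_k:=g_k^\even/\|g_k^\even\|_{L^2(\de B_1)}$ and $\bar b_k:=b_k/\|g_k^\even\|$, which is bounded below by $-CM$. Consider the candidate competitor $\zeta_k:=q_k+\hat g_k$, where $\hat g_k$ minimises $W_3(\cdot)-2b_k\int(\cdot)\Delta q^\even$ among functions with trace $g_k^\even$ on $\de B_1$ and $\hat g_k\ge0$ on the line; this is a thin obstacle problem with a bounded linear forcing on the line. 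Failure of the inequality means the 3-homogeneous function $g^\even_k$ is almost a minimiser.

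Passing to the limit (weak $H^1$ convergence, strong convergence of traces by homogeneity), one obtains a 3-homogeneous, even, nontrivial $\bar g$ which minimises this functional with its own trace, and which is orthogonal to $q^\even$ on $\de B_1$. If $\bar b_k\to+\infty$, the term $-2b\int g\Delta q^\even$ forces $\bar g=0$ on the line, and we are reduced to the odd-type computation above. Otherwise, $\bar g$ is a 3-homogeneous solution of a thin obstacle problem with a forcing supported on the line. In 2D such solutions can be classified explicitly on the slit circle: on each half-circle $\bar g$ is a combination of $r^3\cos3\theta$, $r^3\sin 3\theta$ and the profile $q^\even$. Orthogonality to $\mathcal P_3$ and evenness then force $\bar g=\lambda\, r^3\cos3\theta$.

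The case $\lambda\neq0$ must be excluded separately. Since $r^3\cos 3\theta=x_1^3-3x_1x_2^2$ changes sign on $\{x_2=0\}$, it violates $\bar g(\cdot,0)\ge0$; so $\bar g\equiv0$, contradicting the normalisation. One also has to handle loss of mass in the limit. Here I would use that $W_3(\bar g_k)$ is bounded below in terms of $\|\bar g_k\|_{L^2(\de B_1)}$, because it is quadratic with gap away from the $k=3$ modes, so the trace norm does not vanish.

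I expect this last step, the exclusion of near-degenerate even directions, to be the main obstacle. These are exactly the modes of frequency $3$ not lying in $\mathcal P_3$ and the half-integer thin obstacle modes $5/2,7/2$ near $3$, together with the interplay with a possibly negative $b$. This is precisely where the one-sided bound $\Delta\Pi_3c\le M\|\Pi_3c-c\|\delta_0$ must be used, to keep the forcing term from being of leading order. If the compactness argument becomes delicate there, I would instead try to build $\hat g^\even$ explicitly. On the upper half disk, expand the even trace in the Neumann-type basis adapted to the slit. Then take $\hat g^\even:=\max\{h,\,\text{3-homogeneous extension}\}$-type truncations on the line, and check by hand that the energy gain survives the truncation.
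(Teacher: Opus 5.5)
Your endgame is correct and matches the paper's. A $3$-homogeneous limit that is harmonic off $\{x_2=0\}$, orthogonal to $\mathcal P_3$ and nonnegative on the line equals $\lambda r^3\cos3\theta$ plus an element of $\mathcal P_3$. Orthogonality removes the latter, and the signs at $\theta=0,\pi$ force $\lambda=0$. The paper reaches the same point by testing the Euler--Lagrange equation with $q^{\even}$, which yields $w_\infty(1,0)+w_\infty(-1,0)=0$. Your Fourier computation for the odd part is also correct, but the odd/even splitting is unnecessary. The paper runs a single compactness argument on all of $c-\Pi_3c$. It only needs perturbations vanishing on the line; these see neither the constraint nor the forcing $\Delta q_k$, up to an $O(1/k)$ term controlled by the one-sided hypothesis. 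In particular, the step you flag as the main obstacle is not one. The limit is $3$-homogeneous by construction, so homogeneities $5/2,7/2$ never enter and no explicit construction is needed.

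The genuine gap is the a priori bound that makes compactness work, and the estimates you offer in its place are wrong.

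(i) Your trace claim is false. For $3$-homogeneous $g$ one has $\|g\|_{L^1(\{x_2=0\}\cap B_1)}=\tfrac14(|g(1,0)|+|g(-1,0)|)$, and point values are not controlled by $\|g\|_{L^2(\de B_1)}$. Only $\|g\|_{L^1(B_1')}\le C(\|\nabla g\|_{L^2(B_1)}+\|g\|_{L^2(\de B_1)})$ is available.

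(ii) You never bound $\|\nabla \bar g_k\|_{L^2(B_1)}$. Your substitute, a lower bound on $W_3(\bar g_k)$ from a gap away from $k=3$, is beside the point: with unit trace norm and an $H^1$ bound there is no loss of mass. It is also false on the even part, where $\cos k\theta$ with $k\le 2$ have negative energy and $r^3\cos3\theta\notin\mathcal P_3$ has zero energy.

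The missing step runs as follows. Testing the failure with $v=c_k$ gives $W_3(c_k)\le 0$. Set $g_k=c_k-q_k$ and $\delta_k=\|g_k\|_{L^2(\de B_1)}$, and use the identity $W_3(q+g)=W_3(g)-2\int_{B_1}g\Delta q$. Since $\Delta q_k$ lives on the line where $g_k=c_k\ge0$, you get $\|\nabla g_k\|^2_{L^2(B_1)}\le 3\delta_k^2+2\int_{B_1}g_k\Delta q_k\le 3\delta_k^2+CM\delta_k(\|\nabla g_k\|_{L^2(B_1)}+\delta_k)$. Young's inequality then gives $\|g_k\|_{H^1(B_1)}\le C(1+M)\delta_k$.

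(iii) Dividing by $\|g_k^{\even}\|_{L^2(\de B_1)}$ breaks the hypothesis. It only gives $b_k\ge -CM\|g_k\|_{L^2(\de B_1)}$ with the full $g_k$. So $\bar b_k$ need not be bounded below when the odd part dominates, and you would have to prove separately that it does not. Normalizing the whole $c_k-\Pi_3c_k$ by $\delta_k$, as the paper does, avoids this issue.
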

	\begin{proof}
		Suppose there are 3-homogeneous functions $c_k\in H^1(B_1)$ satisfying $c_k\ge0$ on $\{x_2=0\}$ and such that, defining
		\[
		q_k:= \Pi_3 c_k,\quad \delta_k:=\|c_k-q_k\|_{L^2(\de B_1)},
		\]
		we have
		\begin{equation}
			\Delta q_k\le M\delta_k\delta_0(x_2)\label{eq:lap bound artificial}
		\end{equation}
		but such that
		\begin{equation}\label{eq:almost minimizing assumption general proof}
			W_3(c_k)\le (1-\tfrac1k)W_3(v_k)\quad \forall v_k\in c_k+H^1_0(B_1), v_k\ge0\text{ on }\{x_2=0\}.
		\end{equation}
		In particular
		\[
		W_3(c_k)\le0.
		\]
		
		Setting
		\[
		w_k=\frac{c_k-q_k}{\delta_k},
		\]
		note that
		\[
		w_k \perp_{L^2(\de B_1)} \mathcal P_3,\quad \|w_k\|_{L^2(\de B_1)}=1,\quad w_k(\cdot,0)\ge0.
		\]
		% As $q_k$ are 3-homogeneous we have $x\cdot \nabla q_k-3q_k\equiv0$, so an integration by parts gives
		% \[
		%     W_3(c_k) = W_3(c_k-q_k)-2\int_{B_1}\Delta q_k(c_k-q_k).
		% \]
		Applying \eqref{eq:weiss on P3+something} to $q=q_k$ and $g=c_k-q_k$ and recalling $W_3(c_k)\le0$ yields
		\begin{equation}\label{eq:k}
			W_3(c_k-q_k) \le 2\int_{B_1}(c_k-q_k)\lap q_k.
		\end{equation}
		Recall that $\Delta q_k$ is supported on $\{x_2=0\}$ where $c_k-q_k=c_k\ge0$, thus using \eqref{eq:lap bound artificial} and the trace bound
		\[
		\|c_k-q_k\|_{L^1( B_1')}\le C(\|\nabla(c_k-q_k)\|_{L^2(B_1)}+\delta_k)
		\]
		we find
		\begin{equation}\label{eq:lap bounds}
			\int_{B_1}\Delta q_k(c_k-q_k)\le C\delta_k \int_{B_1'} c_k-q_k \le CM\delta_k\|(c_k-q_k)\|_{L^1(B_1')}\le CM\delta_k^2+CM\delta_k\|\nabla(c_k-q_k)\|_{L^2(B_1)}.
		\end{equation}
		Thus \eqref{eq:k} becomes
		\[
		\|\nabla (c_k-q_k)\|^2_{L^2(B_1)} \le 3\delta_k^2 + CM\delta_k^2 + CM\delta_k\|\nabla(c_k-q_k)\|_{L^2(B_1)}\le C'M\delta_k^2 + \tfrac12\|\nabla(c_k-q_k)\|_{L^2(B_1)}^2
		\]
		and we deduce
		\[
		\|w_k\|_{H^1(B_1)}\le C M.
		\]
		It follows that, up to a subsequence,
		\[
		w_k\weak w_\infty \quad\text{weakly in }H^1(B_1),
		\]
		so in particular
		\begin{equation}\label{eq:orthogonality assumption proof projection}
			w_\infty\perp_{L^2(\de B_1)}\mathcal P_3,\quad \|w_\infty\|_{L^2(\de B_1)}=1,\quad w_\infty(\cdot,0)\ge0.
		\end{equation}
		Recalling \eqref{eq:almost minimizing assumption general proof}, the same integration by parts leading to \eqref{eq:k} gives
		\[\begin{split}
			W_3(c_k-q_k) &- 2\int_{B_1}\Delta q_k(c_k-q_k) =W_3(c_k)\le (1-\tfrac1k) W_3(v_k)\\
			&=(1-\tfrac1k)W_3(v_k-q_k)- 2(1-\tfrac1k)\int_{B_1}\Delta q_k(v_k-q_k),
		\end{split}\]
		for all $v_k\in c_k+H^1_0(B_1)$ with $v_k(\cdot,0)\ge0$.
		Hence we get
		\[\begin{split}
			W_3(w_k)&\le (1-\tfrac1k)W_3(\varphi_k)
			-2\int_{B_1}(\delta_k^{-1}\Delta q_k)\left((1-\tfrac1k)\varphi_k-w_k\right)\quad\forall \varphi_k\in w_k+H^1_0(B_1),\,\varphi_k(\cdot,0)\ge0.
		\end{split}\]
		Given, $\phi\in H^1_0(B_1\setminus\{x_2=0\})$, testing with $\varphi_k= w_k+\phi$ gives
		\[\begin{split}
			\int_{B_1}|\nabla w_k|^2&\le (1-\tfrac1k)\int_{B_1}|\nabla (w_k+\phi)|^2 +\tfrac3k\int_{\partial B_1}w_k^2+\tfrac2k\int_{B_1}(\delta_k^{-1}\Delta q_k)w_k.
		\end{split}\]
		Now \eqref{eq:lap bounds} together with $\|w_k\|_{H^1}\le CM$ gives
		\[
		\int_{B_1}|\nabla w_k|^2\le \int_{B_1}|\nabla(w_k+\phi)|^2 + o(1).
		\]
		Letting $k\to+\infty$ weak convergence in $H^1$ yields
		\[
		\int_{B_1}|\nabla w_\infty|^2\le \int_{B_1}|\nabla (w_\infty+\phi)|^2 \quad\text{ for all }\phi\in H^1_0(B_1\setminus\{x_2=0\}),
		\]
		so, as $w_\infty$ is 3-homogeneous,
		\begin{equation}\label{eq:EL}
			0=\int_{\mathbb S^1} \nabla w_\infty\cdot \nabla \phi-9w_\infty\phi \quad\text{ for all }\phi\in H^1_0(\mathbb S^1\setminus\{x_2=0\}).
		\end{equation}
		Taking $\phi= q^\even$ and integrating by parts we get to
		\begin{align*}
			0&=\int_{\mathbb S^1} \nabla w_\infty\cdot \nabla q^\even =-\int_{\mathbb S^1} w_\infty (\lap_{\mathbb S^1}+ 9)q^\even = a_e \int_{\{x_2=0\}\cap\mathbb S^1} w_\infty x_1^2.
		\end{align*}
		As $w_\infty(\cdot,0)\ge0$ we find $w_\infty\equiv0$ on $\{x_2=0\}$. Clearly, by \eqref{eq:EL}, $w_\infty$ is harmonic in $B_1\setminus\{x_2=0\}$, hence we get $w_\infty\in\mathcal P_3$, contradicting \eqref{eq:orthogonality assumption proof projection}.
	\end{proof}
	The next lemma shows that $w$ may approach $\mathcal{P}_3$ at an angle universally bounded from below in $L^2(\de B_1)$, justifying why the existence of $M$ in the previous Lemma is indeed satisfied with $M=C_\circ$ for traces $c\in H^1(\de B_1)$ of the specific form $u-\tfrac12 x_2^2$.
	\begin{lemma}\label{lem:control projection P3}
		There is $C_\circ>0$ such that if $u$ solves \eqref{eq:classical obstacle} with $0\in\de\{u>0\}$ then $w=u-\tfrac12x_2^2$ satisfies
		\[
		\Delta\Pi_3w\le C_\circ\|\Pi_3w-w\|_{L^2(\de B_1)}\delta_0(x_2).
		\]
	\end{lemma}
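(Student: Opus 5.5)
The plan is to reduce the statement to a one-sided bound on a single coefficient, and to prove that bound by compactness. In 2D we have $\Pi_3 w=\scalar{w,q^\even}q^\even+\scalar{w,q^\odd}q^\odd$, and $q^\odd$ is harmonic. Hence
\[
\Delta\Pi_3 w=\scalar{w,q^\even}\,\Delta q^\even=-6c_e\scalar{w,q^\even}\,x_1^2\,\delta_0(x_2).
\]
If $\scalar{w,q^\even}\ge 0$ the left-hand side is $\le 0$ and there is nothing to prove. So the statement is equivalent to the lower bound
\begin{equation*}
\scalar{w,q^\even}_{L^2(\de B_1)}\ \ge\ -C_\circ\,\|w-\Pi_3 w\|_{L^2(\de B_1)} .
\end{equation*}
In words: once we subtract its best $\mathcal P_3$-approximation, $w$ cannot have a definite negative $q^\even$-component. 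This is natural, since a function $a q^\even+bq^\odd$ with $a<0$ has $\Delta\ge0$ concentrated on $\{x_2=0\}$. On the other hand, $w=u-\tfrac12x_2^2$ satisfies $\Delta w=-\ind_{\{u=0\}}\le0$ (superharmonic) and $w=u\ge0$ on $\{x_2=0\}$.

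I would argue by contradiction. Suppose there are $u_k$ as in the statement with $w_k=u_k-\tfrac12x_2^2$ such that $\lambda_k:=-\scalar{w_k,q^\even}>k\|w_k-\Pi_3w_k\|_{L^2(\de B_1)}$. In particular $\lambda_k>0$ and $\lambda_k\le C\|w_k\|_{L^2(\de B_1)}$. Normalize $v_k:=w_k/\|w_k\|_{L^2(\de B_1)}$. Then $\|v_k-\Pi_3v_k\|_{L^2(\de B_1)}\to0$ and $-\scalar{v_k,q^\even}\ge c>0$ up to a subsequence. Each $v_k$ is superharmonic in $B_1$ and nonnegative on $\{x_2=0\}$, and its $L^2(\de B_1)$-trace converges strongly to $V_\partial:=aq^\even+bq^\odd$ with $a<0$. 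For the interior I would take a weak limit $V$ of $v_k$ in $H^1_\loc$, or at least in $L^1_\loc$, which superharmonic functions with bounded boundary traces allow. By weak closedness, $V$ is superharmonic in $B_1$, has trace $V_\partial$, and satisfies $V\ge 0$ on $\{x_2=0\}$.

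Next I would derive the contradiction from superharmonicity through a Green identity against $q^\even$ itself. Since $q^\even$ is 3-homogeneous, $\de_r q^\even=3q^\even$ on $\de B_1$. I would apply Green's formula for $v_k$ and $q^\even$ on $B_\rho$ and average over $\rho\in(\tfrac12,1)$ to eliminate the uncontrolled normal derivative $\de_r v_k$. With $m_k(\rho):=\int_{\de B_1}(v_k)_\rho\, q^\even$ this yields an ODE identity of the form
\[
\rho\, m_k'(\rho)-3m_k(\rho)=\int_{B_1} q^\even\,\Delta (v_k)_\rho-\int_{B_1}(v_k)_\rho\,\Delta q^\even .
\]
The last term is $\ge0$, because $\Delta q^\even\le0$ lives on $\{x_2=0\}$ where $v_k\ge0$. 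For the first term, $\Delta v_k\le0$ is supported on the contact set. By \eqref{eq:bound contact set} the contact set lies in a strip $|x_2|\le C\|w_k\|$ where $q^\even\ge -C|x_2|$, so the first term is bounded below by a quantity that is small after normalization; if $\|w_k\|$ does not go to $0$, I would instead pass to the limit $u_\infty$ and use that its contact set lies where $q^\even\le 0$ only to leading order. Passing to the limit, $\rho^{-3}m(\rho)$ is nondecreasing on $(\tfrac12,1)$. Separately, the trace of $V$ is exactly $aq^\even+bq^\odd$ at $\rho=1$, and the near-flatness $\|v_k-\Pi_3 v_k\|\to0$ should propagate, via the identity, to $\rho$ close to $1$. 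Combining these with the positive contribution of $-\int V\Delta q^\even$ should force $a\ge0$, a contradiction.

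The main obstacle is compactness and control of the interior. The statement assumes no frequency bound, so $v_k$ need not be bounded in $H^1(B_1)$. Moreover, the contact-set term $\int q^\even\Delta w_k$ must be shown to be $o(\|w_k\|)$, or at least of the favourable sign, uniformly in both regimes: $\|w_k\|\to0$ (use \Cref{lem:lip estimate rhs}) and $\|w_k\|\gtrsim1$ (use $C^1$ compactness of solutions and the explicit sign of $q^\even$ near $\{x_2=0\}$). I would first try to work purely with the averaged Green identity, which requires only traces and the sign structure, and so sidesteps the $H^1$ issue. Only if that fails would I add the doubling assumption of \Cref{lem:doubling}, under which \Cref{lem:apply projection epiperimetric} is applied anyway.
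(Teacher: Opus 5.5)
Your reduction to the one-sided bound $\scalar{w,q^\even}\ge -C_\circ\|w-\Pi_3w\|_{L^2(\de B_1)}$ is correct, but the argument you sketch for it does not close.

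The Green identity against $q^\even$ gives, at best, that $\rho^{-3}m(\rho)$ is nondecreasing, i.e.\ $m(\rho)\le a\rho^3<0$. This is perfectly consistent with $a<0$. To see that none of the properties you derive for $V$ can force $a\ge0$, let $H$ be the harmonic extension of $aq^\even+bq^\odd$ with $a<0$. Then:
\begin{itemize}
\item $H$ is superharmonic, and its trace lies exactly in $\mathcal P_3$;
\item $H\ge0$ on $\{x_2=0\}$, because the harmonic extension of the superharmonic $q^\even$ lies below $q^\even$, which vanishes there;
\item $H$ satisfies your monotonicity, since $-\int H_\rho\Delta q^\even\ge0$.
\end{itemize}
The only hypothesis $H$ violates is $H(0)=0$, because $H(0)=a\fint_{\de B_1}q^\even>0$. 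So the condition $u(0)=0$ has to be the source of the contradiction, and your final step never uses it. Your claims that near-flatness ``propagates'' to $\rho<1$ and that the terms ``should force $a\ge0$'' have no mechanism behind them.

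There are two further problems. The contact-set term $\int q^\even\Delta(v_k)_\rho$ has no sign in general when $\|w_k\|\gtrsim1$. For example, a disk contact set tangent to $\{x_2=0\}$ at $0$ from below already makes it negative and of order one, and your proposed fix via $u_\infty$ is unjustified. Also, \eqref{eq:bound contact set} with data on $\de B_1$ only localizes the contact set inside $B_{1/2}$, not inside $B_\rho$ for $\rho\in(\tfrac12,1)$.

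The missing idea is to test against constants rather than against $q^\even$, i.e.\ to use the mean value property of the superharmonic function $w$. This uses exactly $w(0)=u(0)=0$. It needs no interior compactness, no frequency or doubling bound, and no sign of $w$ on $\{x_2=0\}$. Using $\fint_{\de B_1}q^\odd=0$, one gets
\[
0=w(0)\ge\fint_{\de B_1}w=\scalar{w,q^\even}\fint_{\de B_1}q^\even+\fint_{\de B_1}(w-\Pi_3w).
\]
Since $\fint_{\de B_1}q^\even<0$ (it is superharmonic but not harmonic, and vanishes at $0$), Cauchy--Schwarz gives the bound directly, with $C_\circ$ depending only on $|\fint_{\de B_1}q^\even|^{-1}$. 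Equivalently, within your own framework, the Green identity with the constant $1$ shows that $\rho\mapsto\fint_{\de B_1}w(\rho\cdot)$ is nonincreasing.

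The paper runs exactly this comparison, phrased by contradiction. It shows $v_k\ge-\alpha_kQ+h_k$, where $Q$ is the harmonic replacement of $-q^\even$ and $Q(0)>0$, and this contradicts $v_k(0)=0$.
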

	\begin{proof}
		We argue by contradiction. Assume there are $u_k$ solving \eqref{eq:classical obstacle} with $0\in\de\{u_k>0\}$ but such that, setting $w_k=u_k-\tfrac12x_2^2$, we have
		\begin{equation}\label{eq:super big projection}
			\scalar{\Pi_3 w_k,q^\even}_{L^2(\de B_1)}<-k\|\Pi_3 w_k -w_k\|_{L^2(\de B_1)}.
		\end{equation}
		Writing
		\[
		q_k := \scalar{w_k,q^\odd}_{L^2(\de B_1)}q^\odd,\quad\delta_k:=\|w_k-q_k\|_{L^2(\de B_1)}
		\]
		and setting
		\[
		v_k = \frac{w_k-q_k}{\delta_k}
		\]
		then we can write uniquely
		\begin{equation}\label{e:defvk}
			v_k = \alpha_k q^\even + v_k^\perp
		\end{equation}
		where
		\[
		v_k^\perp\perp\mathcal P_3\quad\text{and}\quad \|v_k^\perp\|_{L^2(\de B_1)}^2+\alpha_k^2=1.
		\]
		\Cref{eq:super big projection} reads as
		\[
		\alpha_k<-k\|v_k^\perp\|_{L^2(\de B_1)}\le0.
		\]
		As $|\alpha_k|\le1$, we deduce
		\[
		v_k^\perp\to0\quad\text{in }L^2(\de B_1)\quad\text{and}\quad \alpha_k\to -1.
		\]
		Consider $Q$ to be the harmonic replacement of $-q^\even$ and $h_k$ the harmonic replacement of $v_k^\perp$. Notice that by the mean value property
		\[
		v_k^\perp \to 0\text{ in $L^2(\de B_1)$ implies that }h_k\to 0\text{ uniformly  in }\overline{B_{1/2}}.
		\]
		Furthermore, $Q\ge c_0>0$ in $B_{\rho_0}$ for some universal $c_0,\rho_0>0$, this can be proved using the mean value property and the fact that $\fint_{\de B_1} q^\even <0$. 
		%(by the strong maximum principle, as $q^\even(0)=0$ and $\Delta (-q^\even)\ge0$). 
		Now $ v_k$ is superharmonic since
		\[
		\Delta v_k = \frac{1}{\delta_k}\Delta\tilde w_k = \frac1{\|w_k\|_{L^2(\de B_1)}\delta_k}\Delta (u_k-\tfrac12x_2^2) \le0,
		\]
		so, by \eqref{e:defvk} and comparison we find $v_k \ge -\alpha_k Q+h_k$ in $B_1$. Thus, in $B_{\rho_0}$:
		\[
		\begin{split}
			v_k \ge -\alpha_k c_0 -\|h_k\|_{L^\infty(B_{\rho_0})}=(1-o(1))c_0-o(1).
		\end{split}
		\]
		But this is impossible because $v_k(0)=0$ while $c_0>0$.
	\end{proof}
	\begin{proof}[Proof of \Cref{lem:apply projection epiperimetric}]
		Let $C_\circ$ be given by \Cref{lem:control projection P3}, and let $\eps_\circ$ and $\tilde\zeta_r$ be given by \Cref{lem:epiperimetric projection} applied to $z_r$ with $M=C_\circ$.
		Set $\zeta_r$ as the solution of the thin obstacle problem in $B_1$ with data $z_r=\tilde w_r$ on $\de B_1$.
		Since 
		\[
		\int_{B_1}|\nabla \zeta_r|^2\le\int_{B_1}|\nabla v|^2\quad\forall v\in \zeta_r+H^1_0(B_1)\text{ with }v(\cdot,x_2)\ge0,
		\]
		using $v=\tilde\zeta_r$ and the fact that $\zeta_r=\tilde\zeta_r=z_r$ on $\de B_1$ with $\|z_r\|_{L^2(\de B_1)}=1$ we deduce
		\[
		(1-\eps_\circ)E_3(\zeta_r)\le (1-\eps_\circ)E_3(\tilde\zeta_r)\le E_3(z_r).
		\]
		On the other hand, \Cref{lem:lip estimate rhs}, together with the doubling estimate of \Cref{lem:doubling}, implies $|\nabla z_r|\le L_\circ$ for a universal $L_\circ$. As $z_r\ge0$ on $\{x_2=0\}$ this implies
		\[
		\zeta_r =z_r\ge -L_\circ|x_2|\quad \text{on }\de B_1.
		\]
		Since $ -L_\circ|x_2|$ solves the thin obstacle as well, we conclude by comparison (see \Cref{lem:comparison thin obstacle}).
	\end{proof}

	\subsubsection{Proof of \Cref{prop:extended monneau}}
	We consider the scale
	\[
	R:= \inf\{s\in(0,1)\,: \phi(s,u-\tfrac12x_2^2)\ge 3\},
	\]
	with the understanding that $R:=1$ if that set is empty. Then using Monneau monotonicity (\Cref{lem:monneau}) and $C^{1,1}$ bounds (\eqref{eq:universal upper bound}) we find
	\[
	\|(u-\tfrac12x_2^2)(s\cdot)\|_{L^2(\de B_1)}\le s^3 \|u-\tfrac12x_2^2\|_{L^2(\de B_1)}\le C\quad\text{ for all }s\in[R,1].
	\]
	Thus if $\rho\ge R/2$ we are done, otherwise rescaling by $R/2$, it suffices to prove
	\begin{equation}
		\|\tilde w_r\|_{L^2(\de B_1)}\le C_\circ r^3 \quad \text{ for all }r\in[\rho',1],\label{eq:sbs} 
	\end{equation}    
	where
	\[
	w_r:=(\tfrac12R r)^{-2}(u-\tfrac12 x_2^2)(\tfrac12R r\cdot),\qquad \tilde w_r:=w_r/\|w_r\|_{L^2(\de B_1)},\qquad\rho':=2\rho/R<1.
	\]
	Now for $r\in[\rho',1]$ using Almgren's monotonicity and the definition of $R$ we have the frequency pinch:
	\[
	3-\delta_\circ\le\phi(r,w)\le\phi(2,w) =  \phi( R, u-\tfrac12 x_2^2)\le 3<10,
	\]
	therefore we can apply \Cref{lem:apply projection epiperimetric} to find $\zeta_r\ge -L_\circ|x_2|$ agreeing with $\tilde w_r$ on $\de B_1$ and such that
	\begin{equation}\label{eq:epi proof monneau}
		E_3(z_r) \ge (1-\eps_\circ)E_3(\zeta_r).
	\end{equation}
	Now we prove that $\tilde w_r$ was a minimizer up to an integrable error:
	\begin{equation}\label{eq:goal 1 extend monneau}
		E_3(\zeta_r)\ge E_3(r)-Cr^{1-\delta_\circ}\quad \text{for all }r\in(\rho',1),
	\end{equation}
	where we denote $E(r):=\phi(r,w)-3$.
	Since $\zeta_r= \tilde w_r$ on $\de B_1$, we have
	\[
	E_3(r) -E_3(\zeta_r)=\int_{B_1}|\nabla \tilde w_r|^2-|\nabla\zeta_r|^2= 2\int_{B_1}\lap\tilde w_r(\zeta_r-\tilde w_r)-\int_{B_1}|\nabla \zeta_r|^2\le  2\int_{B_1}\lap\tilde w_r(\zeta_r-\tilde w_r).
	\]
	Now, $\lap\tilde w_r = -{r^2}\ind_{\{u(r\cdot)=0\}}/{\|w_r\|_{L^2(\de B_1)}}$ 
	so, using $\zeta_r\ge-L_\circ |x_2|$, we get
	\[
	-\tilde w_r\lap \tilde w_r\le0,\qquad \zeta_r\lap \tilde w_r \le \frac{r^2 L_\circ|x_2|}{\|w_r\|_{L^2(\de B_1)}}\ind_{\{u(r\cdot)=0\}}.
	\]
	Using \Cref{lem:lip estimate rhs} and doubling (\Cref{lem:doubling} applies) we find
	\[
	\{u_r=0\}\cap B_1 \subset \{|x_2|\le C r^{-2} \|w_r\|_{L^2(\de B_1)}\},
	\]
	so
	\[
	\zeta_r\lap \tilde w_r\le C\ind_{\{|x_2|\le Cr^{-2}\|w_r\|_{L^2(\de B_1)}\}}.
	\]
	As $\phi(w,r)\ge3-\delta_\circ$ for all $r\in(s,1)$, \Cref{lem:monneau} yields 
	\[
	r^{-2}\|w_r\|_{L^2(\de B_1)}\le r^{1-\delta_\circ}\|w\|_{L^2(\de B_1)},
	\]
	so
	\[
	\int_{B_1}\zeta_r\Delta\tilde w_r\le C r^{-2} \|w_r\|_{L^2(\de B_1)}\le Cr^{1-\delta_\circ}\|w\|_{L^2(\de B_1)},
	\]
	and \eqref{eq:goal 1 extend monneau} follows.
	
	Therefore, combining \cref{eq:goal 1 extend monneau,eq:epi proof monneau} and \Cref{lem:derivative weiss} we find the differential inequality:
	\[
	\frac{d E_3(r)}{d r}\ge \frac2r(3\eps_\circ+E_3(r))(-E_3(r)) -Cr^{-\delta_\circ}\quad\text{for all }r\in(\rho',1).
	\]
	As $E_3(r)\ge -\delta_\circ$, choosing $\delta_\circ:=\eps_\circ$ universally small yields
	\[
	\frac{d}{dr}\big(r^{\eps_\circ}E_3(r)\big) \ge -C  r^{\eps_\circ-\delta_\circ}=-C\quad\text{ for all }r\in(\rho',1),
	\]
	which can be rewritten as
	\[
	\frac{E_3(r)}{r}\ge -\frac{C}{\eps_\circ} r^{-\eps_\circ} - \frac1{\eps_\circ}E_3(r)'\quad\text{for all }r\in(\rho',1),
	\]
	so recalling $-\delta_\circ\le E_3(r)\le 0$, \Cref{lem:monneau} gives
	\[
	\log\frac{H_3(1)}{H_3(\rho')} \ge 2\int_{\rho'}^{1} \frac{E_3(s)}{r}\,dr\ge -\frac{C}{\eps_\circ}\int_0^{1}\frac{ds}{s^{\eps_\circ}} -\frac{1}{\eps_\circ}[\phi(1,w)-\phi(0+,w)] \ge -C,
	\]
	since $\eps_\circ$ is universal, \eqref{eq:sbs} follows.
	
	\subsection{Proof of \Cref{theorem:2D 1}}\label{sec:proofmain}
	Recall the notation $\dist_{\Sigma_1}$ set in \eqref{eq:distance from Sigma1}.
	
	Given $u$ solving \eqref{eq:classical obstacle} in $B_1\subset\R^2$ with $f\equiv1$ and $0\in\de\{u>0\}$, we aim to show
	\[
	D^2 u \ge -\bar Cr\quad\text{in }B_r\qquad\text{for any }r\in(0,1/2).
	\]
	For $\delta_\circ$ given by Proposition~\ref{prop:extended monneau}, set
	\[
	r_u:= \inf\ \{r\in (0,\tfrac12)\,:\,\phi(r,u-\tfrac12(\nu\cdot x)^2)\ge 3-\delta_\circ\text{ for some }\nu\in\mathbb S^1\},
	\]
	and let $\nu_u$ realize the infimum. If the set is empty set $r_u:=\tfrac12$. 
	On one hand \Cref{prop:extended monneau} gives
	\begin{equation}\label{eq:distance Sigma1 proof main theorem}
		\dist_{\Sigma_1}(r^{-2}u(r\cdot))\le \left\|(r^{-2}u(r\cdot)-\tfrac12(\nu_u\cdot x)^2)\right\|_{L^2(\de B_1)}\le C_\circ r\quad\forall r\in[r_u,\tfrac12],
	\end{equation}
	so \Cref{lem:semiconvexity} implies
	\begin{equation}
		D^2u\ge -C_3r\quad\text{in }B_r\qquad \forall r\in[r_u,\tfrac12].\label{eq:r>r_u}    
	\end{equation}
	On the other hand the functions
	\[
	v(x) := r^{-2}_uu(r_ux)
	\]
	solve \eqref{eq:classical obstacle} in $B_1$ and satisfy
	\[
	\phi(v-p,1)\le 3-\delta_\circ\quad\forall p\in\Sigma_1\qquad\text{and}\qquad\dist_{\Sigma_1}(v)\le C_\circ r_u,
	\]
	so applying \Cref{prop:low frequency} to $v$ and rescaling we find
	% \begin{equation*}
		%     -D^2 u(r_u x) \le C_{\delta_\circ} \dist_{\Sigma_1}(v) |x|\text{ for all }x\in B_{1/2}
		% \end{equation*}
	\begin{equation}
		D^2u\ge -C_\circ C_{\delta_\circ}r\quad\text{in }B_r\qquad\text{for any }r<\tfrac12r_u.\label{eq:r<r_u}    
	\end{equation}
	Joining \eqref{eq:r>r_u} with \eqref{eq:r<r_u} we conclude.
	
	\section{General Case: Preliminary Results}\label{sec:preliminary rhs}
	\subsection{Normalization assumptions on $\bm f$}\label{eq:assumptions f}
	In this part we adapt the approach to more general right-hand sides. 
	Since we want to study the zero set of $u,$ we may always consider $u(x)/f(0)$ and assume:
	\[
	f(0)=1.
	\]
	Since we want to prove local estimates, we can do a universal rescaling and assume that the right-hand sides are flat, that is
	\[
	\|\nabla f \|_{C^\alpha(B_1)}\le c(n).
	\]
	Taking $c$ small we may ensure $\mu\ge \tfrac12$. Additionally, this allows us to use safely the adapted blow-up profiles $p_\nu$ defined in \eqref{eq:def_pnu_RHS}, and the choice of $c$ ensures that $p_\nu\ge 0$ in $B_4$. 
	
	We will refer to these right-hand sides as ``normalized''. It is clear that, after normalization, universal constants may depend only on $n$ and $\alpha$. 
	\subsection{Monotonicity formulas}
	
	We collect here the monotonicity formulas that we will need to tackle the case of a general $f$. The monotonicity of the frequency is more delicate to adapt, so we postpone the proof in \Cref{app:frequency}. Following \cite{FROS24}, given $r\in(0,1)$ we write $w_r(x):=w(rx)$ and define
	\[
	D(r,w):=\int_{B_1}|\nabla w_r|^2, \quad H(r,w):=\int_{\de B_1}w_r^2.
	\]
	We denote the truncated frequency as
	\[
	\phi^\tau(r,w):=\frac{D(r,w)+\tau r^{2\tau}}{H(r,w)+r^{2\tau}},\qquad H^\tau_\lambda(r,w):=r^{-2\lambda}(H(r,w)+r^{2\tau})
	\]
	where $\tau>0$. We will also fix the truncation at the value
	\begin{equation}
		\tau:= 3+\tfrac12\alpha,\label{eq:value of tau}
	\end{equation}
	any other value $3<\tau<3+\alpha$ would be possible. The main result of this subsection, proven in \Cref{app:frequency}, is the following.
	\begin{proposition}[Truncated Frequency Monotonicity] \label{corollary:frequency formula}
		There are universal positive constants $C_\circ$ large and $\alpha_\circ,$  $r_\circ$ small with the following property. Suppose $u$ solves \eqref{eq:obstacle} with $0\in\de\{u>0\}$ and $f$ normalized as in \Cref{eq:assumptions f}. Suppose that, writing for some $\nu\in\mathbb S^{n-1}$
		\[
		w := u-p_\nu,
		\]
		there is $s\in(0,r_\circ)$ satisfying
		\[
		\phi^\tau(s,w) + \tfrac{C_\circ}{\alpha_\circ}s^{\alpha_\circ}\ge 2.5.
		\]
		Then, for any $r\in(s,1/2),$
		\[
		\frac{d}{dr}\phi^\tau(r,w) \ge - C_\circ r^{\alpha_\circ-1}\quad\text{ and }\quad\frac{\int_{B_1}w_r\Delta w_r}{H(r,w) + r^{2\tau}}\ge -C_\circ r^{\alpha_\circ}.
		\]
	\end{proposition}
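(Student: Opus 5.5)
The plan is to follow the scheme of \cite[Proposition 3.4]{FROS24} (and its adaptation in \cite{CinftyRectifiable}): compute $\frac{d}{dr}$ of the truncated quantities and absorb every error produced by the right-hand side into a term of size $r^{\alpha_\circ-1}$. Fix $\nu=e_n$ and set $w=u-p_\nu$, so that by \eqref{eq:pde u-p_nu}
\[
\Delta w_r = r^2 g_r - r^2 f_r\ind_{\{u_r=0\}},\qquad |g_r|\le Cr^{1+\alpha}.
\]

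\textbf{Step 1: derivatives.} Write $D=D(r,w)$ and $H=H(r,w)$. The standard identities, coming from Rellich/Pohozaev, are
\[
H' = \tfrac{2}{r}\int_{\de B_1} w_r\,\de_\nu w_r = \tfrac2r\Big(D+\int_{B_1}w_r\Delta w_r\Big),
\]
\[
D' = \tfrac{2}{r}\int_{\de B_1}(\de_\nu w_r)^2 - \tfrac{2}{r}\int_{B_1}(x\cdot\nabla w_r)\Delta w_r .
\]
Adding the truncation terms $\tau r^{2\tau}$ and $r^{2\tau}$, whose derivatives are $\tfrac{2\tau}{r}$ times themselves, and applying Cauchy--Schwarz in the usual way, $\frac{d}{dr}\phi^\tau$ equals a nonnegative Cauchy--Schwarz term plus three error contributions:
\begin{itemize}
\item[(a)] $-\frac{2}{r}\int (x\cdot\nabla w_r-\phi^\tau w_r)\Delta w_r$ divided by $H+r^{2\tau}$;
\item[(b)] cross terms with the truncation, of the form $(\tau-\phi^\tau)r^{2\tau}$;
\item[(c)] the contributions of $g_r$.
\end{itemize}

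\textbf{Step 2: the obstacle term on the contact set.} On $\{u_r=0\}$ we have $w_r=-(p_\nu)_r$ and $\nabla w_r=-\nabla (p_\nu)_r$. Since $p_\nu$ is essentially $\tfrac12 x_n^2$ (cubic corrections are small after normalization), $x\cdot\nabla p_\nu - \lambda p_\nu\approx(2-\lambda)p_\nu$. So for $\phi^\tau\ge 2-o(1)$ the contact contribution has the favourable sign, up to an error of size $Cr^{2}\cdot r|x_n|^2\cdot|\{u_r=0\}|$. Two inputs control this:
\begin{itemize}
\item the contact set is trapped in a slab $\{|x_n|\le Cr^{-2}(\|w_r\|+r^{3+\alpha})\}$ by \eqref{eq:bound contact set};
\item the truncation $r^{2\tau}$ in the denominator, with $\tau<3+\alpha$, absorbs $r^{3+\alpha}$-type errors, giving a relative error $\le Cr^{\alpha_\circ}$.
\end{itemize}
The bound on $\int w_r\Delta w_r/(H+r^{2\tau})$ follows the same way: the contact part is $\ge 0$ (since $w_r=-p\le0$ there and $\Delta w_r\le 0$), and the $g_r$ part is bounded by $Cr^{3+\alpha}\|w_r\|_{L^1}$. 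Using $\|w_r\|_{L^1}\lesssim\sqrt{H}+\sqrt{D}$ and the inequality $ab\le a^2+b^2$, this gives $\le Cr^{\alpha_\circ}(H+r^{2\tau})$, once $D\lesssim H+r^{2\tau}$ is known.

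\textbf{Step 3: bootstrap.} The sign arguments require $\phi^\tau\ge 2-\eta$ along $(s,r)$, and we also need an a priori bound $\phi^\tau\le C$ in order to control $D$ by $H+r^{2\tau}$ and $\|w_r\|_{L^1}$ by the normalization. I would run a continuity argument. Let $r^*$ be the supremum of scales up to which $\phi^\tau(\rho)+\tfrac{C_\circ}{\alpha_\circ}\rho^{\alpha_\circ}\ge 2.5$ for all $\rho\in(s,r^*)$. On that interval the inequality $\frac{d}{dr}\phi^\tau\ge -C_\circ r^{\alpha_\circ-1}$ holds, so $\phi^\tau+\frac{C_\circ}{\alpha_\circ}r^{\alpha_\circ}$ is nondecreasing, and hence the condition cannot fail at $r^*<1/2$.

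The upper bound on the frequency, needed in Step 2, comes from \eqref{eq:C11 regularity}: $D(r,w)\le Cr^{n+2}$-type bounds after scaling. Alternatively, if $\phi^\tau$ is large then the truncated Weiss-type quantity is controlled. Since $r_\circ$ is small, $r^{\alpha_\circ}$ errors stay below $0.5$, keeping $\phi^\tau\ge 2$ and making the sign argument of Step 2 valid.

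\textbf{Main obstacle.} The hardest part is Step 2 when $H(r,w)\ll r^{2\tau}$ or when $\|w_r\|$ is comparable to $r^{3+\alpha}$: the slab width is then $\sim r^{1+\alpha}$ and the errors must be weighed against $H+r^{2\tau}$ rather than $H$. The difference between $p_\nu$ and an exact quadratic must also be tracked, which is where the choice $\tau=3+\alpha/2$ and $\alpha_\circ\approx\alpha/2$ enter. I would handle this by splitting into the regimes $H\ge r^{2\tau}$ and $H<r^{2\tau}$, and estimating each error separately in each regime.
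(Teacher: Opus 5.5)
There is a genuine gap. In fact there are two, and they sit exactly where the paper's proof in \Cref{app:frequency} does its real work.

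\textbf{The upper bound on $\phi^\tau$ and doubling.} In Step 3 you say that a uniform bound $\phi^\tau\le C$ comes from \eqref{eq:C11 regularity}. It does not. $C^{1,1}$ gives $D(r,w)\le Cr^4$, but the denominator is only bounded below by $r^{2\tau}=r^{6+\alpha}$, so you get $\phi^\tau(r,w)\le Cr^{-2-\alpha}$ and nothing better. Every error you list carries one of two factors:
\begin{itemize}
\item a factor $1+\phi^\tau$;
\item a ratio $(\|w_r\|_{L^2(\de B_2)}+r^{3+\alpha})/(H(r,w)+r^{2\tau})^{1/2}$.
\end{itemize}
The Lipschitz bound and the slab \eqref{eq:bound contact set} are stated at scale $2r$, so comparing them with $H+r^{2\tau}$ is a doubling statement. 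Splitting into the regimes $H\gtrless r^{2\tau}$ does not produce doubling. Without these two bounds none of your errors is integrable. The same applies to your bound on $\int w_r\Delta w_r$, which you condition on ``$D\lesssim H+r^{2\tau}$ being known''.

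The paper obtains both bounds by iterating in the truncation exponent (Step~2 of the proof of \Cref{proposition:frequency formula}, following \cite[Lemma 4.3]{FROS24}). For $\tau'\in(2,2+\frac14)$, the lower bound $\phi^{\tau'}\ge 2-Cr^{5-2\tau'}$ described below makes $\log(1+\phi^{\tau'})$ almost monotone. This gives $\phi^{\tau'}\le C$ and doubling at that truncation. One then raises the exponent by small steps $\beta$, losing only $r^{-2\beta}$ in the frequency and $r^{-\beta}$ in the doubling ratio. The integrability margin absorbs these losses until $\tau=3+\alpha/2$ is reached.

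\textbf{The lower bound $\phi^\tau\ge2$ behind your sign argument.} Your sign argument on the contact set needs $\phi^\tau\ge2$, up to an integrable error, at every $r\in(s,\frac12)$. Your continuity argument only gives $\phi^\tau(r)\ge 2.5-\frac{C_\circ}{\alpha_\circ}r^{\alpha_\circ}$. This is empty once $r^{\alpha_\circ}\gtrsim\alpha_\circ/C_\circ$, yet the statement is claimed up to $r=\frac12$. Treating those scales by brute force produces a constant that blows up as $r_\circ\to0$, which is circular with the requirement $\frac{C_\circ}{\alpha_\circ}r_\circ^{\alpha_\circ}\le\frac12$. Moreover, an order-one deficit $2-\phi^\tau$ multiplies a contact term of relative size about $r^{-2}\|w_r\|$. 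Since $C^{1,1}$ bounds this only by a constant, you would get a non-integrable $1/r$ error.

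The missing ingredient is the Weiss energy:
\begin{itemize}
\item The hypothesis at $s$ gives $W_2(s,w)\ge -Cs^{2+\alpha}$.
\item \Cref{lem:monotone weiss} gives $\frac{d}{dr}W_2(r,w)\ge -C$, using only $0\in\de\{u>0\}$. This propagates $W_2(r,w)\ge -Cr$ to all $r>s$, i.e.\ $\phi^{\tau'}\ge 2-Cr^{5-2\tau'}$.
\item Through \Cref{lem:base step frequency,lem:apply monneau} this yields the a priori decay $\|w_r\|_{L^2(\de B_1)}\le Cr^{2+1/10}$.
\item With this decay and the slab, the contact contribution $\int_{\{u_r=0\}}(x\cdot\nabla (p_\nu)_r)\,r^2f(r\cdot)$ is bounded in absolute value by $Cr^{1/10}(\|w_r\|_{L^2(\de B_2)}+r^{3+\alpha})^2$.
\end{itemize}
So the paper never needs the sign of $\phi^\tau-2$ at the truncation $\tau=3+\alpha/2$.
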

	
	The symbols $C_\circ$ and $\alpha_\circ$ will be reserved to these two specific constants.
	
	We now state and prove a few consequences of \Cref{corollary:frequency formula}. We denote the truncated frequency gap as
	\begin{equation}\label{eq:truncated frequency gap}
		E_\lambda^\tau(r,w) := \phi^\tau(r,w) + \tfrac{C_\circ}{\alpha_\circ}r^{\alpha_\circ} - \lambda,
	\end{equation}
	where $C_\circ,\alpha_\circ$ are given by \Cref{corollary:frequency formula}. In particular, \Cref{corollary:frequency formula} states that
	\[
	E_\lambda^\tau(r,w)\text{ is monotone for }r>s,\text{ provided }\phi^\tau(s,w) + \frac{C_\circ}{\alpha_\circ}s^{\alpha_\circ}\ge2.5\text{ for some }s\in(0,r_\circ).
	\]
	
	\begin{lemma}[Truncated Monneau Monotonicity]\label{lem:monneau rhs}
		There are universal constants $C,$ $\delta$ such that if
		\[
		\phi^\tau(s,w) + \frac{C_\circ}{\alpha_\circ}s^{\alpha_\circ}\ge 2.5\quad\text{for some }s\in(0,r_\circ)
		\]
		then
		\[
		\frac{d}{dr}\log(r^{-2\lambda}(H(r,w)+r^{2\tau})) \ge \frac2r E^\tau_\lambda(r,w) - Cr^{\delta-1}.
		\]
		In particular, if
		\[
		\phi^\tau(s,w) + \frac{C_\circ}{\alpha_\circ}s^{\alpha_\circ}\ge \lambda\quad\text{for some }\lambda\ge 2.5\text{ and some }s\in (0,r_\circ),
		\]
		then
		\[
		\log [r^{-2\lambda}(H(r,w)+r^{2\tau})]+ \tfrac C\delta r^\delta\quad\text{ is non-decreasing for all }r\in(s,r_\circ).
		\]
	\end{lemma}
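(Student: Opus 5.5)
We compute the logarithmic derivative of $H^\tau_\lambda(r,w)=r^{-2\lambda}(H(r,w)+r^{2\tau})$ directly, mimicking the proof of \Cref{lem:monneau}. Since $H(r,w)=\int_{\de B_1}w_r^2$ and $\frac{d}{dr}w_r=\frac1r x\cdot\nabla w_r$, we have
\[
\frac{d}{dr}H(r,w)=\frac2r\int_{\de B_1}w_r\,\de_\nu w_r=\frac2r\Big(D(r,w)+\int_{B_1}w_r\Delta w_r\Big),
\]
by integration by parts. Also $\frac{d}{dr}r^{2\tau}=\frac{2\tau}{r}r^{2\tau}$. Therefore
\[
\frac{d}{dr}\log H^\tau_\lambda(r,w)=-\frac{2\lambda}{r}+\frac{2}{r}\,\frac{D(r,w)+\tau r^{2\tau}+\int_{B_1}w_r\Delta w_r}{H(r,w)+r^{2\tau}}
=\frac2r\big(\phi^\tau(r,w)-\lambda\big)+\frac2r\,\frac{\int_{B_1}w_r\Delta w_r}{H(r,w)+r^{2\tau}}.
\]
This is an exact identity, the truncated analogue of \eqref{eq:derivative monneau}. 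Note that $w\in C^{1,1}$, so the integration by parts is justified.

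Next we estimate the two terms. Rewrite $\phi^\tau-\lambda=E^\tau_\lambda(r,w)-\frac{C_\circ}{\alpha_\circ}r^{\alpha_\circ}$, so the first term equals $\frac2rE^\tau_\lambda-\frac{2C_\circ}{\alpha_\circ}r^{\alpha_\circ-1}$. For the second term, the hypothesis $\phi^\tau(s,w)+\frac{C_\circ}{\alpha_\circ}s^{\alpha_\circ}\ge2.5$ is exactly what \Cref{corollary:frequency formula} requires. It therefore gives, for $r\in(s,1/2)$, $\frac{\int w_r\Delta w_r}{H+r^{2\tau}}\ge -C_\circ r^{\alpha_\circ}$, so the second term is $\ge -2C_\circ r^{\alpha_\circ-1}$. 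Collecting, we obtain the first claim with $\delta:=\alpha_\circ$ and $C:=2C_\circ(1+1/\alpha_\circ)$, both universal.

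For the second claim, we use the monotonicity part of \Cref{corollary:frequency formula}: $\frac{d}{dr}\phi^\tau\ge -C_\circ r^{\alpha_\circ-1}$. This means $E^\tau_\lambda(r,w)=\phi^\tau+\frac{C_\circ}{\alpha_\circ}r^{\alpha_\circ}-\lambda$ is non-decreasing on $(s,1/2)$. Hence if $E^\tau_\lambda(s,w)\ge0$ for some $\lambda\ge2.5$, which also gives the $2.5$ hypothesis, then $E^\tau_\lambda(r,w)\ge0$ for all $r\in(s,r_\circ)$. The first claim then gives $\frac{d}{dr}\log H^\tau_\lambda\ge -Cr^{\delta-1}=-\frac{d}{dr}\big(\frac C\delta r^\delta\big)$, which is the stated monotonicity.

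The only delicate point is the sign control of $\int w_r\Delta w_r$, since in the presence of $f$ it is no longer nonnegative, as \eqref{eq:pde u-p_nu} shows. That bound is entirely delegated to \Cref{corollary:frequency formula}. The truncation $r^{2\tau}$ in the denominator is what makes the error, which is of size $r^{3+\alpha}\|w_r\|$, relatively small; in particular no lower bound on $H$ is needed. So given that proposition, the lemma is a direct computation.
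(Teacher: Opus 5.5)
Your proposal is correct and follows the paper's proof. The paper also uses the exact identity $\frac{d}{dr}\log\bigl(r^{-2\lambda}(H(r,w)+r^{2\tau})\bigr)=\frac2r(\phi^\tau(r,w)-\lambda)+\frac2r\frac{\int_{B_1}w_r\Delta w_r}{H(r,w)+r^{2\tau}}$, citing \cite{FROS24} rather than deriving it, and concludes from the two bounds of \Cref{corollary:frequency formula}; your derivation, the choice $\delta=\alpha_\circ$, and the use of the monotonicity of $E^\tau_\lambda$ for the second claim just spell out details the paper leaves implicit.
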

	\begin{proof}
		As shown, for instance, in \cite{FROS24} we have for any $w$ smooth enough
		\[
		\frac{d}{dr}\log(r^{-2\lambda}(H(r,w)+r^{2\tau})) = \frac2r(\phi^\tau(r,w)-\lambda) + \frac2r\frac{\int_{B_1}w_r\Delta w_r}{H(r,w) + r^{2\tau}},
		\]
		so we conclude by \Cref{corollary:frequency formula}.
	\end{proof}
	
	We will also use the following doubling estimate, which is a consequence of \cite[Lemma 4.1 (a)]{FROS24}.
	\begin{lemma}\label{lem:doubling rhs}
		There is a universal $C>0$ such that, if for some $s\in(0,r_\circ)$ we have
		\[
		\phi^\tau(s,w) + \frac{C_\circ}{\alpha_\circ}s^{\alpha_\circ} \ge2.5\quad\text{and}\quad \phi^\tau(2s,w) + \frac{C_\circ}{\alpha_\circ}(2s)^{\alpha_\circ}\le10,
		\]
		then
		\[
		\|w_{2s}\|^2_{L^2(\de B_1)} +(2s)^{2\tau}\le C(\|w_s\|^2_{L^2(\de B_1)}+s^{2\tau}).
		\]
	\end{lemma}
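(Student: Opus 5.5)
The plan is to integrate the logarithmic derivative of the truncated height $H^\tau(r):=H(r,w)+r^{2\tau}$ between $s$ and $2s$, using the identity from the proof of \Cref{lem:monneau rhs}:
\[
\frac{d}{dr}\log\big(H(r,w)+r^{2\tau}\big) = \frac{2}{r}\phi^\tau(r,w) + \frac{2}{r}\frac{\int_{B_1}w_r\Delta w_r}{H(r,w)+r^{2\tau}}
\]
(here $\lambda=0$). The goal is to bound the right-hand side from above by $C/r$ on $(s,2s)$; integrating then gives $\log\big(H^\tau(2s)/H^\tau(s)\big)\le C\log 2$, which is the claim.

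First, the lower bound $\phi^\tau(s,w)+\tfrac{C_\circ}{\alpha_\circ}s^{\alpha_\circ}\ge2.5$ lets us invoke \Cref{corollary:frequency formula} on $(s,1/2)$. (If $2s\ge 1/2$, the doubling is trivial by the universal bounds \eqref{eq:C11 regularity} and the lower bound $H^\tau(s)\ge s^{2\tau}\ge c$.) This makes $E^\tau_0(r,w)=\phi^\tau(r,w)+\tfrac{C_\circ}{\alpha_\circ}r^{\alpha_\circ}$ non-decreasing in $r$. Together with the assumed upper bound at $2s$, we get $\phi^\tau(r,w)\le 10$ for all $r\in(s,2s)$. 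This controls the first term by $20/r$.

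The main obstacle is the second term. \Cref{corollary:frequency formula} only bounds $\int_{B_1}w_r\Delta w_r$ from below, and we need an upper bound. I would use \eqref{eq:pde u-p_nu}: on $\{u_r>0\}$ we have $\Delta w_r=r^2(f-f_1)(r\cdot)=O(r^{3+\alpha})$, while on $\{u_r=0\}$ we have $w_r=-p_\nu(r\cdot)\le0$ and $\Delta w_r=r^2(f_1)_r\cdot(-1)+\dots\le 0$, so that $w_r\Delta w_r\ge0$ there. Thus the contact-set contribution has the wrong sign for a trivial bound and must be estimated. Its size is
\[
\int_{\{u_r=0\}} r^2 p_\nu(r\cdot)\,f_r \le C r^4 \int_{\{u_r=0\}\cap B_1}(x\cdot\nu)^2 .
\]
By \eqref{eq:bound contact set}, the contact set lies in a slab of width $\eta:=Cr^{-2}(\|w_r\|_{L^2(\de B_2)}+r^{3+\alpha})$. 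The integral is therefore at most $Cr^4\eta^3\le C r^{-2}(\|w_{2r}\|^3_{L^2(\de B_1)}+r^{9+3\alpha})$, which is of order $r^{-2}\|w_{2r}\|^{3}+r^{2\tau}$ since $\tau<3+\alpha$. Since $\|w_{2r}\|\le Cr^2$ by \eqref{eq:C11 regularity}, this is at most $C\|w_{2r}\|^2+ Cr^{2\tau}$. The error on $\{u_r>0\}$ is $\le Cr^{3+\alpha}\|w_r\|_{L^1}\le C(H(r,w)+r^{2\tau})$ by Young's inequality, once $\|w_r\|_{L^1(B_1)}$ is controlled by $H$ and $D$ (Poincaré-trace), and $D\le 10H^\tau$ by the frequency bound.

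The remaining issue is that the contact term involves the sphere $\de B_2$ at scale $r$, i.e.\ $H(2r,w)$, not $H(r,w)$. I would close this with a bootstrap/ODE argument. Set $h(r):=H^\tau(r)$. The bounds above give
\[
(\log h)'(r)\le \frac{C}{r}\Big(1+\frac{h(2r)}{h(r)}\cdot \frac{\|w_{2r}\|}{r^2}\Big).
\]
Alternatively, and more simply, I would follow \cite[Lemma 4.1(a)]{FROS24} directly: its proof only uses the identity above, the monotonicity of the truncated frequency, and the sign information, all available here by \Cref{corollary:frequency formula}. So the cleanest route is to check that each of its hypotheses is supplied by \Cref{corollary:frequency formula} and \eqref{eq:pde u-p_nu}, and then cite it.
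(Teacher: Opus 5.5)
Your final suggestion, citing \cite[Lemma 4.1(a)]{FROS24}, is exactly what the paper does; it gives no further argument. However, the reasoning you put around that citation has a genuine gap.

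\textbf{Why your direct route fails.} It stalls where you say it does. By \eqref{eq:bound contact set}, the contact-set contribution to $\int_{B_1}w_r\Delta w_r$ is controlled only through $\|w_r\|_{L^2(\de B_2)}$, i.e.\ by $H(2r,w)+r^{2\tau}$. For $r$ close to $2s$ this involves $H$ at scales up to $4s$, where nothing is assumed, since the frequency bound is only at $2s$. So the bootstrap inequality you write for $h(r):=H(r,w)+r^{2\tau}$ cannot be closed.

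\textbf{Why the citation is not justified as you describe.} Your account of what the cited lemma needs is also off. Write $Q(r):=\int_{B_1}w_r\Delta w_r/h(r)$, so that $(\log h)'=\frac2r(\phi^\tau+Q)$. The almost-monotonicity in \Cref{corollary:frequency formula} does not control $Q$ from above. The sign information you mention is a \emph{lower} bound, $Q\ge -C_\circ r^{\alpha_\circ}$. So these ingredients give no upper bound on $(\log h)'$, as your own computation shows.

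\textbf{The missing idea.} Use the nonnegative term in \eqref{eq:frequency ode}. Since $r^{2-n}\int_{B_r}w\Delta w=\int_{B_1}w_r\Delta w_r$, that term is exactly $\frac2rQ^2$. The proof of \Cref{proposition:frequency formula} carries it along in \eqref{eq:intermediate step freuqneyc formula} and estimates only the other terms. It therefore actually establishes
\[
\frac{d}{dr}\phi^\tau(r,w)\ge \frac2r\,Q(r)^2-Cr^{\alpha_\circ-1}\qquad\text{on }(s,1/2);
\]
\Cref{corollary:frequency formula} merely discards the square.

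\textbf{Completing the proof.} The almost-monotonicity you correctly derived gives $0\le\phi^\tau\le 10$ on $(s,2s)$. Integrating the inequality above over $(s,2s)$ then gives $\int_s^{2s}\frac2rQ^2\,dr\le 10+C$. By Cauchy--Schwarz,
\[
\int_s^{2s}\frac2r|Q|\,dr\le(2\log 2)^{1/2}(10+C)^{1/2}.
\]
Integrating \eqref{eq:monneau ode} with $\lambda=0$ yields $\log\big(h(2s)/h(s)\big)\le 20\log 2+C$, which is the claim. No contact-set estimate and no information at larger scales is needed. This is also the mechanism behind the untruncated \Cref{lem:doubling}, which the paper says rests only on the analogues of \Cref{lem:frequency formula} (with its squared term) and \Cref{lem:monneau}.
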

	
	Finally, we derive in this truncated framework the derivative of the frequency gap.
	\begin{lemma}\label{lem:derivative frequency gap RHS}
		Given $\lambda>0$, there are universal constants $C,$ $\beta$ depending further on $\lambda$ with the following property. Suppose
		\[
		\phi^\tau(s,w) + \frac{C_\circ}{\alpha_\circ}s^{\alpha_\circ} \ge2.5\text{ for some }s\in(0,r_\circ)\quad\text{and}\quad E^\tau_\lambda(R,w)\le0\text{ for some }R>s.
		\]
		Then, denoting by $z_r$ the $\lambda$-homogeneous extension of $w_r$
		\[
		\frac{d}{dr} E^\tau_\lambda(r,w) \ge \tfrac{n-2+2\lambda}{r}(E^\tau_\lambda(r,z_r)-E^\tau_\lambda(r,w)) - \tfrac2r [E^\tau_\lambda(r,w)]^2 - Cr^{\beta-1}\quad\text{for any }s<r<R.
		\]
	\end{lemma}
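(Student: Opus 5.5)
We mimic the proof of \Cref{lem:derivative weiss}, now keeping track of the truncation terms and of the correction $\tfrac{C_\circ}{\alpha_\circ}r^{\alpha_\circ}$. Set
\[
W^\tau_\lambda(r,w):=r^{-2\lambda}\big(D(r,w)+\tau r^{2\tau}-\lambda(H(r,w)+r^{2\tau})\big)=\big(\phi^\tau(r,w)-\lambda\big)H^\tau_\lambda(r,w),
\]
so that $E^\tau_\lambda(r,w)=W^\tau_\lambda/H^\tau_\lambda+\tfrac{C_\circ}{\alpha_\circ}r^{\alpha_\circ}$. Differentiating the quotient gives
\[
\frac{d}{dr}E^\tau_\lambda=\frac{1}{H^\tau_\lambda}\frac{d}{dr}W^\tau_\lambda-(\phi^\tau-\lambda)\frac{d}{dr}\log H^\tau_\lambda+C_\circ r^{\alpha_\circ-1}.
\]
The argument then has two steps.

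\textbf{Step 1: the Weiss term.} For the truncated Weiss energy we apply \cite[Equation (2.1)]{CSV20} to the untruncated part $r^{-2\lambda}(D-\lambda H)$, which gives
\[
\frac{d}{dr}W_\lambda\ge\frac{n-2+2\lambda}{r}\,r^{-2\lambda}\big(W_\lambda(1,z_r)-W_\lambda(1,w_r)\big).
\]
The truncation term $(\tau-\lambda)r^{2\tau-2\lambda}$ has derivative $2(\tau-\lambda)^2r^{2\tau-2\lambda-1}$. This should be compared with the corresponding truncation terms hidden in $E^\tau_\lambda(r,z_r)-E^\tau_\lambda(r,w)$. The $C_\circ r^{\alpha_\circ}$ corrections cancel in this difference, and the truncation contributions, after division by $H^\tau_\lambda$, differ by $O(r^{2\tau-2\lambda}/H^\tau_\lambda)/r$. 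Here I interpret $E^\tau_\lambda(r,z_r)$ as the truncated gap built with the same $H$ and truncation $r^{2\tau}$. Since $H^\tau_\lambda\ge r^{2\tau-2\lambda}$, this ratio is bounded by one. That alone is not small, so I would instead bound it using the upper bound $H(r,w)\lesssim r^{2\lambda}$-type growth, which is not available. The better route is to note that the truncation terms enter $E^\tau_\lambda(r,z_r)$ and $E^\tau_\lambda(r,w)$ identically, so their difference is exactly $(D(1,z_r)-D(1,w_r))/(H+r^{2\tau})$. What remains is then the mismatch $2(\tau-\lambda)^2 r^{2\tau-2\lambda-1}/H^\tau_\lambda\le Cr^{-1}\cdot\frac{r^{2\tau}}{H+r^{2\tau}}$. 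This term has a favourable sign when $\tau>\lambda$, and otherwise it should be absorbed using the frequency bound, which I discuss in Step 3.

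\textbf{Step 2: the Monneau term.} Insert \Cref{lem:monneau rhs}, which gives $\frac{d}{dr}\log H^\tau_\lambda=\frac2r(\phi^\tau-\lambda)+\frac2r\frac{\int w_r\Delta w_r}{H+r^{2\tau}}$. By \Cref{corollary:frequency formula}, the last term is at least $-C_\circ r^{\alpha_\circ}$. The hypothesis $E^\tau_\lambda(R,w)\le0$, together with the almost-monotonicity of $E^\tau_\lambda$ from \Cref{corollary:frequency formula}, gives $E^\tau_\lambda(r,w)\le0$ for $r<R$. Hence $\phi^\tau-\lambda\le -\tfrac{C_\circ}{\alpha_\circ}r^{\alpha_\circ}\le0$. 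Then
\[
-(\phi^\tau-\lambda)\frac{d}{dr}\log H^\tau_\lambda\ge-\frac2r(\phi^\tau-\lambda)^2-\frac{C}{r}|\phi^\tau-\lambda|\,r^{\alpha_\circ}.
\]
Replacing $\phi^\tau-\lambda$ by $E^\tau_\lambda-\tfrac{C_\circ}{\alpha_\circ}r^{\alpha_\circ}$ costs $O(r^{\alpha_\circ-1})$ times $|E|+r^{\alpha_\circ}$.

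\textbf{Step 3: closing the estimate.} We still need $|\phi^\tau-\lambda|$ to be universally bounded. From below this follows from $\phi^\tau+\ldots\ge2.5$ and monotonicity. From above it follows from $E^\tau_\lambda\le 0$. Collecting all errors then gives $-Cr^{\beta-1}$ with $\beta=\min\{\alpha_\circ,\ldots\}$. The main obstacle is Step 1: controlling the truncation mismatch in the Weiss derivative uniformly, without any a priori lower bound $H\gtrsim r^{2\tau}$. If the sign is unfavourable, I would try $\tau>\lambda$, where the term is positive. Otherwise I would handle the case $H(r,w)\le r^{2\tau-\beta}$ separately: there $\phi^\tau$ is close to $\tau$, and the dichotomy forces a contradiction with $E^\tau_\lambda\le0$ whenever $\lambda<\tau$.
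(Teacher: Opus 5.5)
Your route is the paper's. You write $E^\tau_\lambda$ as the truncated Weiss energy $W^\tau_\lambda=W_\lambda+(\tau-\lambda)r^{2(\tau-\lambda)}$ divided by $H^\tau_\lambda$, plus the $r^{\alpha_\circ}$ correction. You then apply \cite[Equation (2.1)]{CSV20} to $W_\lambda$ and observe that the truncations cancel in $E^\tau_\lambda(r,z_r)-E^\tau_\lambda(r,w)$, because $z_r=w_r$ on $\de B_1$. Finally you handle the Monneau term using $-\lambda\le\phi^\tau-\lambda\le E^\tau_\lambda\le0$.

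Steps 2 and 3 are fine. The paper gets the lower bound simply from $\phi^\tau\ge0$. One minor correction: $E^\tau_\lambda$ is genuinely nondecreasing on $(s,1/2)$, not merely almost monotone, since $\frac{d}{dr}E^\tau_\lambda=\frac{d}{dr}\phi^\tau+C_\circ r^{\alpha_\circ-1}\ge0$ by \Cref{corollary:frequency formula}. That exact monotonicity is what gives $E^\tau_\lambda(r,w)\le E^\tau_\lambda(R,w)\le0$ with no error term.

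The flaw is in Step 1, where you declare a ``main obstacle'' that does not exist and leave it unresolved. You computed correctly that $\frac{d}{dr}\big[(\tau-\lambda)r^{2(\tau-\lambda)}\big]=2(\tau-\lambda)^2r^{2(\tau-\lambda)-1}$. This quantity is nonnegative for every $\lambda>0$, whatever the sign of $\tau-\lambda$, because it is a square times a positive power of $r$. Hence $\frac{d}{dr}W^\tau_\lambda\ge\frac{d}{dr}W_\lambda$. After dividing by $H^\tau_\lambda>0$, the term can simply be discarded in a lower bound, which is exactly what the paper does.

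Your claim that the sign is favourable only when $\tau>\lambda$ is therefore false. The fallback you sketch is unnecessary: it splits off the regime $H(r,w)\le r^{2\tau-\beta}$ and derives a contradiction when $\lambda<\tau$. It also would not address the case $\lambda>\tau$ that you were worried about. No lower bound $H\gtrsim r^{2\tau}$ is needed anywhere. Once you drop that term, your three steps close and reproduce the paper's proof, with all error terms of size $O(r^{\alpha_\circ-1})$.
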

	\begin{proof}
		First, under the stated assumptions we have
		\[
		-\lambda\le E^\tau_\lambda(r,w)\le 0\quad\text{for any }r\in(s,R),
		\]
		where on one side we use the lower bound $\phi^\tau\ge0$, while the other inequality uses the monotonicity of $E^\tau_\lambda$.
		
		Setting
		\[
		W_\lambda^\tau:= (\phi^\tau-\lambda) r^{-2\lambda}(H(r,w)+r^{2\tau}),\quad
		W_\lambda := r^{-2\lambda}(D(r,w)-\lambda H(r,w))
		\]
		we can write
		\[
		W_\lambda^\tau = W_\lambda + (\tau-\lambda)r^{2(\tau-\lambda)},
		\]
		so using \cite[Equation (2.1)]{CSV20} we get
		\[
		\frac{d}{dr} W_\lambda^\tau \ge \tfrac{n-2+2\lambda}{r}r^{-2\lambda}(D(z_r) - D(r,w))
		= \tfrac{n-2+2\lambda}r(E_\lambda^\tau(z_r)-E_\lambda^\tau(r,w))[r^{-2\lambda}(H(r,w)+r^{2\tau})].
		\]
		Thus
		\[
		\frac{d}{dr} E^\tau_\lambda\ge\frac{d}{dr} (\phi^\tau-\lambda) \ge \frac{n-2+2\lambda}{r}(E^\tau_\lambda(z_r)-E^\tau_\lambda(r,w))
		- (\phi^\tau(r,w)-\lambda) \frac{d}{dr}\log[r^{-2\lambda}(H(r,w)+r^{2\tau})].
		\]
		Using
		\[
		\frac{d}{dr}\log[r^{-2\lambda}(H(r,w)+r^{2\tau})]\ge \frac{2}{r}E^\tau_\lambda - Cr^{\eps-1},\quad -\lambda\le \phi^\tau-\lambda\le E_\lambda^\tau \le0
		\]
		we get to
		\[
		[-(\phi^\tau-\lambda)] \frac{d}{dr}\log[r^{-2\lambda}(H(r,w)+r^{2\tau})] \ge \frac{2}{r} [-(\phi^\tau-\lambda)]E_\lambda^\tau - Cr^{\eps-1}.
		\]
		We conclude computing
		\[
		-\tfrac{2}{r} (\phi^\tau-\lambda)E_\lambda^\tau = -\tfrac{2}{r} ((E_\lambda^\tau)^2- Cr^{\alpha_\circ} E_\lambda^\tau) \ge -\tfrac2r (E_\lambda^\tau)^2 - C r^{\alpha-1}.\qedhere
		\]
	\end{proof}
	
	% We also extend \Cref{lem:frequency gap} in this case.
	% Recall that $0\in\Sigma_{n-1}(u)$ if and only if $u(x) = \tfrac{f(0)}{2}(x\cdot e )^2 + o(r^2)$ in $B_r$ for some $e\in\mathbb S^{n-1}$.
	% \begin{lemma}\label{lem:frequency gap RHS} \textbf{NO NEED AT THE MOMENT}
		%     Let $u$ be a solution of \eqref{eq:obstacle} in $B_1\subset\R^n$ with $0\in\Sigma_{n-1}(u)$ and blow-up $p^*_\nu$. Then either $u\equiv p^*_\nu$ or $\phi^\tau(r,u-p^*)+C_\circ r^{\alpha_\circ}\ge3$ for all $r\in(0,1)$.
		% \end{lemma}
	% \begin{proof} 
		%     If $u\not\equiv p^*$ then $\phi^\tau(r,u-p^*)+C_\circ r^{\alpha_\circ}$ is well-defined and monotone by \Cref{corollary:frequency formula}, so it exists $\lambda^* = \phi^\tau(0^+,u-p^*)$. It is proven in \cite[Lemmas A.7 and A.8]{CinftyRectifiable} that $\lambda^*>2$ and, when $\lambda^*<3$, is the homogeneity of some homogeneous solution of the thin obstacle problem \eqref{eq:thin obstacle}. It follows from \cite{FranceschiniSavin25} that $\lambda^*\ge3$.
		% \end{proof}

	\subsection{Semiconvexity decay with  frequency $\bm{<3}$}
	
	This sub-section is overall quite similar to the 2D case with mostly notational changes and the appearance, in the statements, of a scale $r$, due to the presence of the right-hand side.
	
	\begin{proposition}\label{prop:low frequency f}
		Given $\delta\in(0,\tfrac\alpha5)$ there is $M=M(n,\alpha,\delta)>0$ large and $\rho=\rho(n,\alpha,\delta)>0$ small such that the following holds.
		
		Assume $u$ solves \eqref{eq:obstacle} in $B_1\subset\R^n$ for some normalized right-hand side $f$,  $0\in\de\{u>0\}$, and for some $r\in(0,\rho)$ it holds
		\begin{equation}
			\label{eq:assfreq<3}
			\phi^{\tau}(r,u-p_\nu)+ \frac{C_\circ}{\alpha_\circ}r^{\alpha_\circ}\le 3-\delta\quad\text{ for all }\nu\in\S^{n-1}.
		\end{equation}
		Then if we set
		\[
		\eps:=r^{-2}\min_{\nu\in\S^{n-1}}H(r,u-p_{\nu})^{1/2},
		\]
		we have \begin{equation}\label{eq:goodconclusion}
			\Delta_{e^\perp} u(x)\ge - M \eps\frac{|x|}{r}\quad \text{ for all } x\in B_{r/2} \text{ and all unit vectors }e.
		\end{equation}
	\end{proposition}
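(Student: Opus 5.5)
We follow the blueprint of \Cref{prop:low frequency}, with two differences: the dimension is arbitrary, and the right-hand side produces errors of order $r^{1+\alpha}$ after rescaling. Set $u^{(r)}:=r^{-2}u(r\cdot)$. It solves \eqref{eq:obstacle} with right-hand side $f(r\cdot)$, and $\|f(r\cdot)-f_1(r\cdot)\|_{C^{1,\alpha}}\le Cr^{1+\alpha}$. The claim \eqref{eq:goodconclusion} is scale invariant in the form $\Delta_{e^\perp}u^{(r)}(y)\ge -M\eps|y|$ for $y\in B_{1/2}$.

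The proof is split into two regimes, as before.

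\emph{Regime 1: $\eps\ge\eps_\delta$.} We argue by contradiction and compactness. Take sequences $u_k$, $r_k\to0$ (we may first shrink $\rho$) and points $x_k$ violating the estimate with $M=k$. Since the left side of \eqref{eq:goodconclusion} is bounded below by $-C\eps$ via \Cref{lem:semiconvexity}, we get $x_k/r_k\to0$. The rescalings $u^{(r_k)}_k$ converge in $C^1_\loc$ to a solution $u_\infty$ with constant right-hand side $1$ (recall $f(0)=1$). We then apply Caffarelli's dichotomy at $0$.
\begin{itemize}
\item At a regular point, \Cref{lemma:regular case} applies at a fixed scale $\bar r$ and gives a contradiction.
\item At a point of $\Sigma_{n-1}$, \Cref{lem:frequency gap} yields frequency $\ge3$. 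This passes to the limit of $\phi^\tau+\frac{C_\circ}{\alpha_\circ}r^{\alpha_\circ}$, since the truncation term vanishes and $\eps\ge\eps_\delta$ prevents $H\to0$. This contradicts \eqref{eq:assfreq<3}.
\item At a point of $\Sigma_k$ with $k<n-1$, the blow-up $p^*$ has $\Delta_{e^\perp}p^*\ge\bar\eta>0$ for every $e$, since at least two eigenvalues are positive and $\tr=1$. Convexity inheritance in the form of \Cref{oss:inherit mean convexity} then gives $\Delta_{e^\perp}u_k\ge0$ near $0$, which is a contradiction.
\end{itemize}
This last case is exactly why only $(n-1)$-convexity survives in higher dimensions.

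\emph{Regime 2: $\eps<\eps_\delta$.} This is the analogue of \Cref{lem:low frequency close poly}. Take $v_k:=(u^{(r_k)}_k-p^{(r_k)}_{\nu_k})/\eps_k$, where $p^{(r)}_\nu:=r^{-2}p_\nu(r\cdot)$. We need compactness, so we use \Cref{lem:lip estimate rhs} and \Cref{lem:doubling rhs}. These give uniform Lipschitz bounds provided the error $r_k^{1+\alpha}$ is $o(\eps_k)$. This holds because \eqref{eq:assfreq<3} with $\tau=3+\alpha/2$ and $\delta<\alpha/5$ forces $H(r,w)\gg r^{2\tau}$: otherwise $\phi^\tau$ would be close to $\tau>3$. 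This is where the restriction on $\delta$ enters.

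The limit $v$ solves \eqref{eq:thin obstacle} with $\phi(1,v)\le 3-\delta$ and $v(0)=0$. By \Cref{prop:thin obstacle} we are in one of cases (a), (b), (c), and we treat them as in 2D.
\begin{itemize}
\item \emph{Case (a).} Barriers now use the profiles $p_\nu$ plus small quadratic corrections, which are solutions up to $O(r^{1+\alpha})$ errors absorbed by a $C|x|^{2+\alpha}$ term. They show that the contact set contains a slab, so the solution splits into two regular pieces, and \Cref{lemma:regular case} applies.
\item \emph{Case (c).} Here $\alpha\ne0$ is forced by the frequency gap, exactly as before. The matrix $A$ has $e\cdot Ae\ge0$ on $e_n^\perp$, $A\ne0$ and $\tr A=0$. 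For the approximant $p+\eps A$ (rotated), we need $\Delta_{e^\perp}$ strictly positive of order $\eps$ for every unit $e$, and this needs care. $\Delta_{e^\perp}p=1-(e\cdot e_n)^2$ is positive unless $e=\pm e_n$, and for $e=e_n$ the term $\Delta_{e_n^\perp}(\eps A)=-\eps A_{nn}>0$ provided $A_{nn}<0$. Near $e=e_n$ a compactness argument suffices. If $A_{nn}=0$, then $A$ restricted to $e_n^\perp$ is psd and traceless, hence zero, so $A$ is purely off-diagonal and can be removed by the rotation.
\item \emph{Case (b).} We avoid the global paraboloid solution and instead build a strictly $(n-1)$-convex comparison solution directly. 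This is the step I expect to be the main obstacle. I would try the strategy of \Cref{lem:low frequency close poly f}: a barrier of the form $p_\nu+\eps\psi_{3/2}$ suitably modified near the edge of the contact set. Where that is awkward, an alternative is to compare $\Delta_{e^\perp}u$ with $h=\Delta_{e^\perp}\psi_{3/2}$-type functions through \Cref{lem:caffarelli rhs holder}. The ingredients are that $D_{11}\psi_{3/2}>0$ away from the contact half-line, and that the contact set is trapped near $\{x_n=0,x_{n-1}\le0\}$ by barriers as in 2D.
\end{itemize}

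In every case we obtain the linear bound at a fixed small scale $\bar r$ depending only on $v$. This contradicts the choice $M=k$ once $k$ is large, since $x_k/r_k\to0$.
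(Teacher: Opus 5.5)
Your Regime 1 is essentially the paper's proof of \Cref{prop:low frequency f}. It runs by contradiction with $r_k\to0$ and $x_k/r_k\to0$, passes to a $C^1$ limit with right-hand side $1$, and closes the regular / $\Sigma_{n-1}$ / lower-strata trichotomy with \Cref{lemma:regular case}, \Cref{lem:frequency gap} and \Cref{oss:inherit mean convexity}. Testing directly with $\nu=\bar\nu$ in the $\Sigma_{n-1}$ case is fine, since $\eps\ge\eps_\delta$ both keeps $H$ away from $0$ and rules out $u_\infty\equiv p^*$.

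The paper isolates your Regime 2 as \Cref{lem:low frequency close poly f}. Your cases (a) and (c) follow its proof. In (c), your eigenvalue check via $A_{nn}=-\tr A'<0$ after the rotation is a valid substitute for the paper's argument that every hyperplane meets $\mathrm{span}\{e_1,e_n\}$. The genuine gap is Case (b). You offer two tentative options, but this is exactly where the 2D proof does not transplant, and neither option is turned into an argument.

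\textbf{Option 1.} It fails as stated. Convexity inheritance needs an \emph{exact} solution $U$, but $p_\nu+\eps\psi_{3/2}$ is negative near the thin half-space $\{x_n=0,\ x_1<0\}$, where $\psi_{3/2}\approx-\tfrac32\rho^{1/2}|x_n|$. ``Suitably modified'' is precisely the missing construction. In 2D the global solution $P$ played this role. Its cylindrical extension still solves the problem with right-hand side $1$, but $\|f_k(r_k\bar r\,\cdot)-1\|_{C^{1/4}}\sim r_k$ need not be $o(\eps_k)$, since you only know $\eps_k\gtrsim r_k^{1+\alpha/2}$. So hypothesis (iv) of \Cref{lem:inherit convexity} fails.

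\textbf{Option 2.} This is the paper's route, but its content is missing. You need three steps.
\begin{itemize}
\item[(i)] Use upper barriers $(1-8t)\bar p_k+4t\,\bar q_k(\cdot-\xi)$, with $\bar q_k$ the average of the rescaled $p_{e_i}$ for $i<n$ and $t\sim\eps_k\bar r^{-1/2}$. Prove that at a fixed small scale $\bar r$ the contact set of $\bar u_k=\bar r^{-2}\tilde u_k(\bar r\,\cdot)$ \emph{contains} $\{x_n=0,\ x_1\le-\bar\sigma(\bar r)\}\cap B_1$ for all large $k$. Containment, not trapping, is what pushes the far region $\{\dist(\cdot,\{\bar u_k=0\})>\sigma_\circ\}$ into a set $\Omega_\sigma$ where $\psi$ is harmonic.
\item[(ii)] Apply interior elliptic estimates in the far region, which lies in $\{\bar u_k>0\}\cap\Omega_\sigma$, to the remainder in $\bar u_k=\bar p_k+\eps_k\bar r^{-1/2}(a\psi+o(1))$. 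Combine this with $D^2(\tfrac12x_n^2+t\psi)\ge c\,t\,(\xi_1^2+\xi_n^2)$ on $\Omega_\sigma$, and with the fact that every hyperplane $\xi^\perp$ contains a unit vector of $\mathrm{span}\{e_1,e_n\}$. This gives $h:=\Delta_{\xi_k^\perp}\bar u_k/\eps_k\ge c\,a\,\bar r^{-1/2}$ there.
\item[(iii)] Record the scale-invariant bound $h\ge-C$ from \Cref{lem:semiconvexity}, that $h\ge0$ on the free boundary, and that $\Delta h=\divergence F_k$ with $\|F_k\|_{C^\alpha}\lesssim r_k^{1+\alpha}/\eps_k\to0$ after subtracting the constant $\nabla f_k(0)$.
\end{itemize}
The gain $\bar r^{-1/2}$ in (ii), set against the $\bar r$-independent bound in (iii), is what reaches the threshold of \Cref{lem:caffarelli rhs holder} once $\bar r$ is small.

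\textbf{Minor corrections.}
\begin{itemize}
\item $r^{1+\alpha}=o(\eps)$ comes from $H(r,w)\ge c\,r^{2\tau}$ (\Cref{rmk:not-toosmall}) and $\tau<3+\alpha$. It does not come from $H\gg r^{2\tau}$, nor from $\delta<\alpha/5$.
\item In case (a) this mismatch is absorbed by the $O(\bar K\eps_k)$ margin in the barriers' Laplacians, not by a $|x|^{2+\alpha}$ term.
\item The barriers in (a) put a hyperplane piece $\{x_n=\beta_k\}\cap B_{\bar r}$ in the contact set, not a slab.
\item \Cref{lem:doubling rhs} is unavailable in cases (a) and (b), since the frequency there is below $2.5$. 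It is also unnecessary: the Lipschitz estimate applied at scale $r/2$ suffices.
\end{itemize}
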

	
	We explain the proof giving momentarily for granted \Cref{lem:low frequency close poly f} below, which is the most delicate point of this section. Nevertheless the high-level strategy is the same in both proofs, \Cref{lem:low frequency close poly f} requiring a next-order analysis compared to \Cref{prop:low frequency f}.
	
	Before embarking in the proof, we record a simple observation, useful when working with an upper bound on the truncated frequency.
	
	\begin{remark}\label{rmk:not-toosmall}
		If $\phi^\tau(r,w)\le \tau'<\tau,$ then $H(r,w)\neq 0$,
		\[
		\phi(r,w)\le \phi^\tau(r,w)\quad \text{ and }\quad  H(r,w)\ge \tfrac{\tau-\tau'}{\tau} r^{2\tau}.
		\]
		Indeed rearranging the definition
		\begin{align*}
			D(r,w)=\phi^\tau(r,w) H(r,w)+\underbrace{(\phi^\tau(r,w)-\tau)}_{\le 0}r^{2\tau}\le\phi^\tau(r,w) H(r,w).
		\end{align*}
		By the same rewriting, since $D(r,w)\ge0$,
		\begin{align*}
			r^{2\tau}\le \frac{\phi^\tau(r,w)}{\tau -\phi^\tau(r,w)} H(r,w)\le \frac{\tau}{\tau-\tau'} H(r,w). 
		\end{align*}
		since the function $t\mapsto t/(\tau-t)$ is increasing for $t<\tau$.
	\end{remark}
	
	\begin{proof}[Proof of \Cref{prop:low frequency f} given \Cref{lem:low frequency close poly f}]
		We argue by contradiction, considering a sequence $u_k$ solving \eqref{eq:obstacle} for some normalized $f_k$, such that,
		\[
		\eps_k:=r_k^{-2}\min_{\nu\in\S^{n-1}}H(r_k,u_k-p_{\nu})^{1/2},\qquad r_k\to0,
		\]
		but, setting $\tilde u_k:=r_k^{-2} u_k(r_k\cdot)$, we have
		\begin{equation}
			\lap_{\xi_k^\perp}\tilde u_k(x_k)<-k \eps_k |x_k| \quad \text{ for some } x_k\in B_{1/2}\text{ and unit vectors }\xi_k.\label{eq:contr prop low freq}
		\end{equation}
		By \eqref{eq:semiconvexity_weak}, we must have $x_k\neq0$ and by \Cref{rmk:not-toosmall} and \eqref{eq:assfreq<3} we must have $\eps_k>0$.
		
		If $\eps_k\to 0$ then we can apply \Cref{lem:low frequency close poly f} and conclude that \eqref{eq:goodconclusion} holds. Then up to subsequences we may assume that
		\begin{equation}\label{eq:apply close poly f}
			\eps_k\ge \bar \eps>0\qquad\text{ for all } k.
		\end{equation}
		By $C^{1,1}$ bounds $\lap_{\xi_k^\perp} \tilde u_k\ge -|D^2 \tilde u_k |\ge -C$, so \eqref{eq:contr prop low freq} forces
		\begin{equation}\label{eq:x_k-to-zero}
			x_k\to0.
		\end{equation}
		By $C^{1,1}$ bounds and normalization, up to a subsequence we have
		\[
		\tilde u_k\to u_\infty\quad \text{in }C^1(B_{3/2}), \qquad f_k(r_k\cdot)\to {1}\quad\text{in }C^1(B_{3/2}),
		\]
		where $u_\infty$ solves \eqref{eq:obstacle} in $B_{2/3}$ with $0\in\de \{u_\infty>0\}$ and constant right-hand side $1$.
		
		We split in three cases, according to Caffarelli's dichotomy.
		\begin{enumerate}[label = \alph*)]
			\item If $0\in\mathrm{Reg}(u_\infty)$ there is $\bar r>0$ small so that, up to a rotation,
			\[
			\left|\bar r^{-2}u_\infty(\bar r\cdot)-\tfrac12{} (x_2)_+^2\right|< \delta_1\quad \text{in }B_1,
			\]
			with $\delta_1$ given by \Cref{lemma:regular case}. Since the same holds for $\bar r^{-2}\tilde u_k(\bar r\cdot)$ and $k$ large, \Cref{lemma:regular case} gives
			\[
			\lap_{\xi_k^\perp} \tilde u_k(y)\ge -C\delta_1 |y|/\bar r\quad \text{ for all }y\in B_{\bar r/4}.
			\]
			Thanks to \eqref{eq:x_k-to-zero} and \eqref{eq:apply close poly f}, for $k$ large enough we can choose $y:=x_k$, contradicting \eqref{eq:contr prop low freq}.
			\item If $0\in\Sigma_{n-1}(u_\infty)$, we denote by $p_\infty=\tfrac12{} (x\cdot\bar \nu)^2$ the blow-up of $u_\infty$ at $0$, for some unique $\bar \nu\in\S^{n-1}$. For $\nu\neq\bar \nu$ and $k$ large we have $H(r_k,\tilde u_k -p_\nu)\neq0$ and we use \Cref{rmk:not-toosmall} and \eqref{eq:assfreq<3} to find 
			\[
			\phi(1,\tilde u_k-r_k^{-2}p_{\nu}(r_k\cdot))=\phi(r_k, u_k-p_{\nu})\le \phi^\tau(r_k, u_k-p_{\nu})\le 3-\delta,
			\]
			where $p_\nu$ is the Ansatz associated with the right-hand side $f_{k,1}$ and the direction $\nu$.
			
			Passing to the limit we find
			\[
			\phi(1,u_\infty-\tfrac12 {} (x\cdot \nu)^2)\le 3-\delta,\qquad \text{ for all }\nu\neq \bar \nu,
			\]
			but, by \Cref{lem:frequency gap}, $\phi(1,u_\infty-\tfrac12 {} (x\cdot\bar \nu)^2)\ge 3$ which is impossible letting $\nu\to\bar \nu$. 
			\item If $0\in\Sigma_k(u_\infty)$ for some $k\in\{0,1,\ldots,n-2\}$, denote by $p_\infty=\tfrac12 {} x\cdot A_\infty x$ the blow-up of $u_\infty$ at $0$. 
			Since $A_\infty\ge0 $ has rank at least two, there is $\bar \eta>0$ and two orthonormal vectors $\theta_1,$ $ \theta_2$ such that
			\[
			A_\infty \ge \bar\eta( \theta_1\otimes \theta_1+\theta_2\otimes \theta_2)
			\]
			from which follows that, if $\xi^\perp$ is an hyperplane with $\xi\in\S^{n-1}$, then
			\[
			\lap_{\xi^\perp} p_\infty ={} \tr_{\xi^\perp} A_\infty \ge \bar\eta\tr_{\xi^\perp}( \theta_1\otimes \theta_1+\theta_2\otimes \theta_2) =2\bar \eta - \bar\eta\big ((\xi\cdot \theta_1)^2 +(\xi\cdot\theta_2)^2\big)\ge\bar\eta. 
			\]
			Given $A_\circ>0$ from \Cref{lem:inherit convexity} choose $\bar r$ such that
			\[
			|\bar r^{-2}u_\infty(\bar r\cdot)-p_\infty|<\frac{\bar\eta}{A_\circ}\quad \text{in }B_1.
			\]
			By uniform convergence, the same holds for $\bar r^{-2}\tilde u_k(\bar r\cdot)$ with $k$ sufficiently large. We can now apply \Cref{lem:inherit convexity} with $\bar r^{-2}\tilde u_k(\bar r\cdot)$ in place of $u$, $f_k(\bar r r_k\cdot)$ in place of $f$, $p_\infty$ in place of $U$, ${}$ in place of $F$ and along the hyperplane $\xi_k^\perp$ from \eqref{eq:contr prop low freq}. Assumptions $(i)$ to $(iv)$ are satisfied for $\eps:=A_\circ\bar\eta$ and $k$ large, so we get
			\[
			\lap_{\xi^\perp_k} \tilde u_k \ge 0\text{ in } B_{\rho_1\bar r},
			\]
			which contradicts \eqref{eq:contr prop low freq} as $x_k\in B_{\rho_1\bar r}$ eventually.
			\qedhere
		\end{enumerate}    
	\end{proof}
	
	We now adapt \Cref{lem:low frequency close poly}.
	\begin{lemma}\label{lem:low frequency close poly f}
		Given $\delta\in(0,\tfrac\alpha5)$ there is $M=M(n,\alpha,\delta)>0$ large and $\eta=\eta(n,\alpha,\delta)>0$ small such that the following holds. 
		
		If $u$ solves \eqref{eq:obstacle} in $B_1\subset\R^n$ for some normalized $f$, $0\in\de\{u>0\}$ and for some $r\in(0,r_\circ)$ we have that
		\begin{enumerate}[label=\roman*)]
			\item $\phi^{\tau}(r,u-p_\nu)+ \frac{C_\circ}{\alpha_\circ}r^{\alpha_\circ}\le 3-\delta$ for all $\nu\in \S^{n-1}$;
			\item $\eps:=r^{-2}\min_{\nu\in\S^{n-1}}H(r,u-p_{\nu})^{1/2} \le\eta$;
		\end{enumerate}
		Then for all unit vectors $\xi$ it holds
		\begin{equation}
			\lap_{\xi^\perp} u(x)\ge - M \eps\, \frac{|x|}{r}\quad \quad \forall x\in B_{r/2}.\label{eq:kl}
		\end{equation}
		The universal constants $r_\circ$, $\alpha_\circ$ and $C_\circ$ are the ones of \Cref{corollary:frequency formula}.
	\end{lemma}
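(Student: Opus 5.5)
We argue by contradiction and compactness, following the proof of \Cref{lem:low frequency close poly}, now in dimension $n$ and with the Ansatz $p_\nu$ of \eqref{eq:def_pnu_RHS}. Suppose there are solutions $u_k$ with normalized $f_k$, scales $r_k\in(0,r_\circ)$ and $\eps_k\to0$ satisfying i) and ii), but with $\lap_{\xi_k^\perp}\tilde u_k(x_k)<-k\eps_k|x_k|$ for some $x_k\in B_{1/2}$, where $\tilde u_k:=r_k^{-2}u_k(r_k\cdot)$. By \Cref{lem:semiconvexity}, applied with $U=r_k^{-2}(p_{\nu_k})_{r_k}$ and using \eqref{eq:p_k almost convex}, we have $\lap_{\xi^\perp}\tilde u_k\ge -C(\eps_k+r_k^{1+\alpha})$. \Cref{rmk:not-toosmall} together with i) gives $r_k^{2\tau}\lesssim H(r_k,u_k-p_{\nu_k})$, that is, $r_k^{\tau-2}\lesssim\eps_k$. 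Since $\tau-2>1+\alpha$, the $f$-errors $r_k^{1+\alpha}$ are $o(\eps_k)$, and therefore $x_k\to0$.

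Rotate so that $\nu_k=e_n$ realizes the minimum, and set $\tilde v_k:=\eps_k^{-1}r_k^{-2}(u_k-p_{e_n})(r_k\cdot)$. Then $\|\tilde v_k\|_{L^2(\de B_1)}=1$, and by \Cref{rmk:not-toosmall} and i), $\phi(1,\tilde v_k)\le3-\delta$. \Cref{lem:lip estimate rhs} (with a covering argument and doubling from \Cref{lem:doubling rhs}) gives uniform Lipschitz bounds, because the errors $r^{3+\alpha}$ are $o(\eps_k r_k^2)$. Hence $\tilde v_k\to v$ weakly in $H^1$ and locally uniformly. Using \eqref{eq:pde u-p_nu} and \eqref{eq:bound contact set}, exactly as in 2D, we check that $v$ solves \eqref{eq:thin obstacle} with $v(0)=0$, $\|v\|_{L^2(\de B_1)}=1$ and $\phi(1,v)\le3-\delta$. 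We then split according to \Cref{prop:thin obstacle}.

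\textbf{Case (a).} We use the quadratic barriers of the 2D proof, shifted in the $x'$ variables and corrected by the cubic terms of $p_{e_n}$ (whose Laplacian matches $f_1$, with the $f_2$ error being $o(\eps_k)$). These barriers show that $\{x_n=\beta_k\}\cap B_{\bar r}$ lies in the contact set. This splits $\tilde u_k$ into two one-phase pieces, to which \Cref{lemma:regular case} applies, yielding the linear bound for $D^2$, and hence for $\lap_{\xi^\perp}$.

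\textbf{Case (c).} We absorb the off-diagonal part $x_n\,b\cdot x'$ by a small rotation $\nu_k$. The remaining quadratic $A$ is trace-free with $A|_{e_n^\perp}\ge0$ and nonzero. Because $\phi(\hat v)<3$, it cannot be of higher order, so the modified profile $\tilde p_k=p_{\nu_k}+\eps_k x\cdot Ax$ has $\lap_{\xi^\perp}\tilde p_k\ge c\eps_k$ for every unit $\xi$. To see this, note that $\lap_{\xi^\perp}$ of $\tfrac12x_n^2$ is $1-\xi_n^2$, which is positive unless $\xi=\pm e_n$; when $\xi=\pm e_n$, $\tr_{e_n^\perp}A=-A_{nn}>0$. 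One must verify, via uniform estimates, that this gives a uniform bound $\ge c\eps_k$ in $\xi$. \Cref{oss:inherit mean convexity} then yields $\lap_{\xi^\perp}\tilde u_k\ge0$ near $0$.

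\textbf{Case (b).} This is where the 2D proof used the explicit paraboloid solution $P$, which is not available in general $n$ or with a variable $f$. This is the main obstacle. I would first prove, with the barriers of the 2D proof, that the contact set contains $\{x_n=0\}$ minus a $\sigma$-neighborhood of the cone $\{\theta=0\}$ of $\psi_{3/2}(Rx)$. Then I would use as comparison $U_k:=p_{e_n}+\eps_k a\,\bar r^{3/2}\psi_{3/2}(Rx)$-type profiles, or directly $\tilde u_k$ itself, and verify condition (i) of \Cref{lem:inherit convexity} (in the version of \Cref{oss:inherit mean convexity}). Away from the contact set, $\lap_{\xi^\perp}$ of $\tfrac12x_n^2+\eps_k a\psi_{3/2}$ is at least $c_\sigma\eps_k$: for $\xi$ near $e_n$ this uses $D_{ee}\psi_{3/2}>0$ along $e\perp e_n$ in the $(x_{n-1},x_n)$ plane, which is the 2D computation $D_{11}\psi_{3/2}=\tfrac34 r^{-1/2}\cos(\theta/2)$. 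The remaining subtlety is that $U_k$ is not itself a solution with $D_{ee}U\ge-\eps$ everywhere near the edge of the contact set. I would handle this by applying \Cref{lem:caffarelli rhs holder} directly to $h=\lap_{\xi^\perp}\tilde u_k/\eps_k$: the bound $h\ge-C$ comes from \Cref{lem:semiconvexity}, and $h\ge A_1$ in $\{\dist>\sigma_1\}$ follows from interior $C^2$ convergence of $\tilde v_k$ to $v$ together with smallness of the $o(|x|^{3/2})$ term after choosing $\bar r$. This gives $\lap_{\xi^\perp}\tilde u_k\ge0$ in $B_{\rho_1\bar r}$, contradicting the assumed failure since $x_k\to0$.
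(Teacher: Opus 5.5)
Your proposal is correct in outline and follows the paper's proof essentially step for step: contradiction, blow-up of $\tilde v_k$ to a thin obstacle solution with $\phi(1,v)\le 3-\delta$, then case (a) by barriers and splitting into two one-phase solutions, case (b) by the contact-set barrier followed by a direct application of \Cref{lem:caffarelli rhs holder} to $h=\lap_{\xi_k^\perp}\bar u_k/\eps_k$, and case (c) by the rotation $\nu_k$ plus \Cref{oss:inherit mean convexity}. A few details need repair: $r_k^{1+\alpha}=o(\eps_k)$ holds because $\tau-2=1+\alpha/2<1+\alpha$ (you wrote the reverse inequality), but $r_k$ itself need not be $o(\eps_k)$, so in case (a) \Cref{lemma:regular case} must be applied relative to the one-sided Ansatz $\tilde p_k\chi_{\{\pm x_n>0\}}$ with right-hand side $f_{k,1}$, since as stated its $\|f-f(0)\|_{C^{1/4}}\sim r_k$ error is too large (a point the paper also passes over); the Lipschitz bound follows from \Cref{lem:lip estimate rhs} together with the interior bound of \Cref{lem:local bounds rhs} with $U=p_{e_n}$, whereas \Cref{lem:doubling rhs} is unavailable because hypothesis i) gives no lower bound $\phi^\tau\ge 2.5$; and in case (b) the profile at scale $\bar r$ is $\bar p_k+a\eps_k\bar r^{-1/2}\psi$, not $\bar r^{3/2}$, and it is exactly this $\bar r^{-1/2}$ gain, set against the scale-invariant bound $h\ge -C$, that yields $h\ge A_1$ once $\bar r$ is small.
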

	\begin{proof}
		We denote by $C$ and $c$ a large and a small universal constant, we also systematically split coordinates as $y=(y',y_n)\in\R^{n-1}\times\R$. 
		
		We argue by contradiction. Take $u_k$ solving \eqref{eq:obstacle} in $B_1$ for some normalized $f_k$ with $0\in\de\{u_k>0\}$, such that $(i)$ holds. 
		Up to a rotation, we may assume that the minimal direction in (ii) is $\bar \nu_k=e_n$ and set
		\[
		p_k(x):=p_{\bar \nu_k}(x),\quad \lap p_k =f_{k,1},\quad \tilde p_k:=r_k^{-2} p_k(r_k\cdot),\quad \tilde u_k(x):= r_k^{-2} u_k(r_k x)\quad \text{ for }x\in B_{1/r_\circ},
		\]
		so that
		\[
		\eps_k:= \|\tilde u_k- \tilde p_k \|_{L^2(\de B_1)}\to 0.
		\]
		Notice also that, since the $f_k$ have been normalized, we must have
		\[
		\tilde p_k(x) \to \tfrac12 x_n^2,
		\]
		in any norm.
		The contradiction assumption is that, for some $z_k\in B_{1/2}\setminus\{0\}$ and unit vectors $\xi_k$, \begin{equation}\label{eq:contradiction low frequency close poly RHS}
			\lap_{\xi_k^\perp}\tilde u_k(z_k)<-k \eps_k |z_k|.
		\end{equation}
		
		By \Cref{rmk:not-toosmall} and the choice of $\tau=3+\alpha/2$ we have
		\[
		0<r_k^{\tau-2}\le C\eps_k\to 0\quad\text{ and }\quad \phi(1,\tilde u_k-\tilde p_k)\le 3-\delta.
		\]
		In particular we deduce that $r_k^{1+\alpha}\le C\eps_k^{({1+\alpha})/({\tau-2})}=C\eps_k\eps_k^{{\alpha}/({\alpha+2})}$, so errors of the form  $O(r_k^{1+\alpha})$ are $o(\eps_k)$. This fact will be used repeatedly in the computations below.
		
		We observe now that 
		\begin{equation}
			\label{eq:x_ktozero}z_k\to 0,
		\end{equation}
		indeed by the semiconvexity bound of \Cref{lem:semiconvexity} applied to $\tilde u_k$ and $U:=\tilde p_k$ we find for any unit vector $\xi$: 
		\begin{equation}\begin{split}\label{eq:semiconvexity freq<3}
				-D_{\xi\xi}\tilde u_k(z_k)&\le C\big(\|\tilde u_k -\tilde p_k\|_{L^2(\de B_1)} +\|f_k(r_k\cdot)-f_{k,1}(r_k\cdot)\|_{C^{1/4}(B_1)} +\sup_{B_1}(D^2\tilde p_k)_-\big)\\&\le  C\eps_k+Cr_k^{1+\alpha}\le C\eps_k.
			\end{split}
		\end{equation}
		Where we used that, by \eqref{eq:p_k almost convex} and scaling:
		\[
		D^2\tilde p_k\ge -Cr_k^2\quad \text{ and }\quad \|f_k(r_k\cdot)-f_{k,1}(r_k\cdot)\|_{C^{1/4}(B_1)}\le Cr_k^{1+\alpha}.
		\]
		Summing over an orthonormal basis of $\xi_k^\perp$ and using \eqref{eq:contradiction low frequency close poly RHS} we get
		\[
		k \eps_k |z_k|\le -\lap_{\xi_k^\perp}\tilde u_k \le C\eps_k,
		\]
		which indeed implies that $z_k\to0$.
		
		Now we go to the next-order, set 
		\[
		v_k:=r_k^{-2}(u_k-p_k)(r_k\cdot),\qquad \tilde v_k:=\frac{v_k}{ \eps_k}.
		\]
		Thanks to \Cref{lem:lip estimate rhs}, we have
		\[
		\|\nabla  v_k\|_{L^\infty(B_{1/2})}\le C(\eps_k + r_k^{1+\alpha})\le C \eps_k,
		\]
		thus $\tilde v_k$ converges locally uniformly in $B_1$ to some locally Lipschitz function $v$. Furthermore, by \Cref{rmk:not-toosmall}, we have an upper bound on its Almgren's frequency:
		\[
		\phi(1,\tilde v_k)=\phi(r_k,\tilde u_k -\tilde p_k)\le \phi^\tau(r_k,\tilde u_k -\tilde p_k)\le 3-\delta.
		\]
		Letting $k\to\infty$ and using the compactness of the trace $H(B_1)\to L^2(\de B_1)$ we find:
		\[
		\|v\|_{L^2(\de B_1)}=1,\qquad \phi(1,v)\le 3-\delta,\qquad v(0)=0.
		\]
		
		As in the case $f\equiv 1$, the limit $v$ solves the thin obstacle problem \eqref{eq:thin obstacle}. Indeed $v$ is superharmonic:
		\begin{equation}\label{eq:delta of u - p RHS}
			\lap v_k =\eps_k^{-1}{f_{k,2}(r_k\cdot )}\ind_{\{\tilde u_k>0\}} -\eps_k^{-1}{f_{k,1
				}(r_k\cdot )}\ind_{\{\tilde u_k=0\}} \le C r_k^{1+\alpha}/ \eps_k\to 0.
		\end{equation}
		Moreover $v_k = u_k\ge0$ on $\{x_n=0\}$, so
		\[
		v\ge0 \text{ on }\{x_n=0\}
		.
		\]
		In addition, as $\tilde u_k\to \tfrac12x_n^2$ locally uniformly in $B_1$, given any $t>0$ we have $\tilde u_k>0$ on $\{|x_n|>t\}$ for $k$ large, so
		\begin{equation}\label{eq:consequence of convergence to thin space RHS}
			\Delta v =0\quad\text{on }\{x_n\neq0\}.
		\end{equation}
		Finally, if $v(z',0)>0$ we also have $v>0$ in $B_{\bar\delta}(z',0)$ for some $t>0$, so we deduce by uniform convergence that $u_k(x)> 0$ in $B_{t}(z',0)$ for $k$ large enough. It follows from this and \eqref{eq:delta of u - p RHS} that $\lap v_k=0$ in $B_{t}(z',0)$. Together with \eqref{eq:consequence of convergence to thin space RHS} this yields
		\[
		\Delta v = 0\quad\text{in }B_1\setminus\{v=0,\  x_n=0\}.
		\]
		According to \Cref{prop:thin obstacle}, there are 3 possible cases: we show that each one leads to a contradiction with \eqref{eq:contradiction low frequency close poly RHS}.
		
		More precisely, we show that in each case there is a small scale $\bar r>0$ ---depending only on $v$ (but uniform in $k$, crucially)---such that the rescaled solution 
		\[
		\bar u_k:=\bar r^{-2} \tilde u_k(\bar r \cdot) 
		\]
		in fact satisfies the linear decay estimate
		\[
		\lap_{\xi^\perp}\bar u_k(y) \ge -C\bar r^{-2}\eps_k |y|\quad\text{ for all } y\in  B_{1/2}\text{ and for all unit vectors }\xi.
		\]
		This will give the sought contradiction: since $z_k\to 0$ we can take $y:=z_k/\bar r$ and $\xi=\xi_k$ to find:
		\[
		\lap_{\xi_k^\perp}\tilde u_k(z_k) \ge -C \bar r^{-3}\eps_k |z_k|,
		\]
		and thus comparing it with \eqref{eq:contradiction low frequency close poly RHS}, get  
		\[
		-k \eps_k |z_k| >-C \bar r^{-3}\eps_k |z_k|. 
		\]
		which is impossible for $k$ large enough.
		
		Heuristically, the scale $\bar r$ is chosen so that the function $v(x)$ agrees with its blow-up at $x=0$.
		
		\textbf{Case (a) of \Cref{prop:thin obstacle}.}  In this case there is a constant $\bar K\ge1$ such that 
		\[
		|v(x) - a_+(x_n)_+ + a_- (x_n)_-|\le \tfrac{1}{4n}\bar K|x|^2\quad\text{for all }x\in B_1,
		\]
		where $a_++a_-\le0$ and not both $a_+,a_-$ are 0.
		We show that there is $\bar r>0$ independent from $k$ and some $\beta_k\to0$ such that, for $k$ large,
		\begin{equation}\label{eq:contact set splits RHS}
			B_{\bar r}\cap\{x_n=\beta_k\}\subset \{\tilde u_k=0\}.
		\end{equation}
		This concludes because we can split the free boundary into two regular regions in $B_{\overline r}$. Rigorously, setting
		\[
		\hat u_k^\pm := \bar r^{-2}\tilde u_k(\bar r\cdot)\ind_{\{ x_n \gtrless \beta_k/\bar r\}},  
		\]
		\eqref{eq:contact set splits RHS} implies that the functions $\hat u_k^\pm$ solve the obstacle problem in $B_1$ with $\hat u_k^\pm(0)=0$ and
		\[
		|\hat u_k^\pm-\tfrac12(x_2)^2_\pm|< \bar r^{-2}(\eps_k+r_k)\quad \text{in }B_1.
		\]
		So if $k$ is large enough we can apply \Cref{lemma:regular case} to $\hat u_k^\pm$ and find
		\[
		-C \bar r^{-2}\eps_k|y|  \le D^2  \hat u_k( y) \quad \forall y \in B_{1/2},
		\]
		which leads a to a contradiction with $y:=z_k/\bar r$ (remember that $z_k\to0$) and \eqref{eq:contradiction low frequency close poly RHS}.
		
		Now we prove claim \eqref{eq:contact set splits RHS} using suitable barriers.
		
		First we observe that
		\[
		a_+\le 0\quad\text{ and } \quad a_-\le 0.
		\]
		In fact we prove that $a_+>0$ implies $\tilde u_k(0)>0$ for $k$ large (the argument for $a_-$ is identical). For $\sigma>0$ to be chosen uniform in $k$, set $Q_\sigma := B_\sigma'\times [-\sigma,\sigma]$ and define the barrier
		\[
		U_k(x) := {(1+8\bar K\eps_k)}\tilde p_k(x+\eps_k^2 e_n)-4\bar K{\eps_k} \tilde q_k,
		\]
		where 
		\begin{equation}
			\label{eq:def q tilde}\tilde q_k(x):= \frac{1}{n-1}\sum_{i=1}^{n-1}r_k^{-2}p_{e_i}(r_k x),\qquad \lap \tilde q_k = f_{k,1}(r_k\cdot).
		\end{equation}
		In the following computations it is helpful to remember that, as the $f_k$ are normalized,
		\[
		\tilde p_k(x) =\tfrac{1}{2}(1+O(r_k))x_n^2,\qquad \tilde q_k(x) =\tfrac{1}{2(n-1)}(1+O(r_k))|x'|^2.
		\]
		We claim that, if $a_+>0,$ we can choose $\sigma=\sigma(a_\pm,\bar K )>0$ small such that, for $k$ large enough,
		\begin{equation}
			{\eps_k^{-1}}(U_k-\tilde p_k) <v\quad\text{on }\de Q_\sigma\cap\{x_2\ge-\eps_k^2\}.\label{eq:a+>0}
		\end{equation}
		Indeed, if $x_n=\sigma$ and $|x'|<\sigma$, we have
		\begin{align*}
			{\eps_k^{-1}}(U_k-\tilde p_k)(x)&\le\frac{\tilde p_k(x+\eps_k^2 e_n)-\tilde p_k(x)}{\eps_k}+8\bar K\tilde p_k(x+\eps_k^2 e_n)-4\bar K\tilde q_k(x)\\
			&\le C\eps_k + C\bar K\sigma^2 < \tfrac12a_+\sigma\le a_+\sigma -\bar K\sigma^2< v(x).
		\end{align*}
		If $|x'|=\sigma,$ $-\eps_k^2\le x_n\le \sigma$ and $\sigma$ is small we have 
		\begin{align*}
			{\eps_k^{-1}}(U_k-\tilde p_k )&\le\frac{\tilde p_k(x+\eps_k^2 e_n)-\tilde p_k(x)}{\eps_k}+8\bar K\tilde p_k(x+\eps_k^2 e_n)- 4\bar K\tilde q_k(x)\\
			&\le C\eps_k+ 5\bar K(x_n+\eps_k^2)^2-\tfrac{2}{n}\bar K \sigma^2 \le 6\bar K x_n^2 -\tfrac{1}{n}\bar K \sigma^2 < \tfrac12 a_+ (x_n)_+-\tfrac{1}{4n}\bar K\sigma^2 < v(x).
		\end{align*}
		Thus \eqref{eq:a+>0} is proved.
		We emphasize that $\sigma$ depends on $\bar K$ and $\pm a$, so it is by no means universal, nevertheless it is uniform in $k$.
		As $v_k\to v$ uniformly on $Q_\sigma$, this implies $U_k\le u_k$ on $\de Q_\sigma\cap\{x_2\ge-\eps_k^2\}$ for all $k$ large enough.
		On the other hand, $U_k\le0\le u_k$ on $\{x_n=-\eps_k^2\}$. Thus, for $k$ large enough
		\[
		U_k\le u_k\quad\text{on }\de(Q_\sigma\cap\{x_n\ge-\eps_k^2\}).
		\]
		Now $U_k$ is a subsolution:
		\begin{align*}
			\lap U_k &= (1+8\bar K\eps_k)f_1(r_k(x+\eps_k^2e_n))-4\bar K \eps_kf_1(r_k x)\\
			&\ge f_1(r_k x)+4\bar K \eps_kf_1(r_k x)-C(1+\bar K)r_k \eps_k^2\\
			&\ge f(r_kx) -Cr_k^{1+\alpha} +4\bar K\eps_k-C (1+\bar K)r_k\eps_k^2\\
			&\ge \lap \tilde u_k +{\eps_k}[4\bar K-Cr_k\eps_k-C(1+\bar K)r_k^{1+\alpha}/\eps_k]>\lap \tilde u_k,
		\end{align*}
		so by comparison $U_k\le \tilde u_k$ on $Q_\sigma\cap\{x_2\ge-\eps_k^2\}$; in particular $0<U_k(0)\le \tilde u_k(0)$, which is a contradiction.
		
		We proved that $a_+\le0$. A symmetric argument gives $a_-\le0$. Since they cannot both vanish we may assume $a_+<0$. We prove similarly \eqref{eq:contact set splits RHS} with $$\beta_k := a_+\eps_k/2.$$ 
		For $\bar r>0$ to be chosen we set $Q_{\bar r}:=B_{\bar r}'\times [-\bar r,\bar r]$ and for $\xi'\in B_{\bar r}'\times\{0\}$ define the upper barrier
		\[
		U_{k,\xi'}(x) := {(1-8\bar K\eps_k)}\, \tilde p_k(x+\beta_ke_n )+ 4\bar K{\eps_k}\tilde q_k(x-\xi'),\qquad U_{k,\xi'}(\xi',-\beta_k)=0.
		\]
		For all $x\in \de Q_{\bar r}$ we have
		\[\begin{split}
			{\eps_k^{-1}}(U_{k,\xi'}-\tilde p_k)(x) &=\tfrac1{\eps_k}(\tilde p_k(x+\beta_ke_n )-\tilde p_k(x)) - 8\bar K\tilde p_k(x+\beta_ke_n )+4\bar K \tilde q_k(x-\xi')\\
			&\ge \tfrac12a_+\de_n\tilde p_k(x) - C\eps_k - 5\bar K (x_n+\beta_k)^2 +\tfrac{2}{n}\bar K |x'-\xi'|^2\\
			&\ge\tfrac{1}{2}a_+x_n - 5\bar K x_n^2 +\tfrac{2}{n}\bar K |x'-\xi'|^2 -Cr_k -C\eps_k \\
			&\ge v(x) + \bar Kx_n^2 + \tfrac{1}{n}\bar K |x'-\xi'|^2 -Cr_k -C\eps_k >v(x).
		\end{split}\]
		Where we used that, since $a_-\le0,$ for all $|x_n|\le\bar r\le \tfrac{1}{100} \bar K/|a_+|$ it holds: 
		\[
		a_-(x_n)_-+a_+(x_n)_+ \le\tfrac12 a_+ x_n -10\bar K x_n^2.
		\]
		As $v_k\to v$ uniformly in $Q_{2\bar r}$, we have $\tilde u_k< U_{k,\xi'}$ in $\de Q_{\bar r}$ for $k$ large.
		Now $U_{k,\xi'}$ is a supersolution:
		\begin{align*}
			\lap U_{k,\xi'}(x)&= (1-8\bar K \eps_k)f_{k,1}(r_k(x+\beta_ke_n))+{4\bar K\eps_k}{} f_{k,1}(r_k(x-\xi'))\\
			&\le (1-8\bar K\eps_k)f_{k,1}(r_k x) +4\bar K{\eps_k}{}f_{k,1}(r_kx)+C(|a_+|+|\xi'|)r_k\eps_k\\
			&\le f_k(r_kx)-{4\bar K\eps_k}{}f_{k,1}(r_kx)+C(|a_+|+|\xi|)r_k\eps_k+Cr_k^{1+\alpha}\\
			&\le f_k(r_kx)-\eps_k[4\bar K - C'r_k -C r_k^{1+\alpha}/\eps_k]<f_k(r_kx).
		\end{align*}
		Since $U_{k,\xi'}\ge 0$,  by comparison
		\Cref{lem:comparison principle} we find $\tilde u_k \le U_{k,\xi'}$ in $Q_{\bar r}$. Since $U_{k,\xi'}(\xi,-\beta_k)=0$ and $\xi\in B_{\bar r}'$ was arbitrary, \eqref{eq:contact set splits RHS} follows.

		\textbf{Case (b) of \Cref{prop:thin obstacle}.} In this case, up to a further rotation that fixes $e_n$ we have
		\[
		|v(x) - a\psi(x) |\le \omega(|x|) |x|^{3/2}\quad \forall x\in B_1,
		\]
		where $a>0$, $\omega$ is some modulus of continuity and $\psi(x):=\psi_{3/2}(x_1,x_n)$ where $\psi_{3/2}$ is given by \eqref{eq:1.5 homogeneous solution}.
		
		We rescale by a small parameter $\bar r$ independent on $k$ and denote:
		\[
		\bar u_k:=\bar r^{-2} \tilde u_k(\bar r\cdot),\quad \bar p_k:= \bar r^{-2} \tilde p_k(\bar r\cdot),\quad \bar v_k:=\bar r^{-2} \tilde v_k(\bar r\cdot),
		\]
		and $\eta_k:=\|\tilde v_k- v\|_{L^\infty(B_{1/2})}\to 0.$
		
		It is then clear that we can write, defining $\omega_k(\bar r):= \omega(\bar r)+\eta_k \bar r^{-3/2},$ the identity 
		\begin{equation}
			\bar u_k(x) = \bar p_k(x) + \bar r ^{-1/2}\eps_k \big[\psi(x) + \omega_k(\bar r) b_k(x)\big],\qquad |b_k(x)|\le 1\quad \forall x\in B_2.\label{eq: u close to psi}
		\end{equation}
		
		We first show that there is a modulus of continuity $\bar \sigma(\bar r)$, such that
		\begin{equation}\label{eq:contact set close paraboloids RHS}
			N_{\bar \sigma}:=B_1 \cap\{ x_1\le -\bar \sigma(\bar r),\ x_n=0\} \subset\{\bar u_k =0\},
		\end{equation}
		for all $k$ larger than $\bar k=\bar k(\bar\sigma)$. 
		
		By \eqref{eq:1.5 homogeneous solution}, for any $\sigma>0$ there is $c_\sigma>0$ such that $\psi(x)\le -c_\sigma|x_n|$ in $N_{\sigma/4}$. So for any $\xi\in N_{\sigma}$, on the sphere $\de B_{\sigma/2}(\xi)$ we have:
		\[
		\bar r^{1/2}\eps_k^{-1}\big(\bar u_k-\bar p_k\big)(x) \le -c_\sigma |x_n|+\omega_k(\bar r) \le -8x_n^2 + (\sigma/2)^2= -7x_n^2+|x'-\xi|^2.
		\]
		
		Thus, for $\xi\in N_\sigma$, we consider the barrier
		\[
		U_{k,\xi} := (1-8r^{-1/2}\eps_k) \bar p_k+4 r^{-1/2}\eps_k \bar q_k(x-\xi)\ge 0,\qquad \bar q_k:=\bar r ^{-2}\tilde q_k(\bar r \cdot),
		\]
		where $\tilde q_k$ is the same as in \eqref{eq:def q tilde}. Now
		\[\begin{split}
			{\bar r^{1/2} \eps_k^{-1}}(U_{k,\xi}-\bar p_k)(x) &\ge - 5 x_n^2+ 2|x'-\xi|^2>-7x_n^2+|x'-\xi|^2\ge  \bar r^{1/2}\eps_k^{-1}\big(\bar u_k-\bar p_k\big)(x).
		\end{split}\]
		We also have, with the same computation of case 1 with $\bar K=1$ and $\beta=0$:
		\[
		\lap U_{k,\xi} \le f_k(r_k\bar r\cdot),
		\]
		so \Cref{lem:comparison principle} yields $\bar u_k \le U_{k,\xi}$ in $B_{\sigma/2}(\xi)$, and \eqref{eq:contact set close paraboloids RHS} follows since $U_{k,\xi}(\xi)=0$.
		
		Now we claim that, for $k$ large, the function
		\[
		h(x):=\lap_{\xi_k^\perp}\bar u_k /\eps_k\quad \forall x\in\{\bar u_k>0\},
		\]
		satisfies the assumptions of \Cref{lem:caffarelli rhs holder} with $\bar u_k$, which gives $\lap_{\xi_k^\perp} \tilde u_k\ge 0$ in $B_{\rho_1\bar r}$ and contradicts \eqref{eq:contradiction low frequency close poly}, as explained above. 
		Let us check the assumptions. If $\{\xi_1,\ldots,\xi_{n-1}\}$ is an orthonormal basis of $\xi_k^\perp$, then
		\[
		\lap h_k(x) = \divergence F_k\quad\text{ with }F_k:= \frac{r_k}{\eps_k}\bar r\sum_{j=1}^{n-1} \xi_j \de_{\xi_j}f_k(r_k\bar r\cdot),
		\]
		so $\|F_k\|_{C^\alpha(B_1)}\le C\|f_k(r_k\cdot)\|_{C^{1,\alpha}(B_1)}/\eps_k \le 1$ for $k$ large.
		
		Furthermore by the weaker semiconvexity estimate of \eqref{eq:semiconvexity_weak},  $h_k\ge0 $ on $\de\{\bar u_k>0\}$.
		
		By rescaling \eqref{eq:semiconvexity freq<3} we get
		\[
		h_k(x) \ge -\eps_k^{-1}\sum_{j=1}^{n-1} D_{\xi_j\xi_j}\bar u_k(x) \ge -C_1.
		\]
		
		Finally we need to check that $h_k$ is very large in the region:
		\[
		\{\dist(\cdot,\{\bar u_k=0\})>\sigma_\circ\}\cap B_1,
		\]
		where $\sigma_\circ$ is the universal constant of \Cref{lem:caffarelli rhs holder}.
		We fix once and for all $\sigma:=\sigma_\circ/8$ and we may assume $\bar r$ so small that $\bar\sigma(\bar r)<\sigma/2$. Then by \eqref{eq:contact set close paraboloids RHS} we have
		\[
		\{\dist(\cdot,\{\bar u_k=0\})>\sigma_\circ\}\cap B_1 \subset \{\dist(\cdot,N_\sigma)>2\sigma\}\cap B_1 =:\Omega_\sigma,
		\]
		so it is enough to show that $h_k$ is large in the region $\Omega_\sigma$.
		
		Now for all $z\in \Omega_\sigma$ we have that $\psi(x)$ is harmonic in $B_{\sigma}(z)$, so we may use standard elliptic estimates to bound $D^2b_k$ (defined by \eqref{eq: u close to psi})
		\begin{align*}
			c(\sigma)\|D^2 b_k\|_{L^\infty(B_{\sigma/2}(z))}&\le \|\lap b_k\|_{C^\alpha(B_\sigma(z))} +\|b_k\|_{L^\infty(B_{\sigma}(z))}\\
			&\le \frac{\sqrt{\bar r}}{\omega_k(\bar r)}\|\lap \bar v_k\|_{C^\alpha(B_\sigma(z))}  +1 \le C \frac{\sqrt{\bar r}\, r_k^{1+\alpha}}{ \omega_k(\bar r)\, \eps_k}{}+1
		\end{align*}
		Since $z\in\Omega_\sigma$ was arbitrary, using \eqref{eq: u close to psi} we get for all $x\in \Omega_\sigma$:
		\[
		D^2\bar u(x) \ge D^2(\bar p_k +\frac{a\eps_k}{\sqrt{\bar r}}\psi) -C\frac{\eps_k}{\sqrt{\bar r}}\big(\omega_k(\bar r)+ \bar r ^{1/2} r_k^{1+\alpha}/\eps_k\big) \ge D^2(\tfrac12 x_n^2 +\frac{a\eps_k}{\sqrt{\bar r}}\psi) -\frac{\eps_k}{\sqrt{\bar r}}\gamma_k(\bar r) 
		\]
		where we used that $\bar p_k$ is almost convex \eqref{eq:p_k almost convex}, and set for brevity
		\[
		\gamma_k(\bar r):= C\big(\omega(\bar r) + \eta_k \bar r ^{-3/2} + \bar r^{1/2} r_k^{1+\alpha}\eps_k^{-1} +r_k^2\eps_k^{-1}\big),
		\]
		note that, first choosing $\bar r$ and then choosing $k$ large, one can arrange $\gamma_k(\bar r)$ to be arbitrarly small.
		
		Now we claim that for some universal $c_\circ>0$ we have
		\begin{equation}\label{eq:hessian psi elliptic}
			D^2_{\xi\xi}(\tfrac12 x_n^2 +\frac{a\eps_k}{\sqrt{\bar r}}\psi)(x) \ge c_\circ \frac{a\eps_k}{\sqrt{\bar r}}(\xi_1^2+\xi_n^2)\ge 0 \quad\forall x\in\Omega_\sigma,\quad \forall \xi\in\R^n.
		\end{equation}
		Since by dimension count $\xi_k^\perp$ must intersect the plane spanned by $\{e_1,e_n\}$ we may assume that $\xi_1$ lies in this plane and deduce
		\[
		h_k(x)= \eps_k^{-1}\sum_{j=1}^{n-1}D^2_{\xi_j\xi_j} \bar u_k(x) \ge \eps_k^{-1}D^2_{\xi_1\xi_1}\bar u_k(x)-\frac{\gamma_k(\bar r)}{\sqrt{\bar r}} \ge \frac{1}{\sqrt{\bar r}}(c_\circ -\gamma_k(\bar r)) \ge c_\circ \frac{a}{2\sqrt{\bar r}}
		\quad\forall x\in\Omega_\sigma.
		\]
		Choosing $\bar r\lesssim (c_\circ a/A_\circ)^2$ where $A_\circ$ is the constant of \Cref{lem:caffarelli rhs holder} we are done.
		
		It remains to verify that \eqref{eq:hessian psi elliptic} holds. Notice that the function $\tfrac12 x_n^2 +t\psi(x) $ depends only on $x_1$ and $x_n$ so denoting by $\rho:=(x_1^2+x_n^2)^{1/2}$ and $\cos\theta:=x_n/\rho$ we compute in the $\{e_1,e_n\}$ basis:
		\[
		D^2(\tfrac12 x_n^2 + t \psi_{3/2}(x_1,x_n)) = \begin{pmatrix}
			t \frac{3}{4} \rho^{-1/2} \cos(\theta/2) & t \frac{3}{4} \rho^{-1/2} \sin(\theta/2) \\
			t \frac{3}{4} \rho^{-1/2} \sin(\theta/2) & 1 - t \frac{3}{4} \rho^{-1/2} \cos(\theta/2)
		\end{pmatrix}.
		\]
		Checking determinant and trace, one sees that this matrix is positive definite for $t={a\eps_k}/{\sqrt{\bar r}}$ small, because, crucially, in the domain $\Omega_\sigma$ it holds:
		\[
		\sigma\le \rho\le 1\quad \text{ and }\quad  |\theta|<{\pi}{}-c(n)\sigma.
		\]
		The first condition ensures that the $(n,n)$ entry is $\ge 1/2$, while the second keeps the $\cos$ in the $(1,1)$ entry uniformly positive.
		
		\textbf{Case (c) of \Cref{prop:thin obstacle}.} In this case for some $(n-1)$ symmetric matrix $A\ge 0$ and some $b\in \R^{n-1}$ (not both zero) it holds
		\[
		|v(x) -\big( x'\cdot A x -\tr(A)x_n^2 + (b\cdot x')x_n \big)|\le \omega(|x|)|x|^2, \text{ for }x\in B_1,
		\]
		where $\omega$ is some modulus of continuity.
		
		We first show that we may assume $b=0$ by perturbing slightly the choice of $\bar \nu_k=e_n$. We consider the rotated Ansatz
		$$\nu_k := (e_n+ \eps_k b)/|e_n+ \eps_k b|,\quad \tilde P_k:=r_k^{-2}p_{\nu_k}(r_k\cdot),$$
		so that (in the $C^1(B_2)$ norm) we have
		\[
		{\eps_k^{-1}}(\tilde P_k-\tilde p_k) = \tfrac{1}2[(\nu_k+e_n)\cdot x][\tfrac1{\eps_k}(\nu_k-e_n)\cdot x)]+O(r_k+\eps_k)\to x_n (b\cdot x').
		\]
		Now we consider, rather than $\tilde v_k$, the function:
		\[
		w_k:= \tilde u_k -\tilde P_k,\qquad \tilde w_k:=\frac{w_k}{\eps_k},
		\]
		and remark that, by minimality of $\bar \nu_k$, and the triangular inequality:
		\[
		\eps_k \le \|w_k\|_{L^2(\de B_1)} \le \|v_k\|_{L^2(\de B_1)}+\|\tilde P_k-\tilde p_k\|_{L^2(\de B_1)}\le\eps_k+ C\eps_k\le C\eps_k.
		\]
		Thus we have $\|\tilde w_k\|_{L^2(\de B_1)}\in[1,C]$. 
		Hence we find
		\[
		\tilde w_k=\tilde v_k +\frac{\tilde p_k-\tilde P_k}{\eps_k}\to v-\bar x_n(b\cdot x')=:w,
		\]
		where the convergence is locally uniform. By construction:
		\[
		|w(x) - \big( x'\cdot A x -\tr(A)x_n^2\big)|\le   \omega(|x|)|x|^2\quad \forall x\in B_1.
		\]
		Now, using again assumption $(i)$ with $\nu=\nu_k$ and \Cref{rmk:not-toosmall} we get
		\[
		\phi(1,\tilde w_k)=\phi(1,\tilde u_k -\tilde P_k)=\phi(r_k, u_k - P_k)\le \phi^\tau(r_k, u_k - P_k) \le 3-\delta.
		\]
		Thus passing to the limit and using compactness of the trace we find
		\[
		\phi(1,w)\le 3-\delta,\qquad \|w\|_{L^2(\de B_1)}\in[1,C],\qquad w(0)=0,
		\]
		Thus $w$ is nonzero and consequently $A$ cannot be zero, otherwise $\phi(1,w)\ge 3$. So, up to a rotation that fixes $e_n$, we have
		\[
		A\ge  \alpha\, e_1\otimes e_1\text{ for some } \alpha >0. 
		\]
		We set
		\[
		U_k:=\tilde P_k + \eps_k(x'\cdot Ax' -\tr(A)x_n^2),\text{ for which }\lap U_k=f_{k,1}(r_k\cdot).
		\]
		
		Now we choose $\bar r>0$ small and $k$ large such that we can apply \Cref{lem:inherit convexity} to the functions
		\[
		\bar u_k:=\bar r^{-2}\tilde u_k(\bar r\cdot),\quad \bar U_k:=\bar r^{-2} U_k(\bar r\cdot).
		\]  
		Indeed we have for $x\in B_1$ that
		\[
		|(\bar u_k -\bar U_k)(x)|\le  o(\eps_k)+\eps_k|\bar r^{-2} \tilde w_k (\bar r\cdot)-(x'\cdot Ax' -\tr(A)x_n^2)|\le o(\eps_k)+\eps_k\omega(\bar r)\le \alpha \eps_k/(4A_\circ),
		\]
		and $|D^2(f_{k}(r_k\bar r\cdot)|\le Cr_k^2\le \alpha \eps_k/(4A_\circ)$.
		On the other hand using \eqref{eq:p_k almost convex} one has
		\begin{equation*}
			D^2\bar U_k \ge -Cr_k^2 +(\tfrac12-\alpha\eps_k) (e_n\otimes e_n)+\alpha\eps_k (e_1\otimes e_1)\quad \text{ in }B_1.
		\end{equation*}
		so using again that $\{e_1,e_n\}$ and $e_k^\perp$ share at least one line we get
		\begin{equation*}
			\lap_{e_k^\perp}\bar U_k \ge{\alpha}{ \eps_k} (1- Cr_k^2/\eps_k)\ge \alpha \eps_k /2.
		\end{equation*}
		So assumptions of \Cref{lem:inherit convexity} hold with $\eps=\alpha\eps_k/(4A_\circ)$ and we get that $\lap_{e_k^\perp}\bar u_k\ge0$ in $B_{\rho_1\bar r}$, contradicting \eqref{eq:contradiction low frequency close poly RHS} as in the other cases.
	\end{proof}

	\subsection{Extending Monneau monotonicity formula}
	In this Section we extend \Cref{prop:extended monneau} to all dimensions and non-constant $f$. 
	
	The main difference, compared to the 2D case, is that the epiperimetric inequality we prove is logarithmic. Crucially, it is still powerful enough to make the frequency gap integrable, so the final statement is the same. 
	
	Recall the definition of the truncated frequency gap \eqref{eq:truncated frequency gap}, the choice $\tau=3+\alpha/2$ and the adapted ansatz $p_\nu$ defined in \eqref{eq:def_pnu_RHS}. We are also implicitly assuming the universal bound \eqref{eq:universal upper bound}.
	\begin{proposition}\label{prop:monneau ND RHS}
		There are universal positive constants $r_\circ,$ $\delta_\circ,$ $ C_\circ$ such that the following holds: suppose that $u$ solves \eqref{eq:obstacle} with $0\in\de\{u>0\}$ and write $w=u-p_\nu$, where $\nu\in\mathbb S^{n-1}$.
		If
		\[
		E_3^\tau(s,w)\geq-\delta_\circ\quad\text{for some }s\in(0,r_\circ/2),
		\]
		then
		\[
		\|w_s\|_{L^2(\de B_1)}\leq C_\circ s^3.
		\]
	\end{proposition}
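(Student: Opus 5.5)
The plan is to follow the proof of \Cref{prop:extended monneau}, replacing each 2D ingredient by its truncated, $n$-dimensional analogue. The key new input is a log-epiperimetric inequality in place of \Cref{lem:apply projection epiperimetric}.

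\textbf{Reduction to a frequency pinch.} Let
\[
R:=\inf\{t\in(s,r_\circ/2): E^\tau_3(t,w)\ge 0\},
\]
with $R:=r_\circ/2$ if this set is empty. On $[R,r_\circ/2]$, \Cref{lem:monneau rhs} with $\lambda=3$ (admissible since $3\ge2.5$) shows that $\log[r^{-6}(H(r,w)+r^{2\tau})]+\tfrac C\delta r^\delta$ is nondecreasing. Together with the universal bound \eqref{eq:universal upper bound} this gives $\|w_t\|_{L^2(\de B_1)}\le Ct^3$ for all $t\in[R,r_\circ/2]$. So it suffices to treat $s<R/2$.

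On $(s,R)$ we have the pinch $-\delta_\circ\le E^\tau_3(r,w)\le 0$. The lower bound comes from monotonicity (\Cref{corollary:frequency formula}) started at $s$; the upper bound comes from the definition of $R$. Integrating \Cref{lem:monneau rhs} on $(s,R)$ gives
\[
\log\frac{H^\tau_3(R)}{H^\tau_3(s)}\ge 2\int_s^R E^\tau_3(r,w)\,\frac{dr}{r}-C.
\]
Hence the whole proposition reduces to proving
\[
\int_s^R (E^\tau_3(r,w))_-\,\frac{dr}{r}\le C,
\]
uniformly in $s$. Since $r^{2\tau}=o(r^6)$, the truncation term is harmless in the final bound $\|w_s\|\le C_\circ s^3$.

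\textbf{Epiperimetric step and almost-minimality.} For $r\in(s,R)$, let $z_r$ be the $3$-homogeneous extension of the normalized trace of $w_r$. Let $\zeta_r$ be the solution of the thin obstacle problem (obstacle on $\{x\cdot\nu=0\}$) with boundary data $z_r$.

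First I would prove a log-epiperimetric inequality (this is \Cref{lem:log epi} referred to in the introduction):
\[
E_3(\zeta_r)\le E_3(z_r)-c\,|E_3(z_r)|^{1+\gamma}\quad\text{for some universal }\gamma\in(0,1),
\]
valid whenever $E_3(z_r)\le0$. The trace also has to satisfy a one-sided projection bound, the analogue of \Cref{lem:control projection P3}, and I would prove that analogue by the same superharmonicity and mean-value argument. Comparison with the barrier $-L_\circ|x\cdot\nu|$ gives $\zeta_r\ge -L_\circ|x\cdot\nu|$, where $L_\circ$ comes from \Cref{lem:lip estimate rhs} and \Cref{lem:doubling rhs}.

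Next I would show that $\tilde w_r$ is almost minimizing:
\[
E^\tau_3(r,w)\le E_3(\zeta_r)+Cr^{\beta}.
\]
The proof integrates by parts. The term $\int\zeta_r\Delta\tilde w_r$ is controlled by two facts: the contact set lies in a slab of width $Cr^{-2}(\|w_r\|+r^{3+\alpha})$ by \eqref{eq:bound contact set}, and $r^{-2}\|w_r\|\lesssim r^{1-\delta_\circ}$ by truncated Monneau. The right-hand-side error $O(r^{3+\alpha})$ from \eqref{eq:pde u-p_nu} is absorbed after normalizing by $H+r^{2\tau}$.

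\textbf{ODE and integrability, the main obstacle.} Plugging both estimates into \Cref{lem:derivative frequency gap RHS} yields, with $e(r):=-E^\tau_3(r,w)\in[0,\delta_\circ]$,
\[
e'(r)\le -\frac{c}{r}\,e^{1+\gamma}+\frac{2}{r}e^2+Cr^{\beta-1}.
\]
The $e^2$ term is dominated once $\delta_\circ$ is small. In the 2D argument a linear damping term gave integrability of $e/r$ directly. Here the damping is only $e^{1+\gamma}/r$, and I expect deducing $\int_s^R e(r)\,dr/r\le C$ uniformly in $s$ to be the hardest point.

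My first attempt is to substitute $t=\log(1/r)$, turning the inequality into a damped ODE in $t$ with an exponentially decaying forcing $e^{-\beta t}$. For such an ODE, $e$ can only decay polynomially in $t$, at rate about $t^{-1/\gamma}$, and this is integrable only if $1/\gamma>1$, i.e. $\gamma<1$; I expect the log-epiperimetric statement to deliver exactly such a $\gamma$. One must then argue that the decay occurs going toward larger $r$ (toward $R$, where $e$ vanishes), using that $e$ is almost nonincreasing in $r$.

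If that fails, the fallback is to combine the integrated form $\int e^{1+\gamma}\,dr/r\le C(e(s)+1)$ with the total-variation bound $\int |de|\le C$ from almost-monotonicity. Interpolating the two would control $\int e\,dr/r$ on the set where $e$ is not tiny. On the complementary set, where $e$ is tiny, I would use the higher-order closeness to $\mathcal P_3$ provided by the epiperimetric inequality itself.
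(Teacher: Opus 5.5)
Your overall architecture matches the paper's. That includes the scale $R$, the pinch $-\delta_\circ\le E_3^\tau(r,w)\le 0$ on $(s,R)$, integration of truncated Monneau, the comparison of $z_r$, the thin-obstacle replacement $\zeta_r$ and $w_r$, and the ODE from \Cref{lem:derivative frequency gap RHS}. The gap is in the one step that is genuinely new for $n\ge3$: the epiperimetric inequality. You propose to obtain the one-sided projection bound $\Delta\Pi_3 w_r\le M\|w_r-\Pi_3 w_r\|_{L^2(\de B_1)}\delta_{\{x_n=0\}}$ ``by the same superharmonicity and mean-value argument'' as in \Cref{lem:control projection P3}. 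That argument uses that in 2D $\mathcal P_3^{\even}$ is spanned by the single function $q^{\even}$. The wrong-sign direction is then the fixed function $-q^{\even}$, whose harmonic replacement is uniformly positive at $0$. For $n\ge3$ the even cubics are, modulo harmonic terms, $|x_n|(\tfrac{\tr A}{3}x_n^2-x'\cdot Ax')$ with $A\in\mathrm{Sym}(n-1)$, and the wrong-sign part is $(A_{\Pi_3 w_r})_-$. The mean value on $\de B_1$ only sees $\tr A$. Even comparison with the thin-obstacle replacement, as the paper does, gives a contradiction only when the normalized even part stays a definite distance from the cone $\mathcal P_3^+$ of even cubic thin-obstacle solutions ($A\ge0$). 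When $\Pi_3^{\even}w_r$ is close to such a solution but has a small negative eigenvalue, the argument gives no control of that eigenvalue by $\|w_r-\Pi_3 w_r\|$. So no universal $M$ is available this way.

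This is why the paper needs the dichotomy of \Cref{lem:projection dichotomy}. Either the projection bound holds with a universal constant, and \Cref{lem:projection epi RHS} gives a \emph{linear} epiperimetric inequality. Or $w_r$ is relatively $L^2$-close to $p_3^{\odd}+p_3^{\even}$ with $p_3^{\even}\in\mathcal P_3^+$, and only in this regime does one prove the \emph{logarithmic} inequality of \Cref{lem:log epi}. That proof is a compactness argument built on the Savin--Yu replacement $\bar q$ on a strip $\Omega_\sigma$, the quantity $\kappa_q$, and the bound $-W_3(\bar q)\le C\kappa_q^{\beta}$ with $\beta=\frac{n+3}{n+1}$; this bound fixes $\gamma$ through $\beta(1+\gamma)=2$. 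Your two claims do not fit together. If the projection bound held for all traces, you would get a linear epiperimetric inequality and no logarithm; the logarithm appears exactly where the bound fails. As written, the log-epiperimetric inequality on which your ODE rests is asserted without a mechanism.

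Two smaller points. First, the ODE step you flagged as hardest is one line, and your first attempt is correct. Absorb $\tfrac2r e^2$ using small $\delta_\circ$ and set $\hat e:=e-\tfrac{C}{\beta}r^\beta$. Then $\hat e'\le -\tfrac{c}{r}\hat e_+^{1+\gamma}$, hence $\tfrac{\hat e_+}{r}\le-\tfrac{1}{c(1-\gamma)}\tfrac{d}{dr}(\hat e_+^{1-\gamma})$. This integrates to a bound $\lesssim\delta_\circ^{1-\gamma}$ uniform in $s$, so no fallback is needed. Second, do not mix truncated and untruncated gaps. The difference $E_3^\tau(r,w)-E_3(r,w)=\tfrac{(\tau-\phi(r,w))r^{2\tau}}{H(r,w)+r^{2\tau}}+\tfrac{C_\circ}{\alpha_\circ}r^{\alpha_\circ}$ need not be $O(r^\beta)$, because in the pinch one only knows $H(r,w)\gtrsim r^{2\tau}$. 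So your inequality $E_3^\tau(r,w)\le E_3(\zeta_r)+Cr^\beta$ is not justified as stated. The paper keeps all energies truncated via $W_3(g)=(H+r^{2\tau})(E_3^\tau(r,g)-\tfrac{C_\circ}{\alpha_\circ}r^{\alpha_\circ})-(\tau-3)r^{2\tau}$, where the positive truncation term goes in the right direction.
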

	We will write
	\[
	w_r:= (u-p_{\nu})(r\cdot),
	\]
	and $z_r$ will denote the 3-homogeneous extension of $w_r$ in $B_1$.
	
	\subsubsection{An epiperimetric inequality}
	The main ingredient is an epiperimetric inequality, in the spirit of \Cref{lem:apply projection epiperimetric}. In higher dimensions, however, we need to weaken it to a logarithmic epiperimetric inequality. The nontrivial proof is relatively short, and proceeds by contradiction taking advantage of a clever construction of Savin and Yu \cite{SavinYuinteger}.
	
	\begin{lemma}\label{lem:epi ND}
		There are universal positive constants $r_\circ,$ $L_\circ,$ $c_\circ$ and a dimensional constant\,\footnote{The proof gives $\gamma=\frac{n-1}{n+3}$.} $\gamma\in(0,1)$ such that the following holds. Suppose $r<r_\circ$ and
		\[
		0\le E_{2.5}^\tau(r,w),\quad E_{10}^\tau(2r,w)\le0.
		\]
		Let $z_r$ be the $3$-homogeneous extension of $w_r$ and let $\zeta_r$ be the solution of the thin obstacle problem in $B_1$ with boundary value $z_r=w_r$ on $\de B_1$. Then
		\[
		E_3^\tau(r,\zeta_r)\le E_3^\tau(r,z_r)(1+c_\circ|E_3^\tau(r,z_r)|^\gamma)+r^\alpha,
		\]
		and
		\[
		\zeta_r
		\ge
		-L_\circ
		\bigl(H(r,w)+r^{6+2\alpha}\bigr)^{1/2}|x_n|
		\qquad\text{in }B_1.
		\]
	\end{lemma}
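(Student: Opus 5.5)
The plan is to follow the 2D proof of \Cref{lem:apply projection epiperimetric}, in two separate parts.

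\textbf{The lower bound on $\zeta_r$.} This is the easier part. By \Cref{lem:lip estimate rhs} together with the doubling estimate \Cref{lem:doubling rhs}, which applies thanks to the frequency pinch $0\le E^\tau_{2.5}(r,w)$ and $E^\tau_{10}(2r,w)\le0$, we have
\[
\|\nabla w_r\|_{L^\infty(B_1)}\le C\bigl(H(r,w)+r^{6+2\alpha}\bigr)^{1/2}.
\]
The same bound holds for $|\nabla z_r|$ on $\de B_1$. Since $w_r=u_r\ge0$ on $\{x_n=0\}$, we get $z_r\ge -L_\circ(H+r^{6+2\alpha})^{1/2}|x_n|$ on $\de B_1$. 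The function $-L|x_n|$ solves \eqref{eq:thin obstacle}, so \Cref{lem:comparison thin obstacle} gives the bound in $B_1$.

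\textbf{The log-epiperimetric inequality.} First reduce to an untruncated statement. Write $W_3(g)=\int_{B_1}|\nabla g|^2-3\int_{\de B_1}g^2$. Since $\zeta_r$, $z_r$ and $w_r$ share the same trace, $E^\tau_3(r,\zeta_r)-E^\tau_3(r,z_r)=(W_3(\zeta_r)-W_3(z_r))/(H(r,w)+r^{2\tau})$. It therefore suffices to find a competitor $\tilde\zeta$ with $\tilde\zeta\ge0$ on $\{x_n=0\}$ and trace $z_r$ such that
\[
W_3(\tilde\zeta)\le W_3(z_r)\bigl(1+c|W_3(z_r)|^\gamma/H^\gamma\bigr)
\]
when $W_3(z_r)\le0$. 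Minimality of $\zeta_r$ then transfers this to $\zeta_r$. The truncation errors (the $\tau r^{2\tau}$ terms and the $C_\circ r^{\alpha_\circ}/\alpha_\circ$ shift) are absorbed into $+r^\alpha$. If $W_3(z_r)\ge0$, the competitor $z_r$ itself essentially works.

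For the competitor, decompose the normalized trace $c=\tilde z_r|_{\de B_1}$ along the spherical harmonics of $\mathbb S^{n-1}\setminus\{x_n=0\}$ for the thin problem. In $n$D the space $\mathcal P_3$ of $3$-homogeneous functions vanishing on $\{x_n=0\}$ is infinite dimensional in the relevant sense: the kernel now contains $|x_n|$ times harmonic quadratics $q(x')$ together with odd cubics. This is why only a log-epiperimetric inequality holds. I would adapt the Savin--Yu construction \cite{SavinYuinteger}. Split $c=\Pi_3c+c^\perp$, and on the modes with homogeneity $\ne3$ replace the $3$-homogeneous extension by the $\lambda_j$-homogeneous ones. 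This gains $\eps\|c^\perp\|^2_{H^1}$ in $W_3$. The $\Pi_3c$ part has $W_3=0$ by \eqref{eq:weiss on P3} and interacts only through $-2\int c^\perp\Delta\Pi_3 c$, via \eqref{eq:weiss on P3+something}.

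\textbf{Main obstacle.} The sign constraint on $\{x_n=0\}$ couples the two parts. The negative part of $\Delta\Pi_3c$ on the thin space must be controlled, as in \Cref{lem:control projection P3}. That argument extends to $n$D with $q^\even$ replaced by $|x_n|q(x')$ with $q\ge0$-type components, giving $\Delta\Pi_3 c\le C\|c^\perp\|\,\mathcal H^{n-1}\llcorner\{x_n=0\}$ in the appropriate sense.

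The remaining loss comes from truncating high frequencies so that the modified competitor stays nonnegative on the thin space. Following \cite{SavinYuinteger}, cut off at frequency $N$ and optimize $N$ against $\|c^\perp\|$. The $L^\infty$ versus $L^2$ loss of order $N^{(n-1)/2}$ from Sobolev and Weyl counting on $\mathbb S^{n-1}$ produces the exponent $\gamma=\frac{n-1}{n+3}$. I expect this optimization, and keeping the positivity constraint on $\{x_n=0\}$ after the truncation, to be the delicate step. I would first try a contradiction/compactness argument as in \Cref{lem:epiperimetric projection} for the low modes, combined with explicit estimates for the high modes.
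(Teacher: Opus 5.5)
Your lower bound for $\zeta_r$ is fine and agrees with the paper: the Lipschitz estimate plus doubling, then comparison with $-L|x_n|$. So is the reduction of the energy inequality to one for $W_3$ via the common trace. The gap is at the step you flag yourself. You claim that the argument of \Cref{lem:control projection P3} extends to $n\ge3$ and gives $\Delta\Pi_3c\le C\|c-\Pi_3c\|_{L^2(\de B_1)}\delta_0(x_n)$ with a universal $C$.

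In 2D this works only because $\mathcal P_3^{\even}$ is one-dimensional. A positive part of $\Delta\Pi_3c$ forces $\Pi_3^{\even}c$ to be a \emph{negative} multiple of $q^{\even}$. After normalization the limit therefore stays at unit distance from the cone $\mathcal P_3^+$ of even cubic thin-obstacle solutions. Comparison with a superharmonic replacement then forces $w>0$ near $0$, contradicting $w(0)=0$.

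In $n\ge3$, $\Pi_3^{\even}c$ is described by a matrix $A\in\mathrm{Sym}(n-1)$, and $\Delta\Pi_3c$ is positive exactly where $x'\cdot Ax'<0$. Now $A$ can have a small negative eigenvalue while being close to a nonzero $A\ge0$. Then $\|A_-\|$ can be much larger than $\|c-\Pi_3c\|_{L^2(\de B_1)}$ while the normalized limit lies in $\mathcal P_3^+$, and the contradiction argument yields nothing. This is exactly what \Cref{lem:projection dichotomy} records: the linear control holds \emph{unless} $w_r$ is close, relative to $\|p_3^{\even}\|_{L^2(\de B_1)}$, to some $p_3^{\odd}+p_3^{\even}$ with $p_3^{\even}\in\mathcal P_3^+$.

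Your claim is also inconsistent with the rest of your plan. If the linear control held universally, \Cref{lem:projection epi RHS} would already give an epiperimetric inequality with a \emph{fixed} gain, and no logarithmic loss would be needed. The loss does not come from $\mathcal P_3$ being large; it is finite dimensional. It comes from the degenerate boundary of the cone $\mathcal P_3^+$.

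What is missing is a genuinely nonlinear epiperimetric inequality near $\mathcal P_3^+$, namely \Cref{lem:log epi}. In the second alternative it is applied to $(z_r-p_3^{\odd})/\|p_3^{\even}\|_{L^2(\de B_1)}$. In the first alternative the paper uses \Cref{lem:projection epi RHS} with $\sigma=r^{3+\alpha}$, since only $\Delta w_r\le O(r^{3+\alpha})$ holds; this $\sigma^2$ term is what produces the $+r^\alpha$ in the statement.

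The paper proves \Cref{lem:log epi} by compactness, not by a mode-by-mode construction. It uses the Savin--Yu replacement $\bar q$: the thin obstacle problem on a thin strip $\Omega_\sigma$ around the equator, with $\kappa_q$ the mass created on $L^\pm_\sigma$. It picks $q_k$ minimizing $\max\{\|c_k-\bar q\|_{H^1(B_1)},\kappa_q\}$ and contradicts this minimality through the linearization $q_k+\delta_kw_\infty$.

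The quantitative input is \Cref{lem:auxiliary properties}(iii). A bump-function test at scale $\|(A_q)_-\|^{1/2}$ around a negative direction of $A_q$ is combined with $-W_3(\bar q)\le C\|(A_q)_-\|\kappa_q$. This gives $\|(A_q)_-\|^{(n+1)/2}\le C\kappa_q$ and $-W_3(\bar q)\le C\kappa_q^{\beta}$ with $\beta=\frac{n+3}{n+1}$. The exponent $\gamma$ is then fixed by $\beta(1+\gamma)=2$, i.e. $\gamma=\frac{n-1}{n+3}$.

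Your high-frequency truncation with Weyl counting is not the Savin--Yu construction. You do not show how it keeps the competitor nonnegative on $\{x_n=0\}$ near the boundary of $\mathcal P_3^+$, which is the whole difficulty. Your exponent is asserted rather than derived.
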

	We recall some notation. We define the linear space
	\[
	\mathcal P_3:=\big\{p(x)\text{ 3-homogeneous},\quad \supp\Delta p\subset\{x_n=0\},\quad p\equiv0\text{ on }\{x_n=0\}\big\},
	\]
	and the orthogonal projection
	\[
	\Pi_3\colon L^2(\de B_1)\to \mathcal P_3.
	\]
	It is useful to consider the decomposition of $\mathcal P_3$ in functions that are respectively even and odd in the $x_n$ variable, that is $\mathcal P_3 = \mathcal P_3^\odd \oplus \mathcal P_3^\even$, and their corresponding orthogonal projections $\Pi_3^\odd,\Pi_3^\even$, and the cone of 3-homogeneous solutions of the thin obstacle problem
	\[
	\mathcal P_3^+ :=\{p\in \mathcal P_3, \ \lap p\le0\}.
	\]
	Recall the Weiss energy notation
	\[
	W_3(g) :=\int_{B_1}|\nabla g|^2 - 3\int_{\de B_1} g^2,\qquad W_3(g;h) := \int_{B_1}\nabla g\cdot\nabla h-3\int_{\de B_1}gh.
	\]
	
	The equivalent of \Cref{lem:epiperimetric projection} with a right-hand side is the following.
	\begin{lemma}\label{lem:projection epi RHS}
		Given $M>0$ there is $\eps=\eps(n,M)>0$ such that the following holds. Let $\sigma\ge0$ and let $c\in H^1(B_1)$ be a $3$-homogeneous function with $c(\cdot,0)\ge0$ and
		\[
		\Delta\Pi_3c \le C(\|c-\Pi_3c\|_{L^2(\de B_1)}+\sigma)
		\delta_{\{x_n=0\}},
		\]
		then there is $v\in c+H^1_0(B_1)$ with $v(\cdot,0)\ge0$ such that
		\[
		(1-\eps)W_3(v)\leq W_3(c)+\sigma^2.
		\]
	\end{lemma}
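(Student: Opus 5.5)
I would follow the contradiction-and-compactness argument of \Cref{lem:epiperimetric projection}, now in $\R^n$ and with the extra parameter $\sigma$. Suppose the statement fails for some $M$. Then there are $3$-homogeneous $c_k$ with $c_k(\cdot,0)\ge0$, numbers $\sigma_k\ge0$, and the following data. Set $q_k:=\Pi_3c_k$ and $\delta_k:=\|c_k-q_k\|_{L^2(\de B_1)}$. We have $\Delta q_k\le M(\delta_k+\sigma_k)\delta_{\{x_n=0\}}$, and yet
\[
W_3(c_k)+\sigma_k^2<(1-\tfrac1k)W_3(v)\qquad\text{for every } v\in c_k+H^1_0(B_1),\ v(\cdot,0)\ge0 .
\]
Testing with $v=c_k$ gives $W_3(c_k)+\sigma_k^2<0$, so $\delta_k>0$. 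I then normalize by $\delta_k$ and, using \eqref{eq:weiss on P3+something} (valid in any dimension, since $W_3(q)=0$ and $q\Delta q=0$ for $q\in\mathcal P_3$), write
\[
W_3(c_k-q_k)\le 2\int_{B_1}(c_k-q_k)\Delta q_k-\sigma_k^2 .
\]
On $\{x_n=0\}$ we have $c_k-q_k=c_k\ge0$. Combining this with the trace inequality gives
\[
2\int(c_k-q_k)\Delta q_k\le CM(\delta_k+\sigma_k)\bigl(\delta_k+\|\nabla(c_k-q_k)\|_{L^2}\bigr).
\]

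The new point is absorbing the $\sigma_k$ term. The term $CM\sigma_k\delta_k$ is at most $\tfrac12\sigma_k^2+CM^2\delta_k^2$. The term $CM\sigma_k\|\nabla(c_k-q_k)\|$ is at most $\tfrac12\sigma_k^2+CM^2\|\nabla(c_k-q_k)\|^2$, and this is too weak to absorb. So instead I will use Young's inequality as $\eta\|\nabla\cdot\|^2+C_\eta M^2\sigma_k^2$, and I need $\sigma_k^2$ controlled by $\delta_k^2$. I obtain that bound from the same inequality: the left side satisfies $W_3(c_k-q_k)\ge\|\nabla(c_k-q_k)\|^2-3\delta_k^2$, so the inequality yields both $\|\nabla(c_k-q_k)\|^2\le CM^2\delta_k^2+CM^2\sigma_k^2$ and $\sigma_k^2\le 3\delta_k^2+CM(\delta_k+\sigma_k)(\delta_k+\|\nabla\|)$. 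Closing this system looks circular at first. To resolve it I will first treat $\sigma_k\le K\delta_k$. In the complementary regime $\sigma_k\gg\delta_k$, the term $-\sigma_k^2$ dominates, because the positive contributions are at most $CM\sigma_k(\delta_k+\|\nabla\|)\le\tfrac12\sigma_k^2+CM^2(\delta_k^2+\|\nabla\|^2)$, which gives
\[
\|\nabla(c_k-q_k)\|^2(1-CM^2)\lesssim \delta_k^2-\tfrac12\sigma_k^2 .
\]
I expect this bookkeeping to be the main obstacle. If the naive absorption fails because $M$ is large, I would instead choose $\eps$ small depending on $M$ and compare directly with $v:=$ the harmonic-type replacement $q_k+\mathrm{(thin\ obstacle\ minimizer\ of\ } c_k-q_k)$. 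The point of this choice is that the $\sigma^2$ on the right-hand side of the claimed inequality is exactly what pays for the cross term $\int (v-q_k)\Delta q_k\lesssim M\sigma\|\cdot\|$.

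Once I have $\|w_k\|_{H^1}\le C(M)$ for $w_k:=(c_k-q_k)/\delta_k$, and $\sigma_k/\delta_k$ bounded along a subsequence, I pass to the limit as in \Cref{lem:epiperimetric projection}. Testing the almost-minimality with $\varphi_k=w_k+\phi$, for $\phi\in H^1_0(B_1\setminus\{x_n=0\})$, shows that the weak limit $w_\infty$ is $3$-homogeneous and harmonic off $\{x_n=0\}$. It also satisfies $w_\infty\ge0$ on $\{x_n=0\}$, $\|w_\infty\|_{L^2(\de B_1)}=1$ and $w_\infty\perp\mathcal P_3$. It is here that the $n$-dimensional structure enters. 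I test the Euler--Lagrange equation on $\mathbb S^{n-1}$ against every even element $q\in\mathcal P_3^\even$ with $\Delta q\le0$: these are $q=|x_n|\,Q(x)$ for harmonic extensions of the form $x_n$-even cubics vanishing on $\{x_n=0\}$, for instance $|x_n|(x_n^2-3x_1^2)$ and its rotations in $x'$. This gives $\int_{\{x_n=0\}}w_\infty\,(-\Delta q)=0$. Since the densities $x_i^2$ span the positive weights needed, this forces $w_\infty=0$ on the thin space. Hence $w_\infty$, being odd/even harmonic pieces vanishing on $\{x_n=0\}$, lies in $\mathcal P_3$, which is a contradiction. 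Finally, the $\sigma$ in the conclusion is harmless in the limit, since it only enters through the bounded ratio $\sigma_k/\delta_k$ multiplying terms that vanish as $k\to\infty$.
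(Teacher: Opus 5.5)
There is a genuine gap at the step you flag yourself as the main obstacle. You never obtain $\|w_k\|_{H^1(B_1)}\le C(M)$, nor boundedness of $\sigma_k/\delta_k$, and your closing sentence presupposes the latter. Write $G:=\|\nabla(c_k-q_k)\|_{L^2(B_1)}$. The only information you extract from testing with $v=c_k$ is $W_3(c_k)+\sigma_k^2<0$, which leads to
\[
G^2+\sigma_k^2\le 3\delta_k^2+CM(\delta_k+\sigma_k)(\delta_k+G).
\]
When $CM\ge 2$ this inequality is compatible with $\delta_k\ll\sigma_k\approx G$, so no bound on $G/\delta_k$ or $\sigma_k/\delta_k$ can follow from it. Your treatment of the regime $\sigma_k\gg\delta_k$ does not repair this. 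The inequality you reach, $(1-CM^2)G^2\lesssim\delta_k^2-\frac12\sigma_k^2$, has a negative coefficient on the left for large $M$, so at best it is a lower bound on $G$. The fallback via a thin-obstacle replacement is only sketched.

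The missing observation is that testing the contradiction hypothesis with $v=c_k$ gives much more than you used. The inequality $W_3(c_k)+\sigma_k^2<(1-\frac{1}{k})W_3(c_k)$ means $W_3(c_k)<-k\sigma_k^2$. Combine this with $W_3(c_k)=W_3(c_k-q_k)-2\int_{B_1}(c_k-q_k)\Delta q_k$ and your trace estimate to get
\[
G^2+k\sigma_k^2\le 3\delta_k^2+CM(\delta_k+\sigma_k)(\delta_k+G).
\]
Now the bad cross term satisfies $CM\sigma_kG\le\frac14G^2+C^2M^2\sigma_k^2$, and this is absorbed by $k\sigma_k^2$. The result is $(k-CM^2)\sigma_k^2+\frac12G^2\le CM^2\delta_k^2$, hence $\sigma_k=o(\delta_k)$ and $\|w_k\|_{H^1(B_1)}\le CM$. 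This is exactly the one new step in the paper's proof. The paper normalizes by $(\|c_k-q_k\|_{L^2(\partial B_1)}^2+\sigma_k^2)^{1/2}$ instead, which makes no difference once $\sigma_k=o(\|c_k-q_k\|)$.

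The same step can also be done directly, without contradiction. The computation above shows $-W_3(c)\le C(1+M)^2(\|c-\Pi_3c\|^2_{L^2(\partial B_1)}+\sigma^2)$. So if $\sigma\ge\|c-\Pi_3c\|_{L^2(\partial B_1)}$, the choice $v=c$ already works with $\eps\le c(1+M)^{-2}$. Otherwise the hypothesis is the $\sigma=0$ one with $2M$ in place of $M$.

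With this fixed, the rest of your argument goes through. That includes dropping the nonnegative $\sigma_k^2$ term in the almost-minimality, deriving the Euler--Lagrange equation off $\{x_n=0\}$, and testing with $|x_n|(x_n^2-3x_1^2)$ to force $w_\infty=0$ on the thin space, hence $w_\infty\in\mathcal P_3$.
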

	\begin{proof}
		By contradiction, suppose there are $3$-homogeneous functions $c_k\in H^1(B_1)$ and $\sigma_k\ge0$ such that $c_k\ge0$ on $\{x_n=0\}$ and
		\[
		\Delta\Pi_3c_k \le M(\|\Pi_3c_k-c_k\|_{L^2(\de B_1)}+\sigma_k)\delta_0(x_n)
		\]
		but, for any $v_k\in c_k+H^1_0(B_1)$ with $v_k(\cdot,0)\ge0$,
		\begin{equation}\label{eq:almost minimizer ND}
			W_3(c_k)\le (1-\tfrac1k)W_3(v_k)-\sigma_k^2 .
		\end{equation}
		
		Setting
		\[
		q_k:=\Pi_3c_k,\qquad
		\delta_k^2:=
		\|c_k-q_k\|_{L^2(\de B_1)}^2+\sigma_k^2,
		\]
		the only difference between the proof of \Cref{lem:epiperimetric projection} is to show $\sigma_k = o(\delta_k)$. This can be proven since $W_3(c_k)\le-k\sigma_k^2$ (which follows by taking $v_k = c_k$ in \eqref{eq:almost minimizer ND}), so in this case \eqref{eq:k} reads as
		\[
		k\sigma_k^2+W_3(c_k-q_k)\le2\int_{B_1}\Delta q_k(c_k-q_k),
		\]
		and the same bound on the right hand side leads to 
		\[
		(k-CM)\sigma_k^2+\tfrac12\|\nabla(c_k-q_k)\|_{L^2(B_1)}^2 \le CM\|c_k-q_k\|_{L^2(\de B_1)}^2,
		\]
		as we wanted, which implies $\sigma_k = o(\delta_k)$ and a non-zero limit up to a subsequence $w_\infty$.
		The optimality of $w_\infty$ follows with the same proof, since
		\[
		W_3(c_k)\le (1-\tfrac1k)W_3(v_k)-\sigma_k^2\le (1-\tfrac1k)W_3(v_k),
		\]
		while the final test function can be chosen as $|x_n|(|x'|^2-\tfrac{n-1}3x_n^2)$.
	\end{proof}
	
	However, in higher dimensions we need to understand a more delicate case, covered by the following result.
	
	\begin{lemma}\label{lem:log epi}
		There are dimensional $\eps_0,$ $c_0>0$ and $\gamma\in(0,1)$ with the following property. 
		
		Let $p$ be an even cubic solution of the thin obstacle problem with $\|p\|_{L^2(\de B_1)}=1$, and let $c\in H^1(B_1)$ be 3-homogeneous and satisfy
		\[
		c\ge0\text{ on }\{x_n=0\}\quad\text{and}\quad \|c-p\|_{H^1(B_1)}<\eps_0.
		\]
		Then there is $\hat\zeta \in c +H^1_0(B_1)$ such that $\hat\zeta(\cdot,0)\ge0$ and
		\[
		W_3(\hat\zeta)\le (1+c_0|W_3(c)|^\gamma)W_3(c).
		\]
	\end{lemma}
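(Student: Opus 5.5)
We plan to prove \Cref{lem:log epi} by an explicit competitor in the spirit of Savin--Yu \cite{SavinYuinteger}, built on a Łojasiewicz-type structure of the energy $W_3$ near the cone $\mathcal P_3^+$. If $W_3(c)\ge0$ there is nothing to prove, since we may take $\hat\zeta=c$; so we assume $W_3(c)<0$. The first step is to decompose $c=q+\phi$, where $q:=\Pi_3 c\in\mathcal P_3$ and $\phi\perp_{L^2(\de B_1)}\mathcal P_3$. Since $\|c-p\|_{H^1}<\eps_0$, the projection $q$ is close to $p$, and both $q$ and $\phi$ are small relative to $p$. Identity \eqref{eq:weiss on P3+something} gives
\[
W_3(c)=W_3(\phi)-2\int_{B_1}\phi\,\Delta q .
\]
On $\{x_n=0\}$ we have $\phi=c\ge0$. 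Hence negativity of $W_3(c)$ can come from two sources: the linear term, where $\Delta q$ has a positive part (that is, $q\notin\mathcal P_3^+$), or the quadratic part $W_3(\phi)$. The latter is controlled by the spectral gap. Harmonic $3$-homogeneous functions orthogonal to $\mathcal P_3$ with nonnegative trace have strictly positive energy, as in the proof of \Cref{lem:epiperimetric projection}, so that
\[
W_3(\phi)\ge c\|\phi\|_{H^1}^2-C\int_{\{x_n=0\}}\phi\,(\Delta q)_- .
\]

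The second step is to split into two regimes according to the size of $(\Delta q)_+$ relative to $\|\phi\|_{L^2(\de B_1)}$. If $\Delta q\le M\|\phi\|\,\delta_{\{x_n=0\}}$, then \Cref{lem:projection epi RHS} with $\sigma=0$ gives a genuine (linear) epiperimetric inequality, which is stronger than the claim since $W_3(c)<0$. Otherwise $q$ is not in the cone $\mathcal P_3^+$, and the mass $m:=\int_{\de B_1\cap\{x_n=0\}}(\Delta q)_+$, interpreted as a measure on the thin space, dominates $\|\phi\|$. Since $p\in\mathcal P_3^+$ is even with $\Delta p\le0$, near $p$ the set where $\Delta q>0$ lies close to the zero set of the density of $\Delta p$ on $\{x_n=0\}$. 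There $c\ge0$ forces $\phi\ge0$ only weakly.

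The third step, the core, is the competitor. We take $\hat\zeta=c+\eta$ with $\eta\in H^1_0(B_1)$ obtained by radially cutting off, near the origin, a correction that pushes $q$ towards its closest point $q^+\in\mathcal P_3^+$. Following Savin--Yu, we first place a small bump of height $h$ localized in a neighborhood of width $\ell$ of $\{\Delta q>0\}\cap\{x_n=0\}$, so that the constraint $\hat\zeta(\cdot,0)\ge0$ is preserved. The gain in energy is of order $m\,h\,\ell^{\,n-1}$, while the cost is of order $h^2\ell^{\,n-3}$ plus a radial cutoff error. Optimizing in $h$ and $\ell$ yields an energy decrease of order $|W_3(c)|^{1+\gamma}$, which is exactly $c_0|W_3(c)|^\gamma W_3(c)$; the exponent $\gamma=\frac{n-1}{n+3}$ emerges from this scaling balance. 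In the remaining case, when $\phi$ dominates, the spectral gap already gives a linear improvement.

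I expect the main obstacle to be the degenerate geometry of $\{\Delta p=0\}$ on the thin space for even cubics in dimension $n\ge3$. This set may be a cone of varying dimension, and the bump construction must be uniform over all such $p$. I would handle this uniformity by compactness of the unit sphere of $\mathcal P_3^+$, together with a quantitative estimate $\|q-q^+\|\lesssim m$, and finally absorb the cross terms $\int\phi\,\Delta\eta$ using $\|c-p\|_{H^1}<\eps_0$.
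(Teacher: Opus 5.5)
Your proposal has a genuine gap: the bump competitor used in your second regime does not give the claimed improvement. For $\eta\in H^1_0(B_1)$ one has $W_3(c+\eta)-W_3(c)=-2\int_{B_1}\eta\,\Delta c+\int_{B_1}|\nabla\eta|^2$. So the gain of a bump is governed by $\Delta c=\Delta q+\Delta\phi$, not by $\Delta q$. The discrepancy $-2\int\eta\,\Delta\phi$ cannot be absorbed using $\|c-p\|_{H^1}<\eps_0$: it must be small relative to a gain of size $\delta^{(n+3)/2}$, where $\delta:=\|(A_q)_-\|$, not relative to a fixed constant.

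This fails concretely for $c=\bar q$, the Savin--Yu replacement of some $q\in\mathcal P_3$ near $p$ with $\delta>0$. Such a $c$ is admissible and $H^1$-close to $p$, and $W_3(\bar q)\le -c\,\delta^{(n+3)/2}<0$ by \eqref{eq:bound weiss modified poly}. It also lies in your second regime: e.g.\ if $\ker A_p$ is one-dimensional, $-W_3(\bar q)$ is essentially of order $\delta^{(n+3)/2}$, so $\|\bar q-q\|_{H^1}^2\lesssim W_3(\bar q-q)=-W_3(\bar q)\ll\delta^2$. Yet $\bar q$ already solves the thin obstacle problem in $\Omega_\sigma$. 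On $\{x_n=0\}$ one has $\Delta\bar q\le0$, with $\Delta\bar q<0$ only where $\bar q=0$, so every admissible bump supported near $\{x_n=0\}$ strictly raises $W_3$. For such $c$ the improvement has to come from the kinks of size $\kappa_q$ that $\bar q$ carries on $L^\pm_\sigma$. To compare that with $|W_3(c)|^{1+\gamma}$ you need an upper bound on $|W_3|$ in terms of the defect, which your sketch never supplies.

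In the paper the bump plays the opposite role. It gives the lower bound $-W_3(\bar q)\ge c\,\delta^{(n+3)/2}$. Combined with the cone-projection bound $-W_3(\bar q)\le C\delta\kappa_q$, this yields $-W_3(\bar q)\le C\kappa_q^{\beta}$ with $\beta=\frac{n+3}{n+1}$ (\Cref{lem:auxiliary properties}(iii)). The exponent $\gamma=\frac{n-1}{n+3}$ is then fixed by $\beta(1+\gamma)=2$, not by a height/width optimization.

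Two further steps fail as stated. First, the ``spectral gap'' $W_3(\phi)\ge c\|\phi\|_{H^1}^2-\dots$ for $\phi\perp\mathcal P_3$ is false. The function $|x|^2x_n$ is $L^2(\de B_1)$-orthogonal to $\mathcal P_3$ and vanishes on $\{x_n=0\}$, yet $W_3(|x|^2x_n)=-\frac{2(n+2)}{n+4}\int_{\de B_1}x_n^2<0$. \Cref{lem:epiperimetric projection} exhibits a competitor; it is not a coercivity statement.

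Second, the same function shows that a fixed-$M$ dichotomy leaves mixed regimes uncovered. Take $c=q+s|x|^2x_n$ with $\delta>Ms$ and, again, $\ker A_p$ one-dimensional ($n\ge3$). Then $W_3(c)=s^2W_3(|x|^2x_n)$, independently of $q$. A bump even in $x_n$ gains exactly what it gains for $q$, essentially $\delta^{(n+3)/2}$. For $s$ close to $\delta/M$ this is far below $|W_3(c)|^{1+\gamma}\approx\delta^{4(n+1)/(n+3)}$, since $\frac{n+3}{2}>\frac{4(n+1)}{n+3}$. Nor does \Cref{lem:projection epi RHS} with $\sigma=0$ apply here.

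The defect of $q$ and the remainder $c-\Pi_3c$ interact through the constraint and must be handled at a common scale. The paper does this by compactness:
\begin{enumerate}
\item it chooses $q_k$ minimizing $\max\{\|c_k-\bar q\|_{H^1},\kappa_q\}$;
\item it uses $|W_3(c_k)|\le C\delta_k^\beta$ to turn failure of the inequality into almost-minimality up to $o(\delta_k^2)$;
\item it shows the blow-up $w_\infty$ lies in $\mathcal P_3$, with $\kappa_{q_k}=o(\delta_k)$ and strong convergence;
\item it contradicts the minimal choice of $q_k$ by replacing it with $q_k+\delta_kw_\infty$.
\end{enumerate}
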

	As a preliminary, we recall a useful construction of Savin and Yu \cite{SavinYuinteger}. We extend the definition of $W_3$ to functions defined only on $\mathbb S^{n-1}$ by setting
	\[
	W_3(g(\theta)) := W_3(r^3g(\theta)) = \frac{1}{n+4}\int_{\mathbb S^{n-1}}|\nabla_\theta g|^2 - \lambda_3 g^2;\quad \lambda_3:=3(n+1).
	\]
	We define $W_3(g;h)$ in a similar way.
	
	Fix $\sigma>0$ and write $L_\sigma^\pm := \{x_n = \pm\sigma\}\cap\mathbb S^{n-1}$, while we denote $\Omega_\sigma := \{|x_n|<\sigma\}\cap\mathbb S^{n-1}$. If $\sigma>0$ is dimensionally small the energy $W_3$ is coercive on $H^1_0(\Omega_\sigma)$. We will denote the equator by $E:= \{x_n=0\}\cap\mathbb S^{n-1}$.
	
	Given $q\in\mathcal P_3$ we denote by $\bar q$ the solution of the thin obstacle problem inside $\Omega_\sigma$ with operator $\Delta_{\mathbb S^{n-1}}+\lambda_3$, equal to $q$ outside $\Omega_\sigma$. This is possible thanks to the choice of $\sigma$. We will identify it with its 3-homogeneous extension. Note that
	\[
	(\Delta_{\mathbb S^{n-1}}+\lambda_3)\bar q = \psi^+_{q}\delta_{L^+_\sigma} + \psi^-_q\delta_{L^-_\sigma} + g\delta_E,
	\]
	where $g\le0$ and $\bar q \cdot g \equiv0$ on $E$, and set
	\[
	\kappa_q := \int_{L^+_\sigma} \psi_q^+.
	\]
	Given $q\in\mathcal P_3$, we associate the unique matrix $A_q\in \mathrm{Sym}(n-1)$ such that $q = h + |x_n| (\tfrac{\tr A}{3}x_n^2-x'\cdot Ax)$, where $h$ is harmonic.
	\begin{lemma}\label{lem:auxiliary properties}
		Given $q$ and the construction above we have the following:
		\begin{enumerate}[label = (\roman*)]
			\item $\bar q\ge q$, $\psi_q^+\equiv\psi_q^-\ge0$ and $c\kappa_q\le\psi_q^\pm\le C\kappa_q$ for some positive dimensional constants $c$, $C$;
			\item Let $p_0\in\mathcal P_3$ be an even solution of \eqref{eq:thin obstacle} with $\|p_0\|_{L^2(\de B_1)}=1$. For each compact subset $K$ of $\{\partial_np_0(\cdot,0)<0\}$ there is $\delta_K>0$ such that if $\|p_0-\bar p\|_{L^2(\de B_1)}<\delta_K$, for some $p\in\mathcal P_3$, then $\bar p\equiv 0$ on $K$;
			\item For $\|q\|_{L^2(\de B_1)}\le1$ we have
			\begin{equation}\label{eq:crucial weiss bounds}
				\|(A_q)_-\|_\infty^{\alpha-1}\le C\kappa_q,\quad -W_3(\bar q)\le C\kappa_q^\beta,
			\end{equation}
			where $C(n)>0$, $\alpha=\frac{n+3}{2}>2$ and $\beta=\frac{n+3}{n+1}\in(1,2)$ 
		\end{enumerate}
	\end{lemma}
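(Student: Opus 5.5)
The three items are of different nature, and I would prove them in order, treating (iii) as the main obstacle. Throughout I use that $q=q^\odd+q^\even$, where $q^\odd$ is harmonic, vanishes on $E$ and is untouched by the construction. So $\bar q=q^\odd+\overline{q^\even}$, and it suffices to work with even $q$; then $\bar q$ is even by uniqueness. By coercivity of $W_3$ on $H^1_0(\Omega_\sigma)$, $\bar q$ is the unique minimizer of $W_3$ among functions equal to $q$ on $\mathbb S^{n-1}\setminus\Omega_\sigma$ and nonnegative on $E$.

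\textbf{Item (i).} To get $\bar q\ge q$, I would test with $(q-\bar q)_+$. On $E$ we have $q-\bar q=-\bar q\le0$, and on $L^\pm_\sigma$ it vanishes, so $(q-\bar q)_+\in H^1_0(\Omega_\sigma\setminus E)$. In $\Omega_\sigma\setminus E$ both $q$ and $\bar q$ solve $(\Delta_{\mathbb S^{n-1}}+\lambda_3)\,\cdot=0$, so coercivity (which persists on the subdomain) forces $(q-\bar q)_+\equiv0$.
\begin{itemize}
\item Since $\bar q-q\ge0$ vanishes on $L^\pm_\sigma$ and $\bar q=q$ outside $\Omega_\sigma$, the densities $\psi^\pm_q$ are the inner normal derivatives of $\bar q-q$ across $L^\pm_\sigma$. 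They are therefore nonnegative, and $\psi^+_q\equiv\psi^-_q$ by evenness.
\item For the comparability $c\kappa_q\le\psi^\pm_q\le C\kappa_q$, I would use that $\bar q-q\ge0$ solves the homogeneous equation in the strip $\{\sigma/2<|x_n|<\sigma\}$ and vanishes on its smooth outer boundary $L^\pm_\sigma$. By the Hopf lemma and the boundary Harnack inequality, its normal derivative on $L^\pm_\sigma$ is comparable, up to dimensional constants, to its value at an interior point. Integrating over $L^+_\sigma$ then gives the claim. If $\bar q-q\not\equiv0$, the strong maximum principle makes it positive in the strip.
\end{itemize}

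\textbf{Item (ii).} I would prove this by compactness. Thin obstacle solutions in $\Omega_\sigma$ are uniformly $C^{1,1/2}$ up to $E$ from each side, on compact subsets away from $L^\pm_\sigma$. Hence $L^2$-closeness of $\bar p$ to $p_0$ upgrades to $C^1$-closeness from each side near $K$. Since $p_0$ is even, the jump of the normal derivative across $E$ on $K$ is $2\partial_np_0(\cdot,0^+)\le-2c_K<0$, and by the $C^1$ convergence $\bar p$ also has a jump $\le -c_K$ there. But at any $x_0\in K$ with $\bar p(x_0)>0$, $\bar p$ solves the equation across $E$ near $x_0$, so its normal derivative has no jump. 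Hence $\bar p\equiv0$ on $K$.

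\textbf{Item (iii).} This is the main obstacle.
\begin{itemize}
\item For the second bound, integrating by parts and using $\bar q\,g\equiv0$ on $E$ together with $\bar q=q$ on $L^\pm_\sigma$ gives
\[
-W_3(\bar q)=\tfrac{1}{n+4}\Bigl(\int_{L^+_\sigma}q\,\psi^+_q+\int_{L^-_\sigma}q\,\psi^-_q\Bigr).
\]
This only bounds $-W_3(\bar q)$ by $\kappa_q$ times $\|q\|$, which gives $\beta=1$ rather than $\beta>1$.
\item To gain the extra power, I would compare $W_3(\bar q)$ with $W_3(q)=0$, using that the modification $\bar q-q$ is small, of size governed by $\kappa_q$. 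Writing $W_3(\bar q)=W_3(q)+2W_3(q;\bar q-q)+W_3(\bar q-q)$, the main term is $2W_3(q;\bar q-q)$. This equals a pairing of $\bar q-q$ with $\Delta q|_E=2(x'\cdot A_qx')\,\delta_E$, which has the favourable sign except on the set where $x'\cdot A_qx'<0$.
\item The negative contribution therefore lives only where $q$ violates $\Delta q\le0$, that is where $A_q$ has negative directions. So both bounds in \eqref{eq:crucial weiss bounds} reduce to a localized scaling analysis near the cone $\{x'\cdot A_qx'<0\}\subset E$.
\item For the first bound I would build a positive localized test function $\varphi$ supported near that cone. The Green pairing $\int(\bar q-q)(\Delta+\lambda_3)\varphi-\varphi(\Delta+\lambda_3)(\bar q-q)$ converts the positive mass $\int_E(x'\cdot A_qx')_-\varphi$ into a multiple of $\kappa_q$.
\item Optimizing the support size of $\varphi$ against $a:=\|(A_q)_-\|_\infty$ (angular width $\sim a^{1/2}$ in the transverse directions, then optimizing the support) should produce the power $a^{\alpha-1}$ with $\alpha=\frac{n+3}2$. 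The same localization, combined with the energy identity above, should give $-W_3(\bar q)\lesssim\kappa_q^{(n+3)/(n+1)}$.
\end{itemize}
I expect that getting the exact exponents right, with constants uniform over the compact set $\|q\|_{L^2(\de B_1)}\le1$, is the delicate part. If the direct scaling fails, I would fall back on a contradiction or compactness argument in the normalized quantities.
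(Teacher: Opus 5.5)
Your arguments for (i) and (ii) are fine. They are self-contained versions of what the paper simply quotes (Lemma 2.2 of \cite{SavinYuinteger} and Lemma B.3 of \cite{FROS24}).

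The gap is in (iii). Neither inequality is actually derived, and the proposed ``localized scaling analysis'' does not contain the mechanism that produces the exponents. Put $\delta:=\|(A_q)_-\|_\infty$.

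\textbf{The second bound.} The gain over your estimate $-W_3(\bar q)\lesssim\kappa_q\|q\|$ comes from replacing $\|q\|$ by $\delta$, and this is a global fact, not a localization.
The paper argues as follows.
\begin{itemize}
\item It writes $-W_3(\bar q)=W_3(\bar q-q)=-W_3(q;\bar q-q)$, using $W_3(\bar q;\bar q-q)=0$.
\item It subtracts the projection $p$ of $q$ onto $\mathcal P_3^+$, so that $\|q-p\|\lesssim\delta$.
\item It drops $-W_3(p;\bar q-q)\le 0$, since $\Delta p\le0$ and $\bar q\ge q$.
\item It bounds the remainder by $\int_{L^\pm_\sigma}\psi_q^\pm(q-p)\le C\delta\kappa_q$, since $q-p=0$ on $E$.
\end{itemize}
Equivalently, in your own expansion, $W_3(\bar q-q)\ge 0$ gives $-W_3(\bar q)\le -2W_3(q;\bar q-q)=c_n\int_E\bar q\,(-x'\cdot A_qx')\le C\delta\int_E\bar q$. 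Pairing $\bar q-q$ with $|x_n|(|x'|^2-\tfrac{n-1}{3}x_n^2)$ then gives $\int_E\bar q\le C\kappa_q$. Note also that $W_3(\bar q-q)$ is not a lower-order term: it equals $-W_3(q;\bar q-q)$ exactly.

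\textbf{The first bound.} What is missing is a lower bound on the energy drop, obtained from the minimality of $\bar q$.
\begin{itemize}
\item Take $e_1$ an eigenvector of $A_q$ with eigenvalue $-\delta$. It is a critical point of $x'\cdot A_qx'$ on $E$, so $x'\cdot A_qx'\le -c\delta$ on the cap $B_{c\sqrt\delta}(e_1)\cap E$.
\item Testing minimality against the admissible competitor $q+t\,\delta^{3/2}\varphi((\theta-e_1)/c\sqrt\delta)$ gives $W_3(\bar q)\le -c\,\delta^{(n+3)/2}$.
\item Combined with $-W_3(\bar q)\le C\delta\kappa_q$, this yields $\delta^{(n+1)/2}\le C\kappa_q$.
\item The second bound is then a consequence of the first: $C\delta\kappa_q\le C\kappa_q^{(n+3)/(n+1)}$.
\end{itemize}

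Your duality route is not hopeless, but its key step (``the Green pairing converts the positive mass into a multiple of $\kappa_q$'') is exactly the unproved estimate.
\begin{itemize}
\item Heuristically, with one slightly negative eigenvalue, $\bar q-q$ is of size $\delta^{3/2}$ on the cap while $\kappa_q\approx\delta^{(n+1)/2}$. So there is no pointwise bound $\bar q-q\lesssim\kappa_q$ near $E$ to plug into $\int(\bar q-q)(\Delta_{\mathbb S^{n-1}}+\lambda_3)\varphi$.
\item You would need the $L^1$ bound $\int_E\bar q\lesssim\kappa_q$, together with a Poisson-kernel estimate such as $\int_{B_{\sqrt\delta}(e_1)}(\bar q-q)\lesssim\sqrt\delta\int_E\bar q$.
\item These would control the remainder by $\delta^{-1/2}\kappa_q$. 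Against the mass $\gtrsim\delta\cdot\delta^{(n-2)/2}$, this produces the exponent $(n+1)/2$.
\end{itemize}
As written, the exponents $\alpha-1$ and $\beta$ are asserted rather than derived.
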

	\begin{proof}
		Notice that for any $q$ cubic harmonic polynomial vanishing on $\{x_n=0\}$ we have $\overline{p+q} = \bar p+q$. So, (i) is \cite[Lemma 2.2]{SavinYuinteger}, while (ii) is \cite[Lemma B.3]{FROS24}.
		
		To show (iii), suppose that $\delta:= \|A_-\|_\infty= -A_{11}>0$. Then $e_1$ is a critical point for the quadratic form $x'\cdot A x'$, so
		\[
		x'\cdot A x'\le -c\delta\quad\text{in }B_{c\sqrt\delta}(e_1)\cap E.
		\]
		Fix now a cutoff function $0\le\varphi\le1$, $\varphi\in C^\infty_c(B_1)$, $\varphi\equiv 1$ on $B_{1/2}$. Given $t>0$ sufficiently small, consider
		\[
		\tilde\varphi(\theta) :=  \delta^{3/2}\varphi((\theta-e_1)/c\sqrt\delta).
		\]
		Note that
		\[
		W_3(q+t\tilde\varphi) = 2tW_3(q;\tilde\varphi) + t^2 W_3(\tilde\varphi).
		\]
		As $\tilde\varphi$ is supported inside $\Omega_\sigma$, we have
		\[
		W_3(q;\tilde\varphi) = -\int_{E}\partial_nq^\even \tilde\varphi = \int_{E\cap B_{c\sqrt\delta}(e_1)}x'\cdot Ax' \tilde\varphi\sim -\delta^{(n-2)/2+1+3/2},
		\]
		on the other hand $|\nabla\tilde\varphi|\lesssim \delta^{-1/2+3/2}$, so
		\[
		W_3(\tilde\varphi) \lesssim \delta^{(n-1)/2+2}.
		\]
		We deduce
		\begin{equation}\label{eq:bound weiss modified poly}
			W_3(\bar q) \le W_3(q+t\tilde\varphi) = 2tW_3(q;\tilde\varphi) + t^2 W_3(\tilde\varphi) \le -c\delta^{(n+3)/2},
		\end{equation}
		provided $t>0$ is universally small.
		
		Let now $p$ be the projection of $q$ onto $\mathcal P_3^+$, so that $\|p-q\|_{L^2(\de B_1)} \sim \delta$. Note that, since $\bar q$ solves the thin obstacle problem and $q\equiv0$ on $E$, we have $W_3(\bar q;\bar q-q)=0$. Then
		\[
		-W_3(\bar q) = W_3(\bar q-q) = -W_3(q;\bar q-q) = - W_3(p;\bar q-q) - W_3(q-p;\bar q-q).
		\]
		As $\Delta p\le0$ and $\bar q-q\ge0$ we have $-W_3(p;\bar q-q) \le0$, while using $q-p\equiv0$ on $E$ yields
		\[
		-W_3(q-p;\bar q-q) = \int_{L^\pm_\sigma} \psi_q^\pm (q-p) \le C\delta \kappa_q.
		\]
		Combining this with \eqref{eq:bound weiss modified poly} gives \eqref{eq:crucial weiss bounds}.
	\end{proof}
	
	\begin{proof}[Proof of \Cref{lem:log epi}]
		Choose $\gamma\in(0,1)$ such that $\beta(1+\gamma)=2$, where $\beta$ is given by \eqref{eq:crucial weiss bounds}. Suppose by contradiction that there are even solutions of the thin obstacle problem $p_k\in\mathcal P_3$ with $\|p_k\|_{L^2(\de B_1)}=1$ and $c_k$ 3-homogeneous with
		\[
		c_k(\cdot,0)\ge0\quad\text{and}\quad\|c_k-p_k\|_{H^1(B_1)}<1/k
		\]
		but
		\begin{equation}\label{eq:contradiction log epi}
			W_3(c_k)(1+\tfrac1k |W_3(c_k)|^\gamma)\le W_3(v)\quad\text{for each }v\in c_k + H^1_0(B_1), v(\cdot,0)\ge0.
		\end{equation}
		Note in particular that
		\[
		W_3(c_k)\le0.
		\]
		
		Let
		\[
		q_k := \mathrm{arg}\min \{\max\{\|c_k-\bar q\|_{H^1(B_1)}, \kappa_q\},\, q\in\mathcal P_3\},\quad \delta_k:=\max\{\|c_k-\bar q_k\|_{H^1(B_1)},\kappa_{q_k}\}\to0,
		\]
		and consider the functions
		\[
		w_k := \frac{c_k-\bar q_k}{\delta_k}.
		\]
		As $w_k$ is bounded in $H^1(B_1)$, up to a subsequence
		\[
		w_k\weak w_\infty\quad\text{weakly in }H^1(B_1).
		\]
		
		We prove that $w_\infty\in H^1(\de B_1)$ satisfies
		\begin{equation}\label{eq:ELE log epi}
			w_\infty \ge0\text{ on }\{x_n=0\}\qquad\text{and}\qquad (\Delta_{\mathbb S^{n-1}} +\lambda_3)w_\infty = - \mu_\infty\quad\text{on }\mathbb S^{n-1}\setminus E.
		\end{equation}
		where $\mu_\infty\ge0$ is supported on $L_\sigma^\pm$ and $\|\mu_\infty\| = 2\lim_k \frac{\kappa_{q_k}}{\delta_k}$.
		
		For the first property, it suffices to note that $p_k,q_k\to p_\infty$, which also implies $\bar q_k\to p_\infty\in \mathcal P_3^+$ with $\|p_\infty\|=1$. So, \Cref{lem:auxiliary properties} implies, for any compact subset $K$ of $\{\partial_np_\infty<0\}$,
		\[
		\bar q_k=0\quad \text{on }K\qquad\text{for }k\text{ large enough}.
		\]
		Thus, inside $\{\partial_n p_\infty<0\}$ we have $w_\infty\ge0$. Since $\mathcal H^{n-2}(\{\partial_np_\infty=0\})=0$, the claim follows.
		
		For the second property, note that
		\[
		W_3(c_k) = W_3(\bar q_k) + 2W_3(\bar q_k;c_k-\bar q_k) + W_3(c_k-\bar q_k).
		\]
		Since $|W_3(c_k-\bar q_k)| \le C\|c_k-\bar q_k\|_{H^1(B_1)}^2$ and
		\[
		W_3(\bar q_k;c_k-\bar q_k) = -c_n\int_{B_1} \Delta \bar q_k (c_k-\bar q_k) \ge -C\kappa_{q_k}\|c_k-\bar q_k\|_{H^1(B_1)},
		\]
		using \Cref{lem:auxiliary properties} we find, for $k$ large enough (recall $\beta<2$)
		\[
		|W_3(c_k)| = -W_3(c_k) \le -W_3(\bar q_k) + C\delta_k^2\le C\delta_k^{\beta}.
		\]
		As $\gamma\in(0,1)$ was chosen so that $\beta (1+\gamma) = 2$ (possible as $\beta>1$), \eqref{eq:contradiction log epi} gives
		\begin{equation}\label{eq:almost minimality log epi}
			W_3(c_k) \le W_3(v) + o(\delta_k^2)\quad\text{for each }v\in c_k+H^1_0(B_1)\cap\{v(\cdot,0)\ge0\}.
		\end{equation}
		Fix $\phi\in H^1_0(B_1\cap\{x_n>0\})$ and test \eqref{eq:almost minimality log epi} with $c_k + \delta_k\phi = \bar q_k + \delta_k(w_k + \phi)$ to find
		\[
		W_3(w_k) + \tfrac2{\delta_k}W_3(w_k;\bar q_k) \le W_3(w_k+\phi) + \tfrac2{\delta_k}W_3(w_k+\phi;\bar q_k) + o(1).
		\]
		Integrating by parts and recalling that $\bar q_k$ is 3-homogeneous and $\phi = 0$ on $E$ leads to
		\[
		W_3(w_k) - \int_{L_\sigma^\pm}\tfrac{2}{\delta_k}\psi_k^\pm w_k \le W_3(w_k+\phi) -\int_{L^\pm_\sigma}\tfrac{2}{\delta_k}\psi^\pm_k(w_k+\phi) + o(1)
		\]
		Since $\|\psi_k\|_{L^\infty}\le C\delta_k$ by \Cref{lem:auxiliary properties}, up to a subsequence we have $\psi_k^\pm/\delta_k\weak^* \psi_\infty^\pm$ in $L^\infty(L_\sigma^\pm)$. Letting $k\to+\infty$ and defining $\mu_\infty = \psi^+_\infty \delta_{L^+_\sigma} + \psi_\infty^-\delta_{L^-_\sigma}$ one finds
		\[
		\int_{B_1} |\nabla w_\infty|^2 - 2\int_{L_\sigma^\pm}  w_\infty \,d\mu_\infty \le \int_{B_1}|\nabla (w_\infty+\phi)|^2- 2\int_{L_\sigma^\pm} (w_\infty+\phi)\,d\mu_\infty.
		\]
		for any $\phi\in H^1_0(B_1\setminus\{x_n=0\})$, where we used $w_k(\cdot,\pm\sigma)\to w_\infty(\cdot,\pm\sigma)$ strongly in $L^1(L^\pm_\sigma)$ by compactness of the trace. As $w_\infty$ is $3$-homogeneous, \eqref{eq:ELE log epi} follows.
		
		Now testing \eqref{eq:ELE log epi} with $P(x',x_n) = |x_n|(|x'|^2-\tfrac{n-1}3x_n^2)$ yields
		\[
		\int_{L^\pm_\sigma}P d\mu_\infty = -\int_{\{x_n=0\}} w_\infty\partial_n P.
		\]
		Recalling $\mu_\infty\ge0,w_\infty\ge0$, the two terms have different signs. Since $\partial_nP(\cdot,0)>0$ and $P(\cdot,\sigma)>0$, this forces
		\[
		w_\infty\equiv0\quad\text{on }\{x_n=0\}\qquad\text{and}\qquad \mu_\infty \equiv0,
		\]
		so in particular
		\begin{equation}\label{eq:ELE limit}
			\kappa_{q_k} = o(\delta_k)\quad \text{and}\quad w_\infty\in \mathcal P_3.
		\end{equation}
		
		We now prove strong $H^1$ convergence along a subsequence satisfying \eqref{eq:ELE limit}.
		To do so, we test \eqref{eq:almost minimality log epi} with $v_k:=\bar q_k+ \delta_k h_k$ finding, after some simplifications,
		\begin{equation}
			W_3(w_k) \le W_3(h_k)+\tfrac{2}{\delta_k} W_3(\bar q_k; h_k-w_k)+o(1),\label{eq:dunno}
		\end{equation}
		provided $h_k\in H^1_0(B_1)+ w_k$ satisfies $h_k\ge -\bar q_k/\delta_k$ on $\{x_n=0\}$. Since $w_\infty$ vanishes on the thin space, we may choose $h_k$ as
		\[
		h_k:= \xi w_\infty + (1-\xi)w_k,\qquad h_k(\cdot,0)\ge (1-\xi)w_k\ge -(1-\xi)\bar q_k/\delta_k \ge - \bar q_k/\delta_k.
		\]
		where $\xi:= (\min\{1,2(1-|x|)\})_+$ is a radial bump function. With this choice we can bound the mixed term using \eqref{eq:ELE limit}:
		\begin{align*}
			W_3(\bar q_k; h_k-w_k)=\int_{B_1} (w_k-w_\infty)\lap \bar q_k \xi\, =\frac{1}{\delta_k}\int_{\{x_n=0\}}c_k \lap\bar q_k \xi + O(\kappa_{q_k})\le o(\delta_k).
		\end{align*}
		Now $w_k$ and $h_k$ agree on $\de B_1$, so putting things together \eqref{eq:dunno} becomes
		\[
		D(w_k) \le D(\xi w_\infty + (1-\xi)w_k) +o(1),
		\]
		which splitting $w_k=\xi w_k+(1-\xi)w_k$ rearranges into:
		\[
		D(\xi w_k) + 2D(\xi w_k; (1-\xi)w_k) \le D(\xi w_\infty) + 2D(\xi w_\infty; (1-\xi)w_k) + o(1).
		\]
		Taking the upper limit and using weak convergence:
		\[
		\limsup_k \int_{B_1} |\nabla (\xi w_k)|^2 + 2\nabla(\xi w_k)\cdot\nabla((1-\xi)w_k) \le \int_{B_1} |\nabla(\xi w_\infty)|^2 +2\nabla(\xi w_\infty)\cdot \nabla ((1-\xi) w_\infty).
		\]
		Note that weak $H^1$ convergence implies
		\[
		\lim_k\int_{B_1}2(1-\xi)w_k\nabla w_k\cdot\nabla\xi -w_k^2|\nabla\xi|^2 = \int_{B_1}2(1-\xi)w_\infty\nabla w_\infty\cdot\nabla\xi -w_\infty^2|\nabla\xi|^2,
		\]
		so subtracting this gives
		\[
		\limsup_k\int_{B_1} (1-(1-\xi)^2)|\nabla w_k|^2 \le \int_{B_1}(1-(1-\xi)^2) |\nabla w_\infty|^2.
		\]
		As $w_k$ are 3-homogeneous and $\xi$ is radial, we can rewrite it as
		\[
		\limsup_k\int_{\de B_1} |\nabla_\theta w_k|^2 + 9w_k^2 \le \int_{\de B_1} |\nabla_\theta w_\infty|^2 + 9w_\infty^2.
		\]
		As the second term converges due to strong $L^2$ convergence, we conclude.
		
		We now conclude. Set
		\[
		q_k':=q_k+\delta_k w_\infty
		\]
		and note that, since $w_\infty\not\equiv0$ thanks to strong $H^1$ convergence and \eqref{eq:ELE limit}, we contradict the minimality of $\bar q_k$ by proving
		\begin{equation}\label{eq:replacement linearization}
			\|\bar q_k'-\bar q_k-\delta_k w_\infty\|_{H^1(B_1)}=o(\delta_k),
			\qquad
			\kappa_{q_k'}=o(\delta_k).
		\end{equation}
		To show this we argue similarly to \cite[Lemma 3.5]{SavinYuinteger}. Using the free boundary condition for $\bar q_k,\bar q_k'$ and $w_\infty(\cdot,0)\equiv0$ we compute
		\begin{equation}\label{eq:exploit coercivity}\begin{split}
				W_3(\bar q_k'-\bar q_k-\delta_k w_\infty)
				&= W_3(\bar q_k'-\bar q_k-\delta_k w_\infty;\bar q_k') - W_3(\bar q_k'-\bar q_k-\delta_k w_\infty;\bar q_k)\\
				&\qquad-\delta_k W_3(w_\infty;\bar q_k'-\bar q_k-\delta_k w_\infty)\\
				&\le -\delta_k W_3(w_\infty;\bar q_k'-\bar q_k-\delta_k w_\infty).
		\end{split}\end{equation}
		Since $\bar q_k'-\bar q_k-\delta_k w_\infty\in H^1_0(\Omega_\sigma)$ and $W_3$ is coercive there,
		setting
		\[
		\eta_k:=\frac{\bar q_k'-\bar q_k-\delta_k w_\infty}{\delta_k}
		\]
		we deduce $\|\eta_k\|_{H^1}\le C$, so up to a subsequence $\eta_k\weak \eta_\infty$ weakly in $H^1(\Omega_\sigma)$.
		By construction $\eta_\infty$ solves $(\Delta_{\mathbb S^{n-1}}+\lambda_3)\eta_\infty=0$ on $\Omega_\sigma\setminus E$; on the other hand,
		as $\bar q_k,\bar q_k'\to p_\infty$,
		by \Cref{lem:auxiliary properties} $\eta_\infty(\cdot,0)=0$. Therefore, $W_3(\eta_\infty) = -\int_{\Omega_\sigma} \eta_\infty (\Delta_{\mathbb S^{n-1}}+\lambda_3)\eta_\infty = 0$, and coercivity of $W_3$ implies
		$\eta_\infty\equiv0$. By \eqref{eq:exploit coercivity},
		\[
		c\|\eta_k\|_{H^1}^2\le W_3(\eta_k)\le-W_3(w_\infty,\eta_k)=o(1),
		\]
		which is the first relation in \eqref{eq:replacement linearization}.
		
		Finally, recall that $\eta_k$ solves $(\Delta_{\mathbb S^{n-1}}+\lambda_3)\eta_k = (\psi_{q_k'}^\pm-\psi^\pm_{q_k})\delta_{\pm\sigma}(x_n)$ on $\mathbb S^{n-1}\setminus E$. Testing with a cutoff function $\varphi$ satisfying $\varphi(\cdot,\pm\sigma)\equiv1$ and supported away from $E$ gives
		\[
		|\kappa_{q_k'}-\kappa_{q_k}| = \left|\int_{L^\pm_\sigma}(\psi^\pm_{q_k'}-\psi^\pm_{q_k})\varphi\right|
		\le C\|\bar q_k'-\bar q_k-\delta_k w_\infty\|_{H^1(B_1)}
		=o(\delta_k).
		\]
		Since $\kappa_{q_k}=o(\delta_k)$, this proves
		\eqref{eq:replacement linearization} and the lemma follows.
	\end{proof}
	
	We conclude the auxiliary Lemmas by showing that we may always apply, with universal parameters, either \Cref{lem:log epi} or \Cref{lem:projection epi RHS}.
	\begin{lemma}\label{lem:projection dichotomy}
		For every $\bar\eps>0$ there are universal constants $\bar M =\bar M(n,\bar\eps)$, $\bar r=\bar r(n,\bar\eps)$ such that for any $r<\bar r$, if $u$ solves \eqref{eq:obstacle} with $f$ normalized and $0\in\de\{u>0\}$, then writing $w:= u-p_\nu$:
		\begin{enumerate}[label=(\alph*)]
			\item either $\Delta\Pi_3 w_r\le \bar M (\|w_r-\Pi_3w_r\|_{L^2(\de B_1)}+r^{3+\alpha}) \delta_0(x_n)$ in $B_1$,
			\item or there are $p_3^{\mathrm{even}}\in\mathcal P_3^{\mathrm{even}}$ solving the thin obstacle problem and $p_3^{\mathrm{odd}}\in\mathcal P_3^{\mathrm{odd}}$ such that
			\[
			\|w_r-p_3^{\mathrm{odd}}-p_3^{\mathrm{even}}\|_{L^2(\de B_1)}\le\bar\eps\|p_3^{\mathrm{even}}\|_{L^2(\de B_1)}.
			\]
		\end{enumerate}
		If, in addition $W_3(z_r)\leq0$, alternative (b) implies
		\begin{equation}\label{eq:projection control H^1}
			\|z_r-p_3^{\odd}-p_3^{\even}\|_{H^1(B_1)} \le C\bar\eps\|p_3^{\even}\|_{L^2(\de B_1)},
		\end{equation}
		where $z_r$ is the $3$-homogeneous extension of $w_r|_{\de B_1}$.
	\end{lemma}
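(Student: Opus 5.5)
The dichotomy will come from a compactness argument by contradiction, generalizing \Cref{lem:control projection P3}. The even part of $\Pi_3 w_r$ is the only part with nonzero Laplacian. Writing $\Pi_3^\even w_r = |x_n|(\tfrac{\tr A}{3}x_n^2 - x'\cdot Ax')$ for $A\in\mathrm{Sym}(n-1)$, we have $\Delta\Pi_3 w_r = -c\,(x'\cdot Ax')\,\delta_0(x_n)$. Alternative (a) therefore fails only when $\|(A)_-\|$ is much larger than $\|w_r-\Pi_3w_r\|_{L^2(\de B_1)}+r^{3+\alpha}$.

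Fix $\bar\eps$ and suppose there are solutions $u_k$, radii $r_k\to0$ (or merely $r_k<\bar r$ with $\bar r$ to be sent to $0$) and directions $\nu_k$ for which both (a) with $\bar M=k$ and (b) fail. Write $w_k:=(w_{r_k})$, $q_k:=\Pi_3 w_k$, $\delta_k:=\|w_k-q_k\|_{L^2(\de B_1)}+r_k^{3+\alpha}$. The failure of (a) gives $\|(A_{q_k})_-\|\ge k\delta_k$, so in particular $\|q_k^\even\|\ge k\delta_k$. Normalize by $N_k:=\|q_k^\even\|_{L^2(\de B_1)}$, dropping the odd part (which has zero Laplacian) exactly as in \Cref{lem:control projection P3}. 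Set
\[
v_k:=\frac{w_k-q_k^\odd}{N_k}=\hat q_k+\frac{w_k-q_k}{N_k},\qquad \hat q_k:=\frac{q_k^\even}{N_k}.
\]
Then $\|v_k-\hat q_k\|_{L^2(\de B_1)}\le 1/k$ and $\hat q_k\to \hat q_\infty\in\mathcal P_3^\even$ with unit norm, up to subsequences.

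Next, $v_k$ is almost superharmonic: by \eqref{eq:pde u-p_nu}, $\Delta v_k\le Cr_k^{3+\alpha}/N_k\le C/k$. Moreover $v_k(0)=0$, and $v_k\ge -C/k$ on $\{x_n=0\}$ after rescaling, since $w_r=u_r\ge0$ there and $q^\odd_k$ vanishes there. Comparison with harmonic replacements, plus the mean value property, gives locally $v_k\ge H_k - C/k$, where $H_k$ is the harmonic replacement of $\hat q_k$ plus a term tending to zero uniformly in $B_{1/2}$. Passing to the limit, the lower semicontinuous limit $v_\infty$ satisfies $v_\infty\ge \hat q_\infty$ type bounds and is a solution of the thin obstacle problem.

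The main obstacle is to show $\hat q_\infty$ is a thin obstacle solution, i.e.\ $A_{\hat q_\infty}\ge0$; this contradicts the failure of (b) with $p_3^\even:=q_k^\even$ projected slightly, for $k$ large. I would argue that if $x'\cdot A_\infty x'<0$ at some $e\in E$, then near $e$ the function $\hat q_\infty$ has $\partial_n$-jump of the wrong sign, i.e.\ $\Delta\hat q_\infty>0$ there as a measure. A localized comparison then contradicts $v_k\ge -C/k$ on the thin space together with $\Delta v_k\le C/k$ and $v_k\to\hat q_\infty$ in $L^2(\de B_1)$. To make this work, I would use the Lipschitz bound \Cref{lem:lip estimate rhs} together with doubling to upgrade the convergence to local uniform convergence, as in \Cref{lem:control projection P3}. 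Once $A_\infty\ge0$ is known, take $p_3^\even:=N_k\Pi_{\mathcal P_3^+}\hat q_k$ and $p_3^\odd:=q^\odd_k$; then $\|w_k-p_3^\odd-p_3^\even\|\le \delta_k + N_k o(1)\le \bar\eps N_k$, so (b) holds, a contradiction.

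For \eqref{eq:projection control H^1}, the 3-homogeneous extension gives $\|z_r-P\|_{H^1(B_1)}^2\lesssim W_3(z_r-P)+C\|z_r-P\|^2_{L^2(\de B_1)}$, with $P:=p_3^\odd+p_3^\even$. Expanding $W_3(z_r)=W_3(z_r-P)+2W_3(P;z_r-P)$ via \eqref{eq:weiss on P3+something}, and using $W_3(z_r)\le0$, $W_3(P)=0$, the cross term is $-2\int (z_r-P)\Delta p_3^\even$. On the thin space, $z_r\ge0$ and $\Delta p_3^\even\le0$, so this is nonpositive up to the trace error. It is bounded by $C\|p_3^\even\|\,\|z_r-P\|_{L^1(B_1')}$, and absorbing via the trace inequality yields the claim.
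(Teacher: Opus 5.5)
Your contradiction set-up, the normalization by $N_k=\|q_k^\even\|_{L^2(\de B_1)}$, and the final step are fine. The final step is: if $\hat q_\infty\in\mathcal P_3^+$, then $p_3^\even:=N_k\Pi_{\mathcal P_3^+}\hat q_k$ and $p_3^\odd:=q_k^\odd$ give (b). The $H^1$ part is also essentially the paper's argument, with one correction: there is no ``trace error''. Since $P\equiv0$ and $z_r\ge0$ on $\{x_n=0\}$, you have exactly $\int_{B_1}(z_r-P)\Delta p_3^\even\le0$. Hence $W_3(z_r-P)\le W_3(z_r)\le0$, and homogeneity gives the bound. If you instead bounded the cross term by $C\|p_3^\even\|\,\|z_r-P\|_{L^1(B_1')}$ and absorbed, you would only get $\|z_r-P\|_{H^1(B_1)}\le C\|p_3^\even\|$, losing the factor $\bar\eps$.

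The genuine gap is the key step $A_{\hat q_\infty}\ge0$.

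\begin{itemize}
\item A localized comparison near an equatorial point $e$ with $e\cdot A_\infty e<0$ cannot produce a contradiction. You only know $v_k\to\hat q_\infty$ on $\de B_1$, not inside. The properties you invoke (almost superharmonic, nonnegative on $\{x_n=0\}$, trace close to $\hat q_\infty$) are all satisfied by $Q_\infty$ itself, the thin obstacle solution with boundary datum $\hat q_\infty$. The only extra information is $v_k(0)=0$, and it must be used at the origin.
\item Harmonic replacements are not enough when $n\ge3$. The harmonic extension of $\hat q_\infty$ equals $\fint_{\de B_1}\hat q_\infty=-c_n\tr A_\infty$ at $0$, with $c_n>0$. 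This is negative whenever $\tr A_\infty>0$, even if $A_\infty$ has a negative eigenvalue. In \Cref{lem:control projection P3} $A$ is a scalar, which is why harmonic replacement suffices there.
\item The Lipschitz/doubling upgrade is unavailable and would not help. The lemma has no frequency hypothesis, so \Cref{lem:doubling rhs} does not apply. \Cref{lem:lip estimate rhs} controls $\nabla w_r$ only at the scale $\|w_r\|$, which may be dominated by $q_k^\odd$ rather than $N_k$. Compactness would also not identify the interior limit with the homogeneous $\hat q_\infty$; that limit is at best a supersolution, not a solution, of \eqref{eq:thin obstacle}.
\end{itemize}

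The missing idea, used in the paper, is to compare with the thin-obstacle replacement. The function $v_k+\tfrac{C}{2nk}(1-|x|^2)$ is a supersolution of \eqref{eq:thin obstacle}. So it lies above the thin obstacle solution with the same boundary datum $g_k$. That solution is in turn at least $Q_\infty$ minus the harmonic extension of $(g_k-\hat q_\infty)_-$, which tends to $0$ at the origin. This yields $0=v_k(0)\ge Q_\infty(0)-o(1)$.

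Hopf's lemma at the origin then closes the argument. If $\hat q_\infty\notin\mathcal P_3^+$, then $Q_\infty-\hat q_\infty\ge0$ is harmonic and nontrivial in $B_1\cap\{x_n>0\}$. So $Q_\infty(0)=0$ would force $\de_n^+Q_\infty(0)>\de_n\hat q_\infty(0)=0$. This contradicts $\de_n^+Q_\infty\le0$, which holds for an even supersolution. Hence $Q_\infty(0)>0$, which is impossible, and therefore $\hat q_\infty\in\mathcal P_3^+$.
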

	\begin{proof}
		Note that, assuming $W_3(z_r)\le0$, one finds
		\[
		W_3(z_r-p_3^\odd-p_3^\even) = W_3(z_r) + W_3(p_3^\odd+p_3^\even) -2W_3(z_r;p_3^\even+p_3^\odd) \le W_3(z_r)\le0,
		\]
		where we used $W_3(q)=0$ for any $q\in\mathcal P_3$ and
		\[
		-W_3(z_r;p_3^\even+p_3^\odd) = \int_{\{x_n=0\}\cap B_1} z_r\Delta p_3^\even \le0 
		\]
		since $z_r\ge0$ on $\{x_n=0\}$ while $\Delta p_3^\even\le0$. So, \eqref{eq:projection control H^1} follows as $z_r$ is 3-homogeneous.
		
		We now prove the first dichotomy by contradiction. Suppose there are $r_k\to0$ and $u_k$ solving \eqref{eq:obstacle} with normalized $f_k$ but such that, writing
		\[
		w_k(x):=(u_k-p_{\nu,k})(r_kx),
		\]
		we have
		\begin{equation}\label{eq:super big projection RHS}
			\|(A_{\Pi_3 w_k})_-\|_\infty\ge k(\|\Pi_3 w_k -w_k\|_{L^2(\de B_1)}+r_k^{3+\alpha})
		\end{equation}
		and, for any $p_3^\odd\in\mathcal P_3^\odd$ and any $p_3^\even\in\mathcal P_3^\even$ solving the thin obstacle problem,
		\begin{equation}\label{eq:distant from signorini cone}
			\|w_k-p_3^\even-p_3^\odd\|_{L^2(\de B_1)}>\bar\eps\|p_3^\even\|_{L^2(\de B_1)}.
		\end{equation}
		Writing
		\[
		q_k^\odd := \Pi_3^\odd w_k,\quad\delta_k:=\|w_k-q_k^\odd\|_{L^2(\de B_1)}+r_k^{3+\alpha}
		\]
		and setting
		\[
		v_k := \frac{w_k-q_k}{\delta_k}
		\]
		then we can write uniquely
		\[
		v_k = q_k^\even + v_k^\perp
		\]
		where $q_k^\even\in \mathcal P_3^\even$ and
		\[
		v_k^\perp \perp_{L^2(\de B_1)}\mathcal P_3 \quad \text{and}\quad \|v_k^\perp\|_{L^2(\de B_1)}^2+\|q_k^\even\|_{L^2(\de B_1)}^2=1-r_k^{6+2\alpha}/\delta_k^2.
		\]
		The first inequality in \eqref{eq:super big projection RHS} gives
		\[
		\|(A_{q_k^\even})_-\|_\infty\ge k(\|v_k^\perp\|_{L^2(\de B_1)}+r_k^{3+\alpha}/\delta_k)
		\]
		Since $\|(A_{q_k^\even})_-\|_\infty\le C\|q_k^\even\|_{L^2(\de B_1)}\le C$, this implies
		\[
		r_k^{3+\alpha} = o(\delta_k)\quad\text{and}\quad v_k^\perp\to0\quad\text{in }L^2(\de B_1),
		\]
		so in particular $\|q_k^\even\|_{L^2(\de B_1)} = 1 - o(1)$.
		On the other hand,  \eqref{eq:distant from signorini cone} applied with $p_3^\odd = q_k^\odd$ and $p_3^\even = \delta_k q_3^\even$ gives, for any 3-homogeneous even solution of the thin obstacle problem $q_3^\even$,
		\[
		\|v_k-  q_3^\even\|_{L^2(\de B_1)}>\bar\eps\|q^\even_3\|_{L^2(\de B_1)}.
		\]
		Notice that $q_k^\even$ are given and $q_3^\even$ are arbitrary. 
		As $\|q_k^\even\|_{L^2(\de B_1)} \to 1$, this implies for $k$ large
		\begin{equation}
			\dist(q_k^\even,\mathcal P_3^+)\ge\tfrac12 \bar\eps>0.\label{eq:no top}
		\end{equation}
		Let $q_\infty^\even \not\in\mathcal P_3^{\even,+}$ be an accumulation point of $q_k^\even$, and denote $Q_k^\even$ (resp., $Q_\infty^\even$) the solution of the Thin Obstacle problem in $B_1$ with boundary data $q_k^\even$ (resp., $q_\infty^\even$) and let $h_k$ be the harmonic replacement of $v_k^\perp$. By elliptic regularization,
		\[
		v_k^\perp \to 0\text{ in $L^2(\de B_1)$ implies that }h_k\to 0\text{ uniformly  in }\overline{B_{1/2}}.
		\]
		We claim that $Q_\infty^\even(0)>0$: indeed, note that $Q_\infty^\even-q_\infty^\even\ge0$ is harmonic on $B_1\cap\{x_n>0\}$ and $Q_\infty^\even\not\equiv q_\infty^\even$, as $q_\infty^\even$ does not solve the Thin Obstacle problem by \eqref{eq:no top}. Thus, if $Q_\infty^\even(0) = q_\infty^\even(0)=0$ then Hopf lemma would imply $\partial_n(Q_\infty^\even-q_\infty^\even)(0)>0$. As $\partial_nq_\infty^\even(0) = 0$ while $\partial_nQ_\infty^\even(0)\le0$ by the free boundary condition, this is a contradiction.
		
		Now $v_k$ is nonnegative on $\{x_n=0\}$ and (almost) superharmonic since \eqref{eq:pde u-p_nu} gives
		\[
		\Delta v_k = \frac{1}{\delta_k}\Delta w_k \le O(r_k^{3+\alpha}/\delta_k) \le o(1),
		\]
		so by comparison we find 
		$$0 = v_k(0) \ge Q_k(0)+h_k(0) - o(1)\ge Q_\infty^\even(0)-o(1)>0$$
		for $k$ large, a contradiction.
	\end{proof}

	We can finally prove our epiperimetric inequality:

	\begin{proof}[Proof of \Cref{lem:epi ND}]
		Thanks to the Lipschitz estimates of \eqref{eq:lip estimates} and the doubling estimate of \Cref{lem:doubling rhs}, the lower bound follows as in the proof of \Cref{lem:apply projection epiperimetric}.
		
		To prove the upper bound on the energy, by minimality we have $E_3^\tau(r,\zeta_r)\le E_3^\tau(r,\tilde\zeta_r)$ for any $\tilde\zeta_r = z_r$ on $\de B_1$ and $\tilde\zeta_r(\cdot,0)\ge0$. We may assume $E_3^\tau(r,z_r)<0$, as otherwise we may take $\tilde\zeta_r = z_r$. We will use a suitable $\tilde\zeta_r$, given either by \Cref{lem:log epi} or \Cref{lem:projection epi RHS}. Thanks to the identity
		\begin{equation}\label{eq:W E identity ND RHS}
			W_3(g) = (H(r,w)+r^{2\tau})(E^\tau_3(r,g)-\tfrac{C_\circ}{\alpha_\circ}r^{\alpha_\circ})-(\tau-3)r^{2\tau},
		\end{equation}
		in particular $W_3(z_r)<0$ (as $\tau>3$).
		
		Let $\eps_0$ be given by \Cref{lem:log epi}, let
		$M_0:=\bar M(n,\eps_0)$ be given by \Cref{lem:projection dichotomy}, and let $\eps_\circ=\eps_\circ(n, M_0)$ be given by \Cref{lem:projection epi RHS} with $M=M_0$. 
		
		Then by \Cref{lem:projection dichotomy} we have two cases.
		
		In the first case we apply \Cref{lem:projection epi RHS} to $z_r$ with $\sigma=r^{3+\alpha}$ to find $\tilde\zeta_r$ satisfying
		\[
		(1-\eps_\circ)W_3(\tilde\zeta_r)
		\le W_3(z_r) +r^{6+2\alpha}.
		\]
		Using \eqref{eq:W E identity ND RHS} together with $H(r,w)+r^{2\tau}\ge r^{6+\alpha}$ gives
		\[
		(1-\eps_\circ)E_3^\tau(r,\tilde\zeta_r)
		\le E^\tau_3(r,z_r) +r^{\alpha}.
		\]
		Since $-3\le E_3^\tau(r,z_r)\le0$ we have $\eps_\circ\ge c|E_3^\tau(r,z_r)|^{1+\gamma}$. Using that, by minimality, $E_3^\tau(r,\zeta_r)\le E_3^\tau(r,z_r)$, we get to
		\[
		E_3^\tau(r,\zeta_r)\le (1+\eps_\circ)E_3^\tau(r,z_r) + r^\alpha \le E_3^\tau(r,z_r)(1+c_\circ|E_3^\tau(r,z_r)|^{\gamma})+r^{\alpha}.
		\]
		
		In the second case \Cref{lem:projection dichotomy} gives us $p_3^\even$ and $p^\odd_3$. We apply \Cref{lem:log epi} to 
		$$c:=(z_r-p_3^{\odd})/({\|p_3^{\even}\|_{L^2(\de B_1)}})\in H^1(\de B_1)$$ 
		to find $\hat\zeta_r$ such that, setting $\tilde\zeta_r:= p_3^\odd+\|p_3^\even\|_{L^2(\de B_1)}\hat\zeta_r$, and using \eqref{eq:weiss on P3+something} repeatedly, the log-epiperimetric inequality holds:
		\[
		W_3(\tilde\zeta_r)
		\le W_3(z_r)(1+c_0
		\|p_3^{\even}\|_{L^2(\de B_1)}^{-2\gamma}
		|W_3(z_r)|^{\gamma}).
		\]
		Note that $\|p_3^\even\|^2_{L^2(\de B_1)}\le C(H(r,w)+r^{2\tau})$, thanks to \eqref{eq:projection control H^1}. Since \eqref{eq:W E identity ND RHS} yields $W_3(z_r)\le (H(r,w)+r^{2\tau})E_3^\tau(r,z_r)\le0$, this implies
		\[
		1+c_0\|p_3^\even\|_{L^2(\de B_1)}^{-2\gamma}|W_3(z_r)|^\gamma \ge 1+c_\circ|E_3^\tau(r,z_r)|^\gamma.
		\]
		Thus, using this and \eqref{eq:W E identity ND RHS} together with $E_3^\tau(r,z_r)\le0$ gives
		\[
		E_3^\tau(r,\tilde\zeta_r)\le E_3^\tau(r,z_r)(1+c_0\|p_3^{\even}\|_{L^2(\de B_1)}^{-2\gamma}
		|W_3(z_r)|^{\gamma})\le E_3^\tau(r,z_r)(1+c_\circ|E_3^\tau(r,z_r)|^\gamma),
		\]
		as we wanted.
	\end{proof}

	\subsection{Proof of \Cref{prop:monneau ND RHS}}
	
	Without loss of generality we can assume $\nu= e_n$.  If $r_\circ$ is given by \Cref{corollary:frequency formula}, thanks to the universal bounds \eqref{eq:universal upper bound} it suffices to consider $s\in(0,r_\circ)$, otherwise the statement holds with a universal constant.
	
	Consider the radius
	\[
	R:=\inf\left(\left\{r\in(0,r_\circ):
	\phi^\tau(r,w)+\frac{C_\circ}{\alpha_\circ}r^{\alpha_\circ}
	\ge3\right\}\cup\{r_\circ\}\right).
	\]
	Then \Cref{lem:monneau rhs} gives, for any $r>R$,
	\[
	\frac d{dr}\log\left[r^{-6}\bigl(H(r,w)+r^{2\tau}\bigr)\right]
	\ge-Cr^{\beta-1}.
	\]
	As the right-hand side is uniformly integrable, this implies
	\[
	\|w_r\|_{L^2(\de B_1)}\le C_\circ r^3
	\qquad\text{for any }r>R.
	\]
	Thus, it suffices to prove that
	\begin{equation}\label{eq:goal monneau ND}
		\bigl(H(s,w)+s^{2\tau}\bigr)^{1/2}
		\le C(s/R)^3
		\bigl(H(R,w)+R^{2\tau}\bigr)^{1/2}
	\end{equation}
	whenever $s<R/2$ and $E_{3-\delta_\circ}^\tau(s,w)\ge0$.
	
	In this case \Cref{lem:monneau rhs} implies
	\begin{equation}\label{eq:apply monneau ND}
		\|w_r\|_{L^2(\de B_1)}\le Cr^{3-\delta_\circ}
		\qquad\text{for any }r\in(s,R),
	\end{equation}
	while the Lipschitz and doubling estimates yield
	\begin{equation}\label{eq:apply lip ND}
		B_1\cap\{u_r=0\}
		\subset\left\{|x_n|\le Cr^{-2}
		\bigl(\|w_r\|_{L^2(\de B_1)}+r^{3+\alpha}\bigr)\right\}.
	\end{equation}
	
	Set $E(r):=E_3^\tau(r,w)$.  Then
	\[
	-\delta_\circ\le E(r)\le0\qquad\text{for }r\in(s,R),
	\]
	so \Cref{lem:epi ND} applies and implies that if $\zeta_r$ solves \eqref{eq:thin obstacle} in $B_1$ with boundary datum $z_r$ on $\de B_1$, then
	\begin{equation}\label{eq:properties zeta_r}
		\zeta_r\ge -L_\circ(H(r,w)+r^{6+2\alpha})^{1/2}|x_n|\quad\text{and}\quad E_3^\tau(r,\zeta_r)\le E_3^\tau(r,z_r)(1+c_\circ|E_3^\tau(r,z_r)|^\gamma)+r^\alpha.
	\end{equation}
	
	We now show that for some universal $C,\beta>0$
	\begin{equation}\label{eq:replacement comparison ND}
		E_3^\tau(r,\zeta_r)\ge E(r)-Cr^\beta.
	\end{equation}
	Indeed, using $\zeta_r=w_r$ on $\de B_1$, it suffices to estimate
	\[
	E_3^\tau(r,\zeta_r)-E_3^\tau(r,w)\ge\frac{2}{H(r,w)+r^{2\tau}}\int_{B_1}(\zeta_r-w_r)\Delta w_r.
	\]
	Using \eqref{eq:pde u-p_nu},
	\[
	\int_{B_1}(\zeta_r-w_r)\Delta w_r\ge \int_{B_1\cap\{u_r=0\}}\zeta_r r^2f_r-C\int_{B_1}r^{3+\alpha}|\zeta_r-w_r|.
	\]
	The first term is bounded as in the proof of
	\eqref{eq:goal 1 extend monneau}, using \eqref{eq:apply monneau ND}, \eqref{eq:apply lip ND}, and the lower bound of $\zeta_r$ in \eqref{eq:properties zeta_r}.
	To bound the second term, note first that $w_r$ is a competitor for $\zeta_r$ and $E(r)\le0$, therefore
	\[
	\int_{B_1}|\nabla(\zeta_r-w_r)|^2\le4\int_{B_1}|\nabla w_r|^2\le12(H(r,w)+r^{2\tau}).
	\]
	Thus Poincar\'e's inequality gives
	\[
	-C\int_{B_1}r^{3+\alpha}|\zeta_r-w_r|
	\ge-Cr^{3+\alpha}\bigl(H(r,w)+r^{2\tau}\bigr)^{1/2},
	\]
	and \eqref{eq:replacement comparison ND} follows recalling that
	$H(r,w)+r^{2\tau}\ge r^{2\tau}=r^{6+\alpha}$.
	
	We now combine \cref{eq:properties zeta_r,eq:replacement comparison ND} and \Cref{lem:derivative frequency gap RHS} to deduce
	\begin{equation}\label{eq:ODE frequency gap ND}
		E'(r)\ge\frac c r(-E(r))^{1+\gamma}-Cr^{\sigma-1}\qquad\text{for }r\in(s,R/2).
	\end{equation}
	If $E_3^\tau(r,z_r)\ge0$, \Cref{lem:derivative frequency gap RHS} already gives, for $\delta_\circ$ small enough,
	\[
	E'(r)\ge\frac{n+4+2E(r)} r(-E(r))-Cr^{\beta-1}\ge \frac{c}{r}(-E(r))^{1+\gamma}-Cr^{\beta-1}.
	\]
	On the other hand, when $E_3^\tau(r,z_r)\le0$, \eqref{eq:properties zeta_r} gives
	\[
	E_3^\tau(r,z_r)\ge E_3^\tau(r,\zeta_r)+c_\circ|E_3^\tau(r,z_r)|^{1+\gamma} - r^\alpha.
	\]
	Since $-\delta_\circ\le E_3^\tau(r,\zeta_r)\le E_3^\tau(r,z_r)\le0$, up to changing $c_\circ$ this implies
	\begin{align}
		E_3^\tau(r,z_r)\ge E_3^\tau(r,\zeta_r)+c_\circ|E_3^\tau(r,\zeta_r)|^{1+\gamma}- r^\alpha.\label{eq:swap z  zeta}
	\end{align}
	Indeed, by minimality of $\zeta_r$ we have $0\ge E_3^\tau(r,z_r)\ge E_3^\tau(r,\zeta_r)$. If $E_3^\tau(r,z_r)$ and $E_3^\tau(r,\zeta_r)$ are comparable (say $|E_3^\tau(r,z_r)|\ge \tfrac12|E_3^\tau(r,\zeta_r)|$), then also the remainders are comparable, that is 
	\[ c_\circ|E_3^\tau(r,z_r)|^{1+\gamma}\ge c_\circ 2^{-{1+\gamma}}|E_3^\tau(r,\zeta_r)|^{1+\gamma}.\]
	In the other case we have that $|E_3^\tau(r,z_r)|\le \tfrac12|E_3^\tau(r,\zeta_r)|$, so we bound directly:
	\begin{align*}
		E_3^\tau(r,z_r)\ge-\tfrac12 |E_3^\tau(r,\zeta_r)| = E_3^\tau(r,\zeta_r) +\tfrac12|E_3^\tau(r,\zeta_r)|\ge E_3^\tau(r,\zeta_r) + \tfrac{3^{-\gamma}}2|E_3^\tau(r,\zeta_r)|^{1+\gamma},
	\end{align*}
	where we used $0\ge E_3^\tau(r,\zeta_r)\ge-3$. Thus \eqref{eq:swap z  zeta} is proved.
	
	Since $w_r$ is a competitor for $\zeta_r$, we also have
	$E_3^\tau(r,\zeta_r)\le E(r)\le0$, so that $|E_3^\tau(r,\zeta_r)|^{1+\gamma} \ge (-E(r))^{1+\gamma}$. So, using \eqref{eq:replacement comparison ND} we obtain
	\[
	E_3^\tau(r,z_r)\ge E(r)+c(-E(r))^{1+\gamma}-Cr^\sigma.
	\]
	Inserting this into \Cref{lem:derivative frequency gap RHS}, and choosing $\delta_\circ$ small so that the term $2E(r)^2$ is absorbed by $|E|^{1+\gamma}$, leads to \eqref{eq:ODE frequency gap ND}.
	
	Setting
	\[
	\widetilde E(r):=E(r)+Cr^\sigma/\sigma,
	\]
	whenever $\tilde E(r)<0$ then \eqref{eq:ODE frequency gap ND} gives $\widetilde E'(r) \ge\frac cr(-\widetilde E(r))^{1+\gamma}$, which can be written as
	\[
	\frac{(\widetilde E(r))_-}{r} \le-\frac1{c(1-\gamma)}\left[(\widetilde E(r))_-^{1-\gamma}\right]'.
	\]
	To deduce \eqref{eq:goal monneau ND}, thanks to \Cref{lem:monneau rhs} we have
	\[
	\log\Big[\frac{s^{-6}(H(s,w)+s^{2\tau})}{R^{-6}(H(R,w)+R^{2\tau})}\Big]\le C+2\int_s^{R}\frac{(E(r))_-}{r}\,dr.
	\]
	In the regime $E(r)\le0$ we have
	\[
	\int_s^{R}\frac{(E(r))_-}{r}\,dr\le \int_s^R \Big(\frac{(\widetilde E(r))_-}{r} + \tfrac C\sigma r^{\sigma-1}\Big)\,dr\le C,
	\]
	where we used $\gamma<1$ and $\widetilde E(r)\ge -\delta_\circ$, and \eqref{eq:goal monneau ND} follows.

	\section{Proof in the General Case}\label{sec:theorem rhs}
	We first prove the linear $(n-1)$-semiconvexity decay of \Cref{theorem:linear decay semiconvexity}, from which we will deduce the curvature bounds of \Cref{theorem:viscosity bound}.
	\begin{proof}[Proof of \Cref{theorem:linear decay semiconvexity}]
		Let $\delta_\circ$ be given by \Cref{prop:monneau ND RHS} and let $r_1$ be a universal radius smaller than $\rho(\delta_\circ)$ (given by \Cref{prop:low frequency f}) and $r_\circ$, given by \Cref{proposition:frequency formula}. Given $u$ solving \eqref{eq:obstacle} in $B_1\subset\R^n$ with $0\in\de\{u>0\}$, we aim to prove that, for any unit vector $e$,
		\begin{equation}\label{eq:goal proof semiconvexity}
			\Delta_{e^\perp} u\ge -\bar C r\quad\text{in }B_r\qquad\text{for each }r\in(0,1/2),
		\end{equation}
		where $\bar C$ is universal. Setting
		\[
		\rho_u:= \inf\{r\in(0,r_1)\,:\,\phi^\tau(r,u-p_\nu)+C_\circ r^{\alpha_\circ}/\alpha_\circ\ge 3-\delta_\circ\text{ for some }\nu\in\mathbb S^{n-1}\},
		\]
		or $\rho_u:=r_1$ if that set is empty, we prove \eqref{eq:goal proof semiconvexity} separately the cases $r>\rho_u$ and $r<\rho_u$.
		
		By definition
		\begin{equation}\label{eq:oki}
			\phi^\tau(\rho_u,u-p_\nu)+C_\circ \rho_u^{\alpha_\circ}/\alpha_\circ\le 3-\delta_\circ\quad \forall \nu\in \S^{n-1}.
		\end{equation}
		Choosing $\bar\nu$ attaining the infimum, \Cref{prop:monneau ND RHS} gives
		\[
		s^{-6}H(s,u-p_{\overline{\nu}}) \le \tfrac12 C_1\quad \forall s\in(\rho_u,1],
		\]
		with $C_1\ge 1$ universal or $C_1=Cr_1^{-6}$, if $\rho_u=r_1$. 
		Then by the semiconvexity estimate \Cref{lem:semiconvexity} applied at scale $s$ with $v=p_{\overline{\nu}}$ we get
		\begin{equation}
			D^2 u \ge -CC_1 s-C s^2 \ge -C_2 s \quad \text{ in } B_s, \quad \forall s\in(\rho_u,1/2],\label{eq:5}
		\end{equation}
		so in particular for any unit vector $e$,
		\[
		\Delta_{e^\perp} u\ge -C_2 s\quad\text{in }B_s,\qquad\forall s\in(\rho_u,1/2].
		\]
		If $\rho_u=0$ the proof ends here, otherwise we consider
		\[
		\eps_1:=\rho_u^{-2}\min_{\nu\in\S^1}H(\rho_u,u-p_\nu)^{1/2}\le  C_1 \rho_u.
		\]
		Because of \eqref{eq:oki} we can apply \Cref{prop:low frequency f} with $r:=\rho_u<\rho(\delta_\circ)$ and find
		\[
		-\Delta_{e^\perp} u(x)\le M(\delta_\circ) \eps_1 |x|/\rho_u \le M(\delta_\circ) C_1 |x|\quad\text{ for all }x\in B_{\rho_u/2}.
		\]
		This, together with \eqref{eq:5}, concludes the proof.
	\end{proof}
	
	Before proving \Cref{theorem:viscosity bound}, we recall a simple fact: if locally $E = \{x_n<\varphi(x')\}$ and $\varphi(0)=|\nabla\varphi(0)|=0$ then $H_{\de E}(0) = -\Delta\varphi(0)$.
	So, fix $z\in \de\{u>0\}\cap B_{1/2}$ with the property that
	\begin{equation}\label{eq:barriercurv}
		\exists E\text{ open with }\de E\in C^2\text{ and }\rho>0 \text{ s.t. } z\in \de E\text{ and } \{u>0\}\cap B_\rho(z) \subset E,
	\end{equation}
	when $z$ is a regular point the inclusion \eqref{eq:barriercurv} and the ellipticity of the mean curvature operator give
	\begin{equation}\label{eq:ellipticity}
		H_{\de E}(z) \le H_{\de\{u>0\}}(z).
	\end{equation}
	\begin{proof}[Proof of \Cref{theorem:viscosity bound}]
		Let $z\in \de\{u>0\}\cap B_{1/2}$ be any point such that \eqref{eq:barriercurv} holds for some $\rho$ and $E$. As $E$ is regular, $z\in\de E$ and $\{u=0\}\supset B_\rho(z)\setminus E$, the contact set $\{u=0\}$ has positive density at $z$. Caffarelli's dichotomy implies that $z$ is a regular free boundary point for $u$.
		
		Since $z$ is regular there are $e_z\in\de B_1$ and $r_z>0$ such that
		\[
		\| r^{-2}u(z+rx)-\tfrac{f(z)}2(x\cdot e_z)_+^2\|_{C^1(B_2)} \le \delta_1<\tfrac{1}{100},\quad \forall r\in(0,4r_z).
		\]
		If $\delta_1$ is chosen sufficiently small, the same holds at any free boundary point close to $z$.
		Thanks to \Cref{lem:bounds regular points} it suffices to bound the mean curvature at points $y\in \{u>0\}$ satisfying $y\to z$. For such points, set $d_y:=\dist(y,\de\{u>0\})$, so that on one hand \Cref{theorem:linear decay semiconvexity} gives $\Delta_{e^\perp}u(y)\ge -C_0 d$ for any unit vector $e$. On the other hand, let $\hat y\in B_{r_z}(z)\cap\de\{u>0\}$ be a closest free boundary point to $y$, and set $\hat u(x):=d^{-2}u(z+d x)$ and $\hat \nu:= (y-\hat y)/d\in \de B_1.$ By the choices of $r_z$, and by $C^1$ closeness, we have
		\[
		d^{-1}|\nabla u(y)| = |\nabla\hat u (\hat\nu)| \ge (\hat \nu\cdot e_{\hat y})_+ -\tfrac{1}{100} =\tfrac{99}{100}.
		\]
		Indeed the minimal choice of $z$ implies that $\hat \nu$ is perpendicular to the free boundary at $\hat y$ and pointing outward, as so is $e_{\hat y}$, thus they must be the same vector.
		
		We conclude recalling that we defined the mean curvature of a superlevel set $\{u>t\}$ of a smooth function by the formula 
		\[
		H_{\de\{u>0\}}(z) = -\divergence\left(\frac{\nabla u}{|\nabla u|}\right) = \frac{-\Delta_{\nabla u^\perp} u(y)}{|\nabla u(y)|} \le \frac{C_0 d}{\tfrac12 d}=2C_0.
		\]
		\Cref{eq:viscositybound} now follows thanks to \eqref{eq:ellipticity}.
	\end{proof}

	\appendix
	\section{Proof of \texorpdfstring{\Cref{corollary:frequency formula}}{}}
	\label{app:frequency}
	
	In this Appendix we adapt some techniques of \cite{FROS24,CinftyRectifiable} to prove the following frequency formula. A similar result was proven in \cite{CinftyRectifiable} assuming $0$ is a singular point.
	\begin{proposition}\label{proposition:frequency formula}
		Let $u$ solve \eqref{eq:obstacle} in $B_1$ with $f$ satisfying \eqref{eq:assumptions f} and $0\in\de \{u>0\}$. Let $\nu\in\mathbb S^{n-1}$. There exist universal constants $\tilde C,\tilde\alpha, C_\circ,\alpha_\circ$ with the following property: let
		\[
		w:= u- p_\nu
		\]
		and suppose that for some $s\in(0,1/2)$ we have
		\[
		W_2(s,w)\ge -\tilde C s\quad\text{and}\quad\phi^{2+1/8}(s,w)+ \tilde C s^{\tilde\alpha}\ge 2+\tfrac1{10}.
		\]
		Then
		\[
		\frac{d}{dr}\phi^\tau(r,w) \ge -C_\circ r^{\alpha_\circ-1}\qquad\text{for any }r\in(s,1/2)
		\]
		and
		\[
		\frac{\int_{B_1}w_r\Delta w_r}{H(r,w) + r^{2\tau}}\ge -C_\circ r^{\alpha_\circ},
		\]
		where we recall that $\tau = 3+\alpha/2$.
	\end{proposition}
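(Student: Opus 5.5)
The plan follows the scheme of \cite[Proposition 3.4]{FROS24} and \cite{CinftyRectifiable}: compute $\frac{d}{dr}\phi^\tau$ exactly, isolate a nonnegative Cauchy--Schwarz term and an error term, and show that the error is integrable once the frequency lies above $2$. The point at which singularity of $0$ was used in those works is replaced by the two hypotheses at scale $s$.

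\textbf{Step 1 (derivative identities).} Write $D=D(r,w)$ and $H=H(r,w)$. For $w\in C^{1,1}$ we have
\[
H'=\tfrac2r\int_{\de B_1}w_r\,x\cdot\nabla w_r,\qquad
\int_{\de B_1}w_r\,\de_\nu w_r=D+\int_{B_1}w_r\Delta w_r .
\]
The Rellich--Pohozaev identity gives
\[
D'=\tfrac2r\Big(\int_{\de B_1}(\de_\nu w_r)^2-\int_{B_1}(x\cdot\nabla w_r)\Delta w_r\Big)+\tfrac{2-n}{r}D-\tfrac{n-2}{r}D .
\]
From these, $\frac{d}{dr}\phi^\tau$ equals $\frac{2}{r(H+r^{2\tau})}$ times
\[
\int_{\de B_1}(\de_\nu w_r)^2(H+r^{2\tau})-\Big(\int_{\de B_1}w_r\de_\nu w_r\Big)^2
\]
(which is $\ge0$ by Cauchy--Schwarz) plus error terms. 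The errors come from $\int (x\cdot\nabla w_r)\Delta w_r$, from $\int w_r\Delta w_r$, and from the truncation terms $\tau r^{2\tau}$ and $r^{2\tau}$. By \eqref{eq:pde u-p_nu}, $\Delta w_r=-r^2f_r\chi_{\{u_r=0\}}+O(r^{3+\alpha})$. On $\{u_r=0\}$ we have $w_r=-p_\nu(r\cdot)\le0$ and $\nabla w_r=-\nabla p_\nu(r\cdot)$, so $x\cdot\nabla w_r=-2p_\nu(r\cdot)+O(r^3|x_n|^3)$ there. The contact term therefore has a sign up to a cubic correction.

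\textbf{Step 2 (controlling the contact term).} Here I expect the main difficulty. The contact-set contributions are bounded by $r^2\cdot r^2\int_{\{u_r=0\}}|x\cdot\nu|^2$. By \eqref{eq:bound contact set}, the set $\{u_r=0\}$ lies in a slab of width $h_r:=Cr^{-2}(\|w_{2r}\|_{L^2(\de B_1)}+r^{3+\alpha})$, so these contributions are $\lesssim r^4h_r^3$. After dividing by $H+r^{2\tau}$, I need this to be $\lesssim r^{\alpha_\circ}$. That holds if $H(r,w)\gtrsim r^{4+\text{small}}$ from below and doubling holds, so that $h_r^2\lesssim r^{-4}(H+r^{2\tau})$ is small. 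Both lower growth and doubling are where the hypotheses enter.

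$W_2(s,w)\ge-\tilde Cs$ plays the role of Weiss monotonicity (\eqref{eq:weiss monotonicity}, \eqref{eq:weiss vs frequency}), now with a right-hand side: $W_2(r,w)+\tilde C r$ is almost nondecreasing, so it stays $\gtrsim -r$ for $r>s$. Combined with the frequency assumption $\phi^{2+1/8}(s,w)\ge 2+\frac1{10}-\tilde Cs^{\tilde\alpha}$, a continuity/bootstrap argument on $(s,1/2)$ shows that $\phi^{2+1/8}(r,w)\ge 2+\frac1{20}$ persists. The argument runs as follows. Let $r_*$ be the first scale where this fails. On $(s,r_*)$, the Step 1 identity for $\phi^{2+1/8}$ has errors integrable in $\frac{dr}{r}$ (as in \cite{CinftyRectifiable}), which forbids the drop. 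This gives $\log H_{2+1/20}$ monotone up to integrable errors, hence $H(r,w)\lesssim r^{4+1/10}$. That upper bound then makes $h_r\lesssim r^{1/20}$, so the contact errors are $\lesssim r^{\alpha_\circ}(H+r^{2\tau})$. An upper frequency bound (truncation at $\tau$) gives doubling as in \Cref{lem:doubling rhs}.

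\textbf{Step 3 (conclusion).} With the contact term controlled, the $O(r^{3+\alpha})$ remainders are handled by Cauchy--Schwarz: $r^{3+\alpha}\|w_r\|_{L^1}\lesssim r^{3+\alpha}(H+D)^{1/2}$, and relative to $H+r^{2\tau}\ge r^{6+\alpha}$ this is $\lesssim r^{\alpha/2}$ times a bounded frequency factor. The truncation terms are explicit: $\frac{d}{dr}$ of $\tau r^{2\tau}$ and $r^{2\tau}$ produces $\frac{2\tau}{r}(\tau-\phi^\tau)r^{2\tau}$-type contributions, which reinforce monotonicity when $\phi^\tau\le\tau$ and are harmless otherwise by the same truncation trick as in \cite{FROS24}. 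This yields $\frac{d}{dr}\phi^\tau\ge -C_\circ r^{\alpha_\circ-1}$. The second inequality, $\int w_r\Delta w_r\ge -C_\circ r^{\alpha_\circ}(H+r^{2\tau})$, is the same contact-term estimate: $w_r\Delta w_r\ge0$ on the contact set up to cubic error, by \eqref{eq:pde u-p_nu}.
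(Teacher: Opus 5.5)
\textbf{Gap: boundedness of $\phi^\tau$ and doubling at $\tau=3+\alpha/2$.} Your Steps 2--3 rest on two facts at the top truncation level $\tau=3+\alpha/2$. The first is the doubling bound $\|w_r\|_{L^2(\de B_2)}^2\le C(H(r,w)+r^{2\tau})$. The second is a uniform bound $\phi^\tau(r,w)\le C$, your ``bounded frequency factor''.

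You derive the first from the second ``as in \Cref{lem:doubling rhs}'', but the second is never proved. Moreover, \Cref{lem:doubling rhs} cannot be used here: in the paper it is a consequence of \Cref{corollary:frequency formula}, which is itself deduced from the statement you are proving.

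Nor can you get $\phi^\tau\le C$ by integrating the derivative bound you are after, since that bound needs both facts. Without doubling, the errors $\phi^\tau r^{3+\alpha}(\|w_r\|_{L^2(\de B_2)}+r^{3+\alpha})/(H+r^{2\tau})$ and $r^{-2}(\|w_r\|_{L^2(\de B_2)}+r^{3+\alpha})^3/(H+r^{2\tau})$ can only be controlled through $H+r^{2\tau}\ge r^{6+\alpha}$. Even with $\|w_r\|\lesssim r^{2+1/10}$, this leaves non-integrable powers of $r$.

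A continuity argument in $r$ does not close either. The assumption $\phi^\tau\le K$ only yields a doubling constant of order $4^K$, so integrating the derivative bound returns $\phi^\tau\lesssim 4^K$ rather than $\phi^\tau\le K$.

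\textbf{The missing step: iterate in the truncation exponent.} This is Step 2 of the paper's proof, following \cite[Lemma 4.3]{FROS24}.
\begin{itemize}
\item At $\tau_0=2+\frac1{10}$ both facts hold. \Cref{lem:base step frequency} works for $\tau<2+\frac14$ using only $H+r^{2\tau}\ge r^{2\tau}$ and the universal $C^{1,1}$ bounds, because the error $r^{8-4\tau}$ is then integrable. \Cref{lem:apply monneau} gives $\|w_r\|\le Cr^{2+1/10}$.
\item If $\phi^\tau\le C_\tau$ and doubling hold at level $\tau$, then passing to $\tau+\beta$ costs only powers of $r$: $\phi^{\tau+\beta}\le 2C_\tau r^{-2\beta}$, and doubling holds up to a factor $r^{-\beta}$.
\item Plugging these into the Step 1 inequality gives $\frac{d}{dr}\phi^{\tau+\beta}\ge -Cr^{2+\alpha-\tau-4\beta}-Cr^{1/10-1-2\beta}$. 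This is integrable for $\beta<\min\{\frac1{20},\frac{3+\alpha-\tau}4\}$, so $\phi^{\tau+\beta}$ is bounded, and \cite[Lemma 4.1(a)]{FROS24} then restores doubling without loss.
\end{itemize}
Finitely many steps reach $3+\alpha/2$. Only then do your contact-set and $O(r^{3+\alpha})$ estimates close.

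\textbf{Two smaller points.}
\begin{itemize}
\item The lower bound $\phi^{2+1/8}\ge 2+\frac1{20}$ on all of $(s,\frac12)$ does not follow, because the integrated error over that interval is $O(1)$. What you do get is $\phi^{2+1/8}(r,w)\ge 2+\frac1{10}-Cr^{\alpha_\circ}$, and this suffices for the Monneau decay.
\item The truncation terms in $\frac{d}{dr}\phi^\tau$ cannot be treated as separate errors: when $\phi^\tau>\tau$ they can be of size $\phi^\tau/r$. They are absorbed exactly by Cauchy--Schwarz applied to $(w_r,r^\tau)$ and $(\de_\nu w_r,\tau r^\tau)$ in $L^2(\de B_1)\oplus\R$, as in \Cref{lem:frequency ode}.
\end{itemize}
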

	
	\Cref{corollary:frequency formula} is an immediate consequence of \Cref{proposition:frequency formula} and the following observation.
	\begin{remark}
		There is a universal $r_\circ$ such that if $\phi^\tau(s,w) + \tfrac{C_\circ}{\alpha_\circ}s^{\alpha_\circ}\ge 2.5$ for some $s\in(0,r_\circ)$ then
		\[
		W_2(s,w)\ge -\tilde C s\quad\text{and}\quad\phi^{2+1/8}(s,w)+ \tilde C s^{\tilde\alpha}\ge 2+\tfrac1{10}.
		\]
		Indeed, we can rewrite the assumption as
		\[
		D(s,w) \ge (2.5-\tfrac{C_\circ}{\alpha_\circ}s^{\alpha_\circ}) H(s,w) - (\alpha/2+0.5+\tfrac{C_\circ}{\alpha_\circ}s^{\alpha_\circ})s^{2\tau},
		\]
		so choosing $r_\circ$ small we have
		\[
		W_2(s,w) = s^{-4}(D(s,w)-2H(s,w)) \ge -1.5s^{2+\alpha}> - \tilde C s.
		\]
		Similarly, for $r_\circ$ small we have
		\[
		D(s,w) + (2+\tfrac18)s^{2(2+\tfrac18)}- (2+\tfrac18)(H(s,w)+s^{2(2+\tfrac18)}) = D(s,w)-(2+\tfrac18)H(s,w) \ge -2s^{2\tau},
		\]
		so dividing by $H(s,w)+s^{2(2+\tfrac18)}\ge s^{2(2+\tfrac18)}$ gives
		\[
		\phi^{2+\tfrac18}(s,w) \ge (2+\tfrac18) - 2s^{2(\tau-2-1/8)}\ge 2+\tfrac1{10}
		\]
		for $r_\circ$ small enough.
	\end{remark}
	
	The proof first relies on an almost monotonicity of a truncated frequency with a small gain $\tau-2>0$. Then improves the admissible truncations through an iteration.
	
	Since $0\in\de\{u>0\}$ then \eqref{eq:C11 regularity} gives
	\begin{equation}\label{eq:universal bounds rhs}
		|u_r| + |\nabla u_r|\le C r^2.
	\end{equation}
	
	We use the following.
	\begin{lemma}[see {\cite[Lemma 2.3]{FROS24}} and {\cite[proof of Corollary 4.3]{CinftyRectifiable}}]\label{lem:frequency ode}
		For every $C^{1,1}$ function $w$,
		\begin{equation}\label{eq:frequency ode}
			\frac{d}{dr}\phi^\tau(r,w) \ge \frac2r\left(\frac{r^{2-n}\int_{B_r}w\Delta w}{H(r,w)+r^{2\tau}}\right)^2 + \frac{2}{r} \frac{\int_{B_1}\big(\phi^\tau(r,w)w_r-x\cdot\nabla w_r\big)\lap w_r}{H(r,w)+r^{2\tau}} .
		\end{equation}
		Moreover, for every $\lambda\in\R$,
		\begin{equation}\label{eq:monneau ode}
			\frac{d}{dr}\log\big(r^{-2\lambda}(H(r,w)+r^{2\tau})\big) = \frac2r (\phi^\tau(r,w)-\lambda) + \frac2r \frac{\int_{B_1}w_r\lap w_r}{H(r,w)+r^{2\tau}}.
		\end{equation}
	\end{lemma}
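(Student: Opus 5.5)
The plan is to derive both identities by direct differentiation in $r$ of $D(r,w)$ and $H(r,w)$. The truncation terms $r^{2\tau}$ are then added by hand; since they are explicit powers of $r$, they contribute only algebraically. Throughout, $w\in C^{1,1}$, so $w_r\in W^{2,\infty}$ and all integrations by parts are justified.

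The first step is the classical formulas. Since $\frac{d}{dr}w_r=\frac1r x\cdot\nabla w_r$, we have
\[
\frac{d}{dr}H(r,w)=\frac2r\int_{\de B_1}w_r\,\de_\nu w_r,
\]
and by the divergence theorem
\[
\int_{\de B_1}w_r\de_\nu w_r=D(r,w)+\int_{B_1}w_r\Delta w_r .
\]
For $D$, the Rellich--Pohozaev identity gives
\[
\frac{d}{dr}D(r,w)=\frac1r\Big((2-n)D+\int_{\de B_1}|\nabla w_r|^2\Big),
\]
and integration by parts yields
\[
\int_{\de B_1}|\nabla w_r|^2=(n-2)D+2\int_{\de B_1}(\de_\nu w_r)^2-2\int_{B_1}(x\cdot\nabla w_r)\Delta w_r .
\]
Hence
\[
D'=\frac2r\Big(\int_{\de B_1}(\de_\nu w_r)^2-\int_{B_1}(x\cdot\nabla w_r)\Delta w_r\Big).
\]
Adding the truncations gives $\frac{d}{dr}(H+r^{2\tau})=\frac2r\big(D+\int w_r\Delta w_r+\tau r^{2\tau}\big)$. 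Since $D+\tau r^{2\tau}=\phi^\tau(H+r^{2\tau})$, this proves \eqref{eq:monneau ode} at once: the $-2\lambda/r$ term comes from the prefactor $r^{-2\lambda}$.

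For \eqref{eq:frequency ode}, write $N=D+\tau r^{2\tau}$ and $\tilde H=H+r^{2\tau}$. Then $(N/\tilde H)'=(N'-\phi^\tau\tilde H')/\tilde H$, with $N'=D'+\frac{2\tau^2}{r}r^{2\tau}$ and
\[
\phi^\tau\tilde H'=\frac2r\Big(\phi^\tau\int_{\de B_1}w_r\de_\nu w_r+\phi^\tau\tau r^{2\tau}\Big).
\]
Setting $a:=\int_{\de B_1}w_r\de_\nu w_r$ and $b:=\int_{\de B_1}(\de_\nu w_r)^2$, we rewrite $\phi^\tau a=\phi^\tau(D+\int w\Delta w)$ using $a=D+\int w_r\Delta w_r$. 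Collecting terms, the numerator becomes
\[
\frac2r\Big[b-\phi^\tau a+\tau r^{2\tau}(\tau-\phi^\tau)\Big]+\frac2r\int_{B_1}(\phi^\tau w_r-x\cdot\nabla w_r)\Delta w_r ,
\]
after shifting the $\int w\Delta w$ piece into the second term. The key algebra is to complete the square. Using $\phi^\tau\tilde H=a-\int w_r\Delta w_r+\tau r^{2\tau}$ and Cauchy--Schwarz $a^2\le bH$, one aims to show
\[
b-\phi^\tau a+\tau r^{2\tau}(\tau-\phi^\tau)\ \ge\ \frac{(\int_{B_1} w_r\Delta w_r)^2}{\tilde H}.
\]
Indeed, write $b\tilde H-a\phi^\tau\tilde H+\tau r^{2\tau}(\tau-\phi^\tau)\tilde H$, substitute $\phi^\tau\tilde H$, and bound $b\tilde H\ge a^2+br^{2\tau}$. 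What remains is a quadratic form in $(a,\tau r^{2\tau},\int w\Delta w)$, which one checks is $\ge(\int w\Delta w)^2$ up to the nonnegative term $r^{2\tau}(\sqrt b-\tau\sqrt{H}\cdots)^2$-type remainders.

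This sign check is the only delicate point, and I expect it to be the main obstacle: it requires arranging the truncated terms into the perfect square $\frac{r^{2\tau}}{\tilde H}\big(\ldots\big)^2\ge0$. I would follow the computation in \cite[Lemma 2.3]{FROS24} verbatim. Finally, the scaling $r^{2-n}\int_{B_r}w\Delta w=\int_{B_1}w_r\Delta w_r$ yields the stated form, dividing by $\tilde H$ throughout.
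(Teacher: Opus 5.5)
Your computations of $H'$, of $D'$ via the Rellich--Pohozaev identity, and hence of \eqref{eq:monneau ode} are correct. Your overall route (differentiate, then use Cauchy--Schwarz plus a perfect square coming from the truncation) is the standard one behind the cited \cite[Lemma 2.3]{FROS24}; the paper gives no proof beyond that citation.

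However, the decisive step contains a bookkeeping error. Set $e:=\int_{B_1}w_r\Delta w_r$, $X:=\int_{B_1}(x\cdot\nabla w_r)\Delta w_r$ and $s:=r^{2\tau}$. Your own formulas give
\[
N'-\phi^\tau\tilde H'=\frac2r\big[b-\phi^\tau a+\tau s(\tau-\phi^\tau)\big]-\frac2r X .
\]
Extracting the term $\frac2r(\phi^\tau e-X)$ therefore forces the bracket to become $b-\phi^\tau(a+e)+\tau s(\tau-\phi^\tau)$. Your bracket is off by $+\phi^\tau e$.

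As a result, the inequality you set out to prove, $b-\phi^\tau a+\tau s(\tau-\phi^\tau)\ge e^2/\tilde H$, is false in general, because $e$ has no sign. Take $n=2$ and $w_r=A(2-|x|^2)$. In units of $A^2|\partial B_1|$ one has $b=4$, $a=-2$, $H=D=1$, $e=-3$. As $A\to\infty$ the $s$-terms become negligible, and the left side tends to $6$ while the right side tends to $9$. You left exactly this verification undone (deferring to ``follow \cite{FROS24} verbatim''), so the proposal as written does not establish \eqref{eq:frequency ode}.

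The repair is short. With the correct bracket, substitute $\phi^\tau\tilde H=a-e+\tau s$ and expand to get the exact identity
\[
\tilde H\big[b-\phi^\tau(a+e)+\tau s(\tau-\phi^\tau)\big]-e^2=(bH-a^2)+s\int_{\partial B_1}(\partial_\nu w_r-\tau w_r)^2 .
\]
The right-hand side is nonnegative by Cauchy--Schwarz and because the last term is a square. Dividing by $\tilde H^2$ and multiplying by $2/r$ yields \eqref{eq:frequency ode}, in fact with the explicit nonnegative remainder $\frac{2}{r\tilde H^2}\big[(bH-a^2)+s\int_{\partial B_1}(\partial_\nu w_r-\tau w_r)^2\big]$. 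This is the perfect square you anticipated (it dominates $s(\sqrt b-\tau\sqrt H)^2$), but it only appears once the $\phi^\tau e$ term is placed correctly.
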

	A simple modification of \cite[Lemma A.3]{CinftyRectifiable} yields the following.
	\begin{lemma}\label{lem:monotone weiss}
		There is a universal $C>0$ such that, writing $w = u-p_\nu$, if $0\in\de\{u>0\}$ then
		\[
		\frac{d}{dr}W_2(r,w) \ge -C.
		\]
	\end{lemma}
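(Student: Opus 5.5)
The plan is to follow the classical Weiss computation for $W_2$ applied to $w=u-p_\nu$, and to track the errors coming from $\Delta p_\nu=f_1\neq f$ and from the contact set. The starting point is the standard identity, valid for any $C^{1,1}$ function:
\[
\frac{d}{dr}W_2(r,w)=\frac{2}{r}\int_{\de B_1}\big(x\cdot\nabla w_r-2w_r\big)^2\,r^{-4}\;-\;\frac{2}{r}\,r^{-4}\int_{B_1}\big(x\cdot\nabla w_r-2w_r\big)\Delta w_r .
\]
The first term is nonnegative. So everything reduces to bounding from above
\[
r^{-5}\int_{B_1}\big(x\cdot\nabla w_r-2w_r\big)\Delta w_r ,
\]
and the goal is an upper bound by a universal constant.

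To do this, split $\Delta w_r$ as in \eqref{eq:pde u-p_nu}:
\[
\Delta w_r=r^2(f_2)_r\,\ind_{\{u_r>0\}}-r^2(f_1)_r\,\ind_{\{u_r=0\}} .
\]

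The contact-set term is handled first. On $\{u_r=0\}$ we have $u_r=|\nabla u_r|=0$, so $w_r=-(p_\nu)_r$ and $\nabla w_r=-\nabla(p_\nu)_r$ there. Since $p_\nu$ is a cubic perturbation of a $2$-homogeneous quadratic,
\[
x\cdot\nabla p_\nu-2p_\nu=\tfrac12 f(0)(x\cdot\nu)^2(x\cdot v_\nu)=O\big(|\nabla f(0)|\,|x|^3\big).
\]
After scaling, $|x\cdot\nabla w_r-2w_r|\le Cr^3$ on the contact set. Together with $|r^2(f_1)_r|\le Cr^2$, this makes the contact contribution at most $r^{-5}\cdot Cr^5=C$. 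Note that no sign is available here, since $(x\cdot\nu)^2(x\cdot v_\nu)$ can have either sign, so absolute bounds are used. Under the normalization, $p_\nu\ge0$, which is not needed beyond this.

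The positivity-set term is bounded next. By \eqref{eq:universal bounds rhs}, $|u_r|+|\nabla u_r|\le Cr^2$, and trivially $|p_\nu(r\cdot)|+|\nabla(p_\nu)_r|\le Cr^2$, so $|x\cdot\nabla w_r-2w_r|\le Cr^2$ in $B_1$. Also $|(f_2)_r|\le Cr^{1+\alpha}$. Hence this term is at most
\[
r^{-5}\cdot r^2\cdot r^2\cdot r^{1+\alpha}=Cr^{\alpha}\le C.
\]
Combining the two bounds gives $\frac{d}{dr}W_2\ge -C$.

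The main obstacle is the contact-set term. Naively, $|x\cdot\nabla w_r-2w_r|\le Cr^2$ only yields $r^{-5}r^4=r^{-1}$, which is not bounded. One must exploit that $x\cdot\nabla w-2w$ on $\{u=0\}$ equals the homogeneity defect of $p_\nu$ alone, which is cubic. This is precisely where the adapted ansatz matters: it is quadratic up to an $O(|x|^3)$ correction, as in \eqref{eq:rhs-singular-ansatz}. One should also double-check the exact normalization constants in the derivative formula, following \cite[Lemma A.3]{CinftyRectifiable}, but these do not affect the argument.
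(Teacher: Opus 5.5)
Your proposal is correct and is essentially the paper's argument. The paper runs the same Weiss derivative computation (via \cite[Lemma A.3]{CinftyRectifiable}) and controls the positivity-set error with \eqref{eq:pde u-p_nu}. On the contact set it uses that the homogeneity defect of $p_\nu$ is cubic, in the one-sided form $2(p_\nu)_r-x\cdot\nabla(p_\nu)_r\ge -Cr^3$ together with $f_1>0$; your absolute bound works equally well.
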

	\begin{proof}
		The proof is the same as \cite[Lemma A.3]{CinftyRectifiable}. The only modification is \eqref{eq:pde u-p_nu} (so that computations can be carried out with $\delta=1+\alpha$ in their proof), and
		\[
		(2(p_\nu)_r-x\cdot\nabla (p_\nu)_r) \ge -Cr^3.\qedhere
		\]
	\end{proof}
	
	Thanks to \Cref{lem:monotone weiss}, instead of assuming that $0$ is a singular point we can assume a lower bound on the Weiss energy in \cite[Lemma A.6]{CinftyRectifiable}.
	
	\begin{lemma}\label{lem:base step frequency}
		Suppose $W_2(s,w)+C_\circ s\ge 0$ for some $s\in(0,1/2)$. Then for almost every $r\in(s,1/2)$ and each $\tau\in(2,2+1/4)$,
		\[
		2-C_\tau r^{5-2\tau}\le\phi^\tau(r,w)\le C_\tau,\qquad \frac{d}{dr}\phi^{\tau}(r,w) \ge -C_\tau r^{8-4\tau}
		\]
		and
		\[
		\frac{\int_{B_1}w_r\Delta w_r}{H(r,w)+r^{2\tau}}\ge -C_\tau r^{5+\alpha-2\tau}
		\]
		for $C_\tau>0$ depending further on $\tau$.
	\end{lemma}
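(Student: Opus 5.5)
The plan is to follow the structure of \cite[Lemma A.6]{CinftyRectifiable} (and \cite[Proposition 3.4]{FROS24}). The hypothesis ``$0$ singular'' is replaced by the Weiss lower bound, which propagates to all larger scales via \Cref{lem:monotone weiss}. Integrating $\frac{d}{dr}W_2\ge -C$ from $s$ to $r$ gives
\[
W_2(r,w)\ge W_2(s,w)-C(r-s)\ge -C_\circ s - Cr\ge -Cr\qquad\text{for } r\in(s,1/2).
\]
Since $W_2(r,w)=r^{-4}(D-2H)$, this yields $D(r,w)\ge 2H(r,w)-Cr^5$. Dividing by $H+r^{2\tau}$ and using $r^{2\tau}\le H+r^{2\tau}$, we obtain
\[
\phi^\tau(r,w)=\frac{D+\tau r^{2\tau}}{H+r^{2\tau}}\ge 2+\frac{(\tau-2)r^{2\tau}-Cr^5}{H+r^{2\tau}}\ge 2-Cr^{5-2\tau}.
\]
This is the lower bound. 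It is meaningful because $\tau<2+1/4$ forces $5-2\tau>1/2>0$.

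Next I would bound the error terms in \eqref{eq:frequency ode} and \eqref{eq:monneau ode} using \eqref{eq:pde u-p_nu} and \eqref{eq:universal bounds rhs}. The identity $\Delta w_r=O(r^{3+\alpha})-r^2f_r\chi_{\{u_r=0\}}$ holds, and on $\{u_r=0\}$ we have $w_r=-(p_\nu)_r\le 0$ and $\nabla w_r=-\nabla(p_\nu)_r$. Hence the contact-set contribution to $\int w_r\Delta w_r$ is nonnegative, and
\[
\int_{B_1} w_r\Delta w_r\ge -Cr^{3+\alpha}\|w_r\|_{L^1}\ge -Cr^{3+\alpha}(H+r^{2\tau})^{1/2}
\]
after a Poincaré/trace bound; the $H^1$ control comes from the upper frequency bound below. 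Dividing by $H+r^{2\tau}\ge r^{2\tau}$ gives the third claim with rate $r^{3+\alpha-\tau}\ge r^{5+\alpha-2\tau}$ up to constants, since $\tau>2$.

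For the frequency derivative, the contact term in the second summand of \eqref{eq:frequency ode} is
\[
\int_{\{u_r=0\}}(\phi^\tau w_r-x\cdot\nabla w_r)(-r^2f_r)=r^2\int_{\{u_r=0\}} f_r\big(\phi^\tau (p_\nu)_r - x\cdot\nabla (p_\nu)_r\big).
\]
Since $p_\nu$ is $2$-homogeneous up to a cubic correction, $x\cdot\nabla(p_\nu)_r=2(p_\nu)_r+O(r^3|x|^3)$, so this term is at least $(\phi^\tau-2)r^2\int (p_\nu)_r\chi - Cr^5|\{u_r=0\}|$. Using $\phi^\tau\ge 2-Cr^{5-2\tau}$ and $(p_\nu)_r\le Cr^2$, the bad part is $O(r^{9-2\tau})+O(r^5)$. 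The remaining $O(r^{3+\alpha})$ piece is controlled by $(\phi^\tau+|x|)\,|w_r|+|\nabla w_r|\le C(1+\phi^\tau)r^2$. Dividing by $r(H+r^{2\tau})$ gives $\frac{d}{dr}\phi^\tau\ge -C r^{4-2\tau}(1+\phi^\tau)-Cr^{8-4\tau}$. Since $5-2\tau>1/2$, the second term dominates and the first is integrable.

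The upper bound $\phi^\tau\le C_\tau$ is where I expect the difficulty, because the upper bound must be established before the error can be controlled by it. I would bootstrap via a Gronwall argument. The almost-monotonicity inequality has the form $\frac{d}{dr}\phi^\tau\ge -Cr^{4-2\tau}\phi^\tau - C r^{8-4\tau}$. Integrating it backward from $r=1/2$, where $\phi^\tau(1/2,w)\le C$ by \eqref{eq:universal bounds rhs} and $H+r^{2\tau}\ge c$, would give a uniform upper bound at all $r\in(s,1/2)$, provided the exponents are integrable. This requires $4-2\tau>-1$, which holds exactly because $\tau<2+1/4<5/2$. The delicate point is the contact-set term: the rough bound $Cr^{3+\alpha}$ on $\Delta w_r$ must be combined with the smallness of $|w_r|$ on $\{u_r=0\}$ in terms of $H$ (as in \eqref{eq:bound contact set}), rather than only the crude $r^2$ bound. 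If the crude estimate fails to give integrable rates, I would instead use the Lipschitz/contact-set localization from \Cref{lem:lip estimate rhs}, exactly as in \cite{CinftyRectifiable}.
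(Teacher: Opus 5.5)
Your proposal is correct and follows essentially the paper's route. You integrate the Weiss bound to get $\phi^\tau\ge 2-Cr^{5-2\tau}$, treat the contact term via $\phi^\tau w_r-x\cdot\nabla w_r=(2-\phi^\tau)(p_\nu)_r+O(r^3)$, bound the remaining piece crudely by $C(1+\phi^\tau)r^{5+\alpha}$, and close with almost-monotonicity of $\log(1+\phi^\tau)$ (your Gronwall step).

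Two small corrections. First, the integrability that actually drives the Gronwall step is $8-4\tau>-1$, i.e.\ $\tau<9/4$, not just $4-2\tau>-1$; with that, the crude $r^2$ bounds already suffice, so your closing hedge about contact-set localization is unnecessary. Second, for the last claim the crude bound $|w_r|\le Cr^2$ gives $-Cr^{5+\alpha-2\tau}$ directly, whereas your $H^1$ route gives the stronger rate $r^{3+\alpha-\tau}$; note that $r^{3+\alpha-\tau}\le r^{5+\alpha-2\tau}$, not $\ge$ as you wrote.
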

	\begin{remark}
		We emphasize that here $p_\nu$ is arbitrary. The assumption $W_2(s,w)+C_\circ s\ge 0$ is telling us that, effectively up to scale $s$, the solution $u$ looks singular, but possibly with a quadratic blow-up which is completely different from $p_\nu$.
	\end{remark}
	\begin{proof}
		The proof is the same as \cite[Lemma A.6]{CinftyRectifiable}, with an additional (but higher order) error coming from the fact that $p_\nu$ is not 2-homogeneous.
		
		First, note that if $C_\circ$ is large enough, the condition $W_2(r,w)\ge -C_\circ r$ holds for any $r\in (s,1)$. This can be proved integrating the bound in \Cref{lem:monotone weiss}. As a consequence, following exactly \cite[Lemma A.6]{CinftyRectifiable}, for any $r\in(s,1)$ and any $\tau\ge2$ we have
		\[
		\phi^\tau(r,w) - 2 \ge -C_\circ r^{5-2\tau}.
		\]
		
		For the second inequality, using \Cref{eq:frequency ode}, it suffices to prove the bound
		\[
		\frac{2}{r(H(r,w)+r^{2\tau})} \int_{B_1}(\phi^\tau(r,w)w_r-x\cdot\nabla w_r)\Delta w_r \ge -Cr^{8-4\tau}.
		\]
		Thanks to \cref{eq:pde u-p_nu,eq:universal bounds rhs}
		\[
		|\Delta w_r + r^2f_r\chi_{\{u_r=0\}}| \le Cr^{3+\alpha},\quad |\phi^\tau(r,w)w_r-x\cdot\nabla w_r| \le C(\phi^\tau(r,w)+1)r^2
		\]
		while a direct computation gives
		\[
		\phi^\tau(r,w)w_r-x\cdot\nabla w_r = (2-\phi^\tau(r,w))p_r + O(r^3)\quad\text{in }\{u_r=0\},
		\]
		so, computing as in \cite[Lemma A.6]{CinftyRectifiable} with $\delta=1+\alpha$ one finds
		\[
		\int_{B_1}(\phi^\tau(r,w)w_r-x\cdot\nabla w_r)\Delta w_r \ge -Cr^{9-2\tau} - Cr^5 - Cr^{5+\alpha}(\phi^\tau(r,w)+1).
		\]
		Thus,
		\[
		\frac{d}{dr}\phi^\tau(r,w) \ge -Cr^{8-4\tau} - Cr^{4-2\tau} - Cr^{4+\alpha-2\tau}(\phi^\tau(r,w)+1)\ge -Cr^{8-4\tau}(1+\phi^{\tau}(r,w)).
		\]
		It follows that $\log(1+\phi^\tau(r,w))$ is almost monotone when $\tau\in(2,2+1/4)$, which implies
		\[
		\phi^\tau(r,w)\le C_\tau\quad\text{and}\quad \frac{d}{dr}\phi^\tau(r,w) \ge -C_\tau r^{8-4\tau},
		\]
		as we wanted.
		
		Similarly, the last bound follows using \eqref{eq:pde u-p_nu}.
	\end{proof}
	
	We note the following simple but crucial consequence.
	\begin{lemma}\label{lem:apply monneau}
		Suppose for some $s\in(0,1)$ we have
		\[
		W_2(s,w) \ge -C_\circ s\quad\text{and}\quad\phi^{2+1/8}(s,w) \ge 2+1/10-C_\circ s^{\alpha_\circ},
		\]
		where $C_\circ,\alpha_\circ$ are given by \Cref{lem:monotone weiss,lem:base step frequency}. Then
		\[
		\|w_r\|_{L^2(\de B_1)}\le Cr^{2+1/10}\quad \text{for any }r\in(s,1).
		\]
	\end{lemma}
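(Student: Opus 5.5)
The plan is to run the truncated Monneau monotonicity argument at level $\lambda = 2+1/10$ with truncation $\tau' = 2+1/8$. The key point is that \Cref{lem:base step frequency} already gives almost monotonicity of $\phi^{\tau'}$ under the Weiss lower bound.

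First, I would integrate \Cref{lem:monotone weiss} from $s$. Together with $W_2(s,w)\ge -C_\circ s$, this gives $W_2(r,w)\ge -C_\circ r$ for all $r\in(s,1)$, provided $C_\circ$ is large. So the hypothesis of \Cref{lem:base step frequency} holds on $(s,1/2)$ with $\tau'=2+1/8$. The derivative bound
\[
\frac{d}{dr}\phi^{\tau'}(r,w)\ge -C r^{8-4\tau'} = -Cr^{-1/2}
\]
is integrable. Hence, after adjusting $C_\circ,\alpha_\circ$, the quantity $\phi^{\tau'}(r,w)+C_\circ r^{\alpha_\circ}$ is almost nondecreasing in $r$ (here $\alpha_\circ=1/2$). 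Combined with the assumption at $s$, this yields
\[
\phi^{\tau'}(r,w)\ge 2+\tfrac1{10}-C r^{\alpha_\circ}\quad\text{for all }r\in(s,1/2).
\]

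Next I would use identity \eqref{eq:monneau ode} with $\lambda=2+1/10$ and truncation $\tau'$:
\[
\frac{d}{dr}\log\big(r^{-2\lambda}(H(r,w)+r^{2\tau'})\big)=\frac2r(\phi^{\tau'}-\lambda)+\frac2r\frac{\int_{B_1}w_r\Delta w_r}{H(r,w)+r^{2\tau'}}.
\]
The first term is $\ge -Cr^{\alpha_\circ-1}$ by the previous step. The last bound of \Cref{lem:base step frequency} gives that the second term is $\ge -Cr^{4+\alpha-2\tau'}=-Cr^{\alpha-1/4}\cdot r^{-1}$.

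This is the main obstacle. When $\alpha\le 1/4$ this exponent is not integrable. I would then check whether the bound can be improved: by \eqref{eq:pde u-p_nu}, $w_r\Delta w_r\ge -Cr^{3+\alpha}|w_r|\ge -Cr^{5+\alpha}$, and the denominator is $\ge r^{2\tau'}=r^{4.25}$. This gives $-Cr^{0.75+\alpha}$, which is integrable. So the integrable bound $-Cr^{\beta-1}$ with $\beta>0$ holds, and I would use this direct estimate in place of the stated one if needed.

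Integrating from $r$ to $1/2$ then gives
\[
r^{-2\lambda}(H(r,w)+r^{2\tau'})\le C\big(H(1/2,w)+1\big)\le C,
\]
using the universal bound \eqref{eq:universal bounds rhs}. Hence $\|w_r\|_{L^2(\de B_1)}\le Cr^{2+1/10}$ for $r\in(s,1/2)$. On $[1/2,1)$ the estimate is trivial from $C^{1,1}$ bounds.
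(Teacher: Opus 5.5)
Your proof is correct and is essentially the paper's argument. You integrate the almost-monotonicity of $\phi^{2+1/8}$ from \Cref{lem:base step frequency}, plug the resulting lower bound into \eqref{eq:monneau ode} with $\lambda=2+1/10$, and integrate the now integrable right-hand side. The ``main obstacle'' you flag is only an arithmetic slip: $\tfrac{2}{r}\cdot r^{5+\alpha-2\tau'}=2r^{\alpha-1/4}$, with no extra factor $r^{-1}$, and this is integrable for every $\alpha>0$ (it is exactly the paper's $-Cr^{-1/4}$ term). Your ``direct estimate'' from \eqref{eq:pde u-p_nu} simply reproduces the bound already stated in \Cref{lem:base step frequency}.
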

	\begin{proof}
		Integrating the lower bound in \Cref{lem:base step frequency} gives
		\[
		\phi^{2+1/8}(r,w)-(2+1/10)\ge-C_\circ r^{\alpha_\circ}\quad\text{for any }r\in(s,1).
		\]
		Thus \cref{eq:monneau ode} and \Cref{lem:base step frequency} yield, for a. e. $r\in(s,1)$,
		\[\begin{split}
			\frac{d}{dr}\log(r^{-2(2+1/10)}(H(r,w)+r^{2(2+1/8)})) &\ge \frac{2}{r}(\phi^{2+1/8}(r,w)-(2+1/10)) - Cr^{-1/4}\\
			&\ge -Cr^{\alpha_\circ-1}.
		\end{split}\]
		Integrating between $r$ and $1$ gives
		\[
		r^{-2(2+1/10)}(\|w_r\|^2_{L^2(\de B_1)}+r^{2(2+1/8)})\le C
		\]
		as we wanted.
	\end{proof}
	
	We can finally prove \Cref{proposition:frequency formula}.
	\begin{proof}[Proof of \Cref{proposition:frequency formula}]
		This proof relies on an iteration argument, where the base step is given by \Cref{lem:base step frequency}.
		
		\medskip\noindent\textbf{Step 1.} For any $\tau>0$ there is a universal $C_\tau>0$ such that
		\begin{equation}\label{eq:intermediate step freuqneyc formula}\begin{split}
				\frac{d}{dr}\phi^\tau(r,w) \ge \frac{2}{r}\frac{(r^{2-n}\int_{B_r}w\Delta w)^2}{(H(r,w)+r^{2\tau})^2}&- C r^{2+\alpha-\tau}(\phi^\tau(r,w)+1)\frac{\|w_r\|_{L^2(\de B_2)}+r^{3+\alpha}}{(H(r,w)+r^{2\tau})^{1/2}}\\
				&- Cr^{1/10-1}\frac{(\|w_r\|_{L^2(\de B_2)} + r^{3+\alpha})^2}{H(r,w) + r^{2\tau}}
		\end{split}\end{equation}
		and
		\begin{equation}\label{eq:intermediate step frequency formula 2}
			\frac{r^{2-n}\int_{B_r}w\Delta w}{H(r,w)+r^{2\tau}} \ge -Cr^{3+\alpha-\tau}\frac{\|w_r\|_{L^2(\de B_2)}+r^{3+\alpha}}{(H(r,w)+r^{2\tau})^{1/2}}.
		\end{equation}
		
		By \cref{eq:frequency ode}, it suffices to bound from below
		\[
		\frac{2}{r(H(r,w)+r^{2\tau})}\int_{B_1}(\phi^\tau(r,w)w_r-x\cdot\nabla w_r)\Delta w_r.
		\]
		First, integrating \eqref{eq:pde u-p_nu} in $B_1$ and using \Cref{lem:local bounds rhs} gives
		\[
		\int_{B_1} w_r\Delta w_r \ge -Cr^{3+\alpha}(\|w_r\|_{L^2(\de B_2)}+r^{3+\alpha}).
		\]
		To bound the other term we use \eqref{eq:pde u-p_nu} to write
		\[
		\int_{B_1}(x\cdot\nabla w_r)\Delta w_r = \overbrace{\int_{B_1}O(r^{3+\alpha})(x\cdot\nabla w_r)}^A + \overbrace{\int_{B_1\cap\{u_r=0\}}(x\cdot\nabla (p_\nu)_r) r^2f(r\cdot)}^B.
		\]
		As \eqref{eq:lip estimates} reads $|\nabla w_r|\le C(\|w_r\|_{L^2(\de B_2)}+r^{3+\alpha})$, we have
		\[
		A \le Cr^{3+\alpha}(\|w_r\|_{L^2(\de B_2)}+r^{3+\alpha}).
		\]
		On the other hand, a computation gives
		\[
		| x\cdot\nabla (p_\nu)_r| \le 3(p_\nu)_r\le Cr^2 |x\cdot\nu|^2,
		\]
		while \eqref{eq:bound contact set} and \Cref{lem:apply monneau} give $B_1\cap\{u_r=0\} \subset \{|x\cdot\nu| \le Cr^{-2}(\|w_r\|_{L^2(\de B_2)}+r^{3+\alpha})\}$ and $\|w_r\|_{L^2(\de B_2)} \le C r^{2+\tfrac1{10}}$. So,
		\[\begin{split}
			B &\le Cr^4\int_{\{|x\cdot\nu|\le Cr^{-2}(\|w_r\|_{L^2(\de B_2)}+r^{3+\alpha})\}} |x\cdot\nu|^2\\
			&\le C \frac{\|w_r\|_{L^2(\de B_2)} + r^{3+\alpha}}{r^2} (\|w_r\|_{L^2(\de B_2)}+r^{3+\alpha})^2\\
			&\le Cr^{1/10}(\|w_r\|_{L^2(\de B_2)}+r^{3+\alpha})^2,
		\end{split}\]
		and the claim follows.
		
		\medskip\noindent\textbf{Step 2.} For all $2+1/10<\tau<3+\alpha$ we have
		\begin{equation}\label{eq:goal proof frequency formula}
			\phi^\tau(r,w) \le C_\tau\quad\text{and}\quad \|w_r\|_{L^2(\de B_2)}\le C_\tau(H(r,w)+r^{2\tau})^{1/2}.
		\end{equation}
		
		The case $\tau = 2 + 1/10$ follows by \Cref{lem:base step frequency}, together with \Cref{lem:apply monneau}. So, following \cite[Lemma 4.3, Step 2]{FROS24} it suffices to show that if the claim holds for some $\tau$, then it holds also for $\tau+\beta$ for any $\beta<\min\{1/20,(3+\alpha-\tau)/4\}$. Step 2 then follows by iterating this claim finitely many times.
		
		Following \cite[Lemma 4.3, Step 2]{FROS24} we find
		\[
		\phi^{\tau+\beta}(r,w)\le 2C_\tau r^{-2\beta}\qquad\text{and}\qquad \frac{\|w_r\|_{L^2(\de B_2)}}{(H(r,w)+r^{2(\tau+\beta)})^{1/2}} \le \frac{1}{r^{\beta}} C_\tau.
		\]
		Then Step 1 (with $\tau$ replaced by $\tau+\beta$) yields
		\[
		\frac{d}{dr}\phi^{\tau+\beta}(r,w)\ge \frac2r\left(\frac{r^{2-n}\int_{B_r}w\Delta w}{H(r,w)+r^{2\tau}}\right)^2 - Cr^{2+\alpha-\tau-4\beta}-Cr^{1/10-1-2\beta}.
		\]
		This implies that $\phi^{\tau+\beta}$ is almost monotone, provided $\beta< \min\{1/20,(3+\alpha-\tau)/4\}$, which yields
		\[
		\phi^{\tau+\beta}(r,w) \le C_{\tau+\beta}.
		\]
		In addition, \cite[Lemma 4.1 (a)]{FROS24} implies
		\[
		\|w_r\|_{L^2(\de B_2)}^2 \le C(H(r,w)+r^{2\tau+2\beta}),
		\]
		so \eqref{eq:goal proof frequency formula} follows. The result now follows using these bounds into \cref{eq:intermediate step freuqneyc formula,eq:intermediate step frequency formula 2}.
	\end{proof}
	
	\section{Proof of \texorpdfstring{\Cref{lem:caffarelli rhs holder}}{}}\label{app:caffarelli}
	
	\begin{lemma}[Local Glaeser Inequality]\label{lem:gleaser}
		Let $w\in C^{1,1}(B_1)$ be a nonnegative function and let 
		$$Z:=\{w=0\}\cap B_{1/2}.$$ 
		Assume that
		\[
		D^2_{ee} w(x)\le M\quad \text{ for all }e\in\S^{n-1} \text{ and  for a.e. }x\in B_1.
		\]
		Then
		\[
		|\nabla w(x)|^2 \le 2M w(x) \quad \text{ whenever } \dist(x,Z)\le\tfrac14. 
		\]
	\end{lemma}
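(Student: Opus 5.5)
The plan is to run the classical one-dimensional Glaeser argument along the gradient direction, starting from $x$, and to use the hypothesis $\dist(x,Z)\le\tfrac14$ only to guarantee that the segment we travel along stays inside $B_1$, where the Hessian bound holds.

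Fix $x$ with $\dist(x,Z)\le\tfrac14$. Since $Z\subset B_{1/2}$, we have $|x|<\tfrac34$, so $B_{1/4}(x)\subset B_1$. If $\nabla w(x)=0$ there is nothing to prove, so assume $\nabla w(x)\ne0$ and set $e:=-\nabla w(x)/|\nabla w(x)|$. Consider
\[
g(t):=w(x+te),\qquad 0\le t<\tfrac14 .
\]
Then $g\ge0$, $g(0)=w(x)$ and $g'(0)=-|\nabla w(x)|$. Since $w\in C^{1,1}$, $g'$ is Lipschitz with $g''\le M$ almost everywhere. Integrating twice gives, for $t\in[0,\tfrac14)$,
\[
0\le g(t)\le w(x)-|\nabla w(x)|\,t+\tfrac M2 t^2 .
\]

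First suppose the minimizer $t^*:=|\nabla w(x)|/M$ satisfies $t^*<\tfrac14$. Plugging $t=t^*$ into the inequality yields $0\le w(x)-|\nabla w(x)|^2/(2M)$, which is exactly the claim. This case is immediate.

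The main obstacle is the complementary case $|\nabla w(x)|\ge M/4$, where the segment is too short to reach $t^*$. Here I would use $Z$ more concretely. Pick $z\in Z$ with $|x-z|\le\tfrac14$; note $z\in\overline{B_{1/2}}$. Since $w\ge0$ and $w(z)=0$, $z$ is a minimum point, so $\nabla w(z)=0$. The Hessian bound together with nonnegativity gives $D^2w\ge -CM$ a.e.: any direction $e$ has $D_{ee}w\le M$, and nonnegativity near a zero controls the negative part. Hence $|\nabla w(x)|\le CM|x-z|$, and one then tries to show directly that $|\nabla w(x)|^2\le 2Mw(x)$, for instance by moving along the segment from $z$ to $x$ and comparing with the one-dimensional bound.

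Honestly, the cleanest route is to reduce to the first case by rescaling. If $w$ is only known on $B_1$, the sharp constant $2M$ genuinely requires a segment of length $|\nabla w(x)|/M$. So I expect the intended argument to show that $|\nabla w(x)|\le M/4$ whenever $\dist(x,Z)\le\tfrac14$. This is where the two-sided control $|D^2w|\le M$ is needed; I would read the hypothesis for all $e$, applied to both $e$ and $-e$, as bounding the full Hessian norm. Granting that, $\nabla w(z)=0$ gives $|\nabla w(x)|\le M|x-z|\le M/4$, so $t^*\le\tfrac14$. The case $t^*=\tfrac14$ is handled by continuity, letting $t\uparrow\tfrac14$. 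Verifying this step is where I expect the real difficulty to lie.
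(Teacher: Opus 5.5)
Your first case ($t^*=|\nabla w(x)|/M<\tfrac14$) is fine. The case $t^*\ge\tfrac14$ is a genuine gap, because both devices you propose for it fail under the stated hypothesis.

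\textbf{Nonnegativity does not control the negative part of the Hessian.} In one variable, take $w$ even with $w(0)=0$, $w''=M$ on $[0,a]$, $w''=-K$ on $[a,a+Ma/K]$, and $w''=0$ afterwards. Then $w\ge0$, $w\in C^{1,1}$ and $w''\le M$, yet $w''=-K$ with $K$ arbitrary. So the claim $D^2w\ge -CM$ is false.

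\textbf{Testing the hypothesis on $-e$ gives nothing new.} Since $D_{(-e)(-e)}w=D_{ee}w$, the assumption remains the one-sided (semiconcavity) bound $D^2w\le M$, not $|D^2w|\le M$. Your argument therefore proves the lemma only under a stronger hypothesis. The a priori bound $|\nabla w(x)|\le M|x-z|$ is not available, and your remark that the sharp constant ``requires a segment of length $|\nabla w(x)|/M$'' is exactly where the reasoning goes wrong.

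\textbf{The missing idea.} A one-sided bound controls $w(x)$, not $\nabla w(x)$, from the zero set. Take $z\in\overline Z$ with $\delta:=|x-z|=\dist(x,Z)\le\tfrac14$, so $w(z)=0$ and $\nabla w(z)=0$. Taylor's formula along $[z,x]$, using only $D^2w\le M$, gives $w(x)\le\tfrac M2\delta^2$.

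Now evaluate your ray inequality not at $t^*$ but at $t_0:=\sqrt{2w(x)/M}$, the minimizer of $t\mapsto w(x)/t+Mt/2$. Since $t_0\le\delta\le\tfrac14$, the point $x+t_0e$ lies in $\overline{B_{1/4}(x)}$, where you already have the inequality (by continuity at $t=\tfrac14$). This gives
\[
t_0|\nabla w(x)|\le w(x)+\tfrac M2 t_0^2=2w(x),
\]
which is exactly $|\nabla w(x)|^2\le 2Mw(x)$, with no case distinction. This is the paper's proof.

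Equivalently, you can keep your case split. If $t^*\ge\tfrac14$, letting $t\uparrow\tfrac14$ gives $|\nabla w(x)|\le 4w(x)+\tfrac M8$. Since $w(x)\le\tfrac M{32}$, this yields $|\nabla w(x)|\le\tfrac M4$, so this case collapses to $t^*=\tfrac14$, which you already handle by continuity.
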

	\begin{proof}
		Let $x$ satisfy $\delta := \operatorname{dist}(x, Z) \le \frac{1}{4}$. If $\delta=0$ the assertion is trivial, so we may assume $w(x)>0$. Choose $z \in Z$ such that $|x - z| = \delta$. Since $w \ge 0$ and $w(z) = 0$, $\nabla w(z) = 0$.
		By Taylor's theorem along the segment $[z, x]$, the condition $D^2 w \le M $ gives
		\[
		w(x) \le \tfrac12{M}\delta^2.
		\]
		
		If $\nabla w(x) = 0$, the conclusion holds trivially. Otherwise, let $e := -{\nabla w(x)}/{|\nabla w(x)|}$. For $t > 0$, Taylor's theorem along the ray $x + te$ yields
		\[
		0 \le w(x + te) \le w(x) - t|\nabla w(x)| + \tfrac12{M}{}t^2.
		\]
		
		Evaluating this inequality at $t_0 := \sqrt{{2w(x)}/{M}}\le \delta \le \frac{1}{4}$ gives
		\[
		t_0 |\nabla w(x)| \le w(x) + \tfrac12{M}{}t_0^2 = 2w(x),
		\]
		where we used that $x+t_0 e\in B_{1/2}+B_\delta +B_{\delta} \subset B_1$.
		Dividing by $t_0$ and squaring both sides yields the result.
	\end{proof}
	
	\begin{lemma}[Caffarelli with $C^{1,\alpha}$ RHS]
		There are positive constants $A_1$ large and $\sigma_1,$ $\rho_1$ small with the following property.
		
		Let $u$ solve \eqref{eq:obstacle} in $B_1$ for some $f\ge\mu>0$.
		Let $h\colon \{u>0\}\cap B_1\to\R$ satisfy
		\[\begin{cases}
			h\ge0   &\text{on }\de\{u>0\},\\
			\Delta h\le \divergence F   &\text{in }\{u>0\},\\
			h\ge -1         &\text{in }\{u>0\},\\
			h\ge A_1        &\text{in }\{\dist(\cdot,\de\{u>0\})>\sigma_1\},
		\end{cases}\]
		for some $F\in C^\alpha(B_1;\R^n),$ with $\alpha>0$.
		Then
		\[
		h\ge 0\quad\text{ in }\{u>0\}\cap B_{\rho_1}.
		\]
		The constants $A_1$, $\sigma_1$ and $\rho_1$ depend on
		\begin{equation}\label{eq:dependencies_caffarelli proof}
			n,\quad\mu,\quad\alpha,\quad \|F\|_{C^\alpha(B_1)},\quad \|f\|_{C^\alpha(B_1)}.
		\end{equation}
	\end{lemma}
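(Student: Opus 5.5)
We follow Caffarelli's original barrier argument \cite[Lemma 11]{C98}, modified to absorb the divergence-form right-hand side. Fix a point $x_0\in\{u>0\}\cap B_{\rho_1}$, assume by contradiction that $h(x_0)<0$, and work in the region $\Omega:=\{u>0\}\cap B_{1/4}(x_0)$. The plan is to build a function $\Psi$ which is a strict subsolution of the same inequality and which controls $h$ from below on $\de\Omega$. Then $h-\Psi$ would be a supersolution that is nonnegative on $\de\Omega$ but negative at $x_0$, contradicting the minimum principle.

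The barrier combines three pieces.

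First, the function $u$ itself enters through a term $K(u(x)-\tfrac{\mu}{4n}|x-x_0|^2)$. This term has Laplacian at least $K(f-\tfrac{\mu}{2})\ge K\mu/2>0$ in $\{u>0\}$. It vanishes (up to sign) on $\de\{u>0\}$, where $u=0$, and it is negative at $x_0$ only if $u(x_0)$ is small. On $\de B_{1/4}(x_0)$ it equals $Ku-c K$.

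Second, a corrector handles $\divergence F$. Let $G$ solve $\Delta G=\divergence F$ in $B_1$ with zero boundary data. Schauder theory gives $\|G\|_{C^{1,\alpha}(B_{3/4})}\le C\|F\|_{C^\alpha}$, so replacing $h$ by $\tilde h:=h-G+G(x_0)$ makes $\Delta\tilde h\le 0$. The price is a boundary error on $\de\{u>0\}$ near $x_0$ of size $|G(x)-G(x_0)|\le C|x-x_0|$, which is not small compared to $0$. To absorb it, the quadratic term alone is not enough: near $x_0$ it beats only $|x-x_0|^2$, not $|x-x_0|$.

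Third, therefore, we exploit that $x_0$ is close to the free boundary. Indeed, if $\dist(x_0,\de\{u>0\})>\sigma_1$ then $h(x_0)\ge A_1>0$ already, so we may assume $d:=\dist(x_0,\de\{u>0\})\le\sigma_1$. Lemma~\ref{lem:gleaser}, applied to $u$ (whose Hessian is bounded by $C^{1,1}$ estimates), gives $|\nabla u|^2\le Cu$. This yields a nondegeneracy/Lipschitz structure for $\sqrt u$, which lets us replace the linear error by a term of the form $C\sqrt{u}$ or $C\,\dist(\cdot,\de\{u>0\})^{\alpha}$.

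Concretely, we would try the barrier
\[
\Psi:=-\tfrac12 - C_1|x-x_0|^{1+\alpha'}+K\big(u-c|x-x_0|^2\big)+\big(G-G(x_0)\big),
\]
with $0<\alpha'<\alpha$. The checks are as follows.

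In $\Omega$ we need $\Delta(h-\Psi)\le0$. This requires $K\mu/2$ to dominate $C_1\Delta|x-x_0|^{1+\alpha'}$. That Laplacian is singular like $|x-x_0|^{\alpha'-1}$, with the wrong sign, and this is exactly where a more careful choice is needed. One option is to use the AI-found-type barrier: a function $\Phi$ such that $\Delta\Phi\ge -K\mu/4$ and $\Phi\ge |G-G(x_0)|$ on $\de\{u>0\}\cap B_{1/4}(x_0)$. One could build $\Phi=C\,u^{(1+\alpha)/2}$-type expressions, using $|\nabla u|^2\le Cu$ from Lemma~\ref{lem:gleaser} to control $\Delta(u^{s})=s u^{s-1}f+s(s-1)u^{s-2}|\nabla u|^2\ge s u^{s-1}(f-C(1-s))$, which is positive for $s$ close to $1$.

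On $\de\{u>0\}$ we need $h\ge\Psi$. Since $h\ge 0$ there, the barrier must be $\le0$ there; the term $-\tfrac12$ plus a linear control of $G-G(x_0)$ through $\dist$ is what makes this work. This requires $\rho_1,\sigma_1$ small so that $|x-x_0|$ stays within the region where $C|x-x_0|\le\tfrac14$ in a neighborhood that the barrier controls.

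On $\de B_{1/4}(x_0)\cap\{u>0\}$ we use $h\ge -1$ where the point is close to the free boundary. There, Lemma~\ref{lem:gleaser} together with the quadratic term gives $\Psi\le -1$ once $c K$ beats the other terms. At points at distance $>\sigma_1$ from the free boundary we use $h\ge A_1\ge \sup\Psi$, which holds since $\Psi\le CK$.

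Finally, at $x_0$ we get $\Psi(x_0)=-\tfrac12+Ku(x_0)$, while $u(x_0)\le Cd^2\le C\sigma_1^2$. Choosing $\sigma_1$ small relative to $K$ gives $\Psi(x_0)<0$, and the comparison gives $h(x_0)\ge\Psi(x_0)$. This alone does not conclude, so we must rescale: the argument should show $h\ge-\tfrac12$ near the free boundary, and then iterate as in Caffarelli, halving the negative part at each step and gaining geometric decay until $h\ge0$.

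The main obstacle is the mismatch between the merely $C^\alpha$ vector field $F$, which produces a first-order corrector error $O(|x-x_0|)$ on the free boundary, and the quadratic growth that the obstacle problem naturally provides. Finding a subsolution of $u^{s}$ type, via Lemma~\ref{lem:gleaser}, that dominates this error is the step I would spend effort on first.
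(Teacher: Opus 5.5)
You have found the right difficulty and the right tool ($u^{(1+\alpha)/2}$ together with Lemma~\ref{lem:gleaser}), but your barrier does not close and the fallback iteration is not justified. The first missing ingredient concerns the corrector. You normalize it only by the constant $G(x_0)$ and then try to absorb an $O(|x-x_0|)$ error on the free boundary. That linear part is harmless: affine functions are harmonic, so you can subtract the whole first-order Taylor polynomial. The paper solves $\Delta\Phi=\divergence F$ in $B_{2\rho}(x_0)$ and removes the Taylor expansion at a nearest free-boundary point $z_0$, with $|x_0-z_0|=d_0$. This gives $|\Phi(x)|\le C_1|x-z_0|^{1+\alpha}$. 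Keeping the linear error would force the exponent $p=\tfrac12$, and $u^{1/2}$ is not strictly subharmonic (it is harmonic for $u=\tfrac12(x_n)_+^2$).

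The second missing ingredient is a barrier that lives at scale $d_0$ with no constant shift. Your $-\tfrac12$ makes $\Psi(x_0)<0$ and destroys the conclusion. The iteration cannot repair this, for two reasons. The corrector error has a fixed size and does not shrink with the amplitude of $h$. And the hypothesis $h\ge A_1$ on $\{\dist(\cdot,\de\{u>0\})>\sigma_1\}$ is not available at smaller scales. The paper instead uses
\[
g=a\,u^{p}+b\,q+\Phi,\qquad q(x)=-(|x-x_0|^2+d_0^2)^{p}+\kappa_p d_0^{2p},\qquad p=\tfrac{1+\alpha}{2},\quad \kappa_p=\tfrac{1+2^p}{2}\in(1,2^p).
\]
With this choice the boundary comparison works piece by piece:
\begin{itemize}
\item at $x_0$, $g(x_0)\ge (b(\kappa_p-1)-C_1)d_0^{2p}\ge 0$;
\item on $\de\{u>0\}$, where $|x-x_0|\ge d_0$, $q\le -c(|x-x_0|^2+d_0^2)^p$, which beats $C_1|x-z_0|^{2p}$;
\item on $\de B_\rho(x_0)$ near the free boundary, $q\le-\tfrac12\rho^{2p}$ and $h\ge -1$;
\item on $\de B_\rho(x_0)$ far from the free boundary, $h\ge A_1$.
\end{itemize}
A single comparison in $\{u>0\}\cap B_\rho(x_0)$ then gives $h(x_0)\ge 0$, with no iteration.

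The third missing ingredient is the sharp constant in the subharmonicity step. You need $a\Delta(u^p)\ge ac_0(|x-x_0|^2+d_0^2)^{p-1}$ to compensate $\Delta q\ge -C_0(|x-x_0|^2+d_0^2)^{p-1}$; the factor $u^{p-1}\ge c(|x-x_0|^2+d_0^2)^{p-1}$ comes from quadratic growth. This requires $f-(1-\alpha)M>0$ in $|\nabla u|^2\le 2Mu$, with $p=\tfrac{1+\alpha}{2}$ possibly close to $\tfrac12$. A crude $C^{1,1}$ constant, as in your bound $f-C(1-s)$, works only for $s$ close to $1$. That is incompatible with $2s\le 1+\alpha$, which is needed for $q$ to dominate the $C^{1,\alpha}$ corrector. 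The paper uses the equation and the weak semiconvexity \eqref{eq:semiconvexity_weak} to get $M\le\sup_{B_{20\rho}(x_0)}f+(n-1)\omega(21\rho)$. Hence $f-(1-\alpha)M\ge\tfrac{\alpha}{2}\mu$ once $\rho$ is small, and this is exactly where $\alpha>0$ and the smallness of $\rho_1$ enter.
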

	\begin{proof}
		We denote by $C$ and $c$ constants depending on \eqref{eq:dependencies_caffarelli proof} and set
		\[
		d(x):=\dist(x,\de\{u>0\}).
		\]
		Let $\rho,\sigma$ and $A$ to be chosen later.  
		Pick any $x_0\in \{u>0\}\cap B_\rho$ and assume $d_0:=d(x_0)\le \sigma\rho$, otherwise $h(x_0)\ge A>0$. This means that there is $z_0\in\de\{u>0\}$ such that
		\[
		|z_0-x_0|=d_0\le \sigma \rho.
		\]
		The idea is to construct a suitable lower barrier for $h$ in $\{u>0\}\cap B_{2\rho}(x_0)$ of the form
		\begin{equation*}
			g(x):=a\, u(x)^{\frac{1+\alpha}{2}} +b\, q(x)+\Phi(x), 
		\end{equation*}
		where 
		\begin{itemize}
			\item $a,b$ are constants to be chosen positive;
			\item $\Phi\in C^{1,\alpha}(B_{2\rho}(x_0)))$ is a corrector such that
			\[
			\lap \Phi =\divergence F,\quad |\Phi(x)|\le C_1|x-z_0|^{1+\alpha}\quad \forall x\in B_{2\rho(x_0)}.
			\]
			To construct $\Phi$ is enough to solve the Dirichlet problem in $B_{2\rho(x_0)}$ and then subtract the Taylor expansion at $z_0$.
			\item The paraboloid-type function $q$ is given by
			\[
			q(x):=-(|x-x_0|^2+d_0^2)^p+\kappa_p d^{2p}_0,
			\]
			where we set for brevity
			\[
			p:=\frac{1+\alpha}{2},\quad p\in(0,1),\quad \kappa_p:=\frac{1+2^p}{2}\in(1,2^p).\]
		\end{itemize}
		Notice that 
		$$g(x_0)\ge\big( b(\kappa_p-1)-C_1\big)d_0^{2p},$$
		so this barrier will show $h(x_0)\ge 0$ provided
		\begin{equation}
			b\ge {C_1}/(\kappa_p-1){}\label{eq:c0}\tag{c0}.
		\end{equation}
		
		\textbf{Step 1.} We show that $\lap g \ge \lap h$ in $B_{\rho}(x_0)\cap\{u>0\}$.
		
		A direct computation gives that, in $B_{2\rho}(x_0)$,
		\begin{align*}
			\lap q &=-2p(r^2+d_0^2)^{p-2}[nd_0^2 +(n+\alpha-1)r^2]\\
			&\ge -C_0 (r^2+d_0^2)^{p-1},
		\end{align*}
		where $r:=|x-x_0|$.
		
		To counter this term we estimate from below $\lap(u^p)$, which is
		\[\lap (u^p) = p u^{p-1} \Big(f(x) - (1-\alpha)\frac{|\nabla u|^2}{2u}\Big).\]
		Notice that, for all $x\in B_{2\rho}(x_0)$,
		\[
		\dist\big(x,\de\{u>0\}\cap B_{2\rho}(x_0)\big)\le |x-z_0|\le r+d_0\le 3\rho
		\]
		Thus by \Cref{eq:C11 regularity} and recalling that $p<1$ we find
		\begin{equation}
			u(x)^{p-1}\ge c(r^2+d_0^2)^{p-1}.\label{eq:up-1}
		\end{equation}
		Applying \Cref{lem:gleaser} to $w:=(20\rho)^{-2}u(x_0+20\rho\cdot)$ we find
		\[
		|\nabla u|^2 \le 2M u\text{ in } B_{2\rho}(x_0),
		\]
		with $M$ given by
		\[
		M:=\sup_{x\in B_{20\rho}(x_0),\ e\in \S^{n-1}}D_{ee} u(x).
		\]
		Using the equation and the weak semiconvexity estimate \eqref{eq:semiconvexity_weak} we find
		\[
		M\le  (n-1)\omega(21\rho)+\sup_{B_{20\rho}(x_0)}f.
		\]
		This proves that, if 
		\begin{equation}\label{eq:c1}
			\rho<c_1    \tag{c1}
		\end{equation} 
		universal,
		\begin{align*}
			\Big(f(x) - (1-\alpha)&\frac{|\nabla u|^2}{2u}\Big)\ge
			\Big(f(x) - (1-\alpha)M\Big)\\
			&\ge \alpha\mu  -(1-\alpha) \osc_{B_{20\rho}(x_0)}f -(n-1)\omega(21\rho)\ge \tfrac\alpha2 \mu.
		\end{align*}
		Then combining this with \eqref{eq:up-1}  we find for some $c_0>0$,
		\[
		\lap(u^p)\ge c_0(r^2+d_0^2)^{p-1}\quad \forall x\in B_{2\rho(x_0)}.
		\]
		Putting things together:
		\[
		\lap g \ge (ac_0-bC_0)(r^2+d_0^2)^{p-1}+\divergence F\ge \lap h \quad \forall x\in B_{2\rho(x_0)},
		\]
		provided
		\begin{equation}
			a>\frac{C_0}{c_0}b.\label{eq:c2}\tag{c2}
		\end{equation}
		\textbf{Step 2.} We show that $g\le h$ on $\de(B_{\rho}(x_0)\cap \{u>0\})$.
		
		We start considering $x\in B_{\rho}(x_0)\cap \de \{u>0\}.$ Here $q$ is quite negative so it helps us, more precisely $|x-x_0|\ge d_0$ so
		\[
		q(x)\le -\big(1-\frac{\kappa_p}{2^p}\big)(|x-x_0|^2+d_0^2)^{p} +\kappa_p\underbrace{\big[d_0^{2p}-\frac{1}{2^p}(|x-x_0|^2+d_0^2)^{p}\big]}_{\le 0}
		\]
		so we find for some $c_2>0$,
		\begin{equation*}
			g(x)\le C_1 |x-z_0|^{2p} - b c_2(|x-x_0|^2+d_0^2)^{p}<0\le h(x),
		\end{equation*}
		as soon as
		\begin{equation}
			\label{eq:c3} b\ge C_1/c_2. \tag{c3}
		\end{equation}
		
		We go on and consider $x\in  \de B_{\rho}(x_0)\cap \{0<d<\sigma\rho\}.$ Here by quadratic contact (see \eqref{eq:C11 regularity}) we have
		\[
		u(x)^p\le C_2\sigma^{2p}\rho^{2p}.
		\]
		If $\sigma$ is small, then $q$ helps us again, recalling that $d_0\le \sigma\rho$:
		\[
		q(x)\le -(\rho^2+d_0^2)^p+\kappa_p d_0^{2p}\le -(1-\kappa_p \sigma^{2p})\rho^{2p} \le -\tfrac12\rho^{2p},
		\]
		provided 
		\begin{equation}\label{eq:c4}
			\sigma\le (2\kappa_p)^{-1/2p}.\tag{c4}
		\end{equation}
		Thus we have
		\[
		g(x)\le  C_2 a\sigma^{2p}\rho^{2p} - \tfrac b2\rho^{2p} +C_1 |x-z_0|^{2p}
		\le  \big(C_2 a\sigma^{2p}- \tfrac 12 b+9^pC_1 \big)\rho^{2p}\le -1\le h(x)
		\]
		if 
		\begin{equation}\label{eq:c5}
			b\ge 2\rho^{-2p}+20C_1+2C_2 a\sigma^{2p}. \tag{c5}
		\end{equation}
		
		Finally, we consider $x\in  \de B_{\rho}(x_0)\cap \{d>\sigma\rho\}.$ 
		Here we need rough upper bounds since $h(x)\ge A$, so we just bound
		\begin{equation}
			g(x)\le a C\rho^{2p}+ b \rho^{2p} +9^pC_1\rho^{2p}=:A\le h(x).\label{eq:c7}
		\end{equation}
		To complete the proof we need to show that there are choices of $\rho,$ $\sigma$, $a$ and $b$, compatible with \eqref{eq:c0}, \eqref{eq:c1}, \eqref{eq:c2}, \eqref{eq:c3}, \eqref{eq:c4} and \eqref{eq:c5}.
		
		First we choose $\rho_1$ such that \eqref{eq:c1} holds; then we choose $b_1$ in such a way that \cref{eq:c0,eq:c3} hold and additionally:
		\begin{equation}\label{eq:c6}
			b_1\ge 2\rho^{-2p}_1+20C_1+2C_2. \tag{c5'}
		\end{equation}
		Then we choose $a_1$ in such a way that \eqref{eq:c2} holds; then we choose $\sigma_1$ in such a way that \eqref{eq:c4} holds and additionally:
		\[
		a_1\sigma^{2p}_1\le 1.
		\]
		This and \eqref{eq:c6} imply that $b_1$ satisfies also \eqref{eq:c5}. $A_1$ is determined then by \eqref{eq:c7}. (In the statement we called $\sigma_1$ what is $\sigma_1\rho_1$ in the proof).
	\end{proof}
	
	\section{A counterexample to the exterior ball property}\label{ex:no ball property}
	We modify some examples of free boundaries in \cite{Schaeffer} to construct solutions in $B_1\subset\R^2$ with $C^\infty$ obstacle, but such that $\{u=0\}$ doesn't have the exterior $r$-ball property for any $r>0$ (as defined in \Cref{rmk:exterior ball}). We will use the following generic regularity statement.
	\begin{theorem}[\cite{FROS24}]\label{theorem:generic regularity}
		Let $\Omega\subset\R^2$ be a bounded domain and let $u$ solve \eqref{eq:obstacle} in $\Omega$. Then for each $\eps>0,$ there is $u_\eps$ solving \eqref{eq:obstacle} in $\Omega,$ with $\de\{u_\eps>0\}$ regular and $u-\eps\le u_\eps\le u$. In particular, regular free boundaries are dense in the local Hausdorff distance.
	\end{theorem}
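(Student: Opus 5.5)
The plan is to embed $u$ in a monotone one-parameter family of solutions, prove the two-sided bound $u-t\le u^t\le u$ by comparison, and then use the genericity theorem of \cite{FROS24} to choose one member of the family with no singular free boundary points.

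\textbf{Step 1: the family.} For $t\in(0,\eps)$ let $u^t$ be the minimizer of the functional in the introduction on $\Omega$, with right-hand side $f$ and boundary datum $(u-t)_+$ on $\de\Omega$. Each $u^t$ solves \eqref{eq:obstacle} in $\Omega$.

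\textbf{Step 2: the bounds $u-t\le u^t\le u$.} Both follow from \Cref{lem:comparison principle}.
\begin{enumerate}
\item For the upper bound, take $V:=u$ as supersolution against $u^t$. It satisfies $\Delta V=f\ind_{\{V>0\}}$, $V\ge0$, and $V\ge(u-t)_+=u^t$ on $\de\Omega$, so $u^t\le u$.
\item For the lower bound, take $V:=u^t+t$ as supersolution against $u$. Since $V>0$ everywhere, we have $\ind_{\{V>0\}}\equiv1$ and $\Delta V=f\ind_{\{u^t>0\}}\le f=f\ind_{\{V>0\}}$. Moreover $V\ge0$, and on $\de\Omega$ we have $V\ge u$ because $(u-t)_++t\ge u$. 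Hence $u\le u^t+t$.
\end{enumerate}
Therefore every $u^t$ with $t<\eps$ meets the required two-sided bound, and it remains to pick one whose free boundary is regular.

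\textbf{Step 3: genericity in $t$.} The family $t\mapsto u^t$ is monotone nonincreasing. The main input, proved in \cite{FROS24}, is that in dimension two the set of parameters $t$ for which $\de\{u^t>0\}$ contains a singular point has zero Lebesgue measure. In outline, that argument runs as follows.
\begin{itemize}
\item Strong comparison makes the family strictly ordered away from the fixed boundary, so distinct parameters have disjoint free boundaries.
\item Singular points are then cleaned by a frequency and dimension-reduction argument in the parameter space.
\item At $\Sigma_0$ points, and at $\Sigma_1$ points of frequency $\ge3$ (cf.\ \Cref{lem:frequency gap}), a quantitative separation between the free boundaries of nearby parameters shows that the corresponding set of $t$ has dimension $<1$.
\end{itemize}
We invoke this result directly rather than reprove it.

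\textbf{Conclusion.} Choose any $t\in(0,\eps)$ outside the exceptional null set and set $u_\eps:=u^t$. Then $\de\{u_\eps>0\}$ is regular and $u-\eps\le u_\eps\le u$. Letting $\eps\to0$, the uniform convergence $u_\eps\to u$ combined with nondegeneracy \eqref{eq:C11 regularity} yields local Hausdorff convergence of the free boundaries, which gives the density statement.

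The main obstacle is Step 3. The measure-zero property of singular times is the deep part, and it is where dimension two is essential (\cite{FROS24} obtains full genericity only for $n\le 4$). If that result had to be adapted to the present setting, the first thing to try is to redo the $\Sigma_1$ cleaning using the frequency gap $\ge3$ from \Cref{lem:frequency gap}: this makes each singular time isolated with a quantitative cubic separation rate, which should suffice in two dimensions.
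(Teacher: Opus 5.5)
Your argument is correct and matches the paper, which gives no proof of its own beyond citing \cite{FROS24}; the reduction you describe is the intended one: the monotone family with datum $(u-t)_+$, the two comparisons via \Cref{lem:comparison principle} giving $u-t\le u^t\le u$, and the a.e.-$t$ absence of singular points from \cite{FROS24}. One point to flag: your datum is not strictly monotone in $t$ on $\{u\le t\}\cap\partial\Omega$ (and $f$ is variable), so you are using the black box in a slightly more general form than its model setting, which is the same extension the paper tacitly assumes; in the paper's actual application $u>0$ on $\partial Q_m$, so there one can simply take the strictly monotone datum $u-t$.
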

	By \cite[Theorem 2.1]{Schaeffer} there is a positive function $f\in C^\infty(B_1)$ and $u\colon B_1\to[0,+\infty)$ solving
	\[\Delta u = f\,\chi_{\{u>0\}}\qquad\text{in }B_1\subset\R^2
	\] such that:
	\begin{enumerate}[label=(\roman*)]
		\item $u$ is even in $x_2$;
		\item $\{u=0\}\cap\{x_2=0\} = \cup_{m\ge1} I_m \cup\{0\}$ where $I_m=[-2^{-2m},-2^{-2m-1}]$;
		\item $\{u=0\}\cap B_r\subset\{|x_2| = o(r)\}$.
	\end{enumerate}
	
	Note that this is not yet a counterexample, because the end points $(-2^{-2m},0)$, $(-2^{-2m-1},0)$ are \textit{singular}, which do enjoy the exterior ball property with a uniform radius. So, we perturb this example to have a regular free boundary.
	
	Given $x\in \R^2$ and $r>0$ we write $Q_r(x) := x+[-r,r]^2$. Choose $x_m,$ $r_m$ so that for $Q_m:=Q_{r_m}(x_m)$ it holds:
	\begin{enumerate}[label=(\roman*)]
		\item $I_m\times\{0\}\subset Q_m$;
		\item $Q_m\cap Q_j=\emptyset$ for $m\neq j$;
		\item for $m$ large enough $\{u=0\}\cap Q_m\subset Q_{r_m(1-1/50)}(x_m)$.
	\end{enumerate}
	
	Set $\eps_m := \exp(-1/r_m)$ and choose a solution $v_m$ of the obstacle problem in $Q_m$ with RHS $f$ such that:
	\begin{enumerate}[label=(\roman*)]
		\item $v_m$ is even in $x_2$;
		\item $|v_m-u|<\eps_m$ on $\de Q_m$;
		\item $0<v_m<u$ on $\de Q_m$;
		\item $\de\{v_m>0\}\subset Q_{r_m(1-1/100)}(x_m)$ is a regular free boundary,
	\end{enumerate}
	which is possible by \Cref{theorem:generic regularity}.
	
	Fix a smooth nonnegative cutoff function $\eta\in C^\infty_c(Q_{1-1/1000})$ and satisfying $\eta\equiv1$ on $Q_{1-1/200}$ and $0\le\eta\le1$.
	We consider the function
	\[
	v(x) = u(x) + \sum_{m\ge1} \eta\left(\tfrac{x-x_m}{r_m}\right)(v_m(x)-u(x)),
	\]
	which is well defined since the series is locally finite, and we prove that it solves the obstacle in $B_1$ with positive $C^\infty$ RHS. Note that this is true in $B_1\setminus\{0\}$. Note also that this coincides with $f$ in $B_1\setminus\big(\cup_mQ_m\setminus Q_{r_m(1-1/100)}(x_m)\big)$. Since
	\[
	\Delta (v_m - u) = 0\quad\text{and}\quad |v_m - u|<\eps_m\qquad\text{in }Q_m\setminus Q_{r_m(1-1/100)}(x_m)
	\]
	we have
	\[
	\|v_m - u\|_{C^k(Q_m\setminus Q_{r_m(1-1/100)}(x_m))}< C_k r_m^{-k}\eps_m \to 0,
	\]
	as desired.
	
	Now note that $\{v=0\}$ has infinitely many regular connected components $\mathcal C_m$ accumulating at $0$. This implies that $\dist(\mathcal C_m,\mathcal C_{m+1})\to0$. Let $x_m\in\mathcal C_m,$ $y_m\in\mathcal C_{m+1}$ be such that $|x_m-y_m| = \dist(\mathcal C_m,\mathcal C_{m+1})$. Then $x_m-y_m$ is orthogonal to $\mathcal C_m$ at $x_m$, so any ball touching $\mathcal C_m$ at $x_m$ will have the center lying in the segment $\{x_m + t(y_m-x_m) : t\in[0,1]\}$ (this is because $\mathcal C_m$ is smooth). As $u(y_m)=0$, the largest ball touching $\mathcal C_m$ at $x_m$ must be centered at $\frac12(x_m+y_m)$ with radius $\frac12|x_m-y_m|\to 0$.

	\printbibliography
\end{document}